\DeclareMathAlphabet{\mathpzc}{OT1}{pzc}{m}{it}
\definecolor{verde}{rgb}{0.,0.7,0.}
\definecolor{indigo}{rgb}{.18, .34, .78}
\definecolor{indigo1}{rgb}{.18, .24, .78}
\definecolor{indigo2}{rgb}{.18, .14, .78}
\definecolor{indigo3}{rgb}{.18, 0., .78}
\definecolor{rojo}{rgb}{1,0,0}
\definecolor{negro}{rgb}{0,0,0}
\definecolor{lila}{rgb}{.46, .16, .78}
\definecolor{lila1}{rgb}{.46, .16, .86}
\definecolor{lila2}{rgb}{.56, .16, .86}
	\definecolor{lila3}{rgb}{.63, .16, .78}
\definecolor{lila4}{rgb}{.7, .16, .78}
\definecolor{lila5}{rgb}{.78, .26, .78}
\definecolor{lila6}{rgb}{.6, 0., .78}
\theoremstyle{plain}
\newtheorem{thm}{Theorem}[section]
\newtheorem{lma}[thm]{Lemma}
\newtheorem{cor}[thm]{Corollary}
\newtheorem{defn}[thm]{Definition}
\newtheorem{rem}[thm]{Remark}
\newtheorem{prop}[thm]{Proposition}
\newtheorem{ex}[thm]{Example}
\newcommand{\qed}{\hfill\quad\fbox{\rule[0mm]{0,0cm}{0,0mm}}  \par\bigskip}
\newcommand{\x}{\mbox{-}}
\newcommand{\s}{\hspace{0,06cm}}
\newcommand{\Ob}{{\mathcal Ob}}
\newcommand{\Cat}{\operatorname {Cat}}
\newcommand{\Tens}{{\rm Tens}}
\newcommand{\DblPs}{{\rm DblPs}}
\newcommand{\Fun}{{\rm Fun}}
\newcommand{\rtr}{\hspace{-0,08cm}\triangleright}
\newcommand{\ltr}{\hspace{-0,08cm}\triangleleft}
\newcommand{\Bicat}{{\rm Bicat}}
\newcommand{\I}{{\mathcal I}}
\newcommand{\Nat}{\operatorname {Nat}}
\newcommand{\Aa}{{\mathbb A}}
\newcommand{\Bb}{{\mathbb B}}
\newcommand{\Cc}{{\mathbb C}}
\newcommand{\Dd}{{\mathbb D}}
\newcommand{\Del}{\boxtimes}
\newcommand{\comp}{\circ}
\newcommand{\iso}{\cong}
\newcommand{\ot}{\otimes}
\newcommand{\C}{{\mathcal C}}
\newcommand{\Tau}{{\mathcal T}}
\newcommand{\M}{{\mathcal M}}
\newcommand{\D}{{\mathcal D}}
\newcommand{\F}{{\mathcal F}}
\newcommand{\G}{{\mathcal G}}
\newcommand{\N}{{\mathcal N}}
\newcommand{\A}{{\mathcal A}}
\newcommand{\B}{{\mathcal B}}
\newcommand{\E}{{\mathcal E}}
\newcommand{\Ll}{{\mathcal L}}
\newcommand{\crta}{\overline}
\newcommand{\Id}{\operatorname {Id}}
\newcommand{\id}{\operatorname {id}}
\newcommand{\Epsilon}{\varepsilon}
\def\K{{\mathcal K}}  %"kategorija"
\def\Dd{{\mathbb D}}
\def\u#1{\underline{#1}}
\newcommand{\Bimod}{\operatorname{Bimod}}
\newcommand{\im}{{\rm Im}\,}
\newcommand{\cref}[1]{C.~\ref{c:#1}}
\newcommand{\exlabel}[1]{\label{ex:#1}}
\newcommand{\exref}[1]{Example~\ref{ex:#1}}
\newcommand{\lelabel}[1]{\label{le:#1}}
\newcommand{\leref}[1]{Lemma~\ref{le:#1}}
\newcommand{\eqlabel}[1]{\label{eq:#1}}
\newcommand{\equref}[1]{(\ref{eq:#1})}
\newcommand{\delabel}[1]{\label{de:#1}}
\newcommand{\deref}[1]{Definition~\ref{de:#1}}
\newcommand{\prlabel}[1]{\label{pr:#1}}
\newcommand{\prref}[1]{Proposition~\ref{pr:#1}}
\newcommand{\colabel}[1]{\label{co:#1}}
\newcommand{\coref}[1]{Corollary~\ref{co:#1}}
\newcommand{\rmlabel}[1]{\label{rm:#1}}
\newcommand{\rmref}[1]{Remark~\ref{rm:#1}}
\newcommand{\selabel}[1]{\label{se:#1}}
\newcommand{\seref}[1]{Section~\ref{se:#1}}
\newcommand{\sslabel}[1]{\label{ss:#1}}
\newcommand{\ssref}[1]{Subsection~\ref{ss:#1}}
\newcommand\Doublearrow{%
        \mathrel{\vcenter{\mathsurround0pt
                \ialign{##\crcr
                        \noalign{\nointerlineskip}$\stackrel{s}{\longrightarrow}$\crcr
                          \noalign{\nointerlineskip}$\underset{t}{\longrightarrow}$\crcr
              }%
        }}%
}
\newcommand\tripplearrow{%
        \mathrel{\vcenter{\mathsurround0pt
                \ialign{##\crcr
                        \noalign{\nointerlineskip}$\longrightarrow$\crcr
                        \noalign{\nointerlineskip}$\longleftarrow$\crcr
                        \noalign{\nointerlineskip}$\longrightarrow$\crcr
                }%
        }}%
}
\newcommand\triarrows{%
        \mathrel{\vcenter{\mathsurround0pt
                \ialign{##\crcr
                        \noalign{\nointerlineskip}$\longrightarrow$\crcr
                        \noalign{\nointerlineskip}$\longrightarrow$\crcr
                        \noalign{\nointerlineskip}$\longrightarrow$\crcr
                }%
        }}%
}
\newcommand*{\threefrac}[3]{%
  \begin{array}{@{\,}c@{\,}}%
    #1\\
    \hline
    #2\\
    \hline
    #3%
  \end{array}%
}
\newcommand{\fit}[3]{\ar@{:=}@/{#3}/[#1] |{\Downarrow #2} }
\begin{document}

%\title{Alternative notion to intercategories, \\
%categories internal in tricategories: \\ B\"ohm's weak pseudomonoid \\ and tensor categories}

\title{Enrichment and internalization in tricategories, \\
the case of tensor categories and \\
alternative notion to intercategories}

\author{Bojana Femi\'c \vspace{6pt} \\
{\small Mathematical Institite of  \vspace{-2pt}}\\
{\small Serbian Academy of Sciences and Arts } \vspace{-2pt}\\
{\small  Knez Mihailova 35,} \vspace{-2pt}\\
{\small  11 000 Belgrade, Serbia}}

\date{}
\maketitle

%\medskip

\begin{center}
{\bf Dedicated to Gabi B\"ohm}
\end{center}

\begin{abstract}
This paper emerged as a result of tackling the following three issues. 
Firstly, we would like the well known embedding of bicategories into pseudo double categories to be monoidal, which it is not if one uses 
the usual notion of a monoidal pseudo double category. 
Secondly, in \cite{Gabi} the question was raised: which would be an alternative notion to intercategories of Grandis and Par\'e, so that 
monoids in B\"ohm's monoidal category $(Dbl,\ot)$ of strict double categories and strict double functors with a Gray type monoidal product be an example of it? 
We obtain and prove that precisely the monoidal structure of $(Dbl,\ot)$ resolves the first question. On the other hand, resolving the second 
question, we upgrade the category $Dbl$ to a tricategory $\DblPs$ and propose %an alternative definition of intercategories as 
to consider internal categories in this tricategory. %, enabling monoids in $(Dbl,\ot)$ to be examples of this gadget. 
Apart from monoids in $(Dbl,\ot)$ - more importanlty, weak pseudomonoids in a tricategory containing $(Dbl,\ot)$ as a sub 1-category - 
most of the examples of intercategories are also examples of this gadget, the ones that escape are those 
that rely on laxness of the product on the pullback, as duoidal categories. 
For the latter purpose we define categories internal to 
tricategories (of the type of $\DblPs$), which simultaneously serves our third motive. Namely, inspired by the tricategory and $(1\times 2)$-category 
of tensor categories, we prove under mild conditions that categories enriched over certain type of tricategories 
may be made into categories internal in them. We illustrate this occurrence for tensor categories with respect to the ambient tricategory 
$2\x\Cat_{wk}$ of 2-categories, pseudofunctors, pseudonatural transformations and modifications. 
\end{abstract}

\section{Introduction}

It is well-known that 2-categories embed in strict double categories and that bicategories embed in pseudo double categories. 
However, it is not clear which of the definitions of a monoidal pseudo double category existent in the literature 
would be suitable to have a monoidal version of the result, that monoidal bicategories embed to monoidal pseudo double categories. 
This question we resolve in \ssref{mon embed}. Namely, seeing a monoidal bicategory as a one object tricategory, %as suggested by Gabi 
we consider the equivalent one object Gray 3-category (by the coherence of tricategories of \cite{GPS}), which is nothing but a monoid in the 
monoidal category $Gray$, {\em i.e.} a Gray monoid (see \cite{DS}, \cite[Lemma 4]{BN}). 
We then prove that $Gray$ embeds as a monoidal category in the monoidal category $Dbl$ from \cite{Gabi} of strict double categories and 
double functors. 
For the above-mentioned embedding we give an explicit description of the monoidal structure of $Dbl$
(in \cite{Gabi} only an explicit description of the structure of a monoid is given). Analogously as in \cite{Gray}, 
we introduce a {\em cubical double functor} along the way. We also show why the other notions of monoidal double categories 
(monoids in the category of strict double categories and strict double functors from \cite{BMM},  
and pseudomonoids in the 2-category $PsDbl$ of pseudo double categories, pseudo double functors and vertical transformations from 
\cite{Shul}) do not obey the embedding in question. 

%The monoidal structure of $Dbl$ in \cite{Gabi} is constructed in an analogous way as that of $Gray$ in \cite{Gray}, as a left adjoint 
%to the hom-functor. In \cite{Gabi} a detailed description of the structure of a monoid is given. In the present paper we give 
%an explicit description of the monoidal structure of $Dbl$, which we use in the above-mentioned embedding of categories. 

Then we turn to the following question of B\"ohm: if a monoid in her monoidal category $Dbl$ could fit some framework similar to 
intercategories of Grandis and Par\'e. Observe that neither of the two notions would be more general than the other. While 
intercategories are categories internal in the 2-category $LxDbl$ of pseudo double categories, lax double functors and horizontal 
transformations, in the structure of B\"ohm's monoid in $Dbl$ the relevant objects are {\em strict} double categories and 
morphisms {\em double pseudo functors} in the sense of \cite{Shul1} (they are given by isomorphisms in both directions). 
By Strictification Theorem of \cite[Section 7.5]{GP:Limits} 
every pseudo double category is equivalent to a strict double category, thus on the level of objects in the ambient category basically 
nothing is changed. Though, going from lax double functors to double pseudo functors, one tightens in one direction and weakens in the other. 
%In \cite{Shul1} a category of strict double categories and double pseudo functors is constructed. 

In the search for a desired framework, we define 2-cells among %the latter 
double pseudo functors and we get that instead of a 2-category, we indeed have a tricategory structure, including modifications 
as 3-cells. This led us to propose an alternative notion for intercategories, as categories internal in this tricategory of strict double categories, which we denote by $\DblPs$. 

Contrarily to $LxDbl$, the 1-cells of the 2-category $PsDbl$ %studied in \cite{Shul} (of pseudo double categories, pseudo double 
%functors and vertical transformations), 
are particular cases of double pseudo functors. Having in mind the above Strictification Theorem and adding only the trivial 
3-cells to $PsDbl$, we also prove that thus obtained tricategory $PsDbl^*_3$ embeds in our tricategory $\DblPs$. 
As a byproduct to this proof we obtain a more general result: supposing that there is a connection (\cite{BS}) on 1v-components of 
strong vertical transformations (\cite[Section 7.4]{GP:Limits}), there is a bijection between 
strong vertical transformations and those strong horizontal transformations whose 1h-cell components are 1h-companions of some 1v-cells.
This we prove in \coref{bij}.

In the literature the notion of a category internal in a Gray category was introduced in \cite{DH}. For our announced purpose 
we introduce a notion of a category internal in a tricategory $V$ %(weaker than a Gray category), 
which is of a similar type as $\DblPs$. Most importantly, $V$ has an underlying 1-category, a property to which we refer to as 
{\em 1-strict}, and apart from the interchange law, the associativity on the 2-cells holds only up to isomorphism, 
which makes it weaker than a Gray category. 
We describe the structure of a category internal in $\DblPs$, and, similarly to intercategories, we give a 
geometric interpretation of it in the form of cubes. Moreover, we upgrade the monoidal category $Dbl$ to a 2-category 
$Dbl_2$, and this one to a (1-strict) tricategory $Dbl_3$, and show how pseudomonoids in $Dbl_2$ 
and ``weak pseudomonoids'' in $Dbl_3$, are categories internal in $\DblPs$. 
The examples of intercategories treated in \cite{GP:Fram} are also examples of categories internal in $\DblPs$, 
except from duoidal categories, which rely on lax double functors, rather than pseudo ones.

\medskip

As we mentioned above, bicategories (which are categories enriched over the 2-category $Cat_2$ of categories) embed into 
double categories (which are categories internal in $Cat_2$). We study what happens in one dimension higher. Namely, the 
standard example of the bicategory of algebras and their bimodules has its well-known analogue in one dimension higher.  
%the tricategory of tensor categories and bimodule categories over them. 
In order to formalize an allusive result,  
we first introduce the notions of tricategorical pullbacks and (co)products. For 
the tricategory $\Tens$ of tensor categories, bimodule categories over the latter, bimodule functors and bimodule natural 
transformations, we show that it is a category enriched over the tricategory $2\x\Cat_{wk}$, of 2-categories, pseudofunctors, 
weak natural transformations and modifications.
Moreover, we show that $\Tens$ is a part of the structure of a category internal in $2\x\Cat_{wk}$. This responds to %the conjecture made in 
\cite[Example 2.14]{DH}, where it was conjectured that $\Tens$ is a category internal in the Gray 3-category $2CAT_{nwk}$, which differs from 
$2\x\Cat_{wk}$ in that its 1-cells are 2-functors, rather than pseudofunctors as in $2\x\Cat_{wk}$. Motivated by this example, 
we prove in \prref{weak-int} 
that under mild conditions categories enriched over 1-strict tricategories can be made into categories internal in them. This 
generalizes to tricategories analogous results from \cite{Ehr:III} and \cite{Power}. 
Since $2\x\Cat_{wk}$ embeds into $\DblPs$, this $(1\times 2)$-category of tensor categories is also an example of our alternative 
notion to intercategories.

\bigskip

The composition of the paper is the following. In Section 2 we give a description of the monoidal structure of $Dbl$ of B\"ohm, 
we define cubical double functors and prove that $Gray$ embeds into $Dbl$. Section 3 is dedicated to the construction of our 
tricategory $\DblPs$. In \seref{Ps embeds} we prove that the tricategory $PsDbl^*_3$ embeds into $\DblPs$ and prove the bijection 
between vertical and horizontal strong transformations, supposing the mentioned connection. In \seref{3-pull(co)pr} we define 
tricategorical pullbacks and (co)products, and in the next one we define categories internal in 1-strict tricategories. 
In the subsequent section we describe the structure of a category internal 
in $\DblPs$, we show here that monoids in B\"ohm's $Dbl$, pseudomonoids in $Dbl_2$ and weak pseudomonoids in $Dbl_3$ fit this 
setting, and we present the announced geometric interpretation on cubes. In \seref{enrich-int} we define categories enriched 
in 1-strict tricategories, we prove that categories enriched over certain type of 1-strict tricategories $V$ are special cases 
of categories internal in $V$, and we discuss examples in dimensions 2 and 1. In the last section we show the enrichment and 
internal structures of $\Tens$ in $2\x\Cat_{wk}$, illustrating the mentioned result.

\section{Monoidal double categories into which monoidal bicategories embed} \selabel{mon}

Although bicategories embed into pseudo double categories, this embedding is not monoidal, 
if one takes for a definition of a monoidal double category any of the ones in \cite{BMM} 
(a monoid in the category of strict double categories and strict double functors) and in \cite{Shul} 
(a pseudomonoid in the 2-category of pseudo double categories, pseudo double functors and say vertical transformations). 
Namely, a monoidal bicategory is a one object tricategory, so its 0-cells have a product associative up to an {\em equivalence}. 
This is far from what happens in the mentioned two definitions of a monoidal double category. 
Even if we consider the triequivalence due to \cite{GPS} of a monoidal bicategory with a one object Gray-category, that is, a Gray monoid, 
one does not have monoidal embeddings, as we will show. Nevertheless, a Gray monoid, which is in fact a monoid in the monoidal category 
$(Gray,\ot)$ of 2-categories, 2-functors with the monoidal product due to Gray \cite{Gray}, can be seen as a monoid in the monoidal category 
$(Dbl,\ot)$ of strict double categories and strict double functors with the monoidal product constructed in \cite[Section 4.3]{Gabi}. 
We will show in this section that $(Gray,\ot)$ embeds monoidaly into $(Dbl,\ot)$.

\subsection{The monoidal structure in $(Dbl,\ot)$ } \sslabel{mon str}

The monoidal structure in $(Dbl,\ot)$ is constructed in the analogous way as in \cite{Gray}. For two double categories $\Aa, \Bb$ 
a double category $\llbracket\Aa,\Bb\rrbracket$ is defined in \cite[Section 2.2]{Gabi} which induces a functor $\llbracket-,-\rrbracket: Dbl^{op}\times Dbl\to Dbl$.  
Representability of the functor $Dbl(\Aa, \llbracket\Bb,-\rrbracket): Dbl\to Set$ is proved, which induces a functor 
$-\ot-: Dbl\times Dbl\to Dbl$. %This is done in \cite[Section 2]{Gabi} and 
For two double categories $\Aa,\Bb$ we will give a full description of the double category $\Aa\ot\Bb$. We will do this using the natural isomorphism 
\begin{equation} \eqlabel{nat iso}
Dbl(\Aa\ot\Bb, \Cc)\iso Dbl(\Aa, \llbracket\Bb,\Cc\rrbracket),
\end{equation}
that is, characterizing a double functor $F:\Aa\to\llbracket\Bb,\Cc\rrbracket$ for another double category $\Cc$ and reading off 
the structure of the image double category $F(\Aa)(\Bb)$, setting $\Cc=\Aa\ot\Bb$. 

\medskip

Let us fix the notation in a double category $\Dd$. Objects we denote by $A,B, \dots$, horizontal 1-cells we will call 
for brevity 1h-cells and denote them by $f, f', g, F, \dots$, vertical 1-cells we will call 1v-cells and denote by $u,v, U, \dots$, 
and squares we will call just 2-cells and denote them by $\omega, \zeta, \dots$. In this section, we denote 
%the identity morphism in $\Dd$ by $U: D_0\to D_1$, 
the horizontal identity 1-cell by $1_A$, vertical identity 1-cell by $1^A$ for an object $A\in\Dd$, 
horizontal identity 2-cell on a 1v-cell $u$ by $Id^u$, and vertical identity 2-cell on a 1h-cell $f$ by $Id_f$ (with subindices we denote 
those identity 1- and 2-cells which come from the horizontal 2-category lying in $\Dd$). 
The composition of 1h-cells as well as the horizontal composition of 2-cells we will denote by $\odot$ in this section, while the composition 
of 1v-cells as well the vertical composition of 2-cells we will denote by juxtaposition. 
%When dealing with a mixture of horizontal and vertical compositions of 2-cells 
%we will use the notations: $[\omega\vert\zeta]:=\zeta\odot\omega$ for the horizontal composition and 
%$\big[ \displaystyle{\frac{\omega}{\zeta}} \big]:=\zeta\omega$ 
%for the vertical one, and combine them into `tapestry' diagrams (mind the flip in the order of factors in this bracket notation). 

We start by noticing that a strict double functor $F:\Cc\to\Dd$ is given by 1) the data: images on objects, 1h-, 1v- and 2-cells of $\Cc$, and 2) rules (in $\Dd$): 

$F(u'u)=F(u')F(u), \quad F(1^A)=1^{F(A)},$ 

$F(\omega\zeta)=F(\omega)F(\zeta), \quad F(1_f)=1_{F(f)},$ 

$F(g\odot f)=F(g)\odot F(f), \quad F(\omega\odot\zeta)=F(\omega)\odot F(\zeta),$ 

$F(1_A)=1_{F(A)},  \quad F(Id^u)=Id^{F(u)}.$

Having in mind the definition of a double category $\llbracket\Aa,\Bb\rrbracket$ from \cite[Section 2.2]{Gabi}, writing out the list of the data and relations 
that determine a double functor $F:\Aa\to\llbracket\Bb,\Cc\rrbracket$, one gets the following characterization of it:

\begin{prop} \prlabel{char df}
A double functor $F:\Aa\to\llbracket\Bb,\Cc\rrbracket$ of double categories consists of the following: \\
%\begin{enumerate}
%%\item a double functor $(-,A):\Bb\to\Cc$, 
%%\item a double functor $(B,-):\Aa\to\Cc$, 
%%\item $(-,A)\vert_B=(B,-)\vert_A=(B,A)$, 
%\item 
1. double functors 
$$(-,A):\Bb\to\Cc\quad\text{ and}\quad (B,-):\Aa\to\Cc$$ 
such that $(-,A)\vert_B=(B,-)\vert_A=(B,A)$, 
for objects $A\in\Aa, B\in\Bb$, \\
%\item 
2. given 1h-cells $A\stackrel{F}{\to} A'$ and $B\stackrel{f}{\to} B'$ and 1v-cells $A\stackrel{U}{\to} \tilde A$ and $B\stackrel{u}{\to} \tilde B$ 
there are 2-cells
$$
\bfig
 \putmorphism(-150,500)(1,0)[(B,A)`(B,A')`(B,F)]{600}1a
 \putmorphism(450,500)(1,0)[\phantom{A\ot B}`(B', A') `(f, A')]{680}1a
 \putmorphism(-150,50)(1,0)[(B,A)`(B', A)`(f, A)]{600}1a
 \putmorphism(450,50)(1,0)[\phantom{A\ot B}`(B', A') `(B', F)]{680}1a
\putmorphism(-180,500)(0,-1)[\phantom{Y_2}``=]{450}1r
\putmorphism(1100,500)(0,-1)[\phantom{Y_2}``=]{450}1r
\put(350,260){\fbox{$(f,F)$}}
\efig
$$

$$
\bfig
\putmorphism(-150,50)(1,0)[(B,A)`(B,A')`(B,F)]{600}1a
\putmorphism(-150,-400)(1,0)[(\tilde B, A)`(\tilde B,A') `(\tilde B,F)]{640}1a
\putmorphism(-180,50)(0,-1)[\phantom{Y_2}``(u,A)]{450}1l
\putmorphism(450,50)(0,-1)[\phantom{Y_2}``(u,A')]{450}1r
\put(0,-180){\fbox{$(u, F)$}}
\efig
\quad
\bfig
\putmorphism(-150,50)(1,0)[(B,A)`(B',A)`(f,A)]{600}1a
\putmorphism(-150,-400)(1,0)[(B, \tilde A)`(B', \tilde A) `(f,\tilde A)]{640}1a
\putmorphism(-180,50)(0,-1)[\phantom{Y_2}``(B,U)]{450}1l
\putmorphism(450,50)(0,-1)[\phantom{Y_2}``(B',U)]{450}1r
\put(0,-180){\fbox{$(f,U)$}}
\efig
$$

$$
\bfig
 \putmorphism(-150,500)(1,0)[(B,A)`(B,A) `=]{600}1a
\putmorphism(-180,500)(0,-1)[\phantom{Y_2}`(B, \tilde A) `(B,U)]{450}1l
\put(0,50){\fbox{$(u,U)$}}
\putmorphism(-150,-400)(1,0)[(\tilde B, \tilde A)`(\tilde B, \tilde A) `=]{640}1a
\putmorphism(-180,50)(0,-1)[\phantom{Y_2}``(u,\tilde A)]{450}1l
\putmorphism(450,50)(0,-1)[\phantom{Y_2}``(\tilde B, U)]{450}1r
\putmorphism(450,500)(0,-1)[\phantom{Y_2}`(\tilde B, A) `(u,A)]{450}1r
\efig
$$
of which $(f,F)$ is vertically invertible  and $(u,U)$ is horizontally invertible, which satisfy: 
\begin{enumerate} [a)]
\item (11)\quad $(1_B,F)=Id_{(B,F)}\quad\text{and}\quad (f,1_A)=Id_{(f,A)}$ 

(21)\quad  $(1^B,F)=Id_{(B,F)}\quad\text{and}\quad (u,1_A)=Id^{(u,A)}$ 

(12)\quad  $(1_B,U)=Id^{(B,U)}\quad\text{and}\quad (f,1^A)=Id_{(f,A)}$ 

(22)\quad  $(1^B,U)=Id^{(B,U)}\quad\text{and}\quad (u,1^A)=Id^{(u,A)};$

\item (11)  
$(f'f, F)=
\bfig

 \putmorphism(-150,0)(1,0)[(B,A)`(B,A')`(B,F)]{600}1a
 \putmorphism(450,0)(1,0)[\phantom{A\ot B}`(B', A') `(f,A')]{680}1a

 \putmorphism(-150,-450)(1,0)[(B,A)`(B',A)`(f,A)]{600}1a
 \putmorphism(450,-450)(1,0)[\phantom{A\ot B}`(B', A') `(B',F)]{680}1a
 \putmorphism(1100,-450)(1,0)[\phantom{A'\ot B'}`(B'', A') `(f', A')]{660}1a

\putmorphism(-180,0)(0,-1)[\phantom{Y_2}``=]{450}1r
\putmorphism(1100,0)(0,-1)[\phantom{Y_2}``=]{450}1r
\put(350,-240){\fbox{$(f,F)$}}
\put(1000,-700){\fbox{$(f',F)$}}

 \putmorphism(450,-900)(1,0)[(B', A)` (B'', A) `(f', A)]{680}1a
 \putmorphism(1100,-900)(1,0)[\phantom{A''\ot B'}`(B'', A') ` (B'', F)]{660}1a

\putmorphism(450,-450)(0,-1)[\phantom{Y_2}``=]{450}1l
\putmorphism(1750,-450)(0,-1)[\phantom{Y_2}``=]{450}1r
\efig
$ \\
and \\
$(f,F\s'F)=
\bfig
 \putmorphism(450,500)(1,0)[(B, A') `(B, A'') `(B, F')]{680}1a
 \putmorphism(1140,500)(1,0)[\phantom{A\ot B}`(B', A'') ` (f, A'')]{680}1a

 \putmorphism(-150,50)(1,0)[(B, A) `(B, A')`(B, F)]{600}1a
 \putmorphism(450,50)(1,0)[\phantom{A\ot B}`(B', A') `(f, A')]{680}1a
 \putmorphism(1130,50)(1,0)[\phantom{A\ot B}`(B', A'') ` (B', F\s')]{680}1a

\putmorphism(450,500)(0,-1)[\phantom{Y_2}``=]{450}1r
\putmorphism(1750,500)(0,-1)[\phantom{Y_2}``=]{450}1r
\put(1020,270){\fbox{$ (f,F\s')$}}

 \putmorphism(-150,-400)(1,0)[(B, A)`(B', A) `(f,A)]{640}1a
 \putmorphism(480,-400)(1,0)[\phantom{A'\ot B'}`(B', A') `(B', F)]{680}1a

\putmorphism(-180,50)(0,-1)[\phantom{Y_2}``=]{450}1l
\putmorphism(1120,50)(0,-1)[\phantom{Y_3}``=]{450}1r
\put(310,-200){\fbox{$ (f,F)$}}

\efig
$ \\

%where $f\s': B'\to B''$ and $F\s ':A'\to A''$ are further 1h-cells; 
(21)\quad $(u'u, F)=(u', F)(u,F)\quad\text{and}\quad (u, F\s'F)=(u,F\s')\odot(u,F)$ 

(12)\quad $(f'f, U)=(f', U)\odot(f, U)\quad\text{and}\quad (f,U'U)=(f,U')(f,U)$ 

(22)\quad 
$$(u,U'U)=
\bfig
 \putmorphism(-150,500)(1,0)[(B,A)`(B,A) `=]{600}1a
\putmorphism(-180,500)(0,-1)[\phantom{Y_2}`(B, \tilde A) `(B,U)]{450}1l
\put(0,50){\fbox{$(u,U)$}}
\putmorphism(-150,-400)(1,0)[(\tilde B, \tilde A)`(\tilde B, \tilde A) `=]{640}1a
\putmorphism(-180,50)(0,-1)[\phantom{Y_2}``(u,\tilde A)]{450}1l
\putmorphism(450,50)(0,-1)[\phantom{Y_2}``(\tilde B, U)]{450}1r
\putmorphism(450,500)(0,-1)[\phantom{Y_2}`(\tilde B, A) `(u,A)]{450}1r
\putmorphism(-820,50)(1,0)[(B, \tilde A)``=]{520}1a
\putmorphism(-820,50)(0,-1)[\phantom{(B, \tilde A')}``(B,U')]{450}1l
\putmorphism(-820,-400)(0,-1)[(B, \tilde A')`(\tilde B, \tilde A')`(u,\tilde A')]{450}1l
\putmorphism(-820,-850)(1,0)[\phantom{(B, \tilde A)}``=]{520}1a
\putmorphism(-150,-400)(0,-1)[(\tilde B, \tilde A)`(\tilde B, \tilde A') `(\tilde B, U')]{450}1r
\put(-650,-630){\fbox{$(u,U')$}}
\efig
$$
and
$$(u'u,U)=
\bfig
 \putmorphism(-150,500)(1,0)[(B,A)`(B,A) `=]{600}1a
\putmorphism(-180,500)(0,-1)[\phantom{Y_2}`(B, \tilde A) `(B,U)]{450}1l
\put(0,50){\fbox{$(u,U)$}}
\putmorphism(-150,-400)(1,0)[(\tilde B, \tilde A)` `=]{500}1a
\putmorphism(-180,50)(0,-1)[\phantom{Y_2}``(u,\tilde A)]{450}1l
\putmorphism(450,50)(0,-1)[\phantom{Y_2}`(\tilde B, \tilde A)`(\tilde B, U)]{450}1r
\putmorphism(450,500)(0,-1)[\phantom{Y_2}`(\tilde B, A) `(u,A)]{450}1r
\putmorphism(450,50)(1,0)[\phantom{(B, \tilde A)}`(\tilde B, A)`=]{620}1a
\putmorphism(1070,50)(0,-1)[\phantom{(B, \tilde A')}``(u',A)]{450}1r
\putmorphism(1070,-400)(0,-1)[(\tilde B', A)`(\tilde B', \tilde A)`(\tilde B', U)]{450}1r
\putmorphism(450,-850)(1,0)[\phantom{(B, \tilde A)}``=]{520}1a
\putmorphism(450,-400)(0,-1)[\phantom{(B, \tilde A)}`(\tilde B', \tilde A) `(U',\tilde A)]{450}1l
\put(600,-630){\fbox{$(u',U)$}}
\efig
$$
\item (11)\quad 
$$
\bfig
 \putmorphism(-150,500)(1,0)[(B,A)`(B,A')`(B,F)]{600}1a
 \putmorphism(450,500)(1,0)[\phantom{A\ot B}`(B', A') `(f, A')]{680}1a
 \putmorphism(-150,50)(1,0)[(B,A)`(B', A)`(f, A)]{600}1a
 \putmorphism(450,50)(1,0)[\phantom{A\ot B}`(B', A') `(B', F)]{680}1a

\putmorphism(-180,500)(0,-1)[\phantom{Y_2}``=]{450}1r
\putmorphism(1100,500)(0,-1)[\phantom{Y_2}``=]{450}1r
\put(350,260){\fbox{$(f,F)$}}
\put(650,-180){\fbox{$(v,F)$}}

\putmorphism(-150,-400)(1,0)[(\tilde B,A)`(\tilde B',A) `(g,A)]{640}1a
 \putmorphism(450,-400)(1,0)[\phantom{A'\ot B'}` (\tilde B',A') `(\tilde B', F)]{680}1a

\putmorphism(-180,50)(0,-1)[\phantom{Y_2}``(u,A)]{450}1l
\putmorphism(450,50)(0,-1)[\phantom{Y_2}``]{450}1l
\putmorphism(610,50)(0,-1)[\phantom{Y_2}``(v,A)]{450}0l %470
\putmorphism(1120,50)(0,-1)[\phantom{Y_3}``(v,A')]{450}1r
\put(-40,-180){\fbox{$(\omega,A)$}} %(0,-180)

\efig
=%\quad=\quad
\bfig
\putmorphism(-150,500)(1,0)[(B,A)`(B,A')`(B,F)]{600}1a
 \putmorphism(450,500)(1,0)[\phantom{A\ot B}`(B', A') `(f, A')]{680}1a

 \putmorphism(-150,50)(1,0)[(\tilde B,A)`(\tilde B,A')`(\tilde B,F)]{600}1a
 \putmorphism(450,50)(1,0)[\phantom{A\ot B}`(\tilde B',A') `(g, A')]{680}1a

\putmorphism(-180,500)(0,-1)[\phantom{Y_2}``(u,A)]{450}1l
\putmorphism(450,500)(0,-1)[\phantom{Y_2}``]{450}1r
\putmorphism(300,500)(0,-1)[\phantom{Y_2}``(u,A')]{450}0r
\putmorphism(1100,500)(0,-1)[\phantom{Y_2}``(v,A')]{450}1r
\put(-20,260){\fbox{$(u,F)$}}
\put(660,260){\fbox{$(\omega,A')$}}

\putmorphism(-150,-400)(1,0)[(\tilde B,A)`(\tilde B',A) `(g, A)]{640}1a
 \putmorphism(450,-400)(1,0)[\phantom{A'\ot B'}` (\tilde B,A') `(\tilde B',F)]{680}1a

\putmorphism(-180,50)(0,-1)[\phantom{Y_2}``=]{450}1l
\putmorphism(1120,50)(0,-1)[\phantom{Y_3}``=]{450}1r
\put(300,-200){\fbox{$(g,F)$}}

\efig
$$
and 
$$
\bfig
 \putmorphism(-150,500)(1,0)[(B,A)`(B,A')`(B,F)]{600}1a
 \putmorphism(450,500)(1,0)[\phantom{A\ot B}`(B', A') `(f, A')]{680}1a
 \putmorphism(-150,50)(1,0)[(B,A)`(B', A)`(f, A)]{600}1a
 \putmorphism(450,50)(1,0)[\phantom{A\ot B}`(B', A') `(B', F)]{680}1a

\putmorphism(-180,500)(0,-1)[\phantom{Y_2}``=]{450}1r
\putmorphism(1100,500)(0,-1)[\phantom{Y_2}``=]{450}1r
\put(350,260){\fbox{$(f,F)$}}
\put(650,-180){\fbox{$(B', \zeta)$}}

\putmorphism(-150,-400)(1,0)[(B,\tilde A)`(B',\tilde A) `(f,\tilde A)]{640}1a
 \putmorphism(450,-400)(1,0)[\phantom{A'\ot B'}` (B',\tilde A') `(B', G)]{680}1a

\putmorphism(-180,50)(0,-1)[\phantom{Y_2}``(B,U)]{450}1l
\putmorphism(450,50)(0,-1)[\phantom{Y_2}``]{450}1l
\putmorphism(610,50)(0,-1)[\phantom{Y_2}``(B',U)]{450}0l %470
\putmorphism(1120,50)(0,-1)[\phantom{Y_3}``(B',V)]{450}1r
\put(-40,-180){\fbox{$(f,U)$}} %(0,-180)

\efig
=%\quad=\quad
\bfig
\putmorphism(-150,500)(1,0)[(B,A)`(B,A')`(B,F)]{600}1a
 \putmorphism(450,500)(1,0)[\phantom{A\ot B}`(B', A') `(f, A')]{680}1a

 \putmorphism(-150,50)(1,0)[(B,\tilde A)`(B,\tilde A')`(B,G)]{600}1a
 \putmorphism(450,50)(1,0)[\phantom{A\ot B}`(B',\tilde A') `(f,\tilde A')]{680}1a

\putmorphism(-180,500)(0,-1)[\phantom{Y_2}``(B,U)]{450}1l
\putmorphism(450,500)(0,-1)[\phantom{Y_2}``]{450}1r
\putmorphism(300,500)(0,-1)[\phantom{Y_2}``(B,V)]{450}0r
\putmorphism(1100,500)(0,-1)[\phantom{Y_2}``(B',V)]{450}1r
\put(-20,260){\fbox{$(B,\zeta)$}}
\put(660,260){\fbox{$(F,V)$}}

\putmorphism(-150,-400)(1,0)[(B,\tilde A)`(B',\tilde A) `(f, \tilde A)]{640}1a
 \putmorphism(450,-400)(1,0)[\phantom{A'\ot B'}` (B',\tilde A') `(B',G)]{680}1a

\putmorphism(-180,50)(0,-1)[\phantom{Y_2}``=]{450}1l
\putmorphism(1120,50)(0,-1)[\phantom{Y_3}``=]{450}1r
\put(300,-200){\fbox{$(f,G)$}}

\efig
$$
(22)\quad
$$
\bfig
 \putmorphism(-150,500)(1,0)[(B,A)`(B,A) `=]{600}1a
 \putmorphism(450,500)(1,0)[(B,A)` `(f,A)]{450}1a
\putmorphism(-180,500)(0,-1)[\phantom{Y_2}`(B, \tilde A) `(B,U)]{450}1l
\put(0,50){\fbox{$(u,U)$}}
\putmorphism(-150,-400)(1,0)[(\tilde B, \tilde A)` `=]{500}1a
\putmorphism(-180,50)(0,-1)[\phantom{Y_2}``(u,\tilde A)]{450}1l
\putmorphism(450,50)(0,-1)[\phantom{Y_2}`(\tilde B, \tilde A)`(\tilde B, U)]{450}1l
\putmorphism(450,500)(0,-1)[\phantom{Y_2}`(\tilde B, A) `(u,A)]{450}1l
\put(600,260){\fbox{$(\omega,A)$}}
\putmorphism(450,50)(1,0)[\phantom{(B, \tilde A)}``(g, A)]{500}1a
\putmorphism(1070,50)(0,-1)[\phantom{(B, A')}`(\tilde B', \tilde A)`(\tilde B',U)]{450}1r
\putmorphism(1070,500)(0,-1)[(B', A)`(\tilde B', A)`(v,A)]{450}1r
\putmorphism(450,-400)(1,0)[\phantom{(B, \tilde A)}``(g, \tilde A)]{500}1a
\put(600,-170){\fbox{$(g,U)$}}
\efig=
\bfig
 \putmorphism(-150,500)(1,0)[(B,A)`(B',A) `(f,A)]{600}1a
 \putmorphism(450,500)(1,0)[\phantom{(B,A)}` `=]{450}1a
\putmorphism(-180,500)(0,-1)[\phantom{Y_2}`(B, \tilde A) `(B,U)]{450}1l
\put(620,50){\fbox{$(v,U)$}}
\putmorphism(-150,-400)(1,0)[(\tilde B, \tilde A)` `(g, \tilde A)]{500}1a
\putmorphism(-180,50)(0,-1)[\phantom{Y_2}``(u,\tilde A)]{450}1l
\putmorphism(450,50)(0,-1)[\phantom{Y_2}`(\tilde B', \tilde A)`(v,\tilde A)]{450}1r
\putmorphism(450,500)(0,-1)[\phantom{Y_2}`(B', \tilde A) `(B',U)]{450}1r
\put(0,260){\fbox{$(f,U)$}}
\putmorphism(-150,50)(1,0)[\phantom{(B, \tilde A)}``(f, \tilde A)]{500}1a
\putmorphism(1070,50)(0,-1)[\phantom{(B, A')}`(\tilde B', \tilde A)`(\tilde B',U)]{450}1r
\putmorphism(1070,500)(0,-1)[(B', A)`(\tilde B', A)`(v,A)]{450}1r
\putmorphism(450,-400)(1,0)[\phantom{(B, \tilde A)}``=]{500}1b
\put(0,-170){\fbox{$(\omega, \tilde A)$}}
\efig
$$
and
$$
\bfig
 \putmorphism(-150,500)(1,0)[(B,A)`(B,A) `=]{600}1a
 \putmorphism(550,500)(1,0)[` `(B,F)]{400}1a
\putmorphism(-180,500)(0,-1)[\phantom{Y_2}`(B, \tilde A) `(B,U)]{450}1l
\put(0,50){\fbox{$(u,U)$}}
\putmorphism(-150,-400)(1,0)[(\tilde B, \tilde A)` `=]{500}1a
\putmorphism(-180,50)(0,-1)[\phantom{Y_2}``(u,\tilde A)]{450}1l
\putmorphism(450,50)(0,-1)[\phantom{Y_2}`(\tilde B, \tilde A)`(\tilde B, U)]{450}1l
\putmorphism(450,500)(0,-1)[\phantom{Y_2}`(\tilde B, A) `(u,A)]{450}1l
\put(620,280){\fbox{$(u,F)$}}
\putmorphism(450,50)(1,0)[\phantom{(B, \tilde A)}``(\tilde B,F)]{500}1a
\putmorphism(1070,50)(0,-1)[\phantom{(B, A')}`(\tilde B, \tilde A')`(\tilde B,V)]{450}1r
\putmorphism(1070,500)(0,-1)[(B, A')`(\tilde B, A')`(u,A')]{450}1r
\putmorphism(450,-400)(1,0)[\phantom{(B, \tilde A)}``(\tilde B, G)]{500}1a
\put(620,-170){\fbox{$ (\tilde{B},\zeta)$ } } % ???
\efig=
\bfig
 \putmorphism(-150,500)(1,0)[(B,A)`(B,A') `(B,F)]{600}1a
 \putmorphism(450,500)(1,0)[\phantom{(B,A)}` `=]{450}1a
\putmorphism(-180,500)(0,-1)[\phantom{Y_2}`(B, \tilde A) `(B,U)]{450}1l
\put(620,50){\fbox{$(u,V)$}}
\putmorphism(-150,-400)(1,0)[(\tilde B, \tilde A)` `(\tilde B, G)]{500}1a
\putmorphism(-180,50)(0,-1)[\phantom{Y_2}``(u,\tilde A)]{450}1l
\putmorphism(450,50)(0,-1)[\phantom{Y_2}`(\tilde B, \tilde A')`(u,\tilde A')]{450}1r
\putmorphism(450,500)(0,-1)[\phantom{Y_2}`(B, \tilde A') `(B,V)]{450}1r
\put(0,260){\fbox{$(B,\zeta)$}}
\putmorphism(-150,50)(1,0)[\phantom{(B, \tilde A)}``(B,G)]{500}1a
\putmorphism(1070,50)(0,-1)[\phantom{(B, A')}`(\tilde B, \tilde A')`(\tilde B,V)]{450}1r
\putmorphism(1070,500)(0,-1)[(B, A')`(\tilde B, A')`(u,A')]{450}1r
\putmorphism(450,-400)(1,0)[\phantom{(B, \tilde A)}``=]{500}1b
\put(0,-170){\fbox{$(u,G)$}}
\efig
$$
for any 2-cells 
\begin{equation} \eqlabel{omega-zeta}
\bfig
\putmorphism(-150,50)(1,0)[B` B'`f]{450}1a
\putmorphism(-150,-300)(1,0)[\tilde B`\tilde B' `g]{440}1b
\putmorphism(-170,50)(0,-1)[\phantom{Y_2}``u]{350}1l
\putmorphism(280,50)(0,-1)[\phantom{Y_2}``v]{350}1r
\put(0,-140){\fbox{$\omega$}}
\efig
\quad\text{and}\quad
\bfig
\putmorphism(-150,50)(1,0)[A` A'`F]{450}1a
\putmorphism(-150,-300)(1,0)[\tilde A`\tilde A'. `G]{440}1b
\putmorphism(-170,50)(0,-1)[\phantom{Y_2}``U]{350}1l
\putmorphism(280,50)(0,-1)[\phantom{Y_2}``V]{350}1r
\put(0,-140){\fbox{$\zeta$}}
\efig
\end{equation}
in $\Bb$, respectively $\Aa$. 
\end{enumerate}
%\end{enumerate}
\end{prop}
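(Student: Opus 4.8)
\noindent\emph{Proof proposal.} The plan is to decode the statement directly from the definition of the double category $\llbracket\Bb,\Cc\rrbracket$ given in \cite[Section 2.2]{Gabi} together with the characterization of a strict double functor recalled just above the proposition: such a functor is completely determined by its values on objects, 1h-, 1v- and 2-cells, subject to the functoriality rules for the two compositions and the four kinds of identity cell. Hence the entire content is to match, one dimension at a time, the cells of $\Aa$ with the cells of $\llbracket\Bb,\Cc\rrbracket$, and then to re-express each image by evaluating it at the cells of $\Bb$, so that every datum becomes a cell of $\Cc$ double-indexed by a cell of $\Bb$ and a cell of $\Aa$.

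First I would run through the four levels of $\Aa$. An object $A$ is sent to an object of $\llbracket\Bb,\Cc\rrbracket$, which by \cite{Gabi} is a double functor $\Bb\to\Cc$; calling it $(-,A)$ gives the first functor of item 1, while reading off its values at a fixed $B\in\Bb$ as $A$ varies produces the second functor $(B,-)\colon\Aa\to\Cc$, the two agreeing on $(B,A)$, $(f,A)$, $(u,A)$, $(\omega,A)$ and $(B,F)$, $(B,U)$, $(B,\zeta)$ precisely because all arise from the single assignment $F$. A 1h-cell $F\colon A\to A'$ is sent to a 1h-cell of $\llbracket\Bb,\Cc\rrbracket$, i.e.\ a horizontal transformation $(-,A)\to(-,A')$, whose data unravel into the object-components $(B,F)$ and the comparison 2-cells $(f,F)$ for 1h-cells $f$ and $(u,F)$ for 1v-cells $u$ of $\Bb$, the definition in \cite{Gabi} forcing $(f,F)$ to be vertically invertible. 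Dually, a 1v-cell $U$ is sent to a vertical transformation, producing $(B,U)$, $(f,U)$ and $(u,U)$ with $(u,U)$ horizontally invertible, and a 2-cell $\zeta$ of $\Aa$ is sent to a modification with components $(B,\zeta)$.

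Then I would translate the functoriality rules, which split into two families. The rules asserting that each image is a genuine transformation or modification, namely that its comparison 2-cells normalize on the identities of $\Bb$ and are compatible with the two compositions and the 2-cells of $\Bb$, account for the identity clauses in (a) indexed by $1_B,1^B$, the composition clauses in (b) indexed on the $\Bb$-side, and the interchange equations in (c) built from a 2-cell $\omega$ of $\Bb$. The remaining rules, expressing that $F$ itself preserves the identities and the two compositions of $\Aa$, give the clauses of (a) indexed by $1_A,1^A$, the clauses of (b) with composites $F'F$, $U'U$, and the equations of (c) built from a 2-cell $\zeta$ of $\Aa$; assembling both families yields items 1, 2 and axioms (a)--(c).

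The routine but genuinely delicate point, and the only place where an error is easy to make, is the bookkeeping of orientations: one must keep straight which comparison 2-cells sit in the horizontal and which in the vertical direction, in which direction each is invertible, and how horizontal composition $\odot$ in $\Bb$ and in $\Aa$ interacts with the vertical composition, so that the pasting diagrams displayed in (b) and (c) come out with the correct boundaries and whiskerings. Everything else is direct substitution into the definitions of \cite{Gabi}, and no step requires an idea beyond careful transcription of the cubical data.
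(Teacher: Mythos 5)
Your proposal is correct and coincides with the paper's own (essentially omitted) argument: the paper simply states that one obtains the characterization by "writing out the list of the data and relations that determine a double functor $F:\Aa\to\llbracket\Bb,\Cc\rrbracket$," i.e. exactly the level-by-level unpacking you describe, with objects of $\llbracket\Bb,\Cc\rrbracket$ being double functors, 1h-/1v-cells being horizontal/vertical transformations, and 2-cells being modifications, and with the axioms splitting into the $\Bb$-indexed transformation/modification conditions and the $\Aa$-indexed functoriality conditions. No further comment is needed.
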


%+++

In analogy to \cite[Section 4.2]{GPS} we set: 

\begin{defn} \delabel{H dbl}
The characterization in the above Proposition gives rise to an application from the Cartesian product of double categories 
$H: \Aa\times\Bb\to\Cc$ such that $H(A,-)=(-, A)$ and $H(-, B)=(B,-)$. Such an application of double categories we 
will call {\em cubical double functor}. 
\end{defn}

We may now describe a double category $\Aa\ot\Bb$ by reading off the structure of the image double category $F(\Aa)(\Bb)$  
for any double functor $F:\Aa\to\llbracket\Bb,\Aa\ot\Bb\rrbracket$ in the right hand-side of \equref{nat iso} using the 
characterization of a double functor before \prref{char df}. With the notation $F(x)(y)=(y,x)=:x\ot y$ for any 0-, 1h-, 1v- or 2-cells 
$x$ of $\Aa$ and $y$ of $\Bb$ we obtain that a double category $\Aa\ot\Bb$ consists of the following: \\
\u{objects}: $A\ot B$ for objects $A\in\Aa, B\in\Bb$; \\ 
\u{1h-cells}: $A\ot f, F\ot B$ and horizontal compositions of such (modulo associativity and unity constraints) obeying the following rules: 
$$(A\ot f')\odot(A\ot f)=A\ot (f'\odot f), \quad (F\s'\ot B)\odot(F\ot B)=(F\s'\odot F)\ot B, \quad A\ot 1_B=1_{A\ot B}=1_A\ot B$$
where $f,f'$ are 1h-cells of $\Bb$ and $F, F\s'$ 1h-cells of $\Aa$; \\
\u{1v-cells}: $A\ot u, U\ot B$ and vertical compositions of such obeying the following rules: 
$$(A\ot u')(A\ot u)=A\ot u'u, \quad (U'\ot B)(U\ot B)=U'U\ot B, \quad A\ot 1^B=1^{A\ot B}=1^A\ot B$$
where $u,u'$ are 1v-cells of $\Bb$ and $U,U'$ 1v-cells of $\Aa$; \\
\u{2-cells}: $A\ot\omega, \zeta\ot B$:
$$
\bfig
\putmorphism(-150,50)(1,0)[A\ot B`A\ot B'`A\ot f]{600}1a
\putmorphism(-150,-400)(1,0)[A\ot\tilde B`A'\ot\tilde B' `A\ot g]{640}1a
\putmorphism(-180,50)(0,-1)[\phantom{Y_2}``A\ot u]{450}1l
\putmorphism(450,50)(0,-1)[\phantom{Y_2}``A\ot v]{450}1r
\put(0,-180){\fbox{$A\ot \omega$}}
\efig
\qquad
\bfig
\putmorphism(-150,50)(1,0)[A\ot B`A'\ot B`F\ot B]{600}1a
\putmorphism(-150,-400)(1,0)[\tilde A\ot B`\tilde A'\ot B `G\ot B]{640}1a
\putmorphism(-180,50)(0,-1)[\phantom{Y_2}``U\ot B]{450}1l
\putmorphism(450,50)(0,-1)[\phantom{Y_2}``V\ot B]{450}1r
\put(0,-180){\fbox{$\zeta\ot B$}}
\efig
$$
where $\omega$ and $\zeta$ are as in \equref{omega-zeta},
and four types of 2-cells coming from the 2-cells of point 2. in \prref{char df}: 
vertically invertible globular 2-cell $F\ot f: (A'\ot f)\odot(F\ot B) \Rightarrow(F\ot B')\odot(A\ot f)$, 
%(this is a vertically globular 2-cell), 
horizontally invertible globular 2-cell $U\ot u: (\tilde A\ot u)(U\ot B)\Rightarrow(U\ot\tilde B)(A\ot u)$, %(horizontally globular 2-cell), 
2-cells $F\ot u$ and $U\ot f$, and the horizontal and vertical compositions of these %, when it makes sense, 
(modulo associativity and unity constraints in the horizontal direction and the interchange law) 
subject to the rules induced by a), b) and c) of point 2. in \prref{char df} and the following ones: 
$$A\ot(\omega'\odot\omega)=(A\ot\omega')\odot(A\ot\omega), \quad (\zeta'\odot\zeta)\ot B=(\zeta'\ot B)\odot(\zeta\ot B),$$
$$A\ot(\omega'\omega)=(A\ot\omega')(A\ot\omega), \quad (\zeta'\zeta)\ot B=(\zeta'\ot B)(\zeta\ot B),$$
$$A\ot\Id_f=\Id_{A\ot f}, \quad \Id_F\ot B=\Id_{F\ot B}, \quad A\ot\Id^f=\Id^{A\ot f}, \quad \Id^F\ot B=\Id^{F\ot B}.$$

\subsection{A monoidal embedding of $(Gray,\ot)$ into $(Dbl,\ot)$ } \sslabel{mon embed}

Let $E: (Gray,\ot)\hookrightarrow(Dbl,\ot)$ denote the embedding functor which to a 2-category assigns a strict double category whose 
all vertical 1-cells are identities and whose 2-cells are vertically globular cells. Then $E$ is a left adjoint to the functor that to a strict double category 
assigns its underlying horizontal 2-category. Let us denote by $\C=(Gray,\ot)$ and by $\D=\im(E)\subseteq(Dbl,\ot)$, the image category by $E$, 
then the corestriction of $E$ to $\D$ is the identity functor 
\begin{equation}\eqlabel{id fun}
F:\C\to\D.
\end{equation}

In order to examine the monoidality of $F$ let us first consider 
an assignment $t: F(\A\ot\B)\to F(\A)* F(\B)$ for two 2-categories $\A$ and $\B$, where $*$ denotes some monoidal product in the category of 
strict double categories which a priori could be the Cartesian one or the one from the monoidal category $(Dbl,\ot)$. 

%Let $\A$ and $\B$ be two 2-categories. 
Observe that given 1-cells $f:A\to A'$ in $\A$ and $g: B\to B'$ in $\B$ the composition 1-cells 
$(f\ot B')\odot(A\ot g)$ and $(A'\ot g)\odot(f\ot B)$ in $\A\ot\B$ are not equal both in $(Gray,\ot)$ and in $(Dbl,\ot)$. 
This means that their images $F\big((f\ot B')\odot(A\ot g)\big)$ and $F\big((A'\ot g)\odot(f\ot B)\big)$ are different as 1h-cells of the double category 
$F(\A\ot\B)$. Now if we map these two images by $t$ into the Cartesian product $F(\A)\times F(\B)$, we will get in both cases the 1h-cell 
$(f,g)$. Then $t$ with the codomain in the Cartesian product is a bad candidate for the monoidal structure of the identity functor $F$. 
This shows that the Cartesian monoidal product on the category of strict double categories is not a good choice for a monoidal structure 
if one wants to embed the Gray category of 2-categories into the latter category. In contrast, if the codomain of $t$ is the monoidal product 
of $(Dbl,\ot)$, we see that $t$ is identity on these two 1-cells. 

Similar considerations and comparing the monoidal product from \cite[Theorem I.4.9]{Gray} 
in $(Gray,\ot)$ to the one after \deref{H dbl} above in $(Dbl,\ot)$, show that for the candidate for (the one part of) a monoidal structure 
on the identity functor $F$ we may take the identity $s=\Id: F(\A\ot\B)\to F(\A)\ot F(\B)$, and that it is indeed a strict double functor of 
strict double categories. 
For the other part of a monoidal structure on $F$, namely $s_0: F(*_2)\to *_{Dbl}$, where $*_2$ is the trivial 2-category with a single object, 
and similarly $*_{Dbl}$ is the trivial double category, it is clear that we again may take identity. The hexagonal and two square relations 
for the monoidality of the functor $(F, s, s_0)$ come down to checking if 
$$F(\alpha_1)=\alpha_2, \quad F(\lambda_1)=\lambda_2, \quad\text{and}\quad F(\rho_1)=\rho_2$$
where the monoidal constraints with indexes 1 are those from $\C$ and those with indexes 2 from $\D$. 

\medskip

Given any monoidal closed category $(\M, \ot, I, \alpha, \lambda, \rho)$ in \cite[Section 4.1]{Gabi} the author constructs a mate 
\begin{equation} \eqlabel{mate a}
a^C_{A,B}: [A\ot B, C]\to[A,[B,C]]
\end{equation} 
for $\alpha$ under the adjunctions $(-\ot X, [X,-])$, for $X$ taking to be $A,B$ and $A\ot B$, 
and then she constructs a mate of $a$: 
\begin{equation} \eqlabel{l-a}
l^C_{A,B}: \big([A,B]\stackrel{[\Epsilon^C_A,1]}{\rightarrow} [[C,A]\ot C, B] \stackrel{a^B}{\rightarrow} [[C,A],[C,B]]\big)
\end{equation}
where $\Epsilon$ is the counit of the adjunction. By the mate correspondence one gets: 
\begin{equation} \eqlabel{a-l}
a^C_{A,B}: \big( [A\ot B, C]\stackrel{l^B_{A\ot B,C} }{\rightarrow} [[B,A\ot B],[B,C]] \stackrel{[\eta^B_A,1]}{\rightarrow} [A,[B,C]]\big)
\end{equation}
where $\eta$ is the unit of the adjunction. As above for the monoidal constraints, let us write $l_i, a_i, \Epsilon_i$ and $\eta_i$ 
with $i=1,2$ for the corresponding 2-functors in $\C$ (with $i=1$), respectively double functors in $\D$ (with $i=2$). 
Comparing the description of $l_1$ from \cite[Section 4.7]{Gabi}, obtained as indicated above: $\alpha_1$ determines $a_1$, which in turn determines 
$l_1$ by \equref{l-a}, to the construction of $l_2$ in \cite[Section 2.4]{Gabi}, on one hand, and the well-known 2-category $[\A,\B]=\Fun(\A,\B)$ 
of 2-functors between 2-categories $\A$ and $\B$, pseudo natural transformations and modifications (see {\em e.g.} \cite[Section 5.1]{GP:Fram}, \cite{Ben}) 
to the definition of the double category $\llbracket\Aa,\Bb\rrbracket$ from \cite[Section 2.2]{Gabi} for double categories $\Aa,\Bb$, 
on the other hand, one immediately obtains:

\begin{lma}
For two 2-categories $\A$ and $\B$, functor $F$ from \equref{id fun} and $l_1$ and $l_2$ as above, it is: 
\begin{itemize}
\item $F(l_1)=l_2$,
\item $F([\A,\B])=\llbracket F(\A),F(\B)\rrbracket$.
\end{itemize}
\end{lma}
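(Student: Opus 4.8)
The plan is to prove both identities by \emph{unwinding the explicit constructions} on either side and matching them cell-by-cell, using that $F$ from \equref{id fun} is (the corestriction of) an identity functor, so verifying $F(X)=Y$ amounts to showing that the two double categories (or double functors) $X$ and $Y$ coincide on the nose. I would treat the second bullet first, since the first bullet is the mate/closed-structure statement built \emph{on top of} the identification of the internal homs, and it is cleaner to have the hom-object identification in hand before dealing with the induced $2$-functor $l_1$ versus the double functor $l_2$.

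\emph{The second bullet.} The left-hand side $F([\A,\B])$ is the image under $E$ of the well-known $2$-category $\Fun(\A,\B)$ of $2$-functors, pseudonatural transformations and modifications; by definition of $E$ this is the strict double category whose objects are $2$-functors, whose 1h-cells are pseudonatural transformations, whose \emph{1v-cells are identities}, and whose 2-cells are vertically globular, i.e.\ modifications. The right-hand side $\llbracket F(\A),F(\B)\rrbracket$ is the double category from \cite[Section 2.2]{Gabi} applied to the double categories $F(\A)=E(\A)$ and $F(\B)=E(\B)$, each of which again has only trivial 1v-cells. The plan is to read off the four classes of cells of $\llbracket F(\A),F(\B)\rrbracket$ from that construction and observe that, because $F(\A)$ and $F(\B)$ have no nontrivial vertical structure, the vertical layer of $\llbracket F(\A),F(\B)\rrbracket$ collapses to identities, its objects reduce to double functors $E(\A)\to E(\B)$ (equivalently $2$-functors $\A\to\B$), its 1h-cells reduce to the horizontal transformations (which, for globular targets, are exactly pseudonatural transformations), and its globular 2-cells reduce to modifications. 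Matching these four levels against $\Fun(\A,\B)$ gives the equality of double categories. I would phrase this as a short case-by-case comparison rather than writing out every coherence axiom, citing the explicit lists in \cite[Section 2.2]{Gabi} and \cite[Section 5.1]{GP:Fram} / \cite{Ben}.

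\emph{The first bullet.} With the hom-objects identified, I would compare the two recipes producing $l_1$ and $l_2$. On the $\C=(Gray,\ot)$ side, $l_1$ is obtained by the route described just before the Lemma: $\alpha_1$ determines the mate $a_1$ via \equref{mate a}, and $a_1$ determines $l_1$ via \equref{l-a}, all computed inside the closed structure of $(Gray,\ot)$. On the $\D$ side, $l_2$ is the double functor constructed directly in \cite[Section 2.4]{Gabi}. Since $F$ is an identity on the underlying data and, by the first two sentences of the present subsection, already satisfies $F(\alpha_1)=\alpha_2$, applying $F$ to the defining composite \equref{l-a} for $l_1$ and using functoriality together with $F(\Epsilon_1)=\Epsilon_2$ (the counit is likewise preserved, being built from $\alpha$ and the evaluation/identity data that $F$ fixes) yields a composite that I then match termwise with the \cite[Section 2.4]{Gabi} description of $l_2$. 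The equality $F([\A,\B])=\llbracket F(\A),F(\B)\rrbracket$ from the second bullet is exactly what guarantees the intermediate objects $[[C,A]\ot C,B]$, $[[C,A],[C,B]]$, etc.\ in \equref{l-a} are sent by $F$ to the corresponding $\llbracket-,-\rrbracket$ objects, so that $F(l_1)$ and $l_2$ have literally the same source and target and can be compared.

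The main obstacle I expect is the bookkeeping in the first bullet: one must check that \emph{every} piece of structure entering the mate construction \equref{l-a} — not only $\alpha$ but the evaluation counit $\Epsilon$ and the adjunction units/counits of $(-\ot X,[X,-])$ — is strictly preserved by $F$, and that the internal-hom identification from the second bullet is compatible with these adjunctions (i.e.\ that $F$ intertwines the $\C$- and $\D$-adjunctions, not merely the hom-objects). Once that compatibility is established the two descriptions of $l_1$ and $l_2$ should coincide by inspection, since both are assembled from the same $\alpha,\Epsilon,\eta$ data and $F$ fixes all of it; the difficulty is organizational rather than conceptual, and the whole argument reduces to the slogan that $F$ is an identity that happens to preserve the entire closed monoidal apparatus.
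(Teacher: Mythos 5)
Your treatment of the second bullet is essentially the paper's own argument: the paper likewise proves $F([\A,\B])=\llbracket F(\A),F(\B)\rrbracket$ by comparing the explicit description of the $2$-category $[\A,\B]=\Fun(\A,\B)$ (2-functors, pseudonatural transformations, modifications) with the cell-by-cell definition of $\llbracket\Aa,\Bb\rrbracket$ from \cite[Section 2.2]{Gabi}, observing that the vertical layer collapses when the arguments lie in the image of $E$. That part of your plan is sound.

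The first bullet, however, contains a genuine circularity. You propose to deduce $F(l_1)=l_2$ by starting from $F(\alpha_1)=\alpha_2$, passing to $F(a_1)=a_2$ via the mate construction \equref{mate a}, and then applying $F$ to the composite \equref{l-a}. But $F(\alpha_1)=\alpha_2$ is not established ``by the first two sentences of the present subsection''; it is precisely the statement the entire subsection is working towards (the paper says explicitly that the monoidality of $(F,s,s_0)$ ``comes down to checking'' $F(\alpha_1)=\alpha_2$). The paper's logical order is the reverse of yours: first this Lemma is proved, then $F(a_1)=a_2$ is deduced from it via \equref{a-l} together with $F(\Epsilon_1)=\Epsilon_2$ and $F(\eta_1)=\eta_2$, and only then does the subsequent Lemma on monoidal closed categories yield $F(\alpha_1)=\alpha_2$. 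So your derivation assumes the conclusion of the chain in which this Lemma is the first link. The correct route for the first bullet is the one the paper takes: compare the \emph{explicit, already-computed} description of $l_1$ in \cite[Section 4.7]{Gabi} (for $Gray$) directly, cell by cell, with the \emph{explicit} construction of $l_2$ in \cite[Section 2.4]{Gabi} (for $Dbl$), using the hom-object identification from the second bullet only to see that the two sides have the same source and target. No appeal to $F(\alpha_1)=\alpha_2$ or $F(a_1)=a_2$ is available, nor is it needed.
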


Because of the extent of the definitions and the detailed proofs we will omit them, we only record that the counits of the adjunctions 
$\Epsilon_i, i=1,2$ are basically given as evaluations and it is $F(\Epsilon_1)=\Epsilon_2$. The counits $\eta_i, i=1,2$ are defined in the natural way 
and it is also clear that $F(\eta_1)=\eta_2$. Now by the above Lemma and \equref{a-l} we get: $F(a_1)=a_2$. Then from the next Lemma 
we get that $F(\alpha_1)=\alpha_2$:

\begin{lma}
Suppose that there is an embedding functor $F:\C\to\D$ between monoidal closed categories which fulfills:
\begin{enumerate} [a)]
\item $F(X)\ot F(Y)=F(X\ot Y)$ for objects $X,Y\in\C$,
\item $F([X,Y])=[F(X),F(Y)]$,
\item $F(a_\C)=a_\D$, where the respective  $a$'s are given through \equref{mate a},
\end{enumerate} 
then it is $F(\alpha_\C)=\alpha_\D$, being $\alpha$'s the respective associativity constraints. 
\end{lma}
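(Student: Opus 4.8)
The plan is to use that, by \equref{mate a}, the arrow $a^C_{A,B}$ is precisely the mate of the associativity constraint $\alpha$ under the tensor-hom adjunctions $(-\ot X,[X,-])$ with $X\in\{A,B,A\ot B\}$, and that the mate construction is a \emph{bijection} between such arrows and the components of $\alpha$. Hence $\alpha$ is recovered from $a$ by the inverse mate, and it suffices to show that $F$ commutes with this inverse-mate operation; feeding in hypothesis c) together with bijectivity then forces $F(\alpha_\C)=\alpha_\D$.

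First I would write the inverse mate out explicitly. Fixing objects $A,B,C$, transposing $\id_{A\ot(B\ot C)}$ across $(-\ot(B\ot C),[B\ot C,-])$ gives the unit (coevaluation) $\eta\colon A\to[B\ot C,\,A\ot(B\ot C)]$; postcomposing with $a^{A\ot(B\ot C)}_{B,C}$ and transposing the result back, first across $(-\ot B,[B,-])$ and then across $(-\ot C,[C,-])$ by means of the counits (evaluations) $\Epsilon$, yields an arrow $(A\ot B)\ot C\to A\ot(B\ot C)$ which by the mate correspondence is exactly $\alpha_{A,B,C}$. Thus $\alpha$ is assembled out of $a$, the units $\eta$, the counits $\Epsilon$ and the functor $\ot$; this is the same data that appears in \equref{l-a} and \equref{a-l}.

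Next I would check that $F$ preserves each of these ingredients. The functor $\ot$ is preserved by a) on objects and, being a functor, on morphisms; the internal hom $[-,-]$ is preserved by b). Crucially, since $F$ is an embedding of monoidal closed categories which strictly preserves $\ot$ and $[-,-]$, the tensor-hom adjunctions of $\D$ restrict along $F$ to its image, so $F$ carries the units and counits of $(-\ot X,[X,-])$ to those of $(-\ot F(X),[F(X),-])$; that is, $F(\eta)=\eta$ and $F(\Epsilon)=\Epsilon$ for the relevant objects (for the embedding of interest these are exactly the identities $F(\eta_1)=\eta_2$ and $F(\Epsilon_1)=\Epsilon_2$ recorded just before the statement). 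Together with c), namely $F(a_\C)=a_\D$, this shows that $F$ sends every ingredient of the inverse mate of $a_\C$ to the corresponding ingredient of the inverse mate of $a_\D$.

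Consequently $F$ commutes with the inverse-mate operation, whence
\[
F(\alpha_\C)=F\bigl(\mathrm{invmate}(a_\C)\bigr)=\mathrm{invmate}\bigl(F(a_\C)\bigr)=\mathrm{invmate}(a_\D)=\alpha_\D .
\]
I expect the main obstacle to be the preservation of the evaluation and coevaluation morphisms: this is the step that genuinely relies on $F$ being a full and faithful embedding and on the \emph{strictness} in a) and b), and it requires keeping careful track of the three adjunctions $X=A,B,A\ot B$ and of the naturality of all the transposes involved. Once this compatibility is secured, the displayed chain of equalities is routine, and one could equally bypass the explicit inverse mate by invoking the injectivity of the mate correspondence directly on the identity $\mathrm{mate}\bigl(F(\alpha_\C)\bigr)=F(a_\C)=a_\D=\mathrm{mate}(\alpha_\D)$.
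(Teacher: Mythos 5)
Your proof is correct and follows essentially the same route as the paper's: both arguments come down to showing that $F$ commutes with the (bijective) mate correspondence of \equref{mate a} and then cancelling, the paper doing this by applying $F$ to the commuting naturality square of the adjunction isomorphisms rather than by unwinding the units and counits explicitly. Your flagging of the preservation of $\eta$ and $\Epsilon$ as the genuine obstacle is a point the paper's proof leaves tacit inside ``by the assumptions a) and b)'' (and verifies separately, just before the lemma, for the intended application), so your version makes the hidden content of that step explicit rather than deviating from it.
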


\begin{proof}
By the mate construction in \equref{mate a} we have a commuting diagram:
$$
\bfig
\putmorphism(-150,50)(1,0)[\C(A\ot(B\ot C),D)`\C(A, [B\ot C,D])`\iso]{1000}1a
\putmorphism(-150,-400)(1,0)[\C((A\ot B)\ot C,D) `\C(A,[B,[C,D]]) `\iso]{1040}{-1}a
\putmorphism(-120,50)(0,-1)[\phantom{Y_2}``\C(\alpha_\C,id)]{450}1l
\putmorphism(870,50)(0,-1)[\phantom{Y_2}``\C(id,a_\C)]{450}1r
\put(300,-180){\fbox{$(1)$}}
\efig
$$%\qquad
Applying $F$ to it, by the assumptions $a)$ and $b)$ we obtain a commuting diagram:
$$
\bfig
\putmorphism(-150,50)(1,0)[\D(F(A)\ot(F(B)\ot F(C)),F(D))`\D(F(A), [F(B)\ot F(C),F(D)])`\iso]{1600}1a
\putmorphism(-150,-400)(1,0)[\D((F(A)\ot F(B))\ot F(C),F(D)) `\D(F(A),[F(B),[F(C),F(D)]]) `\iso]{1640}{-1}a
\putmorphism(-80,50)(0,-1)[\phantom{Y_2}``\D(F(\alpha_\C),id)]{450}1l
\putmorphism(1490,50)(0,-1)[\phantom{Y_2}``\D(id,F(a_\C))]{450}1r
\put(580,-180){\fbox{$(2)$}}
\efig
$$
Now by the assumption $c)$ and the mate construction in \equref{mate a} it follows $F(\alpha_\C)=\alpha_\D$. 
\end{proof}

So far we have proved that for the categories $\C$ and $\D$ as in \equref{id fun} we have $F(\alpha_\C)=\alpha_\D$. 
For the unity constraints $\rho_i, \lambda_i, i=1,2$ in the cases of both categories 
(see Sections 3.3 and 4.7 of \cite{Gabi}) it is: 
$$\rho_i^A=\Epsilon_{i,A}^{1_i}\comp(c_i\ot id_{1_i})\quad\text{and}\quad\lambda_i^A=\Epsilon_{i,A}^A\comp(1_A\ot id_A)$$
where $c_i: A\to[1_i,A]$ is the canonical isomorphism and $1_A: 1_i\to[A,A]$ the 2-functor (pseudofunctor) 
sending the single object of the terminal 2-category $1_1$ (double category $1_2$) to the identity 2-functor (pseudofunctor) $A\to A$ 
(here we have used the same notation for objects $A$ and inner home objects both in $\C$ and in $\D$). Then it is clear that also 
$F(\lambda_1)=\lambda_2$ and $F(\rho_1)=\rho_2$, which finishes the proof that the functor $F:\C\to\D$ is a monoidal embedding. 
%Motivated by our findings from now on we will call a monoid in $(Dbl,\ot)$ a {\em monoidal double category due to B\"ohm}. 

\begin{prop} \prlabel{mon embed}
The category $(Gray,\ot)$ monoidally embeds into $(Dbl,\ot)$, where the respective monoidal structures are those from \cite{Gray} and \cite{Gabi}. 
Consequently, a monoid in $(Gray,\ot)$ is a monoid in $(Dbl,\ot)$, and a monoidal bicategory can be seen as a monoidal double category 
with respect to B\"ohm's tensor product. 
\end{prop}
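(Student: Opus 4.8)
The plan is to take the identity functor $F:\C\to\D$ from \equref{id fun}, equip it with the structure maps $s=\Id:F(\A\ot\B)\to F(\A)\ot F(\B)$ and $s_0=\Id:F(*_2)\to *_{Dbl}$ singled out before the statement, and show that $(F,s,s_0)$ is a \emph{strict} monoidal functor which is, moreover, an embedding. First I would record that $F$ is an embedding for free: being the corestriction to its image $\D=\im(E)$ of the faithful embedding $E$, it is the identity on $\D$, hence faithful and injective on objects. So the entire problem reduces to verifying the monoidal-functor coherence for $(F,s,s_0)$; and since $s$ and $s_0$ are identity double functors, the associativity hexagon and the two unit triangles collapse to the three equalities
$$F(\alpha_1)=\alpha_2,\qquad F(\lambda_1)=\lambda_2,\qquad F(\rho_1)=\rho_2.$$

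Next I would establish these three equalities through the closed structure, exactly as set up above. For the associator I would recover $\alpha$ from its mate $a$ via \equref{mate a}, reducing the claim to $F(a_1)=a_2$; this in turn follows from $F(l_1)=l_2$ (the first Lemma above) together with $F(\Epsilon_1)=\Epsilon_2$ and $F(\eta_1)=\eta_2$ by way of \equref{a-l}, after which the second Lemma converts the hypotheses $F(X)\ot F(Y)=F(X\ot Y)$, $F([X,Y])=[F(X),F(Y)]$ and $F(a_1)=a_2$ into $F(\alpha_1)=\alpha_2$. For the unit constraints I would use the explicit descriptions $\rho_i^A=\Epsilon_{i,A}^{1_i}\comp(c_i\ot id_{1_i})$ and $\lambda_i^A=\Epsilon_{i,A}^A\comp(1_A\ot id_A)$ and simply check that $F$ preserves each ingredient (the counit $\Epsilon$, the canonical isomorphism $c_i$, the unit 2-functor $1_A$), whence $F(\lambda_1)=\lambda_2$ and $F(\rho_1)=\rho_2$. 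This finishes the monoidal embedding.

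For the consequences I would invoke the standard fact that a strong, a fortiori a strict, monoidal functor preserves monoids: a monoid $(M,m,e)$ in $(Gray,\ot)$ is sent to $(F(M),F(m),F(e))$, a monoid in $(Dbl,\ot)$, its multiplication and unit transported along the identity structure maps. Finally, to read the statement in terms of monoidal bicategories, I would use that a Gray monoid is precisely a one-object Gray category and that, by the coherence theorem of \cite{GPS} (see also \cite{DS}, \cite{BN}), every monoidal bicategory is triequivalent to such a Gray monoid, i.e.\ to a monoid in $(Gray,\ot)$; transporting along $F$ then exhibits it as a monoid in $(Dbl,\ot)$, that is, as a monoidal double category in B\"ohm's sense.

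The hard part is not the present assembly but the two Lemmas it rests on, and in particular the identity $F([\A,\B])=\llbracket F(\A),F(\B)\rrbracket$: this demands a careful, on-the-nose comparison of the internal hom $\Fun(\A,\B)$ of $(Gray,\ot)$ with the double category $\llbracket F(\A),F(\B)\rrbracket$ of \cite[Section 2.2]{Gabi}, together with the matching of mates $F(a_1)=a_2$. Once those are in hand, the collapse of the hexagon to $F(\alpha_1)=\alpha_2$ and of the triangles to the unit equalities is purely formal, and the two displayed asserted consequences require no further work.
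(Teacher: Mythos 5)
Your proposal follows essentially the same route as the paper: the identity functor $F:\C\to\D$ with identity structure maps $s,s_0$, the reduction of the coherence conditions to $F(\alpha_1)=\alpha_2$, $F(\lambda_1)=\lambda_2$, $F(\rho_1)=\rho_2$, the passage through the mates \equref{mate a}--\equref{a-l} and the two Lemmas (including $F([\A,\B])=\llbracket F(\A),F(\B)\rrbracket$), the explicit formulas for the unit constraints, and the transport of monoids along the resulting monoidal embedding combined with the \cite{GPS} triequivalence of a monoidal bicategory with a Gray monoid. This matches the argument the paper develops in \ssref{mon embed} leading up to the Proposition, so nothing further is needed.
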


\bigskip

\subsection{A monoid in $(Dbl,\ot)$ } \sslabel{Gabi's monoid}

In \cite[Section 4.3]{Gabi} a complete list of data 
and conditions defining the structure of a monoid $\Aa$ in $(Dbl,\ot)$ is given. As a part of this structure we have the following occurrence. 
As a monoid in $(Dbl,\ot)$, we have that $\Aa$ is equipped with a strict double functor 
$m: \Aa\ot\Aa\to\Aa$. Since in the monoidal product $\Aa\ot\Aa$ 
%the interchange law for the composition of neither horizontal nor vertical 1-cells holds strictly, that is, 
horizontal and vertical 1-cells of the type $(f\ot 1)(1\ot g)$ and $(1\ot g)(f\ot 1)$ are not equal (here juxtaposition denotes the 
corresponding composition of the 1-cells), one can fix a choice for how to define an image 1-cell $f\oast g$ by $m$ 
(either $m\big((f\ot 1)(1\ot g)\big)$ or $m\big((1\ot g)(f\ot 1)\big)$). Any of the two choices 
yields a {\em double pseudo} functor from the {\em Cartesian} product double category 
\begin{equation} \eqlabel{oast}
\oast:\Aa\times\Aa\to\Aa.
\end{equation} 
Let us see this. 
If we take two pairs of horizontal 1-cells $(h,k),(h',k')$ in $A\times A$, for the images under $\oast$, fixing the second choice above, 
we get $(h'h)\oast(k'k)=m\big((1\ot k'k)(h'h\ot 1)\big)=%M\big((1, k')(1, k)(h', 1)(h, 1)\big)=
m(1\ot k')m(1\ot k)m(h'\ot 1)m(h\ot 1)$, whereas 
$(h'\oast k')(h\oast k)=m\big((1\ot k')(h'\ot 1)\big)m\big((1\ot k)(h\ot 1)\big)=m(1\ot k')m(h'\ot 1)m(1\ot k)m(h\ot 1)$. 
So, the two images differ in the flip on the middle factors. The analogous situation happens on the vertical level, thus the functor 
$\oast$ preserves both vertical and horizontal 1-cells only up to an isomorphism 2-cell. This makes it a double pseudo functor due to 
\cite[Definition 6.1]{Shul1}. 

As outlined at the end of \cite[Section 4.3]{Gabi}, monoids in (the non-Cartesian monoidal category) $(Dbl,\ot)$ are monoids in the Cartesian 
monoidal category $(Dbl,\oast)$ of strict double categories and double pseudo functors (in the sense of \cite{Shul1}).

\subsection{Monoidal double categories as intercategories and beyond} \sslabel{beyond}

A monoidal double category in \cite{Shul} is a pseudomonoid in the 2-category $PsDbl$ of pseudo double categories, pseudo double functors 
and vertical transformations, seen as a monoidal 2-category with the Cartesian product. As such it is a particular case of an intercategory \cite{GP}. 

An intercategory is a pseudocategory ({\em i.e.} weakly internal category) in the 2-category $LxDbl$ of pseudo 
double categories, lax double functors and horizontal transformations. 
%(to compare better the 2-categories $\PsDbl$ and $LxDbl$, switch the 2-cells in the latter to vertical transformations). 
It consists of pseudodouble categories $\Dd_0$ and $\Dd_1$ and pseudo double functors 
$S,T: \Dd_1\to\Dd_0, U: \Dd_0\to\Dd_1, M:\Dd_1\times_{\Dd_0}\Dd_1\to\Dd_1$ (where $S$ and $T$ are strict) satisfying the corresponding properties. 
One may denote this structure formally by 
$$%\begin{equation} \eqlabel{interc}
 \Dd_1\times_{\Dd_0}\Dd_1\triarrows \Dd_1\tripplearrow \Dd_0
$$%\end{equation}
where $\Dd_1\times_{\Dd_0}\Dd_1$ is a certain 2-pullback and the additional two arrows $\Dd_1\times_{\Dd_0}\Dd_1\to\Dd_1$ stand for the two projections. 
When $\Dd_0$ is the trivial double category 1 (the terminal object in $LxDbl$, consisting of a single object *), setting $\Dd_1=\Dd$ one has that 
$\Dd\times\Dd$ is the Cartesian product of pseudo double categories. 

As a pseudomonoid in $PsDbl$, a monoidal double category of Shulman consists of a pseudo double category $\Dd$ and pseudo 
double functors $M: \Dd\times\Dd\to\Dd$ and $U: 1\to\Dd$ which satisfy properties that make $\Dd$ precisely an intercategory 
$$ \Dd\times\Dd\triarrows \Dd\tripplearrow 1,$$
as explained in \cite[Section 3.1]{GP:Fram}. 

\medskip

Nevertheless, if one would try to make a monoid in $(Dbl,\ot)$, which is a monoid in $(Dbl,\oast)$, into an intercategory, one would 
need a lax double functor on the Cartesian product $\Dd\times\Dd$ (the pullback). However, as we showed in the last subsection, on the 
Cartesian product one has a {\em double pseudo} functor $\oast$. So, as observed at the end of \cite[Section 4.3]{Gabi}, there seems 
to be no easy way to regard a monoid in $(Dbl,\ot)$ as a suitably degenerated intercategory. Motivated by this and \prref{mon embed}, 
we want now to upgrade the Cartesian monoidal category $(Dbl,\oast)$ from the end of \ssref{Gabi's monoid} to a 2-category, so to obtain 
an intercategory-type notion which would include monoidal double categories due to B\"ohm. 

In the next section we introduce 2-cells and ``unfortunately'' rather than a 2-category we will obtain a 
tricategory of strict double categories whose 1-cells are double pseudo functors of Shulman. Since 1-cells of the 2-category $LxDbl$ 
(considered by Grandis and Par\'e to define intercategories) are lax double functors (they are lax in one and strict in the other direction), 
and our 1-cells are double pseudo functors, that is, they are given by isomorphisms in both directions, we can not 
generalize intercategories this way, rather, we will propose an alternative notion to intercategories which will include 
most of the examples of intercategories treated in \cite{GP:Fram}, but not duoidal categories, as they rely on lax functors, rather than pseudo ones.

%%%%%%%%%%%%%%%%%%%%%%%%%%%%%%%%%%%%%%%%%%

%%%    blanco 2.tex   %%%%%

%%%%%%%%%%%%%%%%%%%%%%%%%%%%%%%%%%%%%%%%%% 

\section{Tricategory of strict double categories and double pseudo functors} %\selabel{tricat DblPs}

Let us denote the tricategory from the title of this section by $\DblPs$. 
As we are going to use double pseudo functors of \cite{Shul1}, which preserve compositions of 1-cells and identity 1-cells in 
both horizontal and vertical direction up to an isomorphism, we have to introduce accordingly horizontal and vertical transformations. 
A pair consisting of a horizontal and a vertical pseudonatural transformation, which we define next, will be a part of the data 
constituting a 2-cell of the tricategory $\DblPs$. %announced in the title of this section. 
Note that while $PsDbl$ usually denotes a category or a 2-category of {\em pseudo} double categories and pseudo double functors, 
that is, in which 0- and 1-cells are weakened, in the notation $\DblPs$ we wish to stress that both 1- and 2-cells are weakened in both 
directions so to deal with {\em double pseudo} functors.

\subsection{Towards the 2-cells}

For the structure of a double pseudo functor we use the same notation as in \cite[Definition 6.1]{Shul1} with the only difference 
that 0-cells we denote by $A,B...$ and 1v-cells by $u,v...$. To simplify the notation, we will denote by juxtaposition the 
compositions of both 1h- and 1v-cells, from the notation of the 1-cells it will be clear which kind of 1-cells and therefore 
composition is meant. Let $\Aa,\Bb,\Cc$ be strict double categories throughout. 

\begin{defn} \delabel{hor psnat tr}
A {\em horizontal pseudonatural transformation} between double pseudo functors $F,G: \Aa\to\Bb$ consists of the following:
\begin{itemize}
\item for every 0-cell $A$ in $\Aa$ a 1h-cell $\alpha(A):F(A)\to G(A)$ in $\Bb$,
\item for every 1v-cell $u:A\to A'$ in $\Aa$ a 2-cell in $\Bb$:
$$
\bfig
\putmorphism(-150,50)(1,0)[F(A)`G(A)`\alpha(A)]{560}1a
\putmorphism(-150,-320)(1,0)[F(A')`G(A')`\alpha(A')]{600}1a
\putmorphism(-180,50)(0,-1)[\phantom{Y_2}``F(u)]{370}1l
\putmorphism(410,50)(0,-1)[\phantom{Y_2}``G(u)]{370}1r
\put(30,-110){\fbox{$\alpha_u$}}
\efig
$$
\item %pseudonaturality of 1h-cells:
for every 1h-cell $f:A\to B$  in $\Aa$ there is a 2-cell in $\Bb$:
$$
\bfig
 \putmorphism(-170,500)(1,0)[F(A)`F(B)`F(f)]{540}1a
 \putmorphism(360,500)(1,0)[\phantom{F(f)}`G(B) `\alpha(B)]{560}1a
 \putmorphism(-170,120)(1,0)[F(A)`G(A)`\alpha(A)]{540}1a
 \putmorphism(360,120)(1,0)[\phantom{G(B)}`G(A) `G(f)]{560}1a
\putmorphism(-180,500)(0,-1)[\phantom{Y_2}``=]{380}1r
\putmorphism(940,500)(0,-1)[\phantom{Y_2}``=]{380}1r
\put(280,310){\fbox{$\delta_{\alpha,f}$}}
\efig
$$
\end{itemize}
so that the following are satisfied:
\begin{enumerate}
\item pseudonaturality of 2-cells:
for every 2-cell in $\Aa$
$\bfig
\putmorphism(-150,50)(1,0)[A` B`f]{400}1a
\putmorphism(-150,-270)(1,0)[A'`B' `g]{400}1b
\putmorphism(-170,50)(0,-1)[\phantom{Y_2}``u]{320}1l
\putmorphism(250,50)(0,-1)[\phantom{Y_2}``v]{320}1r
\put(0,-140){\fbox{$a$}}
\efig$ 
the following identity in $\Bb$ must hold:
$$
\bfig
\putmorphism(-150,500)(1,0)[F(A)`F(B)`F(f)]{600}1a
 \putmorphism(450,500)(1,0)[\phantom{F(A)}`G(B) `\alpha(B)]{640}1a

 \putmorphism(-150,50)(1,0)[F(A')`F(B')`F(g)]{600}1a
 \putmorphism(450,50)(1,0)[\phantom{F(A)}`G(B') `\alpha(B')]{640}1a

\putmorphism(-180,500)(0,-1)[\phantom{Y_2}``F(u)]{450}1l
\putmorphism(450,500)(0,-1)[\phantom{Y_2}``]{450}1r
\putmorphism(300,500)(0,-1)[\phantom{Y_2}``F(v)]{450}0r
\putmorphism(1100,500)(0,-1)[\phantom{Y_2}``G(v)]{450}1r
\put(0,260){\fbox{$F(a)$}}
\put(700,270){\fbox{$\alpha_v$}}

\putmorphism(-150,-400)(1,0)[F(A')`G(A') `\alpha(A')]{640}1a
 \putmorphism(450,-400)(1,0)[\phantom{A'\ot B'}` G(B') `G(g)]{680}1a

\putmorphism(-180,50)(0,-1)[\phantom{Y_2}``=]{450}1l
\putmorphism(1120,50)(0,-1)[\phantom{Y_3}``=]{450}1r
\put(320,-200){\fbox{$\delta_{\alpha,g}$}}

\efig
\quad=\quad
\bfig
\putmorphism(-150,500)(1,0)[F(A)`F(B)`F(f)]{600}1a
 \putmorphism(450,500)(1,0)[\phantom{F(A)}`G(B) `\alpha(B)]{680}1a
 \putmorphism(-150,50)(1,0)[F(A)`G(A)`\alpha(A)]{600}1a
 \putmorphism(450,50)(1,0)[\phantom{F(A)}`G(B) `G(f)]{680}1a

\putmorphism(-180,500)(0,-1)[\phantom{Y_2}``=]{450}1r
\putmorphism(1100,500)(0,-1)[\phantom{Y_2}``=]{450}1r
\put(350,260){\fbox{$\delta_{\alpha,f}$}}
\put(650,-180){\fbox{$G(a)$}}

\putmorphism(-150,-400)(1,0)[F(A')`G(A') `\alpha(A')]{640}1a
 \putmorphism(490,-400)(1,0)[\phantom{F(A')}` G(B') `G(g)]{640}1a

\putmorphism(-180,50)(0,-1)[\phantom{Y_2}``F(u)]{450}1l
\putmorphism(450,50)(0,-1)[\phantom{Y_2}``]{450}1l
\putmorphism(610,50)(0,-1)[\phantom{Y_2}``G(u)]{450}0l %470
\putmorphism(1120,50)(0,-1)[\phantom{Y_3}``G(v)]{450}1r
\put(40,-180){\fbox{$\alpha_u$}} %(0,-180)

\efig
$$

\item vertical functoriality: for any composable 1v-cells $u$ and $v$ in $\Aa$:
$$
\bfig
 \putmorphism(-150,500)(1,0)[F(A)`F(A) `=]{500}1a
 \putmorphism(450,500)(1,0)[` `\alpha(A)]{380}1a
\putmorphism(-180,500)(0,-1)[\phantom{Y_2}`F(A') `F(u)]{450}1l
\put(20,250){\fbox{$F^{vu}$}}
\putmorphism(-150,-400)(1,0)[F(A'')`F(A'') `=]{500}1a
\putmorphism(-180,50)(0,-1)[\phantom{Y_2}``F(v)]{450}1l
\putmorphism(380,500)(0,-1)[\phantom{Y_2}` `F(vu)]{900}1l
\put(600,45){\fbox{$\alpha^{vu}$} }
\putmorphism(940,500)(0,-1)[G(A)`G(A'')`G(vu)]{900}1r
\putmorphism(450,-400)(1,0)[``\alpha(A'')]{360}1a
\efig= 
\bfig
 \putmorphism(-150,500)(1,0)[F(A)`G(A)  `\alpha(A)]{600}1a
 \putmorphism(450,500)(1,0)[\phantom{(B,A)}` `=]{450}1a
\putmorphism(-180,500)(0,-1)[\phantom{Y_2}`F(A') `F(u)]{450}1l
\put(620,50){\fbox{$G^{vu}$}}
\putmorphism(-150,-400)(1,0)[F(A'')` `\alpha(A'')]{500}1a
\putmorphism(-180,50)(0,-1)[\phantom{Y_2}``F(v)]{450}1l
\putmorphism(450,50)(0,-1)[\phantom{Y_2}`G(A'')`G(v)]{450}1r
\putmorphism(450,500)(0,-1)[\phantom{Y_2}`G(A') `G(u)]{450}1r
\put(40,260){\fbox{$\alpha_u$}}
\putmorphism(-150,50)(1,0)[\phantom{(B, \tilde A)}``\alpha(A')]{500}1a
\putmorphism(1000,500)(0,-1)[G(A)`G(A'')`G(vu)]{900}1r
\putmorphism(480,-400)(1,0)[\phantom{F(A)}`\phantom{F(A)}`=]{500}1b
\put(40,-170){\fbox{$\alpha_v$}}
\efig
$$
and  
$$
\bfig
 \putmorphism(-150,250)(1,0)[F(A)`F(A)`=]{600}1a %id_{F(A)}
 \putmorphism(450,250)(1,0)[\phantom{A\ot B}`G(A) `\alpha(A)]{680}1a
\putmorphism(500,250)(0,-1)[\phantom{Y_2}``F(id_A)]{450}1l

 \putmorphism(-150,-200)(1,0)[F(A)`F(A)`=]{600}1a %id_{F(A)}
 \putmorphism(450,-200)(1,0)[\phantom{A\ot B}`G(A) `\alpha(A)]{680}1a
\putmorphism(-180,250)(0,-1)[\phantom{Y_2}``=]{450}1r
\putmorphism(1100,250)(0,-1)[\phantom{Y_2}``G(id_A)]{450}1r
\put(0,0){\fbox{$F^A$}}
\put(700,30){\fbox{$\alpha_{id_A}$}}
\efig
\quad=
\bfig
 \putmorphism(-100,250)(1,0)[F(A)`G(A)`\alpha(A)]{550}1a
 \putmorphism(450,250)(1,0)[\phantom{A\ot B}`G(A) `=]{680}1a
\putmorphism(500,250)(0,-1)[\phantom{Y_2}``=]{450}1l

 \putmorphism(-150,-200)(1,0)[F(A)`G(A)`\alpha(A)]{600}1a
 \putmorphism(450,-200)(1,0)[\phantom{F( B)}`G(A) `=]{680}1a %id_{G(A)}
\putmorphism(-180,250)(0,-1)[\phantom{Y_2}``=]{450}1r
\putmorphism(1100,250)(0,-1)[\phantom{Y_2}``G(id_A)]{450}1r
\put(0,0){\fbox{$\Id_{\alpha(A)}$}}
\put(700,30){\fbox{$G^A$}}
\efig
$$

\item horizontal functoriality for $\delta_{\alpha,-}$: for any composable 1h-cells $f$ and $g$ in $\Aa$ the 2-cell 
$\delta_{\alpha,gf}$ is given by: 
$$
\bfig
 \putmorphism(-130,500)(1,0)[F(A)`F(C)`F(gf)]{580}1a
 \putmorphism(450,500)(1,0)[\phantom{F(B)}`G(C) `\alpha(C)]{580}1a
 \putmorphism(-130,135)(1,0)[F(A)`G(A)`\alpha(A)]{580}1a
 \putmorphism(450,135)(1,0)[\phantom{F(B)}`G(C) `G(gf)]{580}1a
\putmorphism(-180,520)(0,-1)[\phantom{Y_2}``=]{380}1r
\putmorphism(1030,520)(0,-1)[\phantom{Y_2}``=]{380}1r
\put(330,300){\fbox{$\delta_{\alpha,gf}$}}
\efig= 
\bfig
\putmorphism(-150,900)(1,0)[F(A)`F(C)`F(gf)]{1200}1a

 \putmorphism(-150,450)(1,0)[F(A)`F(B)`F(f)]{600}1a
 \putmorphism(450,450)(1,0)[\phantom{F(B)}`F(C) `F(g)]{680}1a
 \putmorphism(1120,450)(1,0)[\phantom{F(B)}`G(C) `\alpha(C)]{600}1a

\putmorphism(-180,900)(0,-1)[\phantom{Y_2}``=]{450}1r
\putmorphism(1060,900)(0,-1)[\phantom{Y_2}``=]{450}1r
\put(350,650){\fbox{$F_{gf}$}}
\put(1000,200){\fbox{$\delta_{\alpha,g}$}}

  \putmorphism(-150,0)(1,0)[F(A)` F(B) `F(f)]{600}1a
\putmorphism(450,0)(1,0)[\phantom{F(A)}` G(B) `\alpha(B)]{680}1a
 \putmorphism(1120,0)(1,0)[\phantom{F(A)}`G(C) ` G(g)]{620}1a

\putmorphism(450,450)(0,-1)[\phantom{Y_2}``=]{450}1l
\putmorphism(1710,450)(0,-1)[\phantom{Y_2}``=]{450}1r

 \putmorphism(-150,-450)(1,0)[F(A)`G(A)`\alpha(A)]{600}1a
 \putmorphism(450,-450)(1,0)[\phantom{F(B)}`G(B) `G(f)]{680}1a
 \putmorphism(1120,-450)(1,0)[\phantom{F(B)}`G(C) `G(g)]{620}1a

\putmorphism(-180,0)(0,-1)[\phantom{Y_2}``=]{450}1r
\putmorphism(1040,0)(0,-1)[\phantom{Y_2}``=]{450}1r
\put(350,-240){\fbox{$\delta_{\alpha,f}$}}
\put(1000,-660){\fbox{$G^{-1}_{gf}$}}

 \putmorphism(450,-900)(1,0)[G(A)` G(C) `G(gf)]{1300}1a

\putmorphism(450,-450)(0,-1)[\phantom{Y_2}``=]{450}1l
\putmorphism(1750,-450)(0,-1)[\phantom{Y_2}``=]{450}1r
\efig
$$ %%%%%
and unit:
$$
\bfig
 \putmorphism(-150,420)(1,0)[F(A)`F(A)`=]{500}1a
\putmorphism(-180,420)(0,-1)[\phantom{Y_2}``=]{370}1l
\putmorphism(320,420)(0,-1)[\phantom{Y_2}``=]{370}1r
 \putmorphism(-150,50)(1,0)[F(A)`F(A)`F(id_A)]{500}1a
 \put(-80,250){\fbox{$(F_A)^{-1}$}} %(0,-180)
\putmorphism(330,50)(1,0)[\phantom{F(A)}`G(A) `\alpha(A)]{560}1a
 \putmorphism(-170,-350)(1,0)[F(A)`G(A)`\alpha(A)]{520}1a
 \putmorphism(350,-350)(1,0)[\phantom{F(A)}`G(A) `G(id_A)]{560}1a

\putmorphism(-180,50)(0,-1)[\phantom{Y_2}``=]{400}1r
\putmorphism(910,50)(0,-1)[\phantom{Y_2}``=]{400}1r
\put(240,-170){\fbox{$\delta_{\alpha,id_A}$}}
\put(540,-550){\fbox{$G_A$}}

\putmorphism(350,-350)(0,-1)[\phantom{Y_2}``=]{350}1l
\putmorphism(920,-350)(0,-1)[\phantom{Y_3}``=]{350}1r
 \putmorphism(350,-700)(1,0)[G(A)` G(A) `=]{580}1a
\efig
\quad=\quad
\bfig
\putmorphism(-150,50)(1,0)[F(A)` G(A) `\alpha(A)]{450}1a
\putmorphism(-150,-300)(1,0)[F(A)` G(A) `\alpha(A)]{440}1b
\putmorphism(-170,50)(0,-1)[\phantom{Y_2}``=]{350}1l
\putmorphism(280,50)(0,-1)[\phantom{Y_2}``=]{350}1r
\put(-80,-140){\fbox{$\Id_{\alpha(A)}$}}
\efig
$$
\end{enumerate}
\end{defn}

\begin{rem} \rmlabel{hor tr as subcase}
Recall horizontal transformations from \cite[Section 2.2]{GP:Adj} and their version when $R=S=\Id, 
\Aa=\Bb, \Cc=\Dd$, which constitute the 2-cells of the 2-category $\mathcal{L}x\mathcal{D}bl$ from \cite{GP}. Considering them 
as acting between pseudo double functors of strict double categories, one has that they are particular cases of our 
horizontal pseudonatural transformations so that the 2-cells $\delta_{\alpha,f}$ and $F_{gf}, F_A$ are identities. 
Similarly, vertical transformations from \cite{Shul} and \cite{GG} are particular cases of the vertical analogon of 
\deref{hor psnat tr}. 
\end{rem}

\begin{rem}
Our above definition generalizes also horizontal pseudotransformations from \cite[Section 2.2]{Gabi} to the case of {\em double 
pseudo functors} instead of strict double functors. In \cite[Section 2.2]{Gabi} horizontal pseudotransformations appear 
as 1h-cells of the double category $\llbracket\Aa,\Bb\rrbracket$ defined therein, which we mentioned in \ssref{mon str}. 
On the other hand, our definition of horizontal pseudotransformations differs from {\em strong horizontal transformations} from 
\cite[Section 7.4]{GP:Limits} in that therein the authors work with {\em pseudo} double categories (whereas we work here with strict ones), 
and they work with double functors which are lax in one direction and strict in the other, whereas we work with double functors 
which are pseudo in both directions. 
\end{rem}

The following results are straightforwardly proved:

\begin{lma} \lelabel{delta F(alfa)}
For a double pseudofunctor $H:\Bb\to\Cc$ and a horizontal pseudonatural transformation $\alpha: F\to G$ of double pseudofunctors 
$F,G: \Aa\to\Bb$, $H(\alpha)$ is a horizontal pseudonatural transformation with $(H(\alpha))_u=H(\alpha_u)$ and $\delta_{H(\alpha),f}$ 
satisfying:
$$
\bfig
\putmorphism(-150,500)(1,0)[HF(A)`HG(B)`H(\alpha(B)F(f))]{1240}1a

 \putmorphism(-150,50)(1,0)[HF(A)`HF(B)`HF(f)]{600}1a
 \putmorphism(450,50)(1,0)[\phantom{F(A)}`HG(B) `H(\alpha(B))]{640}1a

\putmorphism(-180,500)(0,-1)[\phantom{Y_2}``=]{450}1l
\putmorphism(1100,500)(0,-1)[\phantom{Y_2}``=]{450}1r
\put(320,270){\fbox{$H_{\alpha(B),F(f)}$}}

\putmorphism(-150,-400)(1,0)[HF(A)`HG(A) `H(\alpha(A))]{640}1a
 \putmorphism(450,-400)(1,0)[\phantom{A'\ot B'}` HG(B) `HG(f)]{680}1a

\putmorphism(-180,50)(0,-1)[\phantom{Y_2}``=]{450}1l
\putmorphism(1120,50)(0,-1)[\phantom{Y_3}``=]{450}1r
\put(320,-200){\fbox{$\delta_{H(\alpha),f}$}}

\efig
\quad=\quad
\bfig
\putmorphism(-580,500)(0,-1)[\phantom{Y_2}``=]{450}1l
\putmorphism(1520,500)(0,-1)[\phantom{Y_2}``=]{450}1r
\putmorphism(-550,500)(1,0)[HF(A)`\phantom{HF(A)}`=]{550}1a
\putmorphism(-550,50)(1,0)[HF(A)`\phantom{HF(A)}`=]{550}1a
\put(-500,260){\fbox{$H^{F(A)}$}}
\put(1060,260){\fbox{$(H^{G(B)})^{-1}$}}

 \putmorphism(980,500)(1,0)[\phantom{HF(A)}`HG(B) `=]{550}1a
 \putmorphism(1000,50)(1,0)[\phantom{HF(A)}`HG(B) `=]{550}1a

\putmorphism(-20,500)(1,0)[HF(A)`HG(B)`H(\alpha(B)F(f))]{1000}1a
 \putmorphism(-20,50)(1,0)[HF(A)`HG(B)`H\big(G(f)\alpha(A)\big)]{1000}1a
 %\putmorphism(420,50)(1,0)[\phantom{HF(A)}`HG(B) `HG(f)]{600}1a

\putmorphism(-80,500)(0,-1)[\phantom{Y_2}``]{450}1r
\putmorphism(-100,500)(0,-1)[\phantom{Y_2}``H(id)]{450}0r

\putmorphism(1020,500)(0,-1)[\phantom{Y_2}``]{450}1l
\putmorphism(1050,500)(0,-1)[\phantom{Y_2}``H(id)]{450}0l

\put(300,310){\fbox{$H(\delta_{\alpha,f})$}}
\put(300,-180){\fbox{$H_{G(f),\alpha(A)}$}}

\putmorphism(-150,-400)(1,0)[HF(A)`HG(A) `H(\alpha(A))]{600}1a
 \putmorphism(450,-400)(1,0)[\phantom{HF(A)}` HG(B). `HG(f)]{600}1a

\putmorphism(-80,50)(0,-1)[\phantom{Y_2}``=]{450}1l
\putmorphism(1020,50)(0,-1)[\phantom{Y_3}``=]{450}1r

\efig
$$
\end{lma}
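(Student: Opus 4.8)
The plan is to exhibit the three families of data constituting $H(\alpha)$ and then to verify, one at a time, the three axioms of \deref{hor psnat tr}. On $0$-cells I set $H(\alpha)(A):=H(\alpha(A)):HF(A)\to HG(A)$, and on 1v-cells $(H(\alpha))_u:=H(\alpha_u)$. The latter is immediately well-typed and needs no constraint insertions: $H$ sends the 1v-cells $F(u),G(u)$ and the 1h-cells $\alpha(A),\alpha(A')$ bounding the square $\alpha_u$ to exactly the cells bounding the required square, since $HF(u)=H(F(u))$ and $HG(u)=H(G(u))$ hold on the nose. For $\delta_{H(\alpha),f}$ the situation is more delicate, because $H$ preserves horizontal composition only up to its compositor: the 2-cell $H(\delta_{\alpha,f})$ runs from $H(\alpha(B)\odot F(f))$ to $H(G(f)\odot\alpha(A))$, whereas the required cell must run from $H(\alpha(B))\odot HF(f)$ to $HG(f)\odot H(\alpha(A))$. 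I therefore \emph{define} $\delta_{H(\alpha),f}$ to be $H(\delta_{\alpha,f})$ composed with the compositors $H_{\alpha(B),F(f)}$ and $H_{G(f),\alpha(A)}^{-1}$, after using the vertical unit constraints $H^{F(A)},(H^{G(B)})^{-1}$ to replace the $H(\mathrm{id})$ vertical boundaries of $H(\delta_{\alpha,f})$ by genuine identities; the displayed equation is precisely this definition recast as a pasting equality. Since the compositors and unitors of $H$ are invertible, this $\delta_{H(\alpha),f}$ is vertically invertible whenever $\delta_{\alpha,f}$ is, so it is of the required type.

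The verification of each axiom then follows one mechanical scheme. Every axiom in \deref{hor psnat tr} is an equality of pasting diagrams in $\Bb$ built from the components of $\alpha$ together with the structure cells of $F$ and $G$; applying the double pseudofunctor $H$ yields an equality of the $H$-images of these pastings. Because $H$ preserves vertical and horizontal composition of 2-cells, and carries the structure cells of $F$ and $G$ to those of $HF$ and $HG$ modulo its own compositors $H_{-,-}$, $H^{-,-}$ and unitors $H_{-}$, $H^{-}$, each side of the transported equality differs from the corresponding side of the sought axiom for $H(\alpha)$ only by insertions of these constraint cells. For the pseudonaturality axiom (1) and the vertical-functoriality axiom (2) the boundaries involve the cells $\alpha_u,\alpha_v$, $F^{vu},G^{vu}$, $F^A,G^A$ and $F(a),G(a)$, which $H$ transports using the vertical constraints $H^{vu},H^{A}$ (and, in (1), the horizontal compositors through the definition of $\delta_{H(\alpha),-}$); the naturality and coherence of these constraints, which are part of $H$ being a double pseudofunctor, let the inserted cells on the two sides cancel, leaving exactly axioms (1) and (2) for $H(\alpha)$.

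The one place requiring genuine care, and which I expect to be the main obstacle, is the horizontal-functoriality axiom (3): that $\delta_{H(\alpha),gf}$ decomposes through $\delta_{H(\alpha),g}$ and $\delta_{H(\alpha),f}$ as prescribed, and that $\delta_{H(\alpha),\mathrm{id}}$ is trivial. Because $\delta_{H(\alpha),-}$ is defined by inserting the compositors $H_{\alpha(\cdot),F(\cdot)}$ and $H_{G(\cdot),\alpha(\cdot)}$, the decomposition of $\delta_{H(\alpha),gf}$ must match $H$ applied to the decomposition of $\delta_{\alpha,gf}$ only \emph{after} reassociating several compositor cells of $H$. This reassociation is governed exactly by the associativity coherence of $H$ (the pentagon-type condition relating $H_{h,gf}$, $H_{hg,f}$, $H_{h,g}$ and $H_{g,f}$) together with the compatibility of $H_{-,-}$ with the horizontal unitor; invoking these collapses the constraint cells on both sides to the same composite, while the unit statement follows from the unit coherence between $H_{-,-}$ and $H_{A}$. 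All remaining steps are routine pasting manipulations, which justifies treating the result as straightforward once the bookkeeping of $H$'s structural isomorphisms is organised around its coherence axioms.
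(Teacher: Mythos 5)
Your proposal is correct and follows exactly the route the paper intends: the paper offers no written proof beyond declaring the lemma ``straightforwardly proved,'' and the displayed equation is indeed the definition of $\delta_{H(\alpha),f}$ via the compositors and unitors of $H$, with the three axioms of \deref{hor psnat tr} verified by applying $H$ to the corresponding axioms for $\alpha$ and cancelling the inserted constraint cells using the coherence axioms of a double pseudofunctor. Your identification of the horizontal-functoriality axiom as the one place where the associativity and unit coherences of $H_{-,-}$ must genuinely be invoked is the right organisation of the bookkeeping.
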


\begin{lma} \lelabel{horiz comp hor.ps.tr.}
Horizontal composition of two horizontal pseudonatural transformations $\alpha_1: F\Rightarrow G: \Aa\to\Bb$ and $\beta_1: F\s'\Rightarrow G':\Bb\to\Cc$, denoted by $\beta_1\comp\alpha_1$, is well-given by: 
\begin{itemize}
\item for every 0-cell $A$ in $\Aa$ a 1h-cell in $\Cc$:
$$(\beta_1\comp\alpha_1)(A)=\big( F\s'F(A)\stackrel{F\s'(\alpha_1(A))}{\longrightarrow}F\s'G(A) \stackrel{\beta_1(G(A))}{\longrightarrow} G'G(A) \big),$$ 
\item for every 1v-cell $u:A\to A'$ in $\Aa$ a 2-cell in $\Cc$:
$$(\beta_1\comp\alpha_1)_u=
\bfig
\putmorphism(-320,500)(1,0)[F\s'F(A)`F\s'G(A)`F\s'(\alpha_1(A))]{770}1a
 \putmorphism(500,500)(1,0)[\phantom{F(A)}`G'G(A) `\beta_1(G(A))]{720}1a

 \putmorphism(-320,50)(1,0)[F\s'F(A')`F\s'G(A')`F\s'(\alpha_1(A'))]{770}1a
 \putmorphism(520,50)(1,0)[\phantom{F(A)}`G'G(A') `\beta_1(G(A'))]{730}1a

\putmorphism(-280,500)(0,-1)[\phantom{Y_2}``F\s'F(u)]{450}1l
\putmorphism(450,500)(0,-1)[\phantom{Y_2}``]{450}1r
\putmorphism(300,500)(0,-1)[\phantom{Y_2}``F\s'G(u)]{450}0r
\putmorphism(1180,500)(0,-1)[\phantom{Y_2}``G'G(u)]{450}1r
\put(-160,290){\fbox{$F\s'((\alpha_1)_u)$}}
\put(700,300){\fbox{$(\beta_1)_{G(u)}$}}
\efig
$$
\item for every 1h-cell $f:A\to B$  in $\Aa$ a 2-cell in $\Cc$:
$$\delta_{\beta_1\comp\alpha_1,f}=
\bfig

 \putmorphism(-250,0)(1,0)[F\s'F(A)`F\s'F(B)` F\s'F(f)]{700}1a
 \putmorphism(450,0)(1,0)[\phantom{F\s'F(B)}`F\s'G(B) `F\s'(\alpha_1(B))]{760}1a

 \putmorphism(-250,-450)(1,0)[F\s'F(A)`F\s'G(A)`F\s'(\alpha_1(A))]{700}1a
 \putmorphism(480,-450)(1,0)[\phantom{A\ot B}`F\s'G(B) `F\s'G(f)]{740}1a
 \putmorphism(1200,-450)(1,0)[\phantom{A'\ot B'}` G'G(B) `\beta_1(G(B))]{760}1a

\putmorphism(-280,0)(0,-1)[\phantom{Y_2}``=]{450}1r
\putmorphism(1180,0)(0,-1)[\phantom{Y_2}``=]{450}1r
\put(350,-240){\fbox{$ \delta_{F\s'(\alpha_1),f}  $}}
\put(1000,-700){\fbox{$\delta_{\beta_1,G(f)}$}}

 \putmorphism(450,-900)(1,0)[F\s'G(A)` G'G(A) `\beta_1(G(A))]{760}1a
 \putmorphism(1180,-900)(1,0)[\phantom{A''\ot B'}`G'G(B) ` G'G(f)]{760}1a

\putmorphism(450,-450)(0,-1)[\phantom{Y_2}``=]{450}1l
\putmorphism(1950,-450)(0,-1)[\phantom{Y_2}``=]{450}1r
\efig
$$
where $\delta_{F\s'(\alpha_1),f}$ is from \leref{delta F(alfa)}. 
%$(\beta_1\comp\alpha_1)(A)=(\alpha_1\vert\beta_1)=\big( F\s'F(A)\stackrel{F\s'(\alpha_0(A))}{\Longrightarrow}F\s'G(A) \stackrel{\beta_0(G(A)))}{\Longrightarrow} G'G(A)$. 
\end{itemize}
\end{lma}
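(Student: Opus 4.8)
The plan is to read the prescribed data as the composite of two \emph{whiskered} transformations and then reduce to facts that are either already available or easy to verify. Observe that the $0$-cell components $F\s'(\alpha_1(A))$ and $\beta_1(G(A))$, the $1$v-cell $2$-cells $F\s'((\alpha_1)_u)$ and $(\beta_1)_{G(u)}$, and the two storeys of $\delta_{\beta_1\comp\alpha_1,f}$ are precisely the components, respectively the defining pastings, of the composite of the left-whiskered transformation $F\s'(\alpha_1)\colon F\s'F\Rightarrow F\s'G$ with the right-whiskered transformation $\beta_1G\colon F\s'G\Rightarrow G'G$. Hence $\beta_1\comp\alpha_1$ is ``well-given'' as soon as (i) $F\s'(\alpha_1)$ and $\beta_1G$ are horizontal pseudonatural transformations, and (ii) the composite of two horizontal pseudonatural transformations with a common middle functor is again one.

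For (i), the fact that $F\s'(\alpha_1)$ is a horizontal pseudonatural transformation with $(F\s'(\alpha_1))_u=F\s'((\alpha_1)_u)$ and with $\delta_{F\s'(\alpha_1),f}$ as displayed is exactly \leref{delta F(alfa)} applied with $H=F\s'$, so I would merely invoke it. For $\beta_1G$ I would check the three axioms of \deref{hor psnat tr} by hand from its data $(\beta_1G)(A)=\beta_1(G(A))$, $(\beta_1G)_u=(\beta_1)_{G(u)}$ and $\delta_{\beta_1G,f}=\delta_{\beta_1,G(f)}$. Each axiom is the corresponding axiom for $\beta_1$ read off at the $G$-images of the cells of $\Aa$; the only additional ingredients are the comparison isomorphisms $G_{gf}\colon G(gf)\cong G(g)G(f)$ and $G^{vu}$, together with the unitors $G_A,G^A$, which must be inserted to identify $G$ of a composite in $\Aa$ with the composite of $G$-images in $\Bb$ on which $\beta_1$'s axioms are phrased. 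This is routine and dual to the bookkeeping already carried out in \leref{delta F(alfa)}.

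For (ii), writing $\gamma=F\s'(\alpha_1)\colon P\Rightarrow Q$ and $\delta=\beta_1G\colon Q\Rightarrow R$ with $P=F\s'F$, $Q=F\s'G$, $R=G'G$, I would verify the three axioms for their composite. Axiom~(1), pseudonaturality along a $2$-cell $a$ of $\Aa$, follows by pasting the naturality squares of $\gamma$ and $\delta$ horizontally and applying the interchange law in $\Cc$; axiom~(2), vertical functoriality, is analogous, using the vertical functoriality of $\gamma$ and $\delta$. The step I expect to be the genuine obstacle is axiom~(3), horizontal functoriality of $\delta_{\beta_1\comp\alpha_1,-}$: one must show that the two-storey pasting defining $\delta_{\beta_1\comp\alpha_1,gf}$ agrees with the pasting assembled from $\delta_{\beta_1\comp\alpha_1,f}$ and $\delta_{\beta_1\comp\alpha_1,g}$. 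Expanding both sides and invoking the horizontal functoriality of $\gamma$ and of $\delta$ separately is not enough: one also has to transport the composition and unit comparison cells of $F\s'$, $G'$ and $G$ (buried inside $\delta_{F\s'(\alpha_1),-}$ by \leref{delta F(alfa)} and inside $\delta_{\beta_1,G(-)}$) past the intervening component $2$-cells, using the interchange law repeatedly. Keeping track of these isomorphisms across the large pasting diagram, rather than any single conceptual point, is where all the care is required; the unit condition is then dispatched similarly with the unitors $G_A$ and $G^A$.
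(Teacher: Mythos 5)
Your proposal is correct and follows exactly the route the paper intends: the paper states this lemma without proof (it is one of the results declared ``straightforwardly proved''), and the displayed data are visibly those of the composite of the left whiskering $F\s'(\alpha_1)$ (whose structure, including $\delta_{F\s'(\alpha_1),f}$, is supplied by \leref{delta F(alfa)}) with the right whiskering $\beta_1 G$, composed along the common middle functor $F\s'G$ in the manner of \leref{vert comp hor.ps.tr.}. Your identification of where the real bookkeeping lies --- checking the right whiskering against the comparison cells $G_{gf}, G^{vu}, G_A, G^A$ via axiom 1 of $\beta_1$, and the cancellation of the middle functor's comparison cells (up to interchange) in the horizontal functoriality of the composite --- is accurate.
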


{\em Vertical pseudonatural transformations} between double pseudo functors $F,G: \Aa\to\Bb$ are defined in an analogous way, 
consisting of a 1v-cell $\alpha(A):F(A)\to G(A)$ in $\Bb$ for every 0-cell $A$ in $\Aa$, 
for every 1h-cell $f:A\to B$ in $\Aa$ a 2-cell on the left hand-side below and for every 1v-cell $u:A\to A'$  in $\Aa$ a 2-cell 
on the right hand-side below, both in $\Bb$:
$$
\bfig
\putmorphism(-150,180)(1,0)[F(A)`F(B)`F(f)]{560}1a
\putmorphism(-150,-190)(1,0)[G(A)`G(B)`G(f)]{600}1a
\putmorphism(-180,180)(0,-1)[\phantom{Y_2}``\alpha(A)]{370}1l
\putmorphism(410,180)(0,-1)[\phantom{Y_2}``\alpha(B)]{370}1r
\put(30,40){\fbox{$\alpha_f$}}
\efig
\qquad\qquad
%\bfig
% \putmorphism(-90,500)(1,0)[F(A)`F(A) `=]{540}1a
%\putmorphism(-120,500)(0,-1)[\phantom{Y_2}`F(A') `F(u)]{450}1l
%\putmorphism(-90,-400)(1,0)[G(A')`G(A') `=]{540}1a
%\putmorphism(-120,50)(0,-1)[\phantom{Y_2}``\alpha(A')]{450}1l
%\putmorphism(450,50)(0,-1)[\phantom{Y_2}``G(u)]{450}1r
%\putmorphism(450,500)(0,-1)[\phantom{Y_2}`G(A) `\alpha(A)]{450}1r
%\put(60,50){\fbox{$\delta_{\alpha,u}$}}
%\efig
\bfig
 \putmorphism(-90,500)(1,0)[F(A)`F(A) `=]{540}1a
\putmorphism(-120,500)(0,-1)[\phantom{Y_2}`F(A') `F(u)]{400}1l
\putmorphism(-90,-300)(1,0)[G(A')`G(A') `=]{540}1a
\putmorphism(-120,100)(0,-1)[\phantom{Y_2}``\alpha(A')]{400}1l
\putmorphism(450,100)(0,-1)[\phantom{Y_2}``G(u)]{400}1r
\putmorphism(450,500)(0,-1)[\phantom{Y_2}`G(A) `\alpha(A)]{400}1r
\put(60,50){\fbox{$\delta_{\alpha,u}$}}
\efig
$$
Observe that we use the same notation for the 2-cells $\alpha_{\bullet}$ and $\delta_{\alpha,\bullet}$ both for a horizontal and a vertical 
pseudonatural transformation $\alpha$, the difference is indicated by the notation for the respective 1-cell, recall that horizontal ones are denoted by $f,g..$ and vertical ones by $u,v...$. 

For vertical pseudonatural transformations results analogous to \leref{delta F(alfa)} and \leref{horiz comp hor.ps.tr.} hold,
the analogon of the latter one we state here in order to fix the structures that we use:

\begin{lma} \lelabel{horiz comp vert.ps.tr.}
Horizontal composition of two vertical pseudonatural transformations $\alpha_0: F\Rightarrow G: \Aa\to\Bb$ and 
$\beta_0: F\s'\Rightarrow G':\Bb\to\Cc$, denoted by $\beta_0\comp\alpha_0$, is well-given by: 
\begin{itemize}
\item for every 0-cell $A$ in $\Aa$ a 1v-cell on the left below, and %in $\Cc$:
for every 1h-cell $f:A\to B$ in $\Aa$ a 2-cell on the right below, both in $\Cc$:
%$$(\beta_0\comp\alpha_0)(A)=\big( F\s'F(A)\stackrel{F\s'(\alpha_0(A))}{\longrightarrow}F\s'G(A) 
%\stackrel{\beta_0(G(A))}{\longrightarrow} G'G(A) \big),$$ 
%\item  
$$(\beta_0\comp\alpha_0)(A)=
\bfig
\putmorphism(-280,500)(0,-1)[F\s'F(A)`F\s'G(A) `F\s'(\alpha_0(A))]{450}1l
 \putmorphism(-280,70)(0,-1)[\phantom{F(A)}`G'G(A) `\beta_0(G(A))]{450}1l
\efig \qquad
%$$
%$$
(\beta_0\comp\alpha_0)_f= 
\bfig
\putmorphism(-250,500)(1,0)[F\s'F(A)`F\s'F(B)` F\s'F(f)]{700}1a
 \putmorphism(-250,50)(1,0)[F\s'G(A)`F\s'G(B)` F\s'G(f)]{700}1a
 \putmorphism(-250,-400)(1,0)[G'G(A)`G'G(B)` G'G(f)]{700}1a

\putmorphism(-280,500)(0,-1)[\phantom{Y_2}``F\s'(\alpha_0(A))]{450}1l
 \putmorphism(-280,70)(0,-1)[\phantom{F(A)}` `\beta_0(G(A))]{450}1l

\putmorphism(450,500)(0,-1)[\phantom{Y_2}``F\s'(\alpha_0(B))]{450}1r
\putmorphism(450,70)(0,-1)[\phantom{Y_2}``\beta_0(G(B))]{450}1r
\put(-160,290){\fbox{$F\s'((\alpha_0)_f)$}}
\put(-120,-150){\fbox{$(\beta_0)_{G(f)}$}}
\efig
$$
\item for every 1v-cell $u:A\to A'$ in $\Aa$ a 2-cell in $\Cc$: %%%% from here downwards: 
$$\delta_{\beta_0\comp\alpha_0,u}=
\bfig
 \putmorphism(-150,500)(1,0)[F\s'F(A)`F\s'F(A) `=]{550}1a
\putmorphism(-180,500)(0,-1)[\phantom{Y_2}`F\s'F(A') `F\s'F(u)]{450}1l
\put(-80,-160){\fbox{$\delta_{F\s'(\alpha_0),u}$}}
\putmorphism(-150,-400)(1,0)[F\s'G(A')`F\s'G(A') `=]{550}1a
\putmorphism(-180,50)(0,-1)[\phantom{Y_2}``F\s'(\alpha_0(A'))]{450}1l
\putmorphism(380,500)(0,-1)[\phantom{Y_2}` `F\s'(\alpha_0(A))]{450}1r
\putmorphism(380,50)(0,-1)[F\s'G(A)` `F\s'G(u)]{450}1r
\putmorphism(520,60)(1,0)[`F\s'G(A)`=]{460}1a
\putmorphism(370,-850)(1,0)[\phantom{G'G(A)}``=]{440}1b
\putmorphism(960,50)(0,-1)[\phantom{(B, \tilde A')}``\beta_0(G(A))]{450}1r
\putmorphism(960,-400)(0,-1)[G'G(A)`G'G(A')` G'G(u)]{450}1r
\putmorphism(400,-400)(0,-1)[\phantom{(B, \tilde A)}`G'G( A') `\beta_0(G(A'))]{450}1l
\put(500,-630){\fbox{$\delta_{\beta_0,G(u)}$}}
\efig
$$
where $\delta_{F\s'(\alpha_0),u}$ is defined analogously as in \leref{delta F(alfa)}. 
%$(\beta_1\comp\alpha_1)(A)=(\alpha_1\vert\beta_1)=\big( F\s'F(A)\stackrel{F\s'(\alpha_0(A))}{\Longrightarrow}F\s'G(A) \stackrel{\beta_0(G(A)))}{\Longrightarrow} G'G(A)$. 
\end{itemize}
\end{lma}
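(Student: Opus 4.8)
The plan is to follow the proof of \leref{horiz comp hor.ps.tr.} almost verbatim, interchanging the roles of the horizontal and vertical directions throughout, since a vertical pseudonatural transformation is defined by the axioms of \deref{hor psnat tr} with horizontal and vertical 1-cells swapped. First I would record the vertical analogue of \leref{delta F(alfa)}: because $F\s'$ is a double pseudofunctor and $\alpha_0$ is a vertical pseudonatural transformation, $F\s'(\alpha_0)$ is again one, with $(F\s'(\alpha_0))(A)=F\s'(\alpha_0(A))$, with $(F\s'(\alpha_0))_f=F\s'((\alpha_0)_f)$, and with $\delta_{F\s'(\alpha_0),u}$ obtained by conjugating $F\s'(\delta_{\alpha_0,u})$ with the compositors and unitors of $F\s'$, exactly as in the cited lemma. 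This provides every cell occurring on the right-hand sides of the displayed formulas, so the three proposed families are well-typed composites of 1v-cells and 2-cells in $\Cc$.

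It then remains to verify that this data satisfies the three axioms of a vertical pseudonatural transformation, namely the vertical analogues of points (1)--(3) of \deref{hor psnat tr}: pseudonaturality of the cells $(\beta_0\comp\alpha_0)_f$ with respect to 2-cells $a$ of $\Aa$; horizontal functoriality of the assignment $f\mapsto(\beta_0\comp\alpha_0)_f$ together with its unit normalization; and vertical functoriality of the assignment $u\mapsto\delta_{\beta_0\comp\alpha_0,u}$. In each case I would substitute the definitions and then \emph{slide the $\beta_0$-layer past the $F\s'$-image of the $\alpha_0$-layer}: the former is governed by the corresponding axiom for $\beta_0$ applied at the images $G(f), G(u), G(a)$ in $\Bb$, while the latter is the $F\s'$-image of the corresponding axiom for $\alpha_0$, the two being reconciled by the double-pseudofunctoriality (compositors and interchange law) of $F\s'$ and $G'$.

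The main obstacle is the vertical functoriality axiom for $\delta_{\beta_0\comp\alpha_0,-}$. For composable 1v-cells $u$ and $v$ one must show that the vertical pasting of $\delta_{\beta_0\comp\alpha_0,v}$ over $\delta_{\beta_0\comp\alpha_0,u}$ equals $\delta_{\beta_0\comp\alpha_0,vu}$, and here the weakness of the setting enters: the compositors of $F\s'$ and $G'$ do not cancel strictly but only up to their coherence isomorphisms, so the careful bookkeeping of these compositors, together with repeated use of the interchange law, is the delicate point. Concretely, I would expand $\delta_{F\s'(\alpha_0),vu}$ and $\delta_{\beta_0,G(vu)}$ using the vertical functoriality already established for $F\s'(\alpha_0)$ and for $\beta_0$, and then rearrange the resulting pasting by naturality of the compositors and the interchange law until it matches the stacked composite. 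Once this is settled, the pseudonaturality and horizontal functoriality axioms follow by the same substitute-and-slide method with strictly less bookkeeping, completing the proof that $\beta_0\comp\alpha_0$ is a vertical pseudonatural transformation.
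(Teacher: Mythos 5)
Your proposal is correct and follows essentially the route the paper intends: the paper omits the proof entirely, declaring these results ``straightforwardly proved'' and relying on exactly the duality you invoke — swapping the horizontal and vertical directions in \leref{horiz comp hor.ps.tr.} and using the vertical analogue of \leref{delta F(alfa)} to supply $\delta_{F\s'(\alpha_0),u}$. Your identification of the functoriality of $\delta_{\beta_0\comp\alpha_0,-}$ on composable 1v-cells as the point requiring the compositor bookkeeping is an accurate reading of where the (omitted) work lies.
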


Horizontal compositions of horizontal and of vertical pseudonatural transformations are not strictly associative. 

\medskip

We proceed by defining vertical compositions of horizontal and of vertical pseudonatural transformations. From the respective definitions 
it will be clear that these vertical compositions are strictly associative.

\begin{lma} \lelabel{vert comp hor.ps.tr.}
Vertical composition of two horizontal pseudonatural transformations $\alpha_1: F\Rightarrow G: \Aa\to\Bb$ and 
$\beta_1: G\Rightarrow H:\Aa\to\Bb$, denoted by $\frac{\alpha_1}{\beta_1}$, is well-given by: 
\begin{itemize}
\item for every 0-cell $A$ in $\Aa$ a 1h-cell in $\Bb$:
$$(\frac{\alpha_1}{\beta_1})(A)=\big( F(A)\stackrel{\alpha_1(A)}{\longrightarrow}G(A) \stackrel{\beta_1(A)}{\longrightarrow} H(A) \big),$$ 
\item for every 1v-cell $u:A\to A'$ in $\Aa$ a 2-cell in $\Bb$:
$$(\frac{\alpha_1}{\beta_1})(u)=
\bfig
\putmorphism(-150,50)(1,0)[F(A)`G(A)`\alpha_1(A)]{560}1a
 \putmorphism(430,50)(1,0)[\phantom{F(A)}`H(A) `\beta_1(A)]{620}1a

\putmorphism(-150,-320)(1,0)[F(A')`G(A')`\alpha_1(A')]{600}1a
\putmorphism(-180,50)(0,-1)[\phantom{Y_2}``F(u)]{370}1l
\putmorphism(380,50)(0,-1)[\phantom{Y_2}``G(u)]{370}1r
\put(-30,-130){\fbox{$(\alpha_1)_u$}}

 \putmorphism(460,-320)(1,0)[\phantom{F(A)}`H(A') `\beta_1(A')]{630}1a

\putmorphism(1060,50)(0,-1)[\phantom{Y_2}``H(u)]{370}1r
\put(640,-110){\fbox{$(\beta_1)_u$}}
\efig
$$
\item for every 1h-cell $f:A\to B$  in $\Aa$ a 2-cell in $\Bb$:
$$\delta_{\frac{\alpha_1}{\beta_1},f}=
\bfig

 \putmorphism(-200,-50)(1,0)[F(A)`F(B)` F(f)]{650}1a
 \putmorphism(430,-50)(1,0)[\phantom{F(B)}`G(B) `\alpha_1(B)]{700}1a

 \putmorphism(-200,-450)(1,0)[F(A)`G(A)`\alpha_1(A)]{650}1a
 \putmorphism(430,-450)(1,0)[\phantom{A\ot B}`G(B) `G(f)]{700}1a
 \putmorphism(1050,-450)(1,0)[\phantom{A'\ot B'}` H(B) `\beta_1(B)]{700}1a

\putmorphism(-230,-50)(0,-1)[\phantom{Y_2}``=]{400}1r
\putmorphism(1050,-50)(0,-1)[\phantom{Y_2}``=]{400}1r
\put(300,-240){\fbox{$ \delta_{\alpha_1,f}  $}}
\put(1000,-660){\fbox{$\delta_{\beta_1,f}$}}

 \putmorphism(450,-850)(1,0)[G(A)` H(A) `\beta_1(A)]{700}1a
 \putmorphism(1080,-850)(1,0)[\phantom{A''\ot B'}`G(B). ` H(f)]{700}1a

\putmorphism(450,-450)(0,-1)[\phantom{Y_2}``=]{400}1l
\putmorphism(1750,-450)(0,-1)[\phantom{Y_2}``=]{400}1r
\efig
$$
\end{itemize}
\end{lma}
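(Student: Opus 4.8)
The content of the lemma is the claim that the data $(\frac{\alpha_1}{\beta_1})(A)$, $(\frac{\alpha_1}{\beta_1})(u)$ and $\delta_{\frac{\alpha_1}{\beta_1},f}$ displayed above indeed constitute a horizontal pseudonatural transformation, i.e.\ that they satisfy the three axioms of \deref{hor psnat tr}: pseudonaturality of the 2-cells, vertical functoriality, and horizontal functoriality of the $\delta$-component. The plan is to verify these axioms one by one. The guiding principle throughout is that every defining 2-cell of $\frac{\alpha_1}{\beta_1}$ is built by composing the corresponding datum of $\alpha_1$ with that of $\beta_1$: the $u$-component is the horizontal composite of $(\alpha_1)_u$ and $(\beta_1)_u$, and $\delta_{\frac{\alpha_1}{\beta_1},f}$ is the vertical pasting of $\delta_{\alpha_1,f}$ (left-whiskered by $\beta_1(B)$) with $\delta_{\beta_1,f}$ (right-whiskered by $\alpha_1(A)$). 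Consequently each axiom for $\frac{\alpha_1}{\beta_1}$ should reduce, by the interchange law in $\Bb$ together with strict associativity and unitality of $\odot$, to the same axiom holding separately for $\alpha_1$ and for $\beta_1$.

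First I would check that the data is well-typed: $(\frac{\alpha_1}{\beta_1})(A)=\beta_1(A)\odot\alpha_1(A)$ runs from $F(A)$ to $H(A)$, and $\delta_{\frac{\alpha_1}{\beta_1},f}$ has source $(\frac{\alpha_1}{\beta_1})(B)\odot F(f)$ and target $H(f)\odot(\frac{\alpha_1}{\beta_1})(A)$, exactly the boundary required of a $\delta$-cell. For pseudonaturality, given a 2-cell $a$ of $\Aa$ with horizontal boundaries $f,g$ and vertical boundaries $u,v$, I would substitute the defining pastings of $(\frac{\alpha_1}{\beta_1})(u)$, $(\frac{\alpha_1}{\beta_1})(v)$, $\delta_{\frac{\alpha_1}{\beta_1},f}$ and $\delta_{\frac{\alpha_1}{\beta_1},g}$ into the two sides of the identity to be proved. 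Each side then splits into an upper band built from $\alpha_1$-data and a lower band built from $\beta_1$-data; applying the pseudonaturality of $\alpha_1$ to the upper band and of $\beta_1$ to the lower band, and rearranging by interchange, exhibits the two sides as equal. No compositor cells of $F,G,H$ enter here, so this axiom is the cleanest.

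Vertical functoriality and, above all, horizontal functoriality of $\delta$ are where the main work lies, and the latter I expect to be the principal obstacle. Expanding $\delta_{\frac{\alpha_1}{\beta_1},gf}$ by its definition gives the vertical stacking of $\delta_{\alpha_1,gf}$ over $\delta_{\beta_1,gf}$; rewriting each by the horizontal-functoriality axiom already valid for $\alpha_1$ and for $\beta_1$ introduces the compositors $F_{gf}$, $G^{-1}_{gf}$ of the \emph{double pseudo} functors on the $\alpha_1$-side and $G_{gf}$, $H^{-1}_{gf}$ on the $\beta_1$-side. The crucial point is that the compositor $G^{-1}_{gf}$ produced at the bottom of the $\alpha_1$-band and the compositor $G_{gf}$ produced at the top of the $\beta_1$-band carry identical whiskerings by $\beta_1(C)$ and $\alpha_1(A)$, hence compose to an identity and cancel; what survives are the four cells $\delta_{\alpha_1,f},\delta_{\alpha_1,g},\delta_{\beta_1,f},\delta_{\beta_1,g}$ together with $F_{gf}$ and $H^{-1}_{gf}$, and a single application of interchange regroups them into the prescribed pasting of $\delta_{\frac{\alpha_1}{\beta_1},f}$ and $\delta_{\frac{\alpha_1}{\beta_1},g}$. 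Vertical functoriality is handled by the mirror-image argument: $(\frac{\alpha_1}{\beta_1})(u)$ being a horizontal composite, interchange turns its vertical composites into horizontal composites of the vertical composites of the $\alpha_1$- and $\beta_1$-components, the vertical compositor $G^{vu}$ of the middle functor cancels analogously, and the unit cases follow from the units of $\alpha_1$ and $\beta_1$ together with the unitors $F_A,G_A,H_A$ (resp.\ $F^A,G^A,H^A$). Once the cancellation of the middle-functor compositors is established, everything else is bookkeeping with interchange, so the verification is routine.
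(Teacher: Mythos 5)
Your verification is correct and is exactly the direct check the paper leaves implicit (it only remarks that the result is "straightforwardly proved" / clear from the definitions): each of the three axioms of \deref{hor psnat tr} for $\frac{\alpha_1}{\beta_1}$ reduces, via the strict interchange law in $\Bb$, to the corresponding axioms for $\alpha_1$ and $\beta_1$, with the middle functor's compositors $G_{gf},G^{-1}_{gf}$ (resp.\ $G_A, G_A^{-1}$) cancelling in the horizontal-functoriality and unit checks. One small imprecision: in the vertical-functoriality axiom the compositor $G^{vu}$ does not cancel against an inverse --- the axiom for $\alpha_1$ produces $[\frac{(\alpha_1)_u}{(\alpha_1)_v}\,\vert\,G^{vu}\,\vert\,(\beta_1)_{vu}]$ and the block $[G^{vu}\,\vert\,(\beta_1)_{vu}]$ is then consumed wholesale by the axiom for $\beta_1$, yielding $[\frac{(\beta_1)_u}{(\beta_1)_v}\,\vert\,H^{vu}]$, after which one interchange gives $[\frac{\gamma_u}{\gamma_v}\,\vert\,H^{vu}]$; the conclusion is the same but the mechanism is a relay rather than a cancellation.
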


\bigskip

\begin{lma} \lelabel{vert comp vert.ps.tr.}
Vertical composition of two vertical pseudonatural transformations $\alpha_0: F\Rightarrow G: \Aa\to\Bb$ and 
$\beta_0: G\Rightarrow H:\Aa\to\Bb$, denoted by $\frac{\alpha_0}{\beta_0}$, is well-given by: 
\begin{itemize}
\item for every 0-cell $A$ in $\Aa$ a 1v-cell on the left below, and for every 1h-cell $f:A\to B$ in $\Aa$ a 2-cell on the right below, both in $\Bb$:
$$(\frac{\alpha_0}{\beta_0})(A)=
\bfig
\putmorphism(-280,500)(0,-1)[F(A)`G(A) `\alpha_0(A)]{450}1l
 \putmorphism(-280,70)(0,-1)[\phantom{F(A)}`H(A) `\beta_0(A)]{450}1l
\efig \qquad
(\frac{\alpha_0}{\beta_0})(f)= 
\bfig
\putmorphism(-250,500)(1,0)[F(A)`F(B)` F(f)]{550}1a
 \putmorphism(-250,50)(1,0)[G(A)`G(B)` G(f)]{550}1a
 \putmorphism(-250,-400)(1,0)[H(A)`H(B)` H(f)]{550}1a

\putmorphism(-280,500)(0,-1)[\phantom{Y_2}``\alpha_0(A)]{450}1l
 \putmorphism(-280,70)(0,-1)[\phantom{F(A)}` `\beta_0(A)]{450}1l

\putmorphism(300,500)(0,-1)[\phantom{Y_2}``\alpha_0(B)]{450}1r
\putmorphism(300,70)(0,-1)[\phantom{Y_2}``\beta_0(B)]{450}1r
\put(-120,290){\fbox{$(\alpha_0)_f$}}
\put(-120,-150){\fbox{$(\beta_0)_f$}}
\efig
$$ 
\item for every 1v-cell $u:A\to A'$ in $\Aa$ a 2-cell in $\Bb$: %%%% from here downwards: 
$$\delta_{\frac{\alpha_0}{\beta_0},u}=
%\bfig
% \putmorphism(-150,500)(1,0)[F(A)`F(A) `=]{460}1a
%\putmorphism(-130,500)(0,-1)[\phantom{Y_2}`F(A') `F(u)]{450}1l
%\put(0,250){\fbox{$\delta_{\alpha_0,u}$}}
%\putmorphism(-150,-400)(1,0)[G(A')`G(A') `=]{460}1a
%\putmorphism(-130,50)(0,-1)[\phantom{Y_2}``\alpha_0(A')]{450}1l
%\putmorphism(380,500)(0,-1)[\phantom{Y_2}` `\alpha_0(A)]{450}1r
%\putmorphism(380,50)(0,-1)[G(A)` `G(u)]{450}1l
%\putmorphism(480,60)(1,0)[`G(A)`=]{460}1a
%\putmorphism(390,-850)(1,0)[\phantom{G(A)}`H(A').`=]{570}1a
%%
%\putmorphism(920,50)(0,-1)[\phantom{(B, \tilde A')}``\beta_0(A)]{450}1r
%\putmorphism(920,-400)(0,-1)[H(A)`` H(u)]{450}1r
%\putmorphism(400,-400)(0,-1)[\phantom{(B, \tilde A)}`H( A') `\beta_0(A')]{450}1l
%\put(530,-190){\fbox{$\delta_{\beta_0,u}$}}
%\efig
\bfig
 \putmorphism(-150,500)(1,0)[F(A)`F(A) `=]{460}1a
\putmorphism(-130,500)(0,-1)[\phantom{Y_2}`F(A') `F(u)]{400}1l
\put(0,250){\fbox{$\delta_{\alpha_0,u}$}}
\putmorphism(-150,-300)(1,0)[G(A')`G(A') `=]{460}1a
\putmorphism(-130,110)(0,-1)[\phantom{Y_2}``\alpha_0(A')]{400}1l
\putmorphism(380,500)(0,-1)[\phantom{Y_2}` `\alpha_0(A)]{400}1r
\putmorphism(380,100)(0,-1)[G(A)` `G(u)]{400}1l
\putmorphism(480,110)(1,0)[`G(A)`=]{460}1a
\putmorphism(390,-700)(1,0)[\phantom{G(A)}`H(A').`=]{570}1a
\putmorphism(920,100)(0,-1)[\phantom{(B, \tilde A')}``\beta_0(A)]{400}1r
\putmorphism(920,-300)(0,-1)[H(A)`` H(u)]{400}1r
\putmorphism(400,-300)(0,-1)[\phantom{(B, \tilde A)}`H( A') `\beta_0(A')]{400}1l
\put(530,-190){\fbox{$\delta_{\beta_0,u}$}}
\efig
$$
\end{itemize}
\end{lma}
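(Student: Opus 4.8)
The plan is to check that the data defining $\frac{\alpha_0}{\beta_0}$ --- namely the 1v-cell components $(\frac{\alpha_0}{\beta_0})(A)=\beta_0(A)\alpha_0(A)$, the 2-cells $(\frac{\alpha_0}{\beta_0})(f)$ obtained by stacking $(\alpha_0)_f$ over $(\beta_0)_f$, and the 2-cells $\delta_{\frac{\alpha_0}{\beta_0},u}$ given by the displayed pasting --- constitute a vertical pseudonatural transformation $F\Rightarrow H$, that is, that they satisfy the three axioms which are the vertical duals of the pseudonaturality, horizontal functoriality and vertical functoriality conditions of \deref{hor psnat tr}. Since $\alpha_0$ and $\beta_0$ are themselves vertical pseudonatural transformations, each of these axioms already holds for them, and the whole argument consists in pasting the two instances on top of one another and reducing the composite axiom to them.

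First I would record the simplification due to strictness. Because $\Aa$ and $\Bb$ are strict double categories, vertical composition of 1v-cells in $\Bb$ is strictly associative and unital; hence the 1v-cell component $\beta_0(A)\alpha_0(A)$ needs no coherence datum, and the unit clauses of the axioms are immediate. This is precisely the observation announced before the lemma, and it is what makes these vertical compositions strictly associative.

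Next, for the pseudonaturality axiom I would fix a 2-cell $a$ of $\Aa$ and expand both sides of the required identity using the definitions of $(\frac{\alpha_0}{\beta_0})(f)$ and of $\delta_{\frac{\alpha_0}{\beta_0},u}$. By the interchange law in $\Bb$ each side splits into an $\alpha_0$-layer sitting vertically above a $\beta_0$-layer; applying the pseudonaturality axiom of $\alpha_0$ to the upper layer and that of $\beta_0$ to the lower one, and reassembling by interchange, gives the equality. The horizontal functoriality axiom for $(\frac{\alpha_0}{\beta_0})(-)$ is handled in the same fashion, the only additional point being that the horizontal compositors $F_{gf},G_{gf},H_{gf}$ of the double pseudo functors now enter; the middle functor $G$ supplies a compositor $G_{gf}$ together with its inverse $G_{gf}^{-1}$, which cancel, so that the composite functoriality reduces once more to the two component instances.

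The main obstacle I expect is the vertical functoriality axiom for $\delta_{\frac{\alpha_0}{\beta_0},-}$, since its defining pasting is already intricate and, for composable 1v-cells $v,u$, it must be compared with a pasting that carries the vertical compositors $F^{vu},G^{vu},H^{vu}$ of the three double pseudo functors. The technical heart is to rearrange this large diagram so that the $\alpha_0$- and $\beta_0$-contributions separate into two vertically stacked instances of their own $\delta$-functoriality axioms; this calls for repeated use of the interchange law together with the naturality of the compositors, and some care with the globular identifications marked $=$ in the diagrams. Once the rearrangement is carried out the identity follows, and assembling the three verified axioms establishes that $\frac{\alpha_0}{\beta_0}$ is indeed a well-defined vertical pseudonatural transformation.
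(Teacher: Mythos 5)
The paper offers no proof of this lemma---it is one of the statements the surrounding text treats as straightforward---and your outline is exactly the routine verification intended: stack the two layers, use the strict interchange law of $\Bb$ to separate them, apply the corresponding axiom of $\alpha_0$ and then of $\beta_0$, and observe that the middle data $G_{gf}$ and $G^{vu}$ cancel against their inverses. One small correction: the unit clauses are not ``immediate'' from strictness of $\Bb$, since they involve the nontrivial unitors $F_A, G_A, H_A$ (resp.\ $F^A, G^A, H^A$) of the double pseudo functors; they are, however, disposed of by the same two-layer argument, with the $G$-unitor cancelling in the middle.
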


\subsection{2-cells of the tricategory and their compositions} %\sslabel{2-cells}

Now we may define what will be the 2-cells of our tricategory $\DblPs$ of strict double categories and double pseudofunctors.

\begin{defn} \delabel{double 2-cells}
A {\em double pseudonatural transformation} $\alpha:F\to G$ between double pseudofunctors 
is a quadruple $(\alpha_0, \alpha_1, t^\alpha, r^\alpha)$, where: % which we denote shortly by $\alpha$
%\begin{itemize}
%\item 

(T1) $\alpha_0:F\Rightarrow G$ is a vertical pseudonatural transformation, and \\ \indent 
%\item 
 $\alpha_1:F\Rightarrow G$ is a horizontal pseudonatural transformation, 

(T2) the 2-cells $\delta_{\alpha_1,f}$ and $\delta_{\alpha_0,u}$ are invertible when 
$f$ is a 1h-cell component of a horizontal pseudonatural transformation, and 
$u$ is a 1v-cell component of a vertical pseudonatural transformation;

(T3) for every 1h-cell $f:A\to B$ and 1v-cell $u:A\to A'$ in $\Aa$ there are 2-cells in $\Bb$:
$$
\bfig
\putmorphism(-150,500)(1,0)[F(A)`F(B)`F(f)]{600}1a
 \putmorphism(430,500)(1,0)[\phantom{F(A)}`G(B) `\alpha_1(B)]{610}1a
 \putmorphism(-150,100)(1,0)[G(A)`G(B)`G(f)]{1200}1b
\putmorphism(-180,500)(0,-1)[\phantom{Y_2}``\alpha_0(A)]{400}1l
\putmorphism(1050,500)(0,-1)[\phantom{Y_2}``=]{400}1r
\put(350,260){\fbox{$t^\alpha_f$}}
\efig
\qquad\text{and}\qquad
\bfig
 \putmorphism(-150,500)(1,0)[F(A)`G(A)  `\alpha_1(A)]{500}1a
\putmorphism(-180,500)(0,-1)[\phantom{Y_2}`F(A') `F(u)]{400}1l
\putmorphism(-150,-300)(1,0)[G(A')` `=]{400}1a
\putmorphism(-180,100)(0,-1)[\phantom{Y_2}``\alpha_0(A')]{400}1l
\putmorphism(350,500)(0,-1)[\phantom{Y_2}`G(A') `G(u)]{800}1r
\put(50,100){\fbox{$r^\alpha_u$}}
\efig
$$
satisfying: \\
(T3-1)
$$
\bfig
\putmorphism(-150,500)(1,0)[F(A)`F(B)`F(f)]{600}1a
 \putmorphism(450,500)(1,0)[\phantom{F(A)}`G(B) `\alpha_1(B)]{600}1a

 \putmorphism(-150,50)(1,0)[F(A')`F(B')`F(g)]{600}1a
 \putmorphism(450,50)(1,0)[\phantom{F(A)}`G(B') `\alpha_1(B')]{600}1a

\putmorphism(-180,500)(0,-1)[\phantom{Y_2}``F(u)]{450}1l
\putmorphism(450,500)(0,-1)[\phantom{Y_2}``]{450}1r
\putmorphism(300,500)(0,-1)[\phantom{Y_2}``F(v)]{450}0r
\putmorphism(1050,500)(0,-1)[\phantom{Y_2}``G(v)]{450}1r
\put(0,260){\fbox{$F(a)$}}
\put(630,270){\fbox{$(\alpha_1)_v$}}

\putmorphism(-150,-400)(1,0)[G(A')`G(B') `G(g)]{1200}1a

\putmorphism(-180,50)(0,-1)[\phantom{Y_2}``\alpha_0(A')]{450}1l
\putmorphism(1050,50)(0,-1)[\phantom{Y_3}``=]{450}1r
\put(370,-140){\fbox{$t^\alpha_g$}}

\efig=
\bfig
 \putmorphism(-150,500)(1,0)[F(A)`F(A) `=]{500}1a
\putmorphism(-180,500)(0,-1)[\phantom{Y_2}`F(A') `F(u)]{450}1r
\put(0,30){\fbox{$\delta_{\alpha_0,u}$}}
\putmorphism(-150,-400)(1,0)[G(A')`G(A') `=]{520}1a
\putmorphism(-180,50)(0,-1)[\phantom{Y_2}``\alpha_0(A')]{450}1r
\putmorphism(380,500)(0,-1)[\phantom{Y_2}` `\alpha_0(A)]{450}1r
\putmorphism(380,50)(0,-1)[G(A)` `G(u)]{450}1r
\putmorphism(350,500)(1,0)[F(A)`F(B)`F(f)]{600}1a
 \putmorphism(950,500)(1,0)[\phantom{F(A)}`G(B) `\alpha_1(B)]{650}1a
 \putmorphism(470,50)(1,0)[`G(B)`G(f)]{1150}1b
\putmorphism(1570,500)(0,-1)[\phantom{Y_2}``=]{450}1r
\put(880,260){\fbox{$t^\alpha_f$}}
\putmorphism(480,-400)(1,0)[`G(B') `G(g)]{1140}1a
\putmorphism(1570,50)(0,-1)[\phantom{Y_2}``G(v)]{450}1r
\put(850,-200){\fbox{$G(a)$}}
\efig
$$
and 
$$
\bfig
 \putmorphism(-150,500)(1,0)[F(A)`F(B)`F(f)]{600}1a
 \putmorphism(450,500)(1,0)[\phantom{F(A)}`G(B) `\alpha_1(B)]{600}1a
\putmorphism(-180,500)(0,-1)[\phantom{Y_2}`F(A') `F(u)]{450}1l
\put(680,50){\fbox{$r^\alpha_v$}}
\putmorphism(-150,-400)(1,0)[G(A')` `G(g)]{500}1a
\putmorphism(-180,50)(0,-1)[\phantom{Y_2}``\alpha_0(A')]{450}1l
\putmorphism(450,50)(0,-1)[\phantom{Y_2}`G(B')`\alpha_0(B')]{450}1r
\putmorphism(450,500)(0,-1)[\phantom{Y_2}`F(B') `F(v)]{450}1r
\put(0,260){\fbox{$F(a)$}}
\putmorphism(-150,50)(1,0)[\phantom{(B, \tilde A)}``F(g)]{500}1a
\putmorphism(1000,500)(0,-1)[`G(B')`G(v)]{900}1r
\putmorphism(480,-400)(1,0)[\phantom{F(A)}`\phantom{F(A)}`=]{500}1b
\put(0,-170){\fbox{$(\alpha_0)_g$}}
\efig
=
\bfig
\putmorphism(-150,500)(1,0)[F(A)`F(B)`F(f)]{600}1a
 \putmorphism(450,500)(1,0)[\phantom{F(A)}`G(B) `\alpha_1(B)]{680}1a
 \putmorphism(-150,100)(1,0)[F(A)`G(A)`\alpha_1(A)]{600}1a
 \putmorphism(450,100)(1,0)[\phantom{F(A)}`G(B) `G(f)]{680}1a

\putmorphism(-180,500)(0,-1)[\phantom{Y_2}``=]{400}1r
\putmorphism(1100,500)(0,-1)[\phantom{Y_2}``=]{400}1r
\put(350,260){\fbox{$\delta_{\alpha_1,f}$}}
\putmorphism(-180,100)(0,-1)[\phantom{Y_2}`F(A') `F(u)]{400}1l
\putmorphism(-150,-680)(1,0)[G(A')` `=]{480}1a
\putmorphism(-180,-280)(0,-1)[\phantom{Y_2}``\alpha_0(A')]{400}1l
\putmorphism(450,100)(0,-1)[\phantom{Y_2}`G(A') `G(u)]{760}1l
\put(0,-300){\fbox{$r^\alpha_u$}}
\putmorphism(1100,100)(0,-1)[\phantom{Y_2}`G(B') `G(v)]{760}1r
 \putmorphism(450,-660)(1,0)[\phantom{F(A)}` `G(g)]{540}1a
\put(620,-300){\fbox{$G(a)$}} %(0,-180)

\efig
$$
for every 2-cell $a$ in $\Aa$, %such that 

(T3-2) for every composable 1h-cells $f$ and $g$ and every composable 1v-cells $u$ and $v$ it is: 
$$
\bfig

 \putmorphism(440,400)(1,0)[F(B)`F(C)` F(g)]{550}1a
 \putmorphism(1000,400)(1,0)[\phantom{F(B)}`G(C) `\alpha_1(C)]{550}1a

 \putmorphism(-200,0)(1,0)[F(A)`F(B) `F(f)]{600}1a
 \putmorphism(320,0)(1,0)[\phantom{A'\ot B'}` G(B) `\alpha_1(B)]{640}1a
 \putmorphism(900,0)(1,0)[\phantom{A'\ot B'}` G(C) `G(g)]{680}1a

\putmorphism(400,400)(0,-1)[\phantom{Y_2}``=]{400}1l
\putmorphism(1550,400)(0,-1)[\phantom{Y_2}``=]{400}1r
\put(1000,200){\fbox{$ \delta_{\alpha_1,g}$ }}
\put(560,-210){\fbox{$t^\alpha_f$}}

 \putmorphism(-230,-400)(1,0)[G(A)` G(B) ` G(f)]{1210}1a

\putmorphism(-230,0)(0,-1)[\phantom{Y_2}``\alpha_0(A)]{400}1r
\putmorphism(960,0)(0,-1)[\phantom{Y_2}``=]{400}1r
\efig
=
\bfig
 \putmorphism(-100,200)(1,0)[F(A)`F(B)`F(f)]{520}1a
 \putmorphism(420,200)(1,0)[\phantom{F(B)}`F(C) `F(g)]{520}1a
 \putmorphism(930,200)(1,0)[\phantom{F(B)}`G(C) `\alpha_1(C)]{570}1a
\put(20,20){\fbox{$(\alpha_0)_f$}}
\putmorphism(-100,200)(0,-1)[\phantom{Y_2}``\alpha_0(A)]{400}1l
\put(1100,20){\fbox{$t^\alpha_g$}}

\putmorphism(430,200)(0,-1)[\phantom{Y_2}``\alpha_0(B)]{400}1r
\putmorphism(1510,200)(0,-1)[\phantom{Y_2}``=]{400}1r

  \putmorphism(-150,-200)(1,0)[G(A)` G(B) `G(f)]{600}1a
\putmorphism(450,-200)(1,0)[\phantom{F(A)}`G(C) ` G(g)]{1050}1a
\efig
$$ 
and
$$
\bfig
 \putmorphism(920,450)(1,0)[F(A)`F(A) `\alpha_1(A)]{460}1a
\putmorphism(920,450)(0,-1)[\phantom{Y_2}`F(A') `F(u)]{400}1l
\put(1050,250){\fbox{$r^\alpha_u$}}
\putmorphism(920,-400)(1,0)[G(A')`G(A') `=]{460}1a
\putmorphism(1380,450)(0,-1)[\phantom{Y_2}` `G(u)]{850}1r
\putmorphism(440,50)(0,-1)[F(A')`F(A'') `F(v)]{450}1l
\putmorphism(530,60)(1,0)[``=]{300}1a
\putmorphism(560,-850)(1,0)[`G(A'')`=]{430}1a
\putmorphism(920,50)(0,-1)[\phantom{(B, \tilde A')}``\alpha_0(A')]{450}1r
\putmorphism(920,-400)(0,-1)[`` G(v)]{450}1r
\putmorphism(440,-400)(0,-1)[\phantom{(B, \tilde A)}`G(A'') `\alpha_0(A'')]{450}1l
\put(530,-190){\fbox{$\delta_{\alpha_0,v}$}}
\efig
=
\bfig
 \putmorphism(-150,410)(1,0)[F(A)`G(A)  `\alpha_1(A)]{530}1a
\putmorphism(-160,400)(0,-1)[\phantom{Y_2}`F(A') `F(u)]{380}1l
\putmorphism(370,400)(0,-1)[\phantom{Y_2}`G(A') `G(u)]{380}1r
\putmorphism(-160,50)(0,-1)[\phantom{Y_2}`F(A'')`F(v)]{430}1l
\putmorphism(370,50)(0,-1)[\phantom{Y_2}`G(A')`G(v)]{820}1r
\put(-60,240){\fbox{$(\alpha_1)_u$}}
\putmorphism(-70,20)(1,0)[``\alpha_1(A')]{330}1a
\put(20,-210){\fbox{$r^\alpha_v$ }}
\putmorphism(-160,-350)(0,-1)[\phantom{Y_2}``\alpha_0(A'')]{400}1l
\putmorphism(-180,-750)(1,0)[G(A')` `=]{440}1b
\efig
$$

(T3-3) for every composable 1h-cells $f$ and $g$ and every composable 1v-cells $u$ and $v$ it is: 
$$t^\alpha_{gf}=
\bfig
\putmorphism(-150,850)(1,0)[F(A)`F(C)`F(gf)]{1200}1a

 \putmorphism(-180,450)(1,0)[F(A)`F(B)`F(f)]{600}1a
 \putmorphism(420,450)(1,0)[\phantom{F(B)}`F(C) `F(g)]{680}1a
 \putmorphism(1090,450)(1,0)[\phantom{F(B)}`G(C) `\alpha_1(C)]{570}1a
\putmorphism(-180,840)(0,-1)[\phantom{Y_2}``=]{400}1l
\putmorphism(1060,840)(0,-1)[\phantom{Y_2}``=]{400}1r
\put(350,650){\fbox{$F_{gf}$}}
\put(0,250){\fbox{$(\alpha_0)_f$}}
\putmorphism(-180,450)(0,-1)[\phantom{Y_2}``\alpha_0(A)]{400}1l
\put(1000,270){\fbox{$t^\alpha_g$}}

\putmorphism(450,450)(0,-1)[\phantom{Y_2}``\alpha_0(B)]{400}1r
\putmorphism(1660,450)(0,-1)[\phantom{Y_2}``=]{400}1r

  \putmorphism(-150,50)(1,0)[G(A)` G(B) `G(f)]{600}1a
\putmorphism(450,50)(1,0)[\phantom{F(A)}`G(C) ` G(g)]{1200}1a

% \putmorphism(-150,-450)(1,0)[\phantom{F(B)}`G(B) `G(f)]{680}1a
% \putmorphism(450,-450)(1,0)[\phantom{F(B)}`G(C) `G(g)]{620}1a

\putmorphism(-180,50)(0,-1)[\phantom{Y_2}``=]{400}1r
\putmorphism(1640,50)(0,-1)[\phantom{Y_2}``=]{400}1r
\put(650,-180){\fbox{$G^{-1}_{gf}$}}

 \putmorphism(-150,-350)(1,0)[G(A)` G(C) `G(gf)]{1800}1b
\efig
$$ 
and
$$r^\alpha_{vu}=
\bfig
\putmorphism(-600,500)(0,-1)[F(A)`F(A'')`F(vu)]{850}1r
 \putmorphism(-510,500)(1,0)[` F(A)`=]{400}1a
 \putmorphism(-30,500)(1,0)[`G(A)  `\alpha_1(A)]{450}1a
 \putmorphism(400,500)(1,0)[\phantom{(B,A)}` `=]{450}1a
\putmorphism(-180,500)(0,-1)[\phantom{Y_2}`F(A') `F(u)]{450}1r
\put(650,50){\fbox{$G^{vu}$}}
 \putmorphism(-150,-750)(1,0)[G(A'')` `=]{480}1a
\putmorphism(-180,50)(0,-1)[\phantom{Y_2}``F(v)]{400}1r
\putmorphism(-180,-350)(0,-1)[F(A'')``\alpha_0(A'')]{400}1r
\putmorphism(-500,-340)(1,0)[` `=]{200}1a

\putmorphism(450,50)(0,-1)[\phantom{Y_2}`G(A'')`G(v)]{800}1r
\putmorphism(450,500)(0,-1)[\phantom{Y_2}`G(A') `G(u)]{450}1r
\put(80,260){\fbox{$(\alpha_1)_u$}}
\putmorphism(-150,50)(1,0)[\phantom{(B, \tilde A)}``\alpha_1(A')]{500}1a
\putmorphism(960,500)(0,-1)[G(A)`G(A'').`G(vu)]{1250}1r
\putmorphism(470,-750)(1,0)[\phantom{F(A)}`\phantom{F(A)}`=]{450}1a

\put(120,-230){\fbox{$r^\alpha_v$}}
\put(-580,250){\fbox{$(F^{vu})^{-1}$}}
\efig
$$
%\end{itemize}
\end{defn}

By the axiom (v) of a double pseudofunctor in \cite[Definition 6.1]{Shul1} one has:

\begin{lma}
Given three composable 1h-cells $f,g,h$ and three composable 1v-cells $u,v,w$ for a double pseudonatural transformation $\alpha$ 
it is: $t^\alpha_{(hg)f}=t^\alpha_{h(gf)}$ and $r^\alpha_{(wv)u}=r^\alpha_{w(vu)}$. 
\end{lma}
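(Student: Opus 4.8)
The plan is to expand both bracketings of each composite by means of the recursive formulas (T3-3) and then to reduce the comparison to the associativity coherence, axiom (v), of the double pseudofunctors $F$ and $G$. I treat $t^\alpha_{(hg)f}=t^\alpha_{h(gf)}$ in detail; the equality $r^\alpha_{(wv)u}=r^\alpha_{w(vu)}$ is entirely dual, obtained by interchanging the horizontal and vertical directions and replacing the horizontal multiplicators $F_{g,f},G_{g,f}$ of $F,G$ by their vertical counterparts $F^{vu},G^{vu}$.

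First I would unfold the left bracketing. Applying the formula (T3-3) for $t^\alpha$ to the pair $(hg,f)$ writes $t^\alpha_{(hg)f}$ as a pasting built from the multiplicator $F_{hg,f}$, the component $(\alpha_0)_f$, the 2-cell $t^\alpha_{hg}$, and $G^{-1}_{hg,f}$; substituting the expansion of $t^\alpha_{hg}$ given by (T3-3) applied to the pair $(h,g)$ then presents $t^\alpha_{(hg)f}$ purely in terms of the multiplicators $F_{hg,f},F_{h,g}$ and $G^{-1}_{hg,f},G^{-1}_{h,g}$, the components $(\alpha_0)_f,(\alpha_0)_g$, and the 2-cell $t^\alpha_h$. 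Symmetrically I would unfold the right bracketing: (T3-3) applied to $(h,gf)$ produces a pasting out of $F_{h,gf}$, the component $(\alpha_0)_{gf}$ on the \emph{composite} 1h-cell $gf$, the 2-cell $t^\alpha_h$, and $G^{-1}_{h,gf}$. Here the composite component $(\alpha_0)_{gf}$ must itself be rewritten by means of the horizontal-functoriality clause for the vertical pseudonatural transformation $\alpha_0$ (the analogue for $\alpha_0$ of the vertical-functoriality clause (2) of \deref{hor psnat tr}), which expresses $(\alpha_0)_{gf}$ as a pasting of $(\alpha_0)_f$, $(\alpha_0)_g$ and the multiplicators $F_{g,f}$, $G^{-1}_{g,f}$.

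After these substitutions both sides are pastings of the \emph{same} elementary cells $(\alpha_0)_f$, $(\alpha_0)_g$ and $t^\alpha_h$, occupying the same relative positions (with $t^\alpha_h$ outermost on the right and the two $\alpha_0$-components on the left), and the two expressions differ only in the way the four multiplicators $F_{hg,f},F_{h,g}$ of the left side and $F_{h,gf},F_{g,f}$ of the right side, together with the corresponding inverse $G$-multiplicators, are inserted around them. Since $\Bb$ is a \emph{strict} double category the horizontal associator is an identity, so axiom (v) of a double pseudofunctor in \cite[Definition 6.1]{Shul1} asserts exactly that the two composite isomorphisms $F(hgf)\Rightarrow F(h)\,F(g)\,F(f)$ built from $F_{hg,f},F_{h,g}$ on one hand and from $F_{h,gf},F_{g,f}$ on the other coincide; the inverse coherence for $G$ disposes of the $G^{-1}$-multiplicators on the codomain side in the same manner. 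Once the multiplicators are matched the remaining elementary cells coincide term by term, whence $t^\alpha_{(hg)f}=t^\alpha_{h(gf)}$.

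The main obstacle is not conceptual but one of diagrammatic bookkeeping: I must check that after the two rounds of unfolding the cells $(\alpha_0)_f,(\alpha_0)_g,t^\alpha_h$ really do occupy identical positions, so that the \emph{only} discrepancy between the two pastings is localized in the bracketing of the multiplicators, and in particular that no instance of the compatibility axiom (T3-2) is needed to slide one cell past another. I expect this to go through cleanly, because each application of (T3-3) and of the functoriality clause for $\alpha_0$ introduces only multiplicators and unpermuted copies of the $\alpha_0$- and $t^\alpha$-components, so that the entire discrepancy reduces to the single coherence (v), exactly as announced.
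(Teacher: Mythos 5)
Your proposal is correct and takes essentially the same route as the paper, which records this lemma with only the remark that it follows from axiom (v) of \cite[Definition 6.1]{Shul1}. Your unfolding of both bracketings via (T3-3) and the horizontal-functoriality clause for $\alpha_0$, the alignment of the cells $(\alpha_0)_f$, $(\alpha_0)_g$, $t^\alpha_h$ using only the strict interchange law of $\Bb$, and the final reduction to the associativity coherence of the multiplicators of $F$ and $G$ (and dually for the $r$-cells) is exactly the computation the paper leaves implicit.
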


\begin{rem} \rmlabel{axioms used for t} 
The horizontal and vertical compositions of $t$'s and  $r$'s are defined in the next two Propositions below. 
Axiom (T3-2) in the above definition is introduced in order for $t$'s to satisfy the interchange law (up to isomorphism). 
\end{rem}

\medskip

For every 1-cell $F$ of $\DblPs$, the identity 2-cell $\Id_F: F\Rightarrow F$ is given by the 2-cells: 
$((\Id_F)_0)_f=\Id_{F(f)}=t^{\Id_F}_f, ((\Id_F)_1)_u=\Id_{F(u)}=r^{\Id_F}_u, \delta_{(\Id_F)_0,u}=\Id_{F(u)}$ and $\delta_{(\Id_F)_1,f}=\Id_{F(f)}$, 
with $(\Id_F)_0(A)$ and $(\Id_F)_1(A)$ being the identity 1v- and 1h-cells on $F(A)$, respectively, $f$ an arbitrary 1h-cell and $u$ an arbitrary 1v-cell. 
%(\textcolor{rojo}{here $\Id_{hor}$ and $\Id_{vert}$ are horizontal and vertical identities, respectively}).

For the horizontal and vertical compositions of double pseudonatural transformations we have: 

\begin{prop} \prlabel{horiz comp 2-cells}
A horizontal composition of two double pseudonatural transformations acting between double pseudo functors  
$(\alpha_0, \alpha_1, t^\alpha, r^\alpha): F\Rightarrow G: \Aa\to\Bb$ and 
$(\beta_0, \beta_1, t^\beta, r^\beta): F\s'\Rightarrow G': \Bb\to\Cc$, denoted by $\beta\comp\alpha$, is well-given by: 
\begin{itemize}
\item the horizontal pseudonatural transformation $\beta_1\comp\alpha_1$ from \leref{horiz comp hor.ps.tr.}, 
\item the vertical pseudonatural transformation $\beta_0\comp\alpha_0$ from \leref{horiz comp vert.ps.tr.}, 
\item for every 1h-cell $f:A\to B$ and 1v-cell $u:A\to A'$ in $\Aa$: 2-cells in $\Bb$:
$$t^{\beta\comp\alpha}_f:= t^\beta_f\comp t^\alpha_f = 
\bfig
\putmorphism(-680,450)(1,0)[F\s'F(A)` `=]{360}1a
\putmorphism(-600,450)(0,-1)[\phantom{Y_2}`F\s'G(A)`F\s'(\alpha_0(A))]{400}1l
\putmorphism(-600,50)(0,-1)[\phantom{Y_2}`F\s'G(A)`=]{400}1l
\putmorphism(-600,-320)(0,-1)[\phantom{Y_2}``\beta_0(G(A))]{400}1l
\putmorphism(-640,-700)(1,0)[G'G(A)``=]{360}1b
\put(-580,-160){\fbox{$\delta_{\beta_0,\alpha_0(A)}$}}
 \putmorphism(-180,450)(1,0)[F\s'F(A)`F\s'F(B)`F\s'F(f)]{600}1a
 \putmorphism(450,450)(1,0)[\phantom{F(B)}`F\s'G(B) `F\s'(\alpha_1(B))]{660}1a
 \putmorphism(1180,450)(1,0)[\phantom{F(B)}`G'G(B) `\beta_1(G(B))]{570}1a
\put(0,250){\fbox{$(\beta_0)_{F(f)}$}}
\putmorphism(-110,450)(0,-1)[\phantom{Y_2}``\beta_0(F(A))]{400}1l
\put(1000,270){\fbox{$t^\beta_{\alpha_1(B)}$}}

\putmorphism(450,450)(0,-1)[\phantom{Y_2}``\beta_0(F(B))]{400}1r
\putmorphism(1660,450)(0,-1)[\phantom{Y_2}``=]{400}1r

  \putmorphism(-150,50)(1,0)[G'F(A)` G'F(B) `G'F(f)]{600}1a
\putmorphism(450,50)(1,0)[\phantom{F(A)}`G'G(B) ` G'(\alpha_1(B))]{1200}1a

\putmorphism(-110,50)(0,-1)[\phantom{Y_2}``=]{400}1r
\putmorphism(1640,50)(0,-1)[\phantom{Y_2}``=]{400}1r
\put(800,-160){\fbox{$G'^{-1}_{\alpha_1(B),F(f) }$}}

 \putmorphism(-150,-350)(1,0)[G'F(A)` G'G(B) `G'(\alpha_1(B)F(f))]{1220}1a
 \putmorphism(1200,-350)(1,0)[` G'G(B) `=]{520}1a
\putmorphism(-130,-700)(1,0)[G'G(A)`G'G(B)`G'G(f)]{1200}1b
\putmorphism(1200,-700)(1,0)[`G'G(B)`=]{520}1b
\putmorphism(-110,-340)(0,-1)[\phantom{Y_2}``G'(\alpha_0(A))]{360}1l
\putmorphism(1630,-320)(0,-1)[\phantom{Y_2}``=]{400}1r
\putmorphism(1030,-320)(0,-1)[\phantom{Y_2}``G'(id)]{400}1l
\put(250,-530){\fbox{$G'(t^\alpha_f)$}}
\put(1090,-560){\fbox{$\big(G'^{G(B)}\big)^{-1}$}}

\efig
$$ 
and \vspace{-0,4cm}
$$r^{\beta\comp\alpha}_u:= r^\beta_u\comp r^\alpha_u = 
\bfig
 \putmorphism(-180,850)(1,0)[F\s'F(A)`F\s'G(A)  `F\s'(\alpha_1(A))]{680}1a
 \putmorphism(530,850)(1,0)[\phantom{(B,A)}`F\s'G(A) `=]{490}1a
 \putmorphism(1160,850)(1,0)[`G'G(A) `\beta_1(G(A))]{600}1a %%%
\putmorphism(-180,850)(0,-1)[\phantom{Y_2}` `=]{350}1l
\putmorphism(1660,850)(0,-1)[``=]{350}1r
\put(540,670){\fbox{$\delta_{\beta_1,\alpha_1(A)}$}}

 \putmorphism(-180,500)(1,0)[F\s'F(A)`G'F(A)  `\beta_1(F(A))]{660}1a
\putmorphism(-180,500)(0,-1)[\phantom{Y_2}`F\s'F(A') `F\s'F(u)]{450}1l
\put(580,-190){\fbox{$G'^{\bullet\bullet}$}}

 \putmorphism(490,500)(1,0)[\phantom{(B,A)}` `=]{350}1a

\putmorphism(-180,70)(0,-1)[\phantom{Y_2}`F\s'G(A')`F\s'(\alpha_0(A'))]{420}1l
\putmorphism(440,70)(0,-1)[\phantom{Y_2}``]{820}1l  %%%
\putmorphism(440,-60)(0,-1)[\phantom{Y_2}``G'(\alpha_0(A'))]{850}0l  %%%

\putmorphism(-180,-330)(0,-1)[\phantom{Y_2}``\beta_0(G(A'))]{420}1l %%
\putmorphism(-180,-760)(1,0)[G'G(A')`G'G(A') `=]{660}1a % prva
\putmorphism(540,-760)(1,0)[\phantom{F(A)}`G'G(A')`=]{490}1a % druga
\putmorphism(1200,-760)(1,0)[` `=]{300}1a %treca

\putmorphism(440,500)(0,-1)[\phantom{Y_2}` `G'F(u)]{450}1r
\put(-20,300){\fbox{$(\beta_1)_{F(u)}$}}
\putmorphism(-180,50)(1,0)[\phantom{F\s'F(A')}`G'F(A')`\beta_1(F(A'))]{720}1a
\putmorphism(980,500)(0,-1)[G'F(A)``G'(\bullet\bullet)]{800}1r   %  G'G(A')
\putmorphism(980,-280)(0,-1)[``=]{470}1r

\putmorphism(1100,-300)(1,0)[G'G(A')`G'G(A')`G'(id)]{600}1a

\putmorphism(1660,-300)(0,-1)[`G'G(A').`=]{460}1r  % treca

\putmorphism(1660,500)(0,-1)[``G'G(u)]{800}1r
 \putmorphism(1140,500)(1,0)[`G'G(A) `G'(\alpha_1(A))]{600}1a %%%

\put(0,-190){\fbox{$r^\beta_{\alpha_0(A')}$}}
\put(1180,270){\fbox{$G'(r^\alpha_u)$}}
\put(1190,-530){\fbox{$G'_{G(A')}$}}
\efig
$$
\end{itemize}
\end{prop}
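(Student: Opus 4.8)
The plan is to verify that the quadruple $(\beta_0\comp\alpha_0,\ \beta_1\comp\alpha_1,\ t^{\beta\comp\alpha},\ r^{\beta\comp\alpha})$ satisfies every clause of \deref{double 2-cells}, and that the two displayed pasting diagrams defining $t^{\beta\comp\alpha}_f$ and $r^{\beta\comp\alpha}_u$ are composable as drawn. Axiom (T1) is immediate: $\beta_1\comp\alpha_1$ is a horizontal pseudonatural transformation by \leref{horiz comp hor.ps.tr.} and $\beta_0\comp\alpha_0$ is a vertical one by \leref{horiz comp vert.ps.tr.}. For well-definedness of the pastings I would read off the boundaries from those two lemmas: the relevant composite 1v-cell is $(\beta_0\comp\alpha_0)(A)=\beta_0(G(A))\,F'(\alpha_0(A))$ and the composite 1h-cell is $(\beta_1\comp\alpha_1)(A)=\beta_1(G(A))\,F'(\alpha_1(A))$, and the inserted structure cells $\delta_{\beta_0,\alpha_0(A)}$, $G'^{-1}_{\alpha_1(B),F(f)}$ and $(G'^{G(B)})^{-1}$ precisely reconcile these composites (and the composites $G'(\alpha_1(B))\odot G'F(f)$ versus $G'(\alpha_1(B)F(f))$) with the sources and targets of $t^\beta$, $(\beta_0)_{F(f)}$ and $G'(t^\alpha_f)$, so that the diagram type-checks.

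Next I would dispatch (T2). By the formula in \leref{horiz comp hor.ps.tr.}, $\delta_{\beta_1\comp\alpha_1,f}$ is the vertical pasting of $\delta_{F'(\alpha_1),f}$ and $\delta_{\beta_1,G(f)}$. If $f$ is a 1h-cell component of a horizontal pseudonatural transformation $\gamma$, then $\delta_{\alpha_1,f}$ is invertible by (T2) for $\alpha$; since $F'$ is a double pseudofunctor it preserves invertibility, and \leref{delta F(alfa)} expresses $\delta_{F'(\alpha_1),f}$ through $F'(\delta_{\alpha_1,f})$ and the invertible compositors of $F'$, whence $\delta_{F'(\alpha_1),f}$ is invertible. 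Moreover $G(f)=(G\gamma)_1(X)$ is a 1h-cell component of the horizontal pseudonatural transformation $G\gamma$ (the image of $\gamma$ under $G$, as in \leref{delta F(alfa)}), so $\delta_{\beta_1,G(f)}$ is invertible by (T2) for $\beta$; a pasting of invertible 2-cells is invertible. The dual argument, using \leref{horiz comp vert.ps.tr.}, yields invertibility of $\delta_{\beta_0\comp\alpha_0,u}$ for $u$ a 1v-cell component of a vertical pseudonatural transformation, establishing (T2).

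The substance is (T3-1)--(T3-3), each an identity of pasting diagrams, and in every case the strategy is the same: substitute the definitions of $t^{\beta\comp\alpha}$ and $r^{\beta\comp\alpha}$, then rewrite using (i) the corresponding axiom for $\alpha$ inside the region to which $G'$ is applied, (ii) the corresponding axiom for $\beta$ on the outer region, and (iii) the pseudofunctoriality coherences of $F'$ and $G'$ (naturality of the compositors $G'_{-,-}$ and of the unit cells $G'^{\bullet}$, i.e.\ axioms (i)--(v) of \cite[Definition 6.1]{Shul1}) to transport the inserted structure cells past $G'(t^\alpha_f)$ and $G'(r^\alpha_u)$. For (T3-1) I would paste $t^\beta$ (resp.\ $r^\beta$) onto $G'$ applied to the instance of (T3-1) for $\alpha$, using that $G'$ preserves the relevant $\delta$'s and that (T3-1) for $\beta$ lets $G'(a)$ be exchanged with the components; (T3-3) proceeds analogously, invoking (T3-3) for $\alpha$ and $\beta$ together with the associativity coherence (axiom (v)) of $G'$ and $F'$, which is exactly what makes the two compositor insertions $G'^{-1}_{gf}$ cancel correctly.

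The hard part will be (T3-2), the interchange law. As flagged in \rmref{axioms used for t}, (T3-2) is the axiom whose whole purpose is to force the $t$'s (and $r$'s) to interchange up to isomorphism, so for the composite it is the least formal step. One must show that horizontally pasting $t^{\beta\comp\alpha}_f$ with $\delta_{\beta_1\comp\alpha_1,g}$ agrees, up to the prescribed coherence isomorphisms, with pasting $(\beta_0\comp\alpha_0)_f$ onto $t^{\beta\comp\alpha}_g$; expanding both sides produces a large diagram in which the two instances of (T3-2) — one for $\alpha$ transported inside $G'$, one for $\beta$ on the outside — are separated by a band of compositor cells $G'_{-,-}$ (and their inverses) together with $\delta_{\beta_0,-}$. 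The crux is to show that this band recombines by the coherence of the double pseudofunctor $G'$, so that the two interchange axioms can be applied in sequence. I would organize this by first normalizing both sides so that all occurrences of $G'(-)$ are grouped, reducing to an equality of the form $G'(\text{LHS of (T3-2) for }\alpha)$ versus $G'(\text{RHS})$ modulo compositors, and only then invoking (T3-2) for $\alpha$ inside $G'$ and (T3-2) for $\beta$ outside. Horizontal/vertical duality then gives the corresponding statement for $r^{\beta\comp\alpha}$, completing the verification that $\beta\comp\alpha$ is a double pseudonatural transformation.
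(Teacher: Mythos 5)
Your proposal is correct and follows essentially the same route as the paper: the paper likewise reduces well-givenness to verifying the axioms of \deref{double 2-cells} for the composite, deriving the needed auxiliary coherence identities from the axioms of \deref{hor psnat tr} and \leref{delta F(alfa)} (these are the identities (9), (10) and those after Remark 4.11 of \cite{Fem} that it cites), and it singles out (T3-1) and the interchange axiom (T3-2) as the steps requiring those identities, exactly as you do. The detailed pasting computations are deferred to \cite{Fem}, so your outline matches the published argument at the level of detail given.
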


From the axioms of \deref{hor psnat tr} and \leref{delta F(alfa)} identities (9) and (10) in \cite[Section 4.1]{Fem} are deduced, 
of which the vertical version of (10) is used to prove that the horizontal composition of $t$'s satisfies the axiom (T3-1), and the vertical version of (9) 
is used in order to show for this composition to be associative. 
Identities after \cite[Remark 4.11]{Fem} are used in order to prove that the horizontal composition of $t$'s (and $r$'s) satisfies the axiom (T3-2).

\begin{prop} \prlabel{vertic comp 2-cells}
A vertical composition of two double pseudonatural transformations acting between double pseudo functors  
$(\alpha_0, \alpha_1, t^\alpha, r^\alpha): F\Rightarrow G: \Aa\to\Bb$ and 
$(\beta_0, \beta_1, t^\beta, r^\beta): G\Rightarrow H: \Aa\to\Bb$, denoted by $\frac{\alpha}{\beta}$, is well-given by: 
\begin{itemize}
\item the horizontal pseudonatural transformation $\frac{\alpha_1}{\beta_1}$ from \leref{vert comp hor.ps.tr.}, 
\item the vertical pseudonatural transformation $\frac{\alpha_0}{\beta_0}$ from \leref{vert comp vert.ps.tr.}, 
\item for every 1h-cell $f:A\to B$ and 1v-cell $u:A\to A'$ in $\Aa$: 2-cells in $\Bb$:
$$t^{\frac{\alpha}{\beta}}_f:= \frac{t^\alpha_f}{t^\beta_f}= 
\bfig

 \putmorphism(-200,150)(1,0)[F(A)`F(B)` F(f)]{550}1a
 \putmorphism(330,150)(1,0)[\phantom{F(B)}`G(B) `\alpha_1(B)]{550}1a

 \putmorphism(-200,-250)(1,0)[G(A)`G(B) `G(f)]{1100}1a
 \putmorphism(840,-250)(1,0)[\phantom{A'\ot B'}` H(B) `\beta_1(B)]{600}1a

\putmorphism(-230,150)(0,-1)[\phantom{Y_2}``\alpha_0(A)]{400}1r
\putmorphism(850,150)(0,-1)[\phantom{Y_2}``=]{400}1r
\put(500,-40){\fbox{$ t^\alpha_f$} }
\put(1000,-480){\fbox{$t^\beta_f$}}

 \putmorphism(-230,-650)(1,0)[H(A)` H(B) ` H(f)]{1680}1a

\putmorphism(-230,-250)(0,-1)[\phantom{Y_2}``\beta_0(A)]{400}1r
\putmorphism(1420,-250)(0,-1)[\phantom{Y_2}``=]{400}1r
\efig
$$ 
and 
$$r^{\frac{\alpha}{\beta}}_u:= \frac{r^\alpha_u}{r^\beta_u}= 
\bfig
 \putmorphism(-150,500)(1,0)[F(A)`G(A) `\alpha_1(A)]{520}1a
\putmorphism(480,500)(1,0)[`H(A)`\beta_1(A)]{460}1a

\putmorphism(-130,500)(0,-1)[\phantom{Y_2}`F(A') `F(u)]{450}1l
\put(0,250){\fbox{$r^\alpha_u$}}
\putmorphism(-150,-400)(1,0)[G(A')`G(A') `=]{500}1a
\putmorphism(-130,50)(0,-1)[\phantom{Y_2}``\alpha_0(A')]{450}1l
\putmorphism(380,500)(0,-1)[\phantom{Y_2}` `G(u)]{900}1r
\putmorphism(390,-850)(1,0)[\phantom{G(A)}`H(A').`=]{570}1a
\putmorphism(920,500)(0,-1)[`` H(u)]{1350}1r
\putmorphism(400,-400)(0,-1)[\phantom{(B, \tilde A)}`H( A') `\beta_0(A')]{450}1l
\put(530,-190){\fbox{$r^\beta_u$}}
\efig
$$
\end{itemize}
\end{prop}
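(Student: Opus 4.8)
The plan is to verify that the quadruple $(\frac{\alpha_0}{\beta_0},\frac{\alpha_1}{\beta_1},t^{\frac{\alpha}{\beta}},r^{\frac{\alpha}{\beta}})$ meets the axioms (T1)--(T3) of \deref{double 2-cells}. Axiom (T1) is immediate: $\frac{\alpha_1}{\beta_1}$ is a horizontal pseudonatural transformation by \leref{vert comp hor.ps.tr.} and $\frac{\alpha_0}{\beta_0}$ is a vertical one by \leref{vert comp vert.ps.tr.}. For (T2), recall from those two lemmas that $\delta_{\frac{\alpha_1}{\beta_1},f}$ is the vertical paste of $\delta_{\alpha_1,f}$ with $\delta_{\beta_1,f}$, and dually $\delta_{\frac{\alpha_0}{\beta_0},u}$ is the vertical paste of $\delta_{\alpha_0,u}$ with $\delta_{\beta_0,u}$; when $f$ is a 1h-cell component of a horizontal pseudonatural transformation (resp.\ $u$ a 1v-cell component of a vertical one) each constituent is invertible by (T2) for $\alpha$ and for $\beta$, and a vertical composite of vertically (resp.\ horizontally) invertible 2-cells is again invertible, so (T2) holds.

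It remains to check (T3-1), (T3-2) and (T3-3) for the cells $t^{\frac{\alpha}{\beta}}_f=\frac{t^\alpha_f}{t^\beta_f}$ and $r^{\frac{\alpha}{\beta}}_u=\frac{r^\alpha_u}{r^\beta_u}$ displayed in the statement. Each verification is diagrammatic and has the same shape: one substitutes these vertical pastes, splits the resulting pasting along the middle $G$-row into an upper $F$--$G$ band and a lower $G$--$H$ band, applies the corresponding axiom of \deref{double 2-cells} to $\alpha$ on the upper band and to $\beta$ on the lower one, and finally reassembles using the interchange law in $\Bb$ together with the functoriality of $\alpha_1,\beta_1,\alpha_0,\beta_0$ from \deref{hor psnat tr}. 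For (T3-1) the two boundary $\delta$-cells produced this way recombine exactly into $\delta_{\frac{\alpha_0}{\beta_0},u}$ and $\delta_{\frac{\alpha_1}{\beta_1},f}$ by the formulas of \leref{vert comp vert.ps.tr.} and \leref{vert comp hor.ps.tr.}. Since the $t$-axioms (attached to 1h-cells) and the $r$-axioms (attached to 1v-cells) are entirely parallel with the horizontal and vertical directions exchanged, it suffices to run the argument for the $t$-family and read off the $r$-family by this symmetry.

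Axiom (T3-2), which by \rmref{axioms used for t} is exactly the condition making the $t$'s satisfy the interchange law, is obtained by expanding both sides into $F$--$G$--$H$ form, applying (T3-2) for $\alpha$ and for $\beta$, and sliding the middle $\alpha_0$-- and $\beta_0$-columns past the $G(g)$-cell via interchange. Axiom (T3-3), relating $t_{gf}$ to $t_f,t_g$, is handled the same way, but now the expansions of $t^\alpha_{gf}$ and $t^\beta_{gf}$ each carry compositor images: the copy of $G^{-1}_{gf}$ appearing at the bottom of the $\alpha$-expansion meets the copy of $G_{gf}$ at the top of the $\beta$-expansion, and these two cancel as mutually inverse 2-cells, after which interchange reorganises the remaining $\alpha$- and $\beta$-cells into the required $(\frac{\alpha_0}{\beta_0})_f$ together with $t^{\frac{\alpha}{\beta}}_g=\frac{t^\alpha_g}{t^\beta_g}$. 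I expect this tracking of the $G$-level compositors in (T3-3)---making sure $G_{gf}$ and $G^{-1}_{gf}$ are positioned to cancel relative to the $\alpha_0/\beta_0$-columns, which leans on the coherence axiom (v) of a double pseudofunctor from \cite[Definition 6.1]{Shul1} for $G$---to be the main obstacle; strict associativity of the vertical composition keeps everything else a routine, if lengthy, pasting computation.
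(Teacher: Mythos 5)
Your outline is correct, and it is essentially the intended argument: the paper gives no explicit proof of this proposition (the axiom-by-axiom verifications are deferred to \cite{Fem}), but the remarks it does make for the harder horizontal composition confirm that the proof is exactly this kind of direct check of (T1), (T2) and (T3-1)--(T3-3). Your two key points --- the cancellation of the mutually inverse compositors $G^{-1}_{gf}$ and $G_{gf}$ at the $G$-level in (T3-3), and the reassembly via the strict interchange law of $\Bb$ into the formulas of \leref{vert comp hor.ps.tr.} and \leref{vert comp vert.ps.tr.} --- are precisely what makes the vertical composition work (and be strictly associative), in contrast with the horizontal composition where images under $F\s'$, $G'$ and \leref{delta F(alfa)} intervene.
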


This composition is clearly strictly associative. The unity constraint 3-cells for the vertical composition of 2-cells will be identities. 
The unity constraints for the horizontal composition we will discuss in \ssref{sum up tricat DblPs}.

\subsection{A subclass of the class of 2-cells} \sslabel{2-cells Theta}

In \cite[Definition 6.3]{Gabi1} {\em double natural transformations} between strict double functors were used, as a 
particular case of {\em generalized natural transformations} from \cite[Definition 3]{BMM}. Adapting the former to the case of 
{\em double pseudo functors} of Shulman, we get the following weakening of \cite[Definition 6.3]{Gabi1}:

\begin{defn} \delabel{Theta}
A {\em $\Theta$-double pseudonatural transformation} between two double pseudofunctors $F\Rightarrow G:\Aa\to\Bb$ is a tripple 
$(\alpha_0, \alpha_1, \Theta^\alpha)$, which we will denote shortly by $\Theta^\alpha$, where: 
\begin{itemize} 
\item $\alpha_0$ is a vertical and $\alpha_1$ a horizontal pseudonatural transformation (from \deref{hor psnat tr} 
and the analogous one), 
\item the axiom (T2) from \deref{double 2-cells} holds, and 
\item for every 0-cell $A$ in $\Aa$ there are 2-cells in $\Bb$:
$$
\bfig
\putmorphism(-150,50)(1,0)[F(A)`G(A)`\alpha_1(A)]{560}1a
\putmorphism(-150,-320)(1,0)[G(A)`G(A)`=]{600}1a
\putmorphism(-180,50)(0,-1)[\phantom{Y_2}``\alpha_0(A)]{370}1l
\putmorphism(410,50)(0,-1)[\phantom{Y_2}``=]{370}1r
\put(30,-120){\fbox{$\Theta^\alpha_A$}}
\efig
$$
so that for every 1h-cell $f:A\to B$ and every 1v-cell $u:A\to A'$ in $\Aa$ the following identities hold: \\
$(\Theta 0)$ 
$$
\bfig
\putmorphism(-150,500)(1,0)[F(A)`F(B)`F(f)]{600}1a
 \putmorphism(450,500)(1,0)[\phantom{F(A)}`G(B) `\alpha_1(B)]{640}1a

 \putmorphism(-150,50)(1,0)[G(A)`G(B)`G(f)]{600}1a
 \putmorphism(450,50)(1,0)[\phantom{F(A)}`G(B) `=]{640}1a

\putmorphism(-180,500)(0,-1)[\phantom{Y_2}``\alpha_0(A)]{450}1l
\putmorphism(450,500)(0,-1)[\phantom{Y_2}``]{450}1r
\putmorphism(300,500)(0,-1)[\phantom{Y_2}``\alpha_0(B)]{450}0r
\putmorphism(1100,500)(0,-1)[\phantom{Y_2}``=]{450}1r
\put(0,260){\fbox{$(\alpha_0)_f$}}
\put(700,270){\fbox{$\Theta^\alpha_B$}}
\efig
\quad=\quad
\bfig
\putmorphism(-150,500)(1,0)[F(A)`F(B)`F(f)]{600}1a
 \putmorphism(450,500)(1,0)[\phantom{F(A)}`G(B) `\alpha_1(B)]{680}1a
 \putmorphism(-150,50)(1,0)[F(A)`G(A)`\alpha_1(A)]{600}1a
 \putmorphism(450,50)(1,0)[\phantom{F(A)}`G(B) `G(f)]{680}1a

\putmorphism(-180,500)(0,-1)[\phantom{Y_2}``=]{450}1r
\putmorphism(1100,500)(0,-1)[\phantom{Y_2}``=]{450}1r
\put(350,260){\fbox{$\delta_{\alpha_1,f}$}}
\putmorphism(-150,-400)(1,0)[F(A')`G(A') `=]{640}1a

\putmorphism(-180,50)(0,-1)[\phantom{Y_2}``\alpha_0(A)]{450}1l
\putmorphism(450,50)(0,-1)[\phantom{Y_2}``]{450}1l
\putmorphism(610,50)(0,-1)[\phantom{Y_2}``=]{450}0l %470
\put(20,-180){\fbox{$\Theta^\alpha_A$}} %(0,-180)
\efig
$$
and \\
$(\Theta 1)$ 
$$
\bfig
 \putmorphism(-150,500)(1,0)[F(A)`G(A)  `\alpha_1(A)]{600}1a
\putmorphism(-180,500)(0,-1)[\phantom{Y_2}`F(A') `F(u)]{450}1l
\putmorphism(-150,-400)(1,0)[G(A')` `=]{480}1a
\putmorphism(-180,50)(0,-1)[\phantom{Y_2}``\alpha_0(A')]{450}1l
\putmorphism(450,50)(0,-1)[\phantom{Y_2}`G(A')`=]{450}1r
\putmorphism(450,500)(0,-1)[\phantom{Y_2}`G(A') `G(u)]{450}1r
\put(0,260){\fbox{$(\alpha_1)_u$}}
\putmorphism(-180,50)(1,0)[\phantom{F(A)}``\alpha_1(A')]{500}1a
\put(20,-170){\fbox{$\Theta^\alpha_{A'}$}}
\efig=
\bfig
 \putmorphism(-150,500)(1,0)[F(A)`F(A) `=]{500}1a
 \putmorphism(450,500)(1,0)[` `\alpha_1(A)]{380}1a
\putmorphism(-180,500)(0,-1)[\phantom{Y_2}`F(A') `F(u)]{450}1l
\put(0,-160){\fbox{$\delta_{\alpha_0,u}$}}
\putmorphism(-150,-400)(1,0)[G(A')`G(A'). `=]{500}1a
\putmorphism(-180,50)(0,-1)[\phantom{Y_2}``\alpha_0(A')]{450}1l
\putmorphism(380,500)(0,-1)[\phantom{Y_2}` `\alpha_0(A)]{450}1l
\putmorphism(380,50)(0,-1)[G(A)` `G(u)]{450}1r
\put(550,290){\fbox{$\Theta^\alpha_A$} }
\putmorphism(940,500)(0,-1)[G(A)`G(A)`=]{450}1r
\putmorphism(470,60)(1,0)[``=]{360}1a
\efig
$$
\end{itemize} 
\end{defn}

Let us denote a $\Theta$-double pseudonatural transformation $\Theta^\alpha$ by 
$ \xymatrix{
\Aa  \ar@{:= }[r]|{\Downarrow \Theta^\alpha} \ar@/^{1pc}/[r]^{F} \ar@/_{1pc}/[r]_{G} & \Bb
}$. %which includes pseudonatural transformations 
%$\[
%\xymatrix{
%\Aa  \ar@{:= }[r]|{\Downarrow \alpha_i} \ar@/^{1pc}/[r]^{F} \ar@/_{1pc}/[r]_{G} & \Bb
%}$ for $i=0,1$. 

Horizontal composition of $\Theta$-double pseudonatural transformations 
$\xymatrix{
\Aa \ar@{:= }[r]|{\Downarrow \Theta^\alpha} % "flecha invisible" da bi stavio ime "alfa"
\ar@/^{1pc}/[r]^F \ar@/_{1pc}/[r]_G &  
\Bb \ar@{:= }[r]|{\Downarrow \Theta^\beta} \ar@/^{1pc}/[r]^{F'} \ar@/_{1pc}/[r]_{G'} & \Cc
}$ %$\Theta^\alpha:F\Rightarrow G:\Aa\to\Bb$ and $\Theta^\beta: F\s'\Rightarrow G':\Bb\to\Cc$ 
is given by
$$\Theta^{\beta\comp\alpha}_A:= \Theta^\beta_A\comp \Theta^\alpha_A = 
\bfig
 \putmorphism(-180,500)(1,0)[F\s'F(A)`F\s'G(A)  ` F\s'(\alpha_1(A))]{670}1a
 \putmorphism(490,500)(1,0)[\phantom{(B,A)}` `=]{350}1a
\putmorphism(-180,500)(0,-1)[\phantom{Y_2}`F\s'G(A) `F\s'(\alpha_0(A))]{450}1l
\putmorphism(440,500)(0,-1)[\phantom{Y_2}` `]{450}1r
\putmorphism(330,410)(0,-1)[\phantom{Y_2}` `F\s'(id)]{450}0r

\put(580,300){\fbox{$ {F\s'^\bullet}^{-1} $}}  
\putmorphism(490,50)(1,0)[\phantom{(B,A)}`F\s'G(A) `=]{550}1a % drugi red, druga kolona

\putmorphism(-180,50)(0,-1)[\phantom{Y_2}``=]{450}1l
\putmorphism(440,50)(0,-1)[\phantom{F\s'F(A')}``=]{450}1r %druga kolona flecha
\put(-40,300){\fbox{$F\s'(\Theta^\alpha_A)$}}

\putmorphism(-200,50)(1,0)[\phantom{F\s'F(A')}`F\s'G(A)`F\s'(id)]{660}1a 
\putmorphism(980,500)(0,-1)[F\s'G(A)``=]{450}1r % treca kolona flecha, prvi red   
\putmorphism(980,50)(0,-1)[``=]{450}1r

\putmorphism(-200,-400)(1,0)[F\s'G(A)`F\s'G(A)`=]{660}1a 
\putmorphism(500,-400)(1,0)[\phantom{F(A)}`F\s'G(A)`=]{520}1a % druga

\putmorphism(1660,-380)(0,-1)[``=]{400}1r
 \putmorphism(1170,-400)(1,0)[`G'G(A) `\beta_1(G(A))]{600}1a % treca kolona
 \putmorphism(1000,-380)(0,-1)[`G'G(A) `\beta_0(G(A))]{400}1l % treca kolona dole
 \putmorphism(1160,-800)(1,0)[`G'G(A) `=]{600}1a % treca kolona

\put(0,-190){\fbox{$F\s'_{G(A)}$}}
\put(650,-190){\fbox{$1$}}
\put(1210,-620){\fbox{$\Theta^\beta_{G(A)}$}}
\efig
$$
and vertical composition of $\Theta$-double pseudonatural transformations 
%$\Theta^\alpha:F\Rightarrow G:\Aa\to\Bb$ and $\Theta^\beta: G\Rightarrow H:\Aa\to\Bb$ 
$\xymatrix{
\Aa \ar@{:=}@/^1pc/[rr] |{\Downarrow \Theta^\alpha} \ar@/^{2pc}/[rr]^{F}  \ar[rr]^G
  \ar@{:=}@/_1pc/[rr] |{\Downarrow \Theta^\beta}  \ar@/_{2pc}/[rr]_{H}  &&
	\Bb }$ 
is given by
$$\frac{\Theta^\alpha_A}{\Theta^\beta_A}=
\bfig

 \putmorphism(-240,150)(1,0)[F(A)`G(A)` \alpha_1(A)]{500}1a

 \putmorphism(-280,-250)(1,0)[G(A)``=]{420}1a  %
 \putmorphism(210,-250)(1,0)[\phantom{A\ot B}`H(A) `\beta_1(A)]{500}1a

\putmorphism(-230,150)(0,-1)[\phantom{Y_2}``\alpha_0(A)]{400}1l
\putmorphism(230,150)(0,-1)[\phantom{Y_2}`G(A)`=]{400}1l

\put(-160,-40){\fbox{$ \Theta^\alpha_A$ }}
\put(350,-440){\fbox{$ \Theta^\beta_A $}}

\putmorphism(250,-250)(0,-1)[\phantom{Y_2}``\beta_0(A)]{400}1l
\putmorphism(690,-250)(0,-1)[``=]{400}1r

 \putmorphism(250,-640)(1,0)[H(A)` H(A). `=]{450}1a
\efig
$$

The following result is directly proved: 

\begin{prop} \prlabel{teta->double}
A $\Theta$-double pseudonatural transformation $\Theta^\alpha$ gives rise to a double pseudonatural transformation $(\alpha_0,\alpha_1, t^\alpha, r^\alpha)$, 
where %the corresponding horizontal pseudonatural transformation $\alpha_1$ 
$$t^\alpha_f=
\bfig
\putmorphism(-150,250)(1,0)[F(A)`F(B)`F(f)]{600}1a
 \putmorphism(450,250)(1,0)[\phantom{F(A)}`G(B) `\alpha_1(B)]{600}1a

 \putmorphism(-150,-200)(1,0)[G(A)` G(B)` G(f)]{600}1a
 \putmorphism(450,-200)(1,0)[\phantom{F(A)}`G(B) ` =]{600}1a

\putmorphism(-180,250)(0,-1)[\phantom{Y_2}``\alpha_0(A)]{450}1l
\putmorphism(450,250)(0,-1)[\phantom{Y_2}``]{450}1r
\putmorphism(300,250)(0,-1)[\phantom{Y_2}``\alpha_0(B)]{450}0r
\putmorphism(1050,250)(0,-1)[\phantom{Y_2}``=]{450}1r
\put(0,10){\fbox{$(\alpha_0)_f$}}
\put(650,20){\fbox{$\Theta^\alpha_B$}}
\efig
\quad\text{and}\quad
r^\alpha_u=
\bfig
 \putmorphism(-150,500)(1,0)[F(A)`G(A)  `\alpha_1(A)]{600}1a
\putmorphism(-180,500)(0,-1)[\phantom{Y_2}`F(A') `F(u)]{450}1l
\putmorphism(-150,-400)(1,0)[G(A')` `=]{480}1a
\putmorphism(-180,50)(0,-1)[\phantom{Y_2}``\alpha_0(A')]{450}1l
\putmorphism(450,50)(0,-1)[\phantom{Y_2}`G(A')`=]{450}1r
\putmorphism(450,500)(0,-1)[\phantom{Y_2}`G(A') `G(u)]{450}1r
\put(0,260){\fbox{$(\alpha_1)_u$}}
\putmorphism(-180,50)(1,0)[\phantom{F(A)}``\alpha_1(A')]{500}1a
\put(20,-170){\fbox{$\Theta^\alpha_{A'}$}}
\efig
$$
for every 1h-cell $f:A\to B$ and 1v-cell $u:A\to A'$. Moreover, the class of all $\Theta$-double pseudonatural transformations 
is a subclass of the class of double pseudonatural transformations. 
\end{prop}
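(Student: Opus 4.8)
The plan is to adopt the displayed formulas for $t^\alpha_f$ and $r^\alpha_u$ as definitions and to verify directly that the quadruple $(\alpha_0,\alpha_1,t^\alpha,r^\alpha)$ satisfies every clause of \deref{double 2-cells}. Axioms (T1) and (T2) are literally the data and the standing hypothesis of a $\Theta$-double pseudonatural transformation (\deref{Theta}), so they transfer verbatim; the whole content therefore lies in checking (T3-1), (T3-2), (T3-3) and then in showing that the assignment $\Theta^\alpha\mapsto(\alpha_0,\alpha_1,t^\alpha,r^\alpha)$ is injective. The organising observation is that, by construction, $t^\alpha_f$ is the horizontal composite of $(\alpha_0)_f$ on the left with $\Theta^\alpha_B$ on the right, whereas $r^\alpha_u$ is the vertical composite of $(\alpha_1)_u$ on top with $\Theta^\alpha_{A'}$ at the bottom. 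Consequently, once these definitions are inserted, each (T3) identity decouples into a statement about $\alpha_0$ (resp. $\alpha_1$) alone, corrected by a single application of one of the axioms $(\Theta 0)$, $(\Theta 1)$.

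For the first identity of (T3-1) I would substitute the horizontal composites defining $t^\alpha_g$ and $t^\alpha_f$ and first apply $(\Theta 1)$ to the 1v-cell $v$, which rewrites $(\alpha_1)_v$ stacked over $\Theta^\alpha_{B'}$ as $\delta_{\alpha_0,v}$ stacked over $\Theta^\alpha_B$. The remaining cells $(\alpha_0)_g$, $(\alpha_0)_f$, $F(a)$, $G(a)$, $\delta_{\alpha_0,u}$, $\delta_{\alpha_0,v}$ are then related by exactly the pseudonaturality-of-2-cells axiom of the vertical pseudonatural transformation $\alpha_0$ (the vertical analogue of the first axiom of \deref{hor psnat tr}), while the surviving $\Theta^\alpha_B$ factors agree, giving the equality. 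The second identity of (T3-1) is the mirror statement: one applies $(\Theta 0)$ to convert $(\alpha_0)_g$ over $\Theta^\alpha$ into $\delta_{\alpha_1,g}$ over $\Theta^\alpha$, and then the pseudonaturality-of-2-cells axiom of the horizontal pseudonatural transformation $\alpha_1$.

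Axiom (T3-2), the interchange law whose role is recorded in \rmref{axioms used for t}, becomes almost immediate after substitution: factoring out the common cell $(\alpha_0)_f$ (resp. $(\alpha_1)_u$), which is permitted by the interchange of 2-cell pasting already present in $\Bb$, its first equation reduces to $(\Theta 0)$ applied to the 1h-cell $g$ and its second equation to $(\Theta 1)$ applied to the 1v-cell $v$. For (T3-3) I expand $t^\alpha_{gf}$ as the horizontal composite of $(\alpha_0)_{gf}$ with $\Theta^\alpha_C$ and rewrite $(\alpha_0)_{gf}$ by the horizontal-functoriality relation of $\alpha_0$, which introduces the compositor isomorphisms $F_{gf}$ and $G^{-1}_{gf}$; the factor $(\alpha_0)_g$ then recombines with $\Theta^\alpha_C$ into $t^\alpha_g$, and the remaining cells reproduce the prescribed diagram. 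The formula for $r^\alpha_{vu}$ is obtained dually from the vertical-functoriality relation of $\alpha_1$, producing $F^{vu}$ and $G^{vu}$ and the subterm $r^\alpha_v$.

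The main obstacle is the compositor bookkeeping in (T3-3): because $F$ and $G$ are genuine pseudo (not strict) double functors, the isomorphisms $F_{gf}, G_{gf}, F^{vu}, G^{vu}$ and the unit compositors have to be threaded through sizeable pasting diagrams so that the $\Theta^\alpha$-cell lands in exactly the position demanded by the statement; (T3-1) is comparable in length, since it invokes simultaneously the pseudonaturality axioms of both $\alpha_0$ and $\alpha_1$ together with both $(\Theta 0)$ and $(\Theta 1)$. Finally, the subclass assertion follows from injectivity of the construction. Evaluating $t^\alpha$ at a horizontal identity 1h-cell (equivalently $r^\alpha$ at a vertical identity 1v-cell) produces the composite of $(\alpha_0)_{\id}$ (resp. $(\alpha_1)_{\id}$) with $\Theta^\alpha_A$, and the unit part of the relevant functoriality axiom expresses $(\alpha_0)_{\id}$ through the invertible unit compositors of $F$ and $G$; solving for $\Theta^\alpha_A$ against these invertible cells recovers $\Theta^\alpha$ from $(\alpha_0,\alpha_1,t^\alpha,r^\alpha)$. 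Hence the assignment is injective, and the $\Theta$-double pseudonatural transformations form a subclass of the double pseudonatural transformations.
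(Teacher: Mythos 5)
Your proposal is correct and takes essentially the same route as the paper: adopt the displayed formulas as definitions and verify the axioms of a double pseudonatural transformation by substituting them and invoking $(\Theta 0)$, $(\Theta 1)$ together with the axioms of $\alpha_0$ and $\alpha_1$ (the paper's own proof records only the reduction of (T3-1) to axiom 1 of $\alpha_1$ via $(\Theta 1)$, and the surrounding text notes that (T3-2) and (T3-3) become redundant and exhibits the recovery of $\Theta^\alpha_A$ from $t^\alpha_{id_A}$ that underlies your injectivity argument). Your more detailed bookkeeping, including the observation that (T3-2) collapses to $(\Theta 0)$ resp.\ $(\Theta 1)$ after factoring out the common cell, is consistent with the paper's claims.
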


\begin{proof}
By the axiom $(\Theta 1)$, axiom 1. for the horizontal pseudonatural transformation $\alpha_1$ implies 
axiom (T3-1) for $t^\alpha_f$. 
\end{proof}

\medskip

Thus, from the point of view of $\Theta$-double pseudonatural transformations, the axioms (T3-2) and (T3-3) of 
double pseudonatural transformations become redundant. 

Observe also that given a double pseudonatural transformation $\alpha:F\Rightarrow G$ acting between {\em strict} 
double functors, the 2-cells $t^\alpha_{id_A}$ obey the conditions $(\Theta 0)$ and $(\Theta 1)$ for every 0-cell $A$.

\bigskip

The other way around, observe that setting 
$$
\bfig
\putmorphism(-150,150)(1,0)[F(A)`G(A)`\alpha_1(A)]{560}1a
\putmorphism(-150,-220)(1,0)[G(A)`G(A).`=]{600}1a
\putmorphism(-180,150)(0,-1)[\phantom{Y_2}``\alpha_0(A)]{370}1l
\putmorphism(410,150)(0,-1)[\phantom{Y_2}``=]{370}1r
\put(0,-40){\fbox{$t\Theta^\alpha_A$}}
\efig
:=
\bfig
 \putmorphism(-150,420)(1,0)[F(A)`F(A)`=]{500}1a
\putmorphism(-180,420)(0,-1)[\phantom{Y_2}``=]{370}1l
\putmorphism(320,420)(0,-1)[\phantom{Y_2}``=]{370}1r
 \putmorphism(-150,50)(1,0)[F(A)`F(A)`F(id_A)]{500}1a
 \put(-80,250){\fbox{$(F_A)^{-1}$}} %(0,-180)
\putmorphism(330,50)(1,0)[\phantom{F(A)}`G(A) `\alpha_1(A)]{560}1a
 \putmorphism(-170,-350)(1,0)[F(A)`G(A)`G(id_A)]{1070}1a

\putmorphism(-180,50)(0,-1)[\phantom{Y_2}``\alpha_0(A)]{400}1r
\putmorphism(910,50)(0,-1)[\phantom{Y_2}``=]{400}1r
\put(540,-170){\fbox{$t^\alpha_{id_A}$}}
\put(240,-550){\fbox{$G_A$}}

\putmorphism(-180,-350)(0,-1)[\phantom{Y_2}``=]{350}1l
\putmorphism(920,-350)(0,-1)[\phantom{Y_3}``=]{350}1r
 \putmorphism(-180,-700)(1,0)[G(A)` G(A), `=]{1070}1a
\efig
$$
by (T3-3) we get 
$$t^\alpha_f=
\bfig
 \putmorphism(-100,200)(1,0)[F(A)`F(B)`F(f)]{520}1a
 \putmorphism(420,200)(1,0)[\phantom{F(B)}`F(B) `F(id_B)]{520}1a
 \putmorphism(930,200)(1,0)[\phantom{F(B)}`G(B) `\alpha_1(B)]{570}1a
\put(20,20){\fbox{$(\alpha_0)_f$}}
\putmorphism(-100,200)(0,-1)[\phantom{Y_2}``\alpha_0(A)]{400}1l
\put(1100,20){\fbox{$t^\alpha_{id_B}$}}

\putmorphism(430,200)(0,-1)[\phantom{Y_2}``\alpha_0(B)]{400}1r
\putmorphism(1510,200)(0,-1)[\phantom{Y_2}``=]{400}1r

  \putmorphism(-150,-200)(1,0)[G(A)` G(B) `G(f)]{600}1a
\putmorphism(450,-200)(1,0)[\phantom{F(A)}`G(B), ` G(id_B)]{1050}1a
\efig
$$
and analogously, setting 
$$
\bfig
\putmorphism(-150,150)(1,0)[F(A)`G(A)`\alpha_1(A)]{560}1a
\putmorphism(-150,-220)(1,0)[G(A)`G(A).`=]{600}1a
\putmorphism(-180,150)(0,-1)[\phantom{Y_2}``\alpha_0(A)]{370}1l
\putmorphism(410,150)(0,-1)[\phantom{Y_2}``=]{370}1r
\put(0,-40){\fbox{$r\Theta^\alpha_A$}}
\efig
:=
\bfig
\putmorphism(-610,500)(1,0)[F(A)` `=]{370}1a
\putmorphism(-600,500)(0,-1)[` `=]{450}1l
\putmorphism(-610,50)(1,0)[F(A)`F(A) `=]{450}1a
\put(-460,260){\fbox{$F^A$} }

\putmorphism(-150,500)(1,0)[F(A)`G(A) `\alpha_1(A)]{500}1a
 \putmorphism(450,500)(1,0)[` `=]{330}1a
\putmorphism(-180,500)(0,-1)[\phantom{Y_2}` `F(id_A)]{450}1r
\put(0,-160){\fbox{$r^\alpha_{id_A}$}}
\putmorphism(-150,-400)(1,0)[G(A)`G(A) `=]{500}1a
\putmorphism(-180,50)(0,-1)[\phantom{Y_2}``\alpha_0(A)]{450}1l
\putmorphism(380,500)(0,-1)[\phantom{Y_2}` `G(id_A)]{900}1l
\put(460,260){\fbox{$(G^A)^{-1}$} }
  \putmorphism(870,500)(0,-1)[G(A)`G(A),`=]{900}1r
\putmorphism(440,-400)(1,0)[``=]{340}1a
\efig
$$
one gets 
$$r^\alpha_u=
\bfig
 \putmorphism(-150,410)(1,0)[F(A)`G(A)  `\alpha_1(A)]{530}1a
\putmorphism(-160,400)(0,-1)[\phantom{Y_2}`F(A') `F(u)]{380}1l
\putmorphism(370,400)(0,-1)[\phantom{Y_2}`G(A') `G(u)]{380}1r
\putmorphism(-160,50)(0,-1)[\phantom{Y_2}`F(A')`F(id_{A'})]{430}1l
\putmorphism(370,50)(0,-1)[\phantom{Y_2}`G(A').`G(id_{A'})]{820}1r
\put(-60,240){\fbox{$(\alpha_1)_u$}}
\putmorphism(-70,20)(1,0)[``\alpha_1(A')]{330}1a
\put(20,-210){\fbox{$r^\alpha_{id_{A'}} $}}
\putmorphism(-160,-350)(0,-1)[\phantom{Y_2}``\alpha_0(A')]{400}1l
\putmorphism(-180,-750)(1,0)[G(A')` `=]{440}1b
\efig
$$
By successive applications of (T3-2) and axiom 2. for $\alpha_0$ one gets that $t\Theta^\alpha_{\bullet}$ satisfies the axiom $(\Theta 0)$, 
and similarly $r\Theta^\alpha_{\bullet}$ satisfies the axiom $(\Theta 1)$ of \deref{Theta}. Since the 2-cells $t^\alpha_f$ and $r^\alpha_f$ are 
not related, we can not claim that all double pseudonatural transformations are $\Theta$-double pseudotransformations.

%%%%%%%%%%%%%%%%%%%%%%%%%%%%%%%%%%%%%%%%%%%%%%%%%%%%%%%%%%%%

%%%%%     3-cells for double catgeories 1.tex     %%%%%%

%%%%%%%%%%%%%%%%%%%%%%%%%%%%%%%%%%%%%%%%%%%%%%%%%%%%%%%%%%%%

\subsection{3-cells of the tricategory } % and horizontal pseudo-associativity of the 2-cells} 

We first define modifications for horizontal and vertical pseudonatural transformations. Since we will then define modifications for 
double pseudonatural transformations, for mnemonic reasons we will denote vertical pseudonatural transformations with index 0 and horizontal 
ones with index 1.

\begin{defn} \delabel{modif vert/horiz psnat tr}
A modification between two vertical pseudonatural transformations $\alpha_0$ and $\beta_0$ which act between double psuedofunctors 
$F\Rightarrow G$ is an application $a: \alpha_0\Rrightarrow\beta_0$ such that for each 0-cell $A$ in $\Aa$ there is a horizontally globular 
2-cell $a_0(A):\alpha_0(A)\Rightarrow\beta_0(A)$ which for each 1h-cell $f:A\to B$ satisfies: 
$$
\bfig
\putmorphism(-100,250)(1,0)[F(A)`F(A)`=]{550}1a
 \putmorphism(430,250)(1,0)[\phantom{F(A)}`F(B) `F(f)]{580}1a

 \putmorphism(-100,-200)(1,0)[G(A)`G(A)`=]{550}1a
 \putmorphism(450,-200)(1,0)[\phantom{F(A)}`G(B) `G(f)]{580}1a

\putmorphism(-100,250)(0,-1)[\phantom{Y_2}``\alpha_0(A)]{450}1l
\putmorphism(450,250)(0,-1)[\phantom{Y_2}``]{450}1r
\putmorphism(300,250)(0,-1)[\phantom{Y_2}``\beta_0(A)]{450}0r
\putmorphism(1020,250)(0,-1)[\phantom{Y_2}``\beta_0(B)]{450}1r
\put(0,10){\fbox{$a_0(A)$}}
\put(620,20){\fbox{$(\beta_0)_f$}}
\efig\quad
=\quad
\bfig
\putmorphism(-100,250)(1,0)[F(A)`F(B)`F(f)]{550}1a
 \putmorphism(430,250)(1,0)[\phantom{F(A)}`F(B) `=]{550}1a

 \putmorphism(-100,-200)(1,0)[G(A)`G(B)`G(f)]{550}1a
 \putmorphism(450,-200)(1,0)[\phantom{F(A)}`G(B) `=]{550}1a

\putmorphism(-100,250)(0,-1)[\phantom{Y_2}``\alpha_0(A)]{450}1l
\putmorphism(450,250)(0,-1)[\phantom{Y_2}``]{450}1r
\putmorphism(300,250)(0,-1)[\phantom{Y_2}``\alpha_0(B)]{450}0r
\putmorphism(990,250)(0,-1)[\phantom{Y_2}``\beta_0(B)]{450}1r
\put(0,40){\fbox{$ (\alpha_0)_f$}}
\put(590,30){\fbox{$a_0(B)$}}
\efig
$$
and 

$$
\bfig
 \putmorphism(-400,-400)(1,0)[` `=]{380}1a
 \putmorphism(80,500)(1,0)[F(A)`F(A) `=]{500}1a
\putmorphism(-480,50)(0,-1)[F(A')`G(A')`\alpha_0(A')]{450}1l
\putmorphism(-380,60)(1,0)[``=]{360}1a
\putmorphism(70,500)(0,-1)[\phantom{Y_2}`F(A') `F(u)]{450}1l
\put(200,270){\fbox{$\delta_{\beta_0,u}$}}
\putmorphism(80,-400)(1,0)[G(A')`G(A') `=]{500}1a
\putmorphism(70,50)(0,-1)[\phantom{Y_2}``\beta_0(A')]{450}1r
\putmorphism(580,500)(0,-1)[\phantom{Y_2}` `\beta_0(A)]{450}1r
\putmorphism(580,50)(0,-1)[G(A)` `G(u)]{450}1r
\put(-370,-180){\fbox{$a_0(A')$} }
\efig
=
\bfig
 \putmorphism(-150,500)(1,0)[F(A)`F(A) `=]{500}1a
 \putmorphism(450,500)(1,0)[` `=]{380}1a
\putmorphism(-180,500)(0,-1)[\phantom{Y_2}`F(A') `F(u)]{450}1l
\put(0,-160){\fbox{$\delta_{\alpha_0,u}$}}
\putmorphism(-150,-400)(1,0)[G(A')`G(A'). `=]{500}1a
\putmorphism(-180,50)(0,-1)[\phantom{Y_2}``\alpha_0(A')]{450}1l
\putmorphism(380,500)(0,-1)[\phantom{Y_2}` `\alpha_0(A)]{450}1l
\putmorphism(380,50)(0,-1)[G(A)` `G(u)]{450}1r
\put(520,290){\fbox{$a_0(A)$} }
\putmorphism(940,500)(0,-1)[F(A)`G(A)`\beta_0(A)]{450}1r
\putmorphism(470,60)(1,0)[``=]{360}1a
\efig
$$
\end{defn}

A modification between two horizontal pseudonatural transformations $\alpha_1$ and $\beta_1$ which act between double psuedofunctors 
$F\Rightarrow G$ is an application $a: \alpha_1\Rrightarrow\beta_1$ such that for each 0-cell $A$ in $\Aa$ there is a vertically globular 2-cell 
$a_1(A):\alpha_1(A)\Rightarrow\beta_1(A)$ which for each 1v-cell $u:A\to A'$ satisfies two conditions analogous to those of the above definition.

Now, 3-cells for our tricategory $\DblPs$ will be modifications which we define here:

\begin{defn} \delabel{modif 3}
A modification between two double pseudonatural transformations $\alpha=(\alpha_0,\alpha_1,t^\alpha, r^\alpha)$ and 
$\beta=(\beta_0,\beta_1, t^\beta, r^\beta)$ which act between double pseudofunctors $F\Rightarrow G$ is an application 
$a: \alpha\Rrightarrow\beta$ consisting of a modification $a_0$ for vertical pseudonatural transformations and a 
modification $a_1$ for horizontal pseudonatural transformations, such that for each 0-cell $A$ in $\Aa$ it holds: %there is a pair 
%of 2-cells $a_0(A):\alpha_0(A)\Rightarrow\beta_0(A),a_1(A):\alpha_1(A)\Rightarrow\beta_1(A)$, 
%where $a_0(A)$ is horizontally globular, and $a_1(A)$ is vertically globular, which satisfy: 
\begin{equation} \eqlabel{modif t}
\bfig
\putmorphism(900,650)(1,0)[F(B)`G(B)  `\alpha_1(B)]{520}1a
\putmorphism(890,650)(0,-1)[\phantom{Y_2}` `=]{380}1l
\putmorphism(1420,650)(0,-1)[\phantom{Y_2}` `=]{380}1r
\put(1020,470){\fbox{$a_1(B)$}}

\putmorphism(-100,250)(1,0)[F(A)`F(A)`=]{550}1a
 \putmorphism(430,250)(1,0)[\phantom{F(A)}`F(B) `F(f)]{480}1a
 \putmorphism(900,250)(1,0)[\phantom{F(A)}`G(B) `\beta_1(B)]{520}1a
\putmorphism(1420,250)(0,-1)[\phantom{Y_2}``=]{450}1r

 \putmorphism(-100,-200)(1,0)[G(A)`G(A)`=]{550}1a
 \putmorphism(450,-200)(1,0)[\phantom{F(A)}`G(B) `G(f)]{980}1a

\putmorphism(-100,250)(0,-1)[\phantom{Y_2}``\alpha_0(A)]{450}1l
\putmorphism(450,250)(0,-1)[\phantom{Y_2}``]{450}1r
\putmorphism(300,250)(0,-1)[\phantom{Y_2}``\beta_0(A)]{450}0r

\put(0,10){\fbox{$a_0(A)$}}
\put(1070,20){\fbox{$t^\beta_f$}}
\efig\quad
=\quad
\bfig
\putmorphism(-80,250)(1,0)[F(A)`F(B)`F(f)]{530}1a
 \putmorphism(450,250)(1,0)[\phantom{F(A)}`G(B) `\alpha_1(B)]{580}1a
 \putmorphism(-80,-150)(1,0)[F(A)`G(A)`G(f)]{1120}1b
\putmorphism(-100,250)(0,-1)[\phantom{Y_2}``\alpha_0(A)]{400}1r
\putmorphism(1020,250)(0,-1)[\phantom{Y_2}``=]{400}1r
\put(350,30){\fbox{$t^\alpha_f$}}
\efig
\end{equation}
and
$$
\bfig
 \putmorphism(-150,410)(1,0)[F(A)`G(A)  `\alpha_1(A)]{490}1a
\putmorphism(-160,430)(0,-1)[\phantom{Y_2}`F(A) `=]{410}1l
\putmorphism(370,430)(0,-1)[\phantom{Y_2}`G(A) `=]{410}1r
\putmorphism(-160,50)(0,-1)[\phantom{Y_2}``F(u)]{420}1l
\putmorphism(370,50)(0,-1)[\phantom{Y_2}`G(A')`G(u)]{820}1r
\put(-60,240){\fbox{$a_1(A)$}}

\putmorphism(-180,20)(1,0)[\phantom{F(A)}``\beta_1(A)]{460}1a
\put(20,-180){\fbox{$r^\beta_u $}}

\putmorphism(-160,-350)(0,-1)[\phantom{Y_2}``\beta_0(A')]{400}1r
\putmorphism(-620,-350)(0,-1)[\phantom{Y_2}``\alpha_0(A')]{400}1l
\putmorphism(-620,-350)(1,0)[F(A')`F(A') `=]{440}1b
\putmorphism(-630,-750)(1,0)[G(A')` `=]{340}1b
\putmorphism(-180,-750)(1,0)[G(A')` `=]{440}1b
\put(-540,-560){\fbox{$a_0(A')$}}

\efig
=
\bfig
 \putmorphism(-150,500)(1,0)[F(A)`G(A)  `\alpha_1(A)]{500}1a
\putmorphism(-180,500)(0,-1)[\phantom{Y_2}`F(A') `F(u)]{400}1l
\putmorphism(-150,-300)(1,0)[G(A')` `=]{400}1a
\putmorphism(-180,100)(0,-1)[\phantom{Y_2}``\alpha_0(A')]{400}1l
\putmorphism(350,500)(0,-1)[\phantom{Y_2}`G(A'). `G(u)]{800}1r
\put(50,100){\fbox{$r^\alpha_u$}}
\efig
$$
\end{defn}

{\em Horizontal composition} of the modifications $a: \alpha\Rrightarrow\beta: F\Rightarrow G$ and $b: \alpha'\Rrightarrow\beta': 
F\s'\Rightarrow G'$, acting between horizontally composable double pseudonatural transformations %$F\stackrel{\alpha}{\Rightarrow} 
%F\s'\stackrel{\alpha'}{\Rightarrow}G'$ and $F\stackrel{\beta}{\Rightarrow} G \stackrel{\beta'}{\Rightarrow}H$, 
$\alpha'\comp\alpha\Rrightarrow\beta'\comp\beta : F\s'\comp F\Rightarrow G'\comp G$ is given for every 0-cell 
$A$ in $\Aa$ by pairs consisting of
$$(b\comp a)_0(A)=
\bfig
 \putmorphism(-170,470)(1,0)[F\s'F(A)`F\s'F(A)  `=]{570}1a
\putmorphism(-180,470)(0,-1)[\phantom{Y_2}` `=]{400}1l
\putmorphism(-180,80)(0,-1)[\phantom{Y_2}``F\s'(\alpha_0(A))]{400}1l
\putmorphism(370,80)(0,-1)[\phantom{Y_2}``F\s'(\beta_0(A))]{400}1r
\putmorphism(370,470)(0,-1)[\phantom{Y_2}``=]{400}1r  % \beta_0(A)
\put(-40,280){\fbox{$(F\s'_\bullet)^{-1}$}}

\putmorphism(-170,70)(1,0)[F\s'F(A)`F\s'F(A) `F\s'(id)]{570}1a
\putmorphism(-170,-320)(1,0)[F\s'G(A)`F\s'G(A) `F\s'(id)]{570}1b
\put(-140,-130){\fbox{$F\s'(a_0(A))$}}

\putmorphism(-180,-310)(0,-1)[\phantom{Y_2}``=]{400}1l
\putmorphism(370,-310)(0,-1)[\phantom{Y_2}``=]{400}1r
\putmorphism(-170,-710)(1,0)[F\s'G(A)`F\s'G(A) `=]{570}1b
\put(-20,-560){\fbox{$F\s'_\bullet$}}

\putmorphism(-180,-700)(0,-1)[\phantom{Y_2}``\alpha'_0(G(A))]{400}1l
\putmorphism(370,-700)(0,-1)[\phantom{Y_2}``\beta'_0(G(A))]{400}1r
\putmorphism(-170,-1100)(1,0)[G'G(A)`G'G(A) `=]{570}1b
\put(-120,-930){\fbox{$b_0(G(A))$}}

\efig
$$
and %\text{and}\quad
$$(b\comp a)_1(A)=
%$$
\bfig
\putmorphism(-580,250)(0,-1)[\phantom{Y_2}``=]{450}1l
\putmorphism(1520,250)(0,-1)[\phantom{Y_2}``=]{450}1r
\putmorphism(-570,250)(1,0)[F\s'F(A)`\phantom{HF(A)}`=]{550}1a
\putmorphism(-570,-200)(1,0)[F\s'F(A)`\phantom{HF(A)}`=]{550}1a
\put(-460,30){\fbox{$F\s'^\bullet$}}
\put(1040,30){\fbox{$(F\s'^\bullet)^{-1}$}}

 \putmorphism(880,250)(1,0)[\phantom{HF(A)}`F\s'G(A) `=]{550}1a
 \putmorphism(1430,250)(1,0)[\phantom{HF(A)}`F\s'G(A) `\alpha_1'(G(A))]{900}1a
 \putmorphism(880,-200)(1,0)[\phantom{HF(A)}`F\s'G(A) `=]{550}1a
 \putmorphism(1450,-200)(1,0)[\phantom{HF(A)}`F\s'G(A). `\beta_1'(G(A))]{900}1a
\putmorphism(2350,250)(0,-1)[\phantom{Y_2}``=]{450}1l
\put(1680,30){\fbox{$b_1(G(A))$}}

\putmorphism(-20,250)(1,0)[F\s'F(A)`F\s'G(A)` F\s'(\alpha_1(A))]{900}1a
 \putmorphism(-20,-200)(1,0)[F\s'F(A)`F\s'G(A)`F\s'(\beta_1(A))]{900}1a

\putmorphism(-120,250)(0,-1)[\phantom{Y_2}``]{450}1r
\putmorphism(-140,250)(0,-1)[\phantom{Y_2}``F\s'(id)]{450}0r

\putmorphism(940,250)(0,-1)[\phantom{Y_2}``]{450}1l
\putmorphism(960,250)(0,-1)[\phantom{Y_2}``F\s'(id)]{450}0l

\put(180,30){\fbox{$F\s'(a_1(A))$}}

\efig
$$
%%%%%%%%%%%%%%%5

%$$
%\bfig
%\putmorphism(-580,500)(0,-1)[\phantom{Y_2}``=]{450}1l
%\putmorphism(1520,500)(0,-1)[\phantom{Y_2}``=]{450}1r
%\putmorphism(-570,500)(1,0)[F\s'F(A)`\phantom{HF(A)}`=]{550}1a
%\putmorphism(-570,50)(1,0)[F\s'F(A)`\phantom{HF(A)}`=]{550}1a
%\put(-500,260){\fbox{$F\s'^{F(A)}}}
%\put(1000,260){\fbox{$(F\s'^{G(A)})^{-1}}}

 %\putmorphism(880,500)(1,0)[\phantom{HF(A)}`F\s'G(A) `=]{550}1a
 %\putmorphism(1430,500)(1,0)[\phantom{HF(A)}`F\s'G(A) `\alpha_1'(G(A))]{900}1a
 %\putmorphism(880,50)(1,0)[\phantom{HF(A)}`F\s'G(A) `=]{550}1a
 %\putmorphism(1450,50)(1,0)[\phantom{HF(A)}`F\s'G(A) `\beta_1'(G(A))]{900}1a
%\putmorphism(2350,500)(0,-1)[\phantom{Y_2}``=]{450}1l
%\put(1680,260){\fbox{$b_1(G(A))}}

%\putmorphism(-20,500)(1,0)[F\s'F(A)`F\s'G(A)` F\s'(\alpha_1(A))]{900}1a
% \putmorphism(-20,50)(1,0)[F\s'F(A)`F\s'G(A)`F\s'(\beta_1(A))]{900}1a

%\putmorphism(-120,500)(0,-1)[\phantom{Y_2}``]{450}1r
%\putmorphism(-140,500)(0,-1)[\phantom{Y_2}``F\s'(id)]{450}0r

%\putmorphism(940,500)(0,-1)[\phantom{Y_2}``]{450}1l
%\putmorphism(960,500)(0,-1)[\phantom{Y_2}``F\s'(id)]{450}0l

%\put(180,270){\fbox{$F\s'(a_1(A))}}

%\efig
%$$

{\em Vertical composition} of the modifications $a: \alpha\Rrightarrow\beta: F\Rightarrow G$ and $b: \alpha'\Rrightarrow\beta': 
G\Rightarrow H$, acting between vertically composable double pseudonatural transformations $F\stackrel{\alpha}{\Rightarrow} G 
\stackrel{\alpha'}{\Rightarrow}H$ and $F\stackrel{\beta}{\Rightarrow} G \stackrel{\beta'}{\Rightarrow}H$, is given for every 0-cell 
$A$ in $\Aa$ by pairs consisting of
$$(\frac{a}{b})_0(A)=
\bfig
 \putmorphism(-150,470)(1,0)[F(A)`F(A)  `=]{440}1a
\putmorphism(-160,470)(0,-1)[\phantom{Y_2}`G(A) `\alpha_0(A)]{400}1l
\putmorphism(-160,80)(0,-1)[\phantom{Y_2}``\alpha'_0(A)]{400}1l
\putmorphism(300,80)(0,-1)[\phantom{Y_2}`H(A)`\beta'_0(A)]{400}1r
\putmorphism(300,470)(0,-1)[\phantom{Y_2}`G(A) `\beta_0(A)]{400}1r
\put(-80,260){\fbox{$a_0(A)$}}

\putmorphism(-180,70)(1,0)[\phantom{F(A)}``=]{380}1a
\putmorphism(-220,-300)(1,0)[H(A)` `\beta_1(A')]{440}1b
\put(-80,-130){\fbox{$a'_0(A)$}}
\efig
\text{and}\quad
(\frac{a}{b})_1(A)=
\bfig
\putmorphism(-100,250)(1,0)[F(A)`G(A)`\alpha_1(A)]{550}1a
 \putmorphism(430,250)(1,0)[\phantom{F(A)}`H(A) `\alpha'_1(A)]{580}1a

 \putmorphism(-100,-200)(1,0)[F(A)`G(A)`\beta_1(A)]{550}1a
 \putmorphism(450,-200)(1,0)[\phantom{F(A)}`H(A). `\beta'_1(A)]{580}1a

\putmorphism(-100,250)(0,-1)[\phantom{Y_2}``=]{450}1r
\putmorphism(420,250)(0,-1)[\phantom{Y_2}``]{450}1r
 \putmorphism(400,250)(0,-1)[\phantom{Y_2}``=]{450}0r
\putmorphism(1020,250)(0,-1)[\phantom{Y_2}``=]{450}1r
\put(40,10){\fbox{$a_1(A)$}}
\put(560,20){\fbox{$a'_1(A)$}}
\efig
$$

{\em Transversal composition} of the modifications %$a: \alpha\Rrightarrow\beta: F\Rightarrow G$ and $b: \alpha'\Rrightarrow\beta': 
%G\Rightarrow H$, acting between vertically composable double pseudonatural transformations 
$\alpha\stackrel{a}{\Rrightarrow}\beta \stackrel{b}{\Rrightarrow}\gamma : F\Rightarrow G$ is given for every 0-cell 
$A$ in $\Aa$ by pairs consisting of
$$(b\cdot a)_0(A)=
\bfig
\putmorphism(-100,250)(1,0)[F(A)`G(A)`=]{550}1a
 \putmorphism(430,250)(1,0)[\phantom{F(A)}`H(A) `=]{580}1a

 \putmorphism(-100,-200)(1,0)[F(A)`G(A)`=]{550}1a
 \putmorphism(450,-200)(1,0)[\phantom{F(A)}`H(A) `=]{580}1a

\putmorphism(-100,250)(0,-1)[\phantom{Y_2}``]{450}1r
\putmorphism(-120,200)(0,-1)[\phantom{Y_2}``\alpha_0(A)]{450}0r

\putmorphism(420,250)(0,-1)[\phantom{Y_2}``]{450}1r
 \putmorphism(400,200)(0,-1)[\phantom{Y_2}``\beta_0(A)]{450}0r

\putmorphism(1020,250)(0,-1)[\phantom{Y_2}``]{450}1r
\putmorphism(1000,200)(0,-1)[\phantom{Y_2}``\gamma_0(A)]{450}0r

\put(100,100){\fbox{$a_0(A)$}}
\put(660,100){\fbox{$b_0(A)$}}
\efig\quad
\text{and}\quad
(b\cdot a)_1(A)=
\bfig
 \putmorphism(-150,470)(1,0)[F(A)`G(A)  `\alpha_1(A)]{440}1a
\putmorphism(-160,470)(0,-1)[\phantom{Y_2}`F(A) `=]{400}1l
\putmorphism(-160,80)(0,-1)[\phantom{Y_2}`F(A)`=]{400}1l
\putmorphism(300,80)(0,-1)[\phantom{Y_2}`G(A).`=]{400}1r
\putmorphism(300,470)(0,-1)[\phantom{Y_2}`G(A) `=]{400}1r
\put(-80,260){\fbox{$a_1(A)$}}

\putmorphism(-160,70)(1,0)[\phantom{F(A)}``\beta_1(A)]{380}1a
\putmorphism(-100,-300)(1,0)[\phantom{Y_2}` `\gamma_1(A)]{300}1b
\put(-80,-130){\fbox{$b_1(A)$}}
\efig
$$

From the definitions it is clear that vertical and transversal composition of the 3-cells is strictly associative. That the associativity in 
the horizontal direction is also strict we proved in \cite[Section 4.7]{Fem}.

\subsection{A subclass of the 3-cells} 

For $\Theta$-double pseudonatural transformations we define modifications as follows:

\begin{defn}
A modification between two $\Theta$-double pseudonatural transformations $\Theta^\alpha\equiv(\alpha_0,\alpha_1,\Theta^\alpha)$ and 
$\Theta^\beta\equiv(\beta_0,\beta_1, \Theta^\beta)$ which act between double psuedofunctors $F\Rightarrow G$ is an application 
$a: \alpha\Rrightarrow\beta$ consisting of a modification $a_0$ for vertical pseudonatural transformations and a 
modification $a_1$ for horizontal pseudonatural transformations, such that for each 0-cell $A$ in $\Aa$ it holds:
$$
\bfig
\putmorphism(450,450)(1,0)[`G(A) `\alpha_1(A)]{460}1a
\putmorphism(380,470)(0,-1)[F(A)` `=]{420}1l
\putmorphism(920,470)(0,-1)[\phantom{Y_2}``=]{420}1r
\put(500,250){\fbox{$a_1(A)$} }

 \putmorphism(-150,50)(1,0)[F(A)`F(A) `=]{500}1a
 \putmorphism(450,50)(1,0)[` `\beta_1(A)]{380}1a
\putmorphism(-180,50)(0,-1)[\phantom{Y_2}` `\alpha_0(A)]{450}1l
\put(-50,-190){\fbox{$a_0(A)$}}
\putmorphism(-150,-400)(1,0)[G(A)`G(A) `=]{500}1a
\putmorphism(360,50)(0,-1)[\phantom{Y_2}` `\beta_0(A)]{450}1r
\put(660,-180){\fbox{$\Theta^\beta_A$} }
\putmorphism(940,50)(0,-1)[G(A)`G(A)`=]{450}1r
\putmorphism(470,-400)(1,0)[``=]{360}1a
\efig\quad
=
\bfig
\putmorphism(-150,50)(1,0)[F(A)`G(A)`\alpha_1(A)]{560}1a
\putmorphism(-150,-320)(1,0)[G(A)`G(A).`=]{600}1a
\putmorphism(-180,50)(0,-1)[\phantom{Y_2}``\alpha_0(A)]{370}1l
\putmorphism(410,50)(0,-1)[\phantom{Y_2}``=]{370}1r
\put(30,-140){\fbox{$\Theta^\alpha_A$}}
\efig
$$
\end{defn}

It is directly proved that modifications between $\Theta$-double pseudonatural transformations are particular cases of modifications 
between double pseudonatural transformations. This gives a sub-tricategory $\DblPs^\Theta$ of the tricategory $\DblPs$. 
%This yields a sub-tricategory of our tricategory. 

\subsection{The obtained tricategory}  \sslabel{sum up tricat DblPs}

In Sections 4.6 and 4.9 of \cite{Fem} we proved that horizontal associativity of the 2-cells of $\DblPs$ and the interchange law for 2-cells, 
respectively, hold up to isomorphisms, which we gave explicitly. In Section 4.7 of {\em loc.cit.} we proved the strict associativity of the 3-cells, 
and in Section 4.8 we showed that left unity constraints on 2-cells is identity, but for the right one we gave an isomorphism. 
In Section 4.10 we showed that the distinguished modifications from the axiom (TD8) of \cite{GPS} fulfill the required identities, which 
concludes the construction of the tricategory $\DblPs$.

\section{The 2-category $PsDbl$ embeds into our tricategory $\DblPs$} \selabel{Ps embeds}

As we want to propose an alternative notion to intercategories as {\em categories internal to the tricategory $\DblPs$}, which we do in the next 
two sections, so that monoids in the Cartesian monoidal category $(Dbl, \oast)$ fit in it, 
in this section we compare the 2-category $LxDbl$ and our tricategory $\DblPs$. 
As we explained, we can not embed $LxDbl$ (whose 1-cells are lax double functors) into $\DblPs$, instead we will embed the 2-category $PsDbl$ 
of pseudo double categories, pseudo double functors and vertical transformations, used in \cite{Shul}. Apart from 1-cells, 
it differs from $LxDbl$ in that the %roles of the horizontal and the vertical direction are sitched
horizontal direction is weak and the vertical one is strict, while in the approach 
of Grandis and Par\'e and in $LxDbl$ it is the other way around. Moreover, 2-cells in $PsDbl$ are vertical rather than horizontal transformations, 
as in $LxDbl$. Thus the 2-category $PsDbl$ is the closest one to $LxDbl$ in the presented context which we could embed into our tricategory $\DblPs$.

The 0-cells of $PsDbl$ are pseudo double categories and not {\em strict} double categories as in $\DblPs$. Though, by Strictification 
Theorem of \cite[Section 7.5]{GP:Limits} every pseudo double category is equivalent by a pseudodouble functor 
%(which is equivalence by a horizontal isomorphism) 
to a strict double category. Let $PsDbl^*_3$ be the 3-category defined by adding only the identity 3-cells to the 2-category equivalent 
to $PsDbl$ having strict double categories for 0-cells. Thus $PsDbl^*_3$ consists of strict double categories, pseudo double functors, 
vertical transformations and identity modifications among the latter. Pseudo double functors are in particular double pseudo functors, 
so the only thing it remains to check is how to make a vertical transformation a double pseudonatural transformation, that is, embed 2-cells 
of $PsDbl$ into those of $\DblPs$. 

Before doing this, we prove some more general results. 

%Observe that by doing this, we go from the 2-category $LxDbl$ 
%(considered by Grandis and Par\'e) to a tricategory which in fact has {\em strict} double categories for 0-cells rather than {\em pseudo} double categories 
%as in $LxDbl$. Though, by Strictification Theorem of \cite[Section 7.5]{GP:Limits} every pseudodouble category is equivalent by a pseudodouble functor 
%(which is equivalence by a horizontal isomorphism) to a strict double category. 

%Secondly, by the Strictification Theorem of \cite[Section 7.5]{GP:Limits}, $LxDbl^{tr}$ is equivalent to a 2-category of 
%strict double categories, lax double functors and vertical transformations. Let $GP_3$ be the 3-category defined by adding only the identity 3-cells 
%to the latter 2-category, that is, it has strict double categories for 0-cells, lax double functors as 1-cells, vertical transformations as 
%2-cells and identities for 3-cells. 

\subsection{Bijectivity between strong vertical and strong horizontal transformations} \sslabel{bijective corr}

Recall that a {\em companion} for a 1v-cell $u:A\to A'$ is a 1h-cell  $u_*:A\to A'$ together with certain 2-cells $\Epsilon$ and $\eta$ satisfying 
$[\eta\vert\Epsilon]=\Id_{u_*}$ and $\frac{\eta}{\Epsilon}=\Id_{u}$, %\textcolor{rojo}{
%where $\Id_{hor}$ and $\Id_{vert}$ are horizontal and vertical identities, respectively}, 
\cite[Section 1.2]{GP:Adj}, \cite[Section 3]{Shul}. (Here $[\eta\vert\Epsilon]$ denotes the horizontal composition of 2-cells, 
where $\eta$ acts first, and the fraction denotes their vertical composition.) We will say that $u_*$ is a {\em 1h-companion} of $u$. 
Companions are unique up to a unique globular isomorphism \cite[Lemma 3.8]{Shul} and a {\em connection} on a double category is 
a functorial choice of a companion for each 1v-cell, \cite{BS}. We will need a functorial choice of companions only for 1v-cell 
components of vertical pseudonatural transformations, accordingly we will speak about a {\em connection on those 1v-cells}. 
%\textcolor{rojo}{
%We will consider that strict double categories have companions for all 1v-cells. For this property we will say that strict 
%double categories are {\em 1h-liftable}. For a 1v-cell $u:A\to A'$ its 1h-companion we will denote by $u^*:A\to A'$ and it 
%will be not necessary to say that $u$ is a 1v-cell.  } 

\begin{prop} \prlabel{vertic->horiz}
Let $\alpha_0:F\Rightarrow G$ be a strong vertical transformation between pseudo double functors acting between %1h-liftable 
strict double categories $\Aa\to\Bb$ (\cite[Section 7.4]{GP:Limits}). 
%Suppose that there is a connection on 1v-cell components of $\alpha_0$. 
%and suppose that for every 1v-cell $u:A\to A'$ the 1v-cell $\alpha_0(A)$ 
%has a 1h-companion, which we will denote by $\alpha_1(A)$. 
The following data define a horizontal pseudonatural transformation $\alpha_1:F\Rightarrow G$:
\begin{itemize}
\item a fixed choice of a 1h-companion of $\alpha_0(A)$, for every 0-cell $A$ of $\Aa$ (with corresponding 2-cells 
$\Epsilon^\alpha_A$ and $\eta^\alpha_A$), 
we denote it by $\alpha_1(A)$; 
\item the 2-cell 
$$(\alpha_1)_u=\quad
\bfig
 \putmorphism(-40,50)(1,0)[``=]{320}1b

\put(550,30){\fbox{$\delta_{\alpha_0,u}$}}
\putmorphism(-150,-400)(1,0)[F(A')`G(A') `\alpha_1(A')]{520}1a
\putmorphism(-150,50)(0,-1)[F(A')``=]{450}1l
\putmorphism(380,500)(0,-1)[\phantom{Y_2}` `F(u)]{450}1l
\putmorphism(950,500)(0,-1)[\phantom{Y_2}`G(A) `\alpha_0(A)]{450}1r
\putmorphism(380,50)(0,-1)[F(A')` `\alpha_0(A')]{450}1r
\putmorphism(350,500)(1,0)[F(A)`F(A)`=]{600}1a
 \putmorphism(950,500)(1,0)[\phantom{F(A)}`G(A) `\alpha_1(A)]{650}1a
 \putmorphism(1060,50)(1,0)[`G(A)`=]{500}1b

\putmorphism(1570,500)(0,-1)[\phantom{Y_2}``=]{450}1r
\put(1250,260){\fbox{$\Epsilon^\alpha_A$}}
\putmorphism(480,-400)(1,0)[`G(A') `=]{500}1a
\putmorphism(950,50)(0,-1)[\phantom{Y_2}``G(u)]{450}1r
\put(0,-160){\fbox{$\eta^\alpha_{A'}$}}
\efig
$$
for every 1v-cell $u:A\to A'$; 
\item the 2-cell 
$$\delta_{\alpha_1,f}=\quad
\bfig
 \putmorphism(-150,250)(1,0)[F(A)`F(A) `=]{500}1a
\put(-100,30){\fbox{$\eta^\alpha_A$}}
\putmorphism(380,250)(0,-1)[\phantom{Y_2}` `\alpha_0(A)]{450}1l
\putmorphism(950,250)(0,-1)[\phantom{Y_2}` `\alpha_0(B)]{450}1r
\putmorphism(350,250)(1,0)[F(A)`F(B)`F(f)]{600}1a
 \putmorphism(950,250)(1,0)[\phantom{F(A)}`G(B) `\alpha_1(B)]{600}1a
 \putmorphism(470,-200)(1,0)[`G(B)`G(f)]{500}1b
 \putmorphism(1060,-200)(1,0)[`G(B)`=]{500}1b
\putmorphism(1570,250)(0,-1)[\phantom{Y_2}``=]{450}1r
\put(550,10){\fbox{$(\alpha_0)_f$}}
\put(1250,30){\fbox{$\Epsilon^\alpha_B$}}
\putmorphism(-150,-200)(1,0)[F(A)`G(A) `\alpha_1(A)]{520}1a
\putmorphism(-150,250)(0,-1)[``=]{450}1l
\efig
$$
for every 1h-cell $f:A\to B$. 
\end{itemize}
\end{prop}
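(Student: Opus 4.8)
The plan is to verify that the proposed triple $\big(\alpha_1(A),(\alpha_1)_u,\delta_{\alpha_1,f}\big)$ satisfies the three axioms of a horizontal pseudonatural transformation in \deref{hor psnat tr}. The whole construction is an instance of the companion (mate) correspondence: the 2-cells $\Epsilon^\alpha_A$ and $\eta^\alpha_A$ accompanying the chosen 1h-companion $\alpha_1(A)$ of $\alpha_0(A)$ are exactly the data needed to bend the vertical comparison cells $(\alpha_0)_f$ and $\delta_{\alpha_0,u}$ of the strong vertical transformation $\alpha_0$ into horizontal ones. The only facts about the companions that will enter are the two companion identities $[\eta^\alpha_A\vert\Epsilon^\alpha_A]=\Id_{\alpha_1(A)}$ and $\frac{\eta^\alpha_A}{\Epsilon^\alpha_A}=\Id_{\alpha_0(A)}$ of \cite[Section 1.2]{GP:Adj}, \cite[Section 3]{Shul}, so I would first record these two snake equations and then treat each axiom by substituting the pasting definitions of $(\alpha_1)_u$ and $\delta_{\alpha_1,f}$ and performing a diagram chase. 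Throughout, the hypothesis that $\alpha_0$ is \emph{strong} is used only to guarantee that its comparison cells are the genuine (invertible) cells appearing in the definitions.

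For axiom~1 (pseudonaturality on a square $a$ of $\Aa$), after substituting the definitions both sides become pastings whose interior is assembled from $F(a)$, $G(a)$, the cells $(\alpha_0)_f,(\alpha_0)_g$ and $\delta_{\alpha_0,u},\delta_{\alpha_0,v}$, bordered by copies of $\eta^\alpha$ and $\Epsilon^\alpha$; the pseudonaturality axiom for the vertical transformation $\alpha_0$ rewrites the interior, and two applications of the identity $\frac{\eta^\alpha}{\Epsilon^\alpha}=\Id$ collapse the bordering companion cells to yield the stated equality. For axiom~2 (vertical functoriality), the cell $(\alpha_1)_{vu}$ unfolds, via the definition of $(\alpha_1)_\bullet$ and the vertical-composition law for $\delta_{\alpha_0,\bullet}$ that $\alpha_0$ satisfies, into the prescribed pasting of $(\alpha_1)_u$ and $(\alpha_1)_v$ with the compositors $F^{vu}$ and $G^{vu}$; here one snake identity is invoked to produce the middle companion cell that splits $vu$ into $v$ followed by $u$. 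The unit clause of axiom~2 is the same computation carried out with $\delta_{\alpha_0,\id_A}$ and the unit compositors $F^A,G^A$.

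Axiom~3 (horizontal functoriality of $\delta_{\alpha_1,-}$) I expect to be the main obstacle, since it is where the weakness of $F$ and $G$, carried by the compositors $F_{gf}$ and $G^{-1}_{gf}$, interacts with the companion structure. I would start from the prescribed pasting of $\delta_{\alpha_1,f}$ and $\delta_{\alpha_1,g}$, insert a cancelling pair at the middle column over the object $B$ using the appropriate companion identity (so as to separate the contribution of $f$ from that of $g$), and then apply the horizontal-functoriality law satisfied by $(\alpha_0)_{gf}$ as part of the vertical transformation $\alpha_0$ to recombine $(\alpha_0)_f$ and $(\alpha_0)_g$ into $(\alpha_0)_{gf}$ modulo the compositors, thereby recovering the direct definition of $\delta_{\alpha_1,gf}$. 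The unit clause reduces by $[\eta^\alpha\vert\Epsilon^\alpha]=\Id$ together with the normalization cells $F_A,G_A$. No idea beyond the two snake identities and the axioms of $\alpha_0$ is needed; the genuine difficulty is purely bookkeeping, namely keeping track of the compositors and of which of the two companion identities (horizontal versus vertical) is applied at each cancellation.
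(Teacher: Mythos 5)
Your proposal is correct and follows essentially the same route as the paper's proof: each of the three axioms of \deref{hor psnat tr} is checked by unfolding the pasting definitions of $(\alpha_1)_u$ and $\delta_{\alpha_1,f}$, invoking the corresponding axiom of the strong vertical transformation $\alpha_0$ (pseudonaturality for axiom~1, the $\delta_{\alpha_0,vu}$-composition law for axiom~2, the $(\alpha_0)_{gf}$-functoriality law for axiom~3), and cancelling the bordering companion cells via the $\Epsilon$--$\eta$ snake identities. Your write-up merely makes explicit the bookkeeping that the paper compresses into a few lines.
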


\begin{proof}
To prove axiom 1), the axiom 1) of $\alpha_0$ is used; 
the first part of the axiom 2) works directly, and in the second one use second part of the axiom 3) for $\alpha_0$; 
the first part of the axiom 3) works directly, and in the second one use second part of the axiom 2) for $\alpha_0$; 
in checking of all the three axioms also the rules $\Epsilon\x\eta$ are used. 
\end{proof}

\medskip

Observe that there is a way in the other direction:

\begin{prop} \prlabel{horiz->vertic}
Let $\alpha_1:F\Rightarrow G$ be a strong horizontal transformation between pseudo double functors acting between %1h-liftable 
strict double categories $\Aa\to\Bb$. Suppose that for every 0h-cell $A$ the 1h-cell $\alpha_1(A)$ is a 1h-companion 
of some 1v-cell (with corresponding 2-cells $\Epsilon^\alpha_A$ and $\eta^\alpha_A$). 
Fix a choice of such 1v-cells for each $A$ and denote them by $\alpha_0(A)$. 
The following data define a vertical pseudonatural transformation $\alpha_0:F\Rightarrow G$:
\begin{itemize}
\item the 1v-cell $\alpha_0(A)$, for every 0-cell $A$ of $\Aa$;
\item the 2-cell 
$$(\alpha_0)_f=\quad
\bfig
\putmorphism(-150,0)(1,0)[F(A)`F(B)`F(f)]{600}1a
 \putmorphism(450,0)(1,0)[\phantom{F(A)}`G(B) `\alpha_1(B)]{680}1a
 \putmorphism(-150,-450)(1,0)[F(A)`G(A)`\alpha_1(A)]{600}1a
 \putmorphism(450,-450)(1,0)[\phantom{F(A)}`G(B) `G(f)]{680}1a

\putmorphism(-180,0)(0,-1)[\phantom{Y_2}``=]{450}1r
\putmorphism(1100,0)(0,-1)[\phantom{Y_2}``=]{450}1r
\put(350,-240){\fbox{$\delta_{\alpha_1,f}$}}
\putmorphism(450,450)(1,0)[F(B)`F(B) `=]{640}1a

\putmorphism(450,450)(0,-1)[\phantom{Y_2}``=]{450}1l
\putmorphism(1100,450)(0,-1)[\phantom{Y_2}``\alpha_0(B)]{450}1r
\put(680,250){\fbox{$\eta^\alpha_B$}} %(0,-180)

\putmorphism(-150,-450)(0,-1)[\phantom{Y_2}``\alpha_0(A)]{450}1l
\putmorphism(450,-450)(0,-1)[\phantom{Y_2}``=]{450}1l %470
 \putmorphism(-150,-900)(1,0)[G(A)`G(A) `=]{580}1a
\put(20,-700){\fbox{$\Epsilon^\alpha_A$}} %(0,-180)

\efig
$$
for every 1h-cell $f:A\to B$; 
\item the 2-cell 
$$\delta_{\alpha_0,u}=
\bfig
 \putmorphism(-150,410)(1,0)[F(A)`F(A)  `=]{530}1a
\putmorphism(-160,400)(0,-1)[\phantom{Y_2}`F(A) `=]{380}1l
\putmorphism(370,400)(0,-1)[\phantom{Y_2}`G(A) `\alpha_0(A)]{380}1r
\putmorphism(-160,50)(0,-1)[\phantom{Y_2}`F(A')`F(u)]{430}1l
\putmorphism(370,50)(0,-1)[\phantom{Y_2}`G(A')`G(u)]{430}1r
\put(-20,240){\fbox{$\eta^\alpha_A$}}
\putmorphism(-70,20)(1,0)[``\alpha_1(A)]{330}1a
\put(-60,-160){\fbox{$(\alpha_1)_u$ }}
\putmorphism(-160,-350)(0,-1)[\phantom{Y_2}``\alpha_0(A')]{400}1l
\putmorphism(-70,-350)(1,0)[``\alpha_1(A')]{330}1a
\putmorphism(370,-350)(0,-1)[\phantom{Y_2}`G(A')`=]{400}1r
\putmorphism(-180,-750)(1,0)[G(A')` `=]{440}1b
\put(-20,-560){\fbox{$\Epsilon^\alpha_{A'}$ }}
\efig
$$
for every 1v-cell $u:A\to A'$. 
\end{itemize}
\end{prop}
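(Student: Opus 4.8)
The plan is to check that the triple given by the 1v-cells $\alpha_0(A)$, the 2-cells $(\alpha_0)_f$ and the 2-cells $\delta_{\alpha_0,u}$ satisfies the three defining axioms of a vertical pseudonatural transformation, that is, the vertical analogues of axioms 1)--3) of \deref{hor psnat tr}. The whole computation is the mirror image of the proof of \prref{vertic->horiz}: there one passed from a vertical transformation to a horizontal one by pasting the naturality data with the companion cells, and here one goes back, so the decisive ingredient is again the companion relations $[\eta^\alpha_A\vert\Epsilon^\alpha_A]=\Id$ and $\frac{\eta^\alpha_A}{\Epsilon^\alpha_A}=\Id$ (the rules $\Epsilon\x\eta$), this time used in tandem with the axioms of the given \emph{horizontal} transformation $\alpha_1$.

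First I would verify pseudonaturality. For a 2-cell $a$ in $\Aa$ with horizontal edges $f,g$ and vertical edges $u,v$, one substitutes the defining pastings of $(\alpha_0)_f$, $(\alpha_0)_g$, $\delta_{\alpha_0,u}$ and $\delta_{\alpha_0,v}$ into the required identity and rewrites the core by axiom 1) of $\alpha_1$ (its own pseudonaturality), which transports $F(a)$ across $\delta_{\alpha_1,f}$; the flanking copies of $\eta^\alpha$ and $\Epsilon^\alpha$ are then cancelled by the $\Epsilon\x\eta$ relations.

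Next I would dispatch the two functoriality axioms. The horizontal functoriality of $(\alpha_0)_{-}$ (the formula for $(\alpha_0)_{gf}$, together with its unit clause) follows from the horizontal functoriality of $\delta_{\alpha_1,-}$, i.e.\ axiom 3) of $\alpha_1$, after inserting a cancelling pair $\frac{\eta^\alpha}{\Epsilon^\alpha}=\Id$ between the factors $\delta_{\alpha_1,f}$ and $\delta_{\alpha_1,g}$; the unit reduces to $\Id_{\alpha_0(A)}$ via the unit part of that axiom and the companion relation. Dually, the vertical functoriality of $\delta_{\alpha_0,-}$ (the formula for $\delta_{\alpha_0,vu}$ and its unit) comes from the vertical functoriality of $(\alpha_1)_{-}$, i.e.\ axiom 2) of $\alpha_1$, once the redundant companion cells are collapsed by $\Epsilon\x\eta$.

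The main obstacle is the sheer bookkeeping of the pasting diagrams rather than any conceptual difficulty: each of $(\alpha_0)_f$ and $\delta_{\alpha_0,u}$ is itself a threefold paste -- a structure cell of $\alpha_1$ (namely $\delta_{\alpha_1,f}$, resp.\ $(\alpha_1)_u$) flanked above and below by $\eta^\alpha$ and $\Epsilon^\alpha$ -- so each axiom check is a paste of such triples in which one must, at each stage, locate the adjacent $\eta^\alpha$--$\Epsilon^\alpha$ pair whose snake identity collapses it while keeping the globular boundaries aligned. That the output is independent of the chosen 1v-cells up to globular isomorphism follows from the essential uniqueness of companions \cite[Lemma 3.8]{Shul}, which is exactly what makes the correspondence well defined; together with \prref{vertic->horiz} and a routine check that the two assignments are mutually inverse, this yields the bijection of \coref{bij}.
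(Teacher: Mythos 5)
Your proposal is correct and follows essentially the same route as the paper, which leaves this proposition without an explicit argument precisely because it is the mirror of \prref{vertic->horiz}: one verifies the three axioms of a vertical pseudonatural transformation by substituting the defining pastings, invoking the corresponding axiom of $\alpha_1$ (pseudonaturality for axiom 1, axiom 3 of $\alpha_1$ for the horizontal functoriality of $(\alpha_0)_{-}$, axiom 2 of $\alpha_1$ for the vertical functoriality of $\delta_{\alpha_0,-}$), and cancelling the flanking companion cells with the $\Epsilon\x\eta$ relations. Your cross-wiring of the axioms matches the pattern recorded in the paper's proof of the forward direction, so nothing further is needed.
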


By $\Epsilon\x\eta$-relations, there is a 1-1 correspondence between those strong vertical transformations whose 1v-cell components 
have 1h-companions %which are 1h-components of strong horizontal transformations, on one hand, 
and those strong horizontal transformations 
whose 1h-cell components are 1h-companions of some 1v-cells. %1v-components of strong vertical transformations, on the other hand. 

\begin{cor} \colabel{bij}
Suppose that there is a connection on 1v-components of strong vertical transformations. Then there is a bijection between 
strong vertical transformations and those strong horizontal transformations whose 1h-cell components are 1h-companions of some 1v-cells.
\end{cor}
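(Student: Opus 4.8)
The plan is to exhibit the bijection as a pair of mutually inverse assignments, one built from \prref{vertic->horiz} and one from \prref{horiz->vertic}, and then to check that the two round trips are the identity by invoking the companion equations. First I would define the forward map $\Phi$: given a strong vertical transformation $\alpha_0:F\Rightarrow G$, the connection supplies, for each $0$-cell $A$, a chosen $1$h-companion of the $1$v-cell $\alpha_0(A)$ together with its structure $2$-cells $\Epsilon^\alpha_A,\eta^\alpha_A$; \prref{vertic->horiz} then turns this data into a strong horizontal transformation $\alpha_1$, and by construction each $\alpha_1(A)$ is the companion of $\alpha_0(A)$, so $\Phi(\alpha_0)=\alpha_1$ lands in the target class. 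Conversely I would define $\Psi$ on a strong horizontal transformation $\alpha_1$ whose $1$h-cell components are companions of $1$v-cells: choosing for each $A$ such a $1$v-cell $\alpha_0(A)$, \prref{horiz->vertic} yields a strong vertical transformation $\alpha_0=\Psi(\alpha_1)$. Well-definedness of both maps rests on the fact that companions are unique up to a unique globular isomorphism (\cite[Lemma 3.8]{Shul}); the presence of a connection promotes this to an honest functorial choice, so that $\Phi$ is defined on \emph{all} strong vertical transformations and the hypothesis ``$1$v-components have companions'' from the preliminary remark becomes vacuous. This is precisely what upgrades the one-to-one correspondence into a genuine bijection.

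Next I would verify $\Psi\comp\Phi=\id$. On $0$-cells this is immediate, since $\Psi$ returns the very $1$v-cell $\alpha_0(A)$ whose companion was used to build $\alpha_1(A)$. For the $2$-cell components I would substitute the expressions for $(\alpha_1)_u$ and $\delta_{\alpha_1,f}$ from \prref{vertic->horiz} into the formulas for $(\alpha_0)_f$ and $\delta_{\alpha_0,u}$ of \prref{horiz->vertic}, then paste the resulting diagrams and collapse the repeated occurrences of $\Epsilon^\alpha_A$ and $\eta^\alpha_A$ using the companion identities $[\eta^\alpha_A\vert\Epsilon^\alpha_A]=\Id_{\alpha_1(A)}$ and $\frac{\eta^\alpha_A}{\Epsilon^\alpha_A}=\Id_{\alpha_0(A)}$. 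The two cancellations are exactly the two snake reductions, and what remains is the original data $(\alpha_0)_f,\delta_{\alpha_0,u}$ of $\alpha_0$. The reverse composite $\Phi\comp\Psi=\id$ is checked symmetrically: the recovered $\delta_{\alpha_1,f}$ and $(\alpha_1)_u$ reduce to the starting ones after the analogous cancellations, using that the companion chosen by $\Phi$ agrees, up to the unique globular isomorphism, with the one recorded in the given $\alpha_1$.

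The routine content is the diagrammatic bookkeeping in these round trips, and the main obstacle is to arrange each pasting so that the companion $2$-cells annihilate in the correct order: one must match the left/right whiskerings produced by \prref{vertic->horiz} against those consumed by \prref{horiz->vertic} before the identities $[\eta\vert\Epsilon]=\Id$ and $\frac{\eta}{\Epsilon}=\Id$ can be applied. Once the $\Epsilon$-$\eta$ relations are set up consistently with the fixed connection, both composites collapse to the identity, yielding the asserted bijection.
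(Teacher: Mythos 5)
Your proposal is correct and follows essentially the same route as the paper: the forward map is \prref{vertic->horiz}, the backward map is \prref{horiz->vertic}, and the two composites collapse to the identity via the companion ($\Epsilon$-$\eta$) relations, with the connection supplying the functorial choice of companions that makes the correspondence a genuine bijection on all strong vertical transformations. The paper states this more tersely (``By $\Epsilon$-$\eta$-relations, there is a 1-1 correspondence\dots''), but the content is the same.
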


As a direct corollary of \prref{vertic->horiz} we get:

\begin{cor} \colabel{4 identities e-eta}
Suppose that the 1v-components of a strong vertical transformation $\alpha_0:F\Rightarrow G$ have 1h-companions $\alpha_1(A)$, 
for every 0-cell $A$ (with corresponding 2-cells $\Epsilon^\alpha_A$ and $\eta^\alpha_A$), and define the 2-cells 
$(\alpha_1)_u$ and $\delta_{\alpha_1,f}$ as in \prref{vertic->horiz}. The following identities then follow: 
$$
\bfig
\putmorphism(-150,500)(1,0)[F(A)`F(B)`F(f)]{600}1a
 \putmorphism(450,500)(1,0)[\phantom{F(A)}`G(B) `\alpha_1(B)]{640}1a

 \putmorphism(-150,50)(1,0)[G(A)`G(B)`G(f)]{600}1a
 \putmorphism(450,50)(1,0)[\phantom{F(A)}`G(B) `=]{640}1a

\putmorphism(-180,500)(0,-1)[\phantom{Y_2}``\alpha_0(A)]{450}1l
\putmorphism(450,500)(0,-1)[\phantom{Y_2}``]{450}1r
\putmorphism(300,500)(0,-1)[\phantom{Y_2}``\alpha_0(B)]{450}0r
\putmorphism(1100,500)(0,-1)[\phantom{Y_2}``=]{450}1r
\put(0,260){\fbox{$(\alpha_0)_f$}}
\put(700,270){\fbox{$\Epsilon^\alpha_B$}}
\efig
\quad=\quad
\bfig
\putmorphism(-150,500)(1,0)[F(A)`F(B)`F(f)]{600}1a
 \putmorphism(450,500)(1,0)[\phantom{F(A)}`G(B) `\alpha_1(B)]{680}1a
 \putmorphism(-150,50)(1,0)[F(A)`G(A)`\alpha_1(A)]{600}1a
 \putmorphism(450,50)(1,0)[\phantom{F(A)}`G(B) `G(f)]{680}1a

\putmorphism(-180,500)(0,-1)[\phantom{Y_2}``=]{450}1r
\putmorphism(1100,500)(0,-1)[\phantom{Y_2}``=]{450}1r
\put(350,260){\fbox{$\delta_{\alpha_1,f}$}}
\putmorphism(-150,-400)(1,0)[F(A')`G(A'). `=]{640}1a

\putmorphism(-180,50)(0,-1)[\phantom{Y_2}``\alpha_0(A)]{450}1l
\putmorphism(450,50)(0,-1)[\phantom{Y_2}``]{450}1l
\putmorphism(610,50)(0,-1)[\phantom{Y_2}``=]{450}0l %470
\put(20,-180){\fbox{$\Epsilon^\alpha_A$}} %(0,-180)
\efig
$$
and
$$
\bfig
\putmorphism(-150,0)(1,0)[F(A)`F(A)`=]{600}1a
 \putmorphism(450,0)(1,0)[\phantom{F(A)}`G(B) `F(f)]{640}1a

 \putmorphism(-150,-450)(1,0)[F(A)`G(A)`\alpha_1(A)]{600}1a
 \putmorphism(450,-450)(1,0)[\phantom{F(A)}`G(B) `G(f)]{640}1a

\putmorphism(-180,0)(0,-1)[\phantom{Y_2}``=]{450}1l
\putmorphism(450,0)(0,-1)[\phantom{Y_2}``]{450}1r
\putmorphism(300,0)(0,-1)[\phantom{Y_2}``\alpha_0(A)]{450}0r
\putmorphism(1100,0)(0,-1)[\phantom{Y_2}``\alpha_0(B)]{450}1r
\put(50,-200){\fbox{$\eta^\alpha_A$}}
\put(650,-200){\fbox{$(\alpha_0)_f$}}
\efig
\quad=\quad
\bfig
\putmorphism(-150,0)(1,0)[F(A)`F(B)`F(f)]{600}1a
 \putmorphism(450,0)(1,0)[\phantom{F(A)}`G(B) `\alpha_1(B)]{680}1a
 \putmorphism(-150,-450)(1,0)[F(A)`G(A)`\alpha_1(A)]{600}1a
 \putmorphism(450,-450)(1,0)[\phantom{F(A)}`G(B) `G(f)]{680}1a

\putmorphism(-180,0)(0,-1)[\phantom{Y_2}``=]{450}1r
\putmorphism(1100,0)(0,-1)[\phantom{Y_2}``=]{450}1r
\put(350,-240){\fbox{$\delta_{\alpha_1,f}$}}
\putmorphism(450,450)(1,0)[F(B)`F(B) `=]{640}1a

\putmorphism(450,450)(0,-1)[\phantom{Y_2}``=]{450}1l
\putmorphism(1100,450)(0,-1)[\phantom{Y_2}``\alpha_0(B)]{450}1l %470
\put(600,250){\fbox{$\eta^\alpha_B$}} %(0,-180)
\efig
$$
for every 1h-cell $f:A\to B$; 
$$
\bfig
 \putmorphism(-150,500)(1,0)[F(A)`G(A)  `\alpha_1(A)]{600}1a
\putmorphism(-180,500)(0,-1)[\phantom{Y_2}`F(A') `F(u)]{450}1l
\putmorphism(-150,-400)(1,0)[G(A')` `=]{480}1a
\putmorphism(-180,50)(0,-1)[\phantom{Y_2}``\alpha_0(A')]{450}1l
\putmorphism(450,50)(0,-1)[\phantom{Y_2}`G(A')`=]{450}1r
\putmorphism(450,500)(0,-1)[\phantom{Y_2}`G(A') `G(u)]{450}1r
\put(0,260){\fbox{$(\alpha_1)_u$}}
\putmorphism(-180,50)(1,0)[\phantom{F(A)}``\alpha_1(A')]{500}1a
\put(40,-170){\fbox{$\Epsilon^\alpha_{A'}$}}
\efig=
\bfig
 \putmorphism(-150,500)(1,0)[F(A)`F(A) `=]{500}1a
 \putmorphism(450,500)(1,0)[` `\alpha_1(A)]{380}1a
\putmorphism(-180,500)(0,-1)[\phantom{Y_2}`F(A') `F(u)]{450}1l
\put(0,-160){\fbox{$\delta_{\alpha_0,u}$}}
\putmorphism(-150,-400)(1,0)[F(A'')`F(A'') `=]{500}1a
\putmorphism(-180,50)(0,-1)[\phantom{Y_2}``\alpha_0(A')]{450}1l
\putmorphism(380,500)(0,-1)[\phantom{Y_2}` `\alpha_0(A)]{450}1l
\putmorphism(380,50)(0,-1)[G(A)` `G(u)]{450}1r
\put(550,290){\fbox{$\Epsilon^\alpha_A$} }
\putmorphism(940,500)(0,-1)[G(A)`G(A)`=]{450}1r
\putmorphism(470,60)(1,0)[``=]{360}1a
\efig
$$
and 
$$
\bfig
 \putmorphism(-150,500)(1,0)[F(A)`F(A)  `=]{600}1a
\putmorphism(-180,500)(0,-1)[\phantom{Y_2}`F(A) `=]{450}1l
\putmorphism(-150,-400)(1,0)[F(A')` `\alpha_1(A')]{480}1a
\putmorphism(-180,50)(0,-1)[\phantom{Y_2}``F(u)]{450}1l
\putmorphism(450,50)(0,-1)[\phantom{Y_2}`G(A')`G(u)]{450}1r
\putmorphism(450,500)(0,-1)[\phantom{Y_2}`G(A) `\alpha_0(A)]{450}1r
\put(50,300){\fbox{$\eta^\alpha_A$}}
\putmorphism(-180,50)(1,0)[\phantom{F(A)}``\alpha_1(A)]{520}1a
\put(20,-170){\fbox{$(\alpha_1)_u$}}
\efig=
\bfig
 \putmorphism(-400,-400)(1,0)[` `\alpha_1(A')]{380}1a
 \putmorphism(80,500)(1,0)[F(A)`F(A) `=]{500}1a
\putmorphism(-480,50)(0,-1)[F(A')`F(A')`=]{450}1r
\putmorphism(-380,60)(1,0)[``=]{360}1a
\putmorphism(70,500)(0,-1)[\phantom{Y_2}`F(A') `F(u)]{450}1l
\put(220,270){\fbox{$\delta_{\alpha_0,u}$}}
\putmorphism(80,-400)(1,0)[G(A')`G(A') `=]{500}1a
\putmorphism(70,50)(0,-1)[\phantom{Y_2}``\alpha_0(A')]{450}1r
\putmorphism(580,500)(0,-1)[\phantom{Y_2}` `\alpha_0(A)]{450}1r
\putmorphism(580,50)(0,-1)[G(A)` `G(u)]{450}1r
\put(-280,-160){\fbox{$\eta^\alpha_{A'}$} }
\efig
$$
for every 1v-cell $u:A\to A'$. 
\end{cor}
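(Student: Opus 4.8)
The plan is to obtain all four equations by unfolding the definitions of $(\alpha_1)_u$ and $\delta_{\alpha_1,f}$ given in \prref{vertic->horiz} and then cancelling a redundant companion pair. Recall that in \prref{vertic->horiz} the cell $\delta_{\alpha_1,f}$ is constructed as the horizontal string of $\eta^\alpha_A$, $(\alpha_0)_f$ and $\Epsilon^\alpha_B$, while $(\alpha_1)_u$ is the analogous pasting built from $\eta^\alpha_{A'}$, $\delta_{\alpha_0,u}$ and $\Epsilon^\alpha_A$. The only external tool I would use is the vertical companion relation $\frac{\eta^\alpha_A}{\Epsilon^\alpha_A}=\Id_{\alpha_0(A)}$ recalled before \prref{vertic->horiz}; strictness of $\Aa$ and $\Bb$ ensures that no unit or associativity cells clutter the pastings.

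For the two identities involving $\delta_{\alpha_1,f}$ I would start from the side on which it appears and replace it by its defining string. In the first identity the companion cell $\Epsilon^\alpha_A$ that the statement adjoins lands directly beneath the internal $\eta^\alpha_A$, so $\frac{\eta^\alpha_A}{\Epsilon^\alpha_A}=\Id_{\alpha_0(A)}$ collapses the left-hand column and leaves exactly $(\alpha_0)_f$ together with $\Epsilon^\alpha_B$, the other side of the identity. In the second identity the adjoined $\eta^\alpha_B$ lands above the internal $\Epsilon^\alpha_B$, and the same relation applied to the pair $\eta^\alpha_B,\Epsilon^\alpha_B$ collapses the right-hand column, leaving $\eta^\alpha_A$ with $(\alpha_0)_f$.

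The two identities involving $(\alpha_1)_u$ are handled in the same manner. Expanding $(\alpha_1)_u$, the companion cell adjoined in the statement, namely $\Epsilon^\alpha_{A'}$ in the third identity and $\eta^\alpha_A$ in the fourth, again becomes vertically adjacent to the matching internal cell, $\eta^\alpha_{A'}$ respectively $\Epsilon^\alpha_A$; the vertical relation then erases that pair and what survives is precisely the $\delta_{\alpha_0,u}$ composite on the opposite side. In every one of the four cases the cancellation is of the same vertical type $\frac{\eta}{\Epsilon}=\Id$, and the defining equations of $\alpha_0$ as a strong vertical transformation enter only through the boundaries of $(\alpha_0)_f$ and $\delta_{\alpha_0,u}$.

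The real work is purely in the bookkeeping of the pastings, which is where the only obstacle lies: for each equation I must confirm, after the expansion, that the supplied companion cell is genuinely stacked vertically against its partner of the opposite kind, so that $\frac{\eta}{\Epsilon}=\Id$ applies with matching 1h- and 1v-boundaries, and that, once this pair has been erased, the leftover squares sit in exactly the configuration displayed on the other side of the equation, with no residual globular cell. Tracking the source and target 1-cells of every constituent square through the staircase-shaped pasting that defines $(\alpha_1)_u$ is the most delicate point, but it involves no relation beyond the companion axioms and strictness of the ambient double categories.
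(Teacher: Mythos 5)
Your proposal is correct and is exactly the argument the paper has in mind: the corollary is stated as an immediate consequence of \prref{vertic->horiz}, and indeed each of the four identities follows by expanding $\delta_{\alpha_1,f}$ (resp.\ $(\alpha_1)_u$) into its defining pasting $[\eta^\alpha\vert(\alpha_0)_f\vert\Epsilon^\alpha]$ (resp.\ the staircase with $\eta^\alpha$, $\delta_{\alpha_0,u}$, $\Epsilon^\alpha$), applying the middle-four interchange to align the adjoined companion cell with its internal partner, and cancelling that column via $\frac{\eta}{\Epsilon}=\Id$, with strictness of $\Bb$ absorbing the leftover identity cells. No axiom of $\alpha_0$ beyond its boundary data is needed, as you note, so the verification is indeed pure bookkeeping.
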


In the following Proposition $\eta_{F(u)}$ and $\Epsilon_{G(u)}$ are given as in \cite[Lemma 3.16]{Shul}.

\begin{prop} \prlabel{invertible delta}
Given a strong vertical transformation $\alpha_0$ under conditions of \prref{vertic->horiz}. 
Let $u:A\to A'$ be a 1v-cell with a 1h-companion $f=u_*$. Then the inverse of the 2-cell $\delta_{\alpha_1,u_*}$ is given by: 
$$\delta_{\alpha_1,u_*}^{-1}=
\bfig
\putmorphism(-700,500)(1,0)[F(A)`F(A)`=]{530}1a
\putmorphism(-150,500)(0,-1)[\phantom{Y_2}` `F(u)]{450}1l
\putmorphism(-700,500)(0,-1)[\phantom{Y_2}`F(A) `=]{450}1l
\putmorphism(-610,50)(1,0)[``F(u)_*]{360}1b
\put(-630,260){\fbox{$\eta_{F(u)}$}}

 \putmorphism(-40,50)(1,0)[``=]{320}1b

\put(550,30){\fbox{$\delta_{\alpha_0,u}$}}
\putmorphism(-150,-400)(1,0)[F(A')`G(A') `\alpha_1(A')]{520}1a
\putmorphism(-150,50)(0,-1)[F(A')``=]{450}1l
\putmorphism(380,500)(0,-1)[\phantom{Y_2}` `F(u)]{450}1l
\putmorphism(950,500)(0,-1)[\phantom{Y_2}`G(A) `\alpha_0(A)]{450}1r
\putmorphism(380,50)(0,-1)[F(A')` `\alpha_0(A')]{450}1r
\putmorphism(350,500)(1,0)[F(A)`F(A)`=]{600}1a
 \putmorphism(950,500)(1,0)[\phantom{F(A)}`G(A) `\alpha_1(A)]{580}1a
 \putmorphism(1060,50)(1,0)[`G(A)`=]{500}1b

\putmorphism(1520,500)(0,-1)[\phantom{Y_2}``=]{450}1r
\put(1250,260){\fbox{$\Epsilon^\alpha_A$}}
\putmorphism(480,-400)(1,0)[`G(A') `=]{500}1a
\putmorphism(950,50)(0,-1)[\phantom{Y_2}``G(u)]{450}1r
\put(0,-160){\fbox{$\eta^\alpha_{A'}$}}

\putmorphism(1640,50)(1,0)[`G(A')`G(u)_*]{440}1a
\putmorphism(2070,50)(0,-1)[\phantom{Y_2}``=]{450}1r
\putmorphism(1520,50)(0,-1)[\phantom{Y_2}`G(A')`G(u)]{450}1l
\putmorphism(1630,-400)(1,0)[`G(A'). `=]{440}1a
\put(1660,-160){\fbox{$\Epsilon_{G(u)}$}}

\efig
$$
\end{prop}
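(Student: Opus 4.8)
The plan is to verify directly that the displayed pasting is a two-sided \emph{vertical} inverse of $\delta_{\alpha_1,u_*}$. Since $\delta_{\alpha_1,u_*}$ is a vertically globular 2-cell with horizontal source $\alpha_1(A')\odot F(u_*)$ and horizontal target $G(u_*)\odot\alpha_1(A)$, invertibility here means vertical invertibility, so I must show that the two vertical composites of $\delta_{\alpha_1,u_*}$ with the candidate cell reduce, respectively, to $\Id_{\alpha_1(A')\odot F(u_*)}$ and $\Id_{G(u_*)\odot\alpha_1(A)}$. First I would recall the explicit shape of $\delta_{\alpha_1,f}$ from \prref{vertic->horiz}, specialised to $f=u_*$: it is the pasting of $\eta^\alpha_A$, the vertical-transformation cell $(\alpha_0)_{u_*}$ and $\Epsilon^\alpha_{A'}$. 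The candidate inverse, by contrast, is built from $\eta_{F(u)}$, $\delta_{\alpha_0,u}$, $\Epsilon^\alpha_A$, $\eta^\alpha_{A'}$ and $\Epsilon_{G(u)}$; here $\eta_{F(u)}$ and $\Epsilon_{G(u)}$ are the companion-preservation 2-cells of \cite[Lemma 3.16]{Shul}, which identify $F(u_*)$ with the chosen companion $F(u)_*$ and $G(u_*)$ with $G(u)_*$, so that the outer boundaries match.

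The main tools are the companion zig-zag identities $[\eta\vert\Epsilon]=\Id$ and $\frac{\eta}{\Epsilon}=\Id$ for each of the four companion pairs involved --- $(\alpha_0(A),\alpha_1(A))$ and $(\alpha_0(A'),\alpha_1(A'))$ on the one hand, $(F(u),F(u)_*)$ and $(G(u),G(u)_*)$ on the other --- together with the four identities of \coref{4 identities e-eta}, which express $\delta_{\alpha_1,-}$ and $(\alpha_0)_-$ as mutual companion transposes. Concretely, I would paste $\delta_{\alpha_1,u_*}$ on top of the candidate cell; the $\eta^\alpha_A$ coming from $\delta_{\alpha_1,u_*}$ then meets the $\Epsilon^\alpha_A$ of the candidate, and the $\Epsilon^\alpha_{A'}$ of $\delta_{\alpha_1,u_*}$ meets the $\eta^\alpha_{A'}$ of the candidate, so that both $\alpha$-companion zig-zags collapse by $\frac{\eta^\alpha}{\Epsilon^\alpha}=\Id$ and $[\eta^\alpha\vert\Epsilon^\alpha]=\Id$. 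What remains is a composite built only from $(\alpha_0)_{u_*}$, $\delta_{\alpha_0,u}$ and the cells $\eta_{F(u)},\Epsilon_{G(u)}$; I would collapse this by using the snake identities for $F(u)$ and $G(u)$ to turn $(\alpha_0)_{u_*}$ into the companion transpose of $\delta_{\alpha_0,u}$, leaving $\delta_{\alpha_0,u}$ composed with its inverse (recall that for a strong vertical transformation $\delta_{\alpha_0,u}$ is invertible, which is precisely the hypothesis feeding axiom (T2)). The opposite composite is treated symmetrically, with the horizontal and vertical zig-zags interchanged.

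The hard part will be the bookkeeping around the comparison cells rather than any conceptual difficulty. One must keep careful track of the pseudo-double-functor structure constraints of $F$ and $G$ hidden inside $\eta_{F(u)}$ and $\Epsilon_{G(u)}$ (via \cite[Lemma 3.16]{Shul}) and check that they cancel against the instances of those constraints buried in $(\alpha_0)_{u_*}$ once it is rewritten through $\delta_{\alpha_0,u}$. Equivalently, the crux is to establish that, under the identifications $F(u_*)\cong F(u)_*$ and $G(u_*)\cong G(u)_*$, the cell $(\alpha_0)_{u_*}$ is exactly the companion transpose of $\delta_{\alpha_0,u}$ --- a compatibility already implicit in the mutually inverse constructions of \prref{vertic->horiz} and \prref{horiz->vertic}. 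I would isolate this companion-transpose compatibility as a short preliminary observation; granting it, the entire verification becomes a chain of the snake identities and of \coref{4 identities e-eta}, and the two vertical composites telescope to the required identity 2-cells.
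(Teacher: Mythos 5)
Your overall architecture is the right one and is the one the paper intends: its proof is the single line ``Use axiom 1) for $\alpha_0$ and (6.3) of \cite[Definition 6.1]{Shul1}, together with $\Epsilon\x\eta$-relations'', i.e.\ a direct two-sided verification in which the companion zig-zags collapse. Your handling of the $\alpha$-side is fine: pasting $\delta_{\alpha_1,u_*}$ (which by \prref{vertic->horiz} is $[\eta^\alpha_A\vert(\alpha_0)_{u_*}\vert\Epsilon^\alpha_{A'}]$) against the candidate does make $\eta^\alpha_A$ meet $\Epsilon^\alpha_A$ and $\Epsilon^\alpha_{A'}$ meet $\eta^\alpha_{A'}$, and these cancel by the snake identities for the companion pairs $(\alpha_0(A),\alpha_1(A))$ and $(\alpha_0(A'),\alpha_1(A'))$, with \coref{4 identities e-eta} doing the bookkeeping.

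The gap is in the step you yourself single out as the crux. You claim that the identification of $(\alpha_0)_{u_*}$ with the companion transpose of $\delta_{\alpha_0,u}$ is ``already implicit in the mutually inverse constructions of \prref{vertic->horiz} and \prref{horiz->vertic}.'' It is not: those two propositions concern exclusively the companion pairs $(\alpha_0(A),\alpha_1(A))$ in the \emph{codomain} $\Bb$, and say nothing about how $\alpha_0$ interacts with the companion pair $(u,u_*)$ in the \emph{domain} $\Aa$ or with its images under $F$ and $G$. If you tried to extract the transpose compatibility from them, the argument would stall. What actually proves it is (i) axiom 1) of \deref{hor psnat tr} (in its vertical version) for $\alpha_0$, i.e.\ pseudonaturality of $(\alpha_0)_{-}$ and $\delta_{\alpha_0,-}$, applied to the binding 2-cells $\eta_u$ and $\Epsilon_u$ of the companion pair $(u,u_*)$ \emph{in $\Aa$} --- this is what relates $(\alpha_0)_{u_*}$ to $\delta_{\alpha_0,u}$ via $F(\eta_u)$, $G(\Epsilon_u)$ --- together with (ii) the coherence (6.3) of \cite[Definition 6.1]{Shul1} for the constraint cells of $F$ and $G$, which is needed because $\eta_{F(u)}$ and $\Epsilon_{G(u)}$ from \cite[Lemma 3.16]{Shul} are built from $F(\eta_u)$, $G(\Epsilon_u)$ conjugated by exactly those constraints, and these must be seen to cancel. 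Neither ingredient appears in your list of tools, so the ``short preliminary observation'' you defer is precisely the unproved content; once it is supplied from axiom 1) and (6.3), the rest of your telescoping argument goes through.
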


\begin{proof}
Use axiom 1) for $\alpha_0$ and (6.3) of \cite[Definition 6.1]{Shul1}, together with $\Epsilon\x\eta$-relations.  
\end{proof}

\medskip

Note that the above inverse of $\delta_{\alpha_1,u_*}$ is in fact the image of the known pseudofunctor $V\Dd\to H\Dd$ from the 
vertical 2-category to the horizontal one of a given double category $\Dd$ in which all 1v-cells have 1h-companions. For a horizontally 
globular 2-cell $a$ with a left 1v-cell $u$ and 
a right 1v-cell $v$, the image by this functor of $a$ is given by $[\eta_u \vert a\vert\Epsilon_v]$ (horizontal composition of 2-cells).

\begin{rem}
One could start with a vertical transformation $\alpha_0$ (for which $\delta_{\alpha_0,u}=\Id$ for all 1v-cells $u:A\to A'$) 
and define a horizontal transformation $\alpha_1$ setting $\delta_{\alpha_1,f}=\Id$ for all 1h-cells $f:A\to B$ and 
defining $(\alpha_1)_u$  as in \prref{vertic->horiz}. Though, in order for $\alpha_1$ to satisfy the corresponding axiom 1), 
one needs to assume the first two identities of \coref{4 identities e-eta}. 
\end{rem}

\subsection{Embedding $PsDbl^*_3$ into $DblPs$} \sslabel{embed transf}

It remains to show how to turn vertical transformations into double pseudonatural transformations. We will assume that 
1v-cell components of vertical transformations have 1h-companions. 

Observe that for vertical transformations the 2-cells $\delta_{\alpha_0,u}$ are identities. Moreover,  
we know that vertical transformations are particular cases of vertical pseudonatural transformations (\rmref{hor tr as subcase}), 
and that strong horizontal transformations are particular cases of horizontal pseudonatural transformations. 
By \prref{vertic->horiz} we have that a vertical transformation $\alpha_0$ determines a strong horizontal transformation. 
So far we have axiom (T1) of \deref{double 2-cells}. 
Furthermore, by \prref{invertible delta} we have in particular that for all 1v-cell components $\alpha_0(A)$ of vertical transformations 
the 2-cells $\delta_{\alpha_1,\alpha_1(A)}$ are invertible. Then we have that the axiom (T2) is fulfilled.  
Observe that setting $\Theta^\alpha_A=\Epsilon^\alpha_A$, by the first and third identities in \coref{4 identities e-eta} 
we have a $\Theta$-double pseudonatural transformation between pseudo double functors. Due to \prref{teta->double} we have 
indeed a double pseudonatural transformation, as we wanted. (Actually, thanks to the $\Epsilon\x\eta$-relations, by the first identity in 
\coref{4 identities e-eta}, axiom 1) for the horizontal pseudonatural transformation $\alpha_1$ holds 
if and only if axiom (T3-1) for $t^\alpha_f$ in \prref{teta->double} holds.)

Moreover, we may deduce the following bijective correspondence $t^\alpha_f\leftrightarrow\delta_{\alpha_1,f}$:
$$
t^\alpha_f=%\quad
\scalebox{0.86}{
\bfig
\putmorphism(-150,450)(1,0)[F(A)`F(B)`F(f)]{500}1a
 \putmorphism(350,450)(1,0)[\phantom{F(A)}`G(B) `\alpha_1(B)]{580}1a
 \putmorphism(-150,50)(1,0)[F(A)`G(A)`\alpha_1(A)]{500}1a
 \putmorphism(350,50)(1,0)[\phantom{F(A)}`G(B), `G(f)]{580}1a

\putmorphism(-180,450)(0,-1)[\phantom{Y_2}``=]{400}1r
\putmorphism(900,450)(0,-1)[\phantom{Y_2}``=]{400}1r
\put(250,260){\fbox{$\delta_{\alpha_1,f}$}}
\putmorphism(-150,-350)(1,0)[F(A')`G(A') `=]{500}1a

\putmorphism(-180,50)(0,-1)[\phantom{Y_2}``\alpha_0(A)]{400}1r
\putmorphism(350,50)(0,-1)[\phantom{Y_2}``]{400}1l
\putmorphism(510,50)(0,-1)[\phantom{Y_2}``=]{400}0l %470
\put(120,-150){\fbox{$\Epsilon^\alpha_A$}} %(0,-150)
\efig}
\qquad\quad
\delta_{\alpha_1,f}=
\scalebox{0.86}{
\bfig
 \putmorphism(-150,250)(1,0)[F(A)`F(A) `=]{500}1a
\put(50,30){\fbox{$\eta^\alpha_A$}}
\putmorphism(380,250)(0,-1)[\phantom{Y_2}` `\alpha_0(A)]{400}1r
\putmorphism(350,250)(1,0)[F(A)`F(B)`F(f)]{500}1a
 \putmorphism(850,250)(1,0)[\phantom{F(A)}`G(B) `\alpha_1(B)]{500}1a
 \putmorphism(450,-150)(1,0)[`G(B)`G(f)]{420}1b
 \putmorphism(960,-150)(1,0)[`G(B),`=]{420}1b
\putmorphism(1370,250)(0,-1)[\phantom{Y_2}``=]{400}1r
\put(800,30){\fbox{$t^\alpha_f$}}
\putmorphism(-150,-150)(1,0)[F(A)`G(A) `\alpha_1(A)]{520}1b
\putmorphism(-150,250)(0,-1)[``=]{400}1r
\efig}
$$
and complete the bijection $t^\alpha_f\leftrightarrow(\alpha_0)_f$:
$$(\alpha_0)_f=
\scalebox{0.86}{
\bfig
\putmorphism(350,400)(1,0)[F(B)`F(B) `=]{540}1a
\putmorphism(350,400)(0,-1)[\phantom{Y_2}``=]{400}1l
\putmorphism(900,400)(0,-1)[\phantom{Y_2}``\alpha_0(B)]{400}1r
\put(530,220){\fbox{$\eta^\alpha_B$}} %(0,-180)

\putmorphism(-150,0)(1,0)[F(A)`F(B)`F(f)]{500}1a
 \putmorphism(350,0)(1,0)[\phantom{F(A)}`G(B) `\alpha_1(B)]{540}1a
 \putmorphism(-150,-400)(1,0)[F(A)`G(A).`G(f)]{1040}1a

\putmorphism(-130,0)(0,-1)[\phantom{Y_2}``=]{400}1r
\putmorphism(900,0)(0,-1)[\phantom{Y_2}``=]{400}1r
\put(530,-190){\fbox{$t^\alpha_f$}} %(270,-170)
\efig}
$$

\begin{rem}
Given the $\Epsilon\x\eta$-relations, by the properties developed in this and the previous subsection, 
axiom 1) for the horizontal pseudonatural transformation $\alpha_1$ holds if and only if 
axiom (T3-1) for $t^\alpha_f$ in \prref{teta->double} holds, if and only if 
axiom 1) for the vertical pseudonatural transformation $\alpha_0$ holds. 
\end{rem}

%%%%%%%%%%%%%%%%%%%%%%%%%%%

% Part II

%%%%%%%%%%%%%%%%%%%%%%%%55

\section{Tricategorical pullbacks and (co)products} \selabel{3-pull(co)pr}

For a notion of enrichment over a (1-strict) tricategory $V$ we need some notion of a monoidal structure on $V$, 
while for a notion of an internal catgeory in $V$ we need some notion of tricatgeorical pullbacks. We define 
both such notions in this section, where for the monoidal structure we consider tricategorical products.

\subsection{Tricategorical pullbacks}

In this subsection we define tricategorical pullbacks, that is pullbacks in tricategories. We will also call them shortly 3-pullbacks. 

\begin{defn}
A 3-pullback over a 0-cell $S$ with respect to 1-cells $f:M\to S$ and $g:N\to S$ in a tricategory $V$ is given by: 
%\begin{itemize}
%\item 
a 0-cell $P$, 1-cells $p_1:P\to M, p_2:P\to N$ and an equivalence 2-cell $\omega:  gp_2\Rightarrow fp_1$ so that 
   \begin{itemize}
   \item for every 0-cell $T$, 1-cells $q_1: T\to M, q_2:T\to N$ and equivalence 2-cell $\sigma: gq_2\Rightarrow fq_1$ 
	there are a 1-cell $u:T\to P$, equivalence 2-cells $\zeta_1: p_1u\Rightarrow q_1$ and $\zeta_2: q_2\Rightarrow p_2u$ and 
	an isomorphism 3-cell 
	$$\Sigma: \threefrac{\Id_{g}\ot\zeta_2}{\omega\ot\Id_u}{\Id_f\ot\zeta_1} \Rrightarrow\sigma;$$
		\item for all 1-cells $u,v: T\to P$, 2-cells $\alpha:p_1u\Rightarrow p_1v, \beta: p_2u\Rightarrow p_2v$ and a 3-cell 
		$\kappa:\frac{\Id_t\ot\alpha}{\omega\ot\Id_v}\Rrightarrow\frac{\omega\ot\Id_u}{\Id_g\ot\beta}$ 
		there are a 2-cell $\gamma: u\Rightarrow v$ and isomorphism 3-cells $\Gamma_1:\Id_{p_1}\ot\gamma\Rightarrow\alpha, 
		\Gamma_2:\Id_{p_2}\ot\gamma\Rightarrow\beta$; 
		\item for all 2-cells $\gamma,\gamma': u\Rightarrow v$ and a 3-cell $\chi: \omega\ot\gamma\Rrightarrow\omega\ot\gamma'$ 
		such that the following two transversal compositions of 3-cells are equal:
		$$\frac{\Id_t\ot(\Id_{p_1}\ot\gamma)}{\omega\ot\Id_v} \stackrel{ \frac{\Id\ot\Gamma_1}{\Id}} {\Rrightarrow} 
		\frac{\Id_t\ot\alpha}{\omega\ot\Id_v} \stackrel{\kappa}{\Rrightarrow} \frac{\omega\ot\Id_u}{\Id_s\ot\beta} 
		 \stackrel{ \frac{\Id}{\Id\ot(\Gamma'_2)^{-1}}} {\Rrightarrow}
		\frac{\omega\ot\Id_u}{\Id_s\ot(\Id_{p_2}\ot\gamma')}$$
   $$\Downarrow \displaystyle{\frac{a}{\Id}} \hspace{7cm} \Downarrow\displaystyle{\frac{a}{\Id}} $$
    $$ \frac{(\Id_t\ot\Id_{p_1})\ot\gamma}{\omega\ot\Id_v} \hspace{0,2cm}\stackrel{\xi}{\Rrightarrow}\hspace{0,2cm} \omega\ot\gamma 
		\hspace{0,2cm}\stackrel{\chi}{\Rrightarrow}\hspace{0,2cm} \omega\ot\gamma' \hspace{0,2cm}\stackrel{\xi^{-1}}{\Rrightarrow} 
		\hspace{0,2cm} \frac{\omega\ot\Id_u}{(\Id_s\ot\Id_{p_2})\ot\gamma'}$$
		there exists a unique 3-cell $\delta:\gamma\Rrightarrow\gamma'$ such that $\Id_\omega\ot\delta=\chi$. 
	\end{itemize}
%\end{itemize}
\end{defn}

A 3-pullback with notations as in the above Definition we will denote shortly by $(P,M,N,S,p_1,p_2;f,g)$, or 
%, when $P$ is of the form $M\times_SN$, we will write only 
$(M\times_SN, f,g)$.

\subsection{Tricategorical (co)products} \sslabel{3-(co)prod}

In the literature there are bicategorical (co)products, that is, (co)products in bicategories. In this section we propose 
a definition for their tricategorical companions, we will shortly also call them 3-(co)products. Before defining them let us 
remark what data comprise a 2-product in a bicategory $\K$. A 2-product consists of: 1) a 0-cell $A\times B$ for 0-cells $A,B\in\K$ and 
1-cells $p_1:A\times B\to A, p_2:A\times B\to B$, and 2) for every $X\in\K$ a natural equivalence of categories: 
$F:\K(X,A\times B)\to\K(X,A)\times\K(X,B)$. Observe that the point 2) means that $F$ is an equivalence 1-cell in the 2-category 
$\Cat_2$ of categories, and that the 2-functor $\K(X,-):\K\to\Cat_2$ sends the product 0-cell $A\times B$ in $\K$ to the 
1-product in the 1-category of categories $\Cat_1$. 

With this in mind we define:

\begin{defn}
A 3-product of 0-cells $A$ and $B$ in a tricategory $V$ consists of: %the following: 
\begin{itemize}
\item a 0-cell $A\times B$ and 1-cells $p_1:A\times B\to A, p_2:A\times B\to B$, such that
\item for every $X\in V$ there is a biequivalence of bicategories 
$$V(X,A\times B)\simeq V(X,A)\times V(X,B)$$
where on the right hand-side the 2-product in the 2-category $\Bicat_2$ of bicategories, pseudofunctors and icons 
\cite{Lack-i} is meant. 
\end{itemize}
\end{defn}

The second point in the above definition says that there is an equivalence 1-cell in the tricategory $\Bicat_3$ of bicategories 
(bicategories, pseudofunctors, pseudonatural transformations and modifications) up to which the ``trifunctor'' 
$V(X,-):V\to\Bicat_3$ sends the product 0-cell $A\times B$ to the 2-product of bicategories. 
%%2-product in the 2-category $\Bicat_2$ of bicategories, pseudofunctors and icons, \cite{Lack-i}. 
%In the case of a 1-strict tricategory $V$ so that the underlying 1-category $V_1$ of $V$ has products, 
%this means that $V(X,-)$ (restricted to the functor $V_1\to\Bicat_1$) preserves 1-products 
%(up to a natural biequivalence of bicategories). 
%In our definition we content with considering $V(X,A)\times V(X,B)$ as a 1-product (rather than a 2-product in 
%the 2-category $\Bicat_2$ of bicategories, pseudofunctors and icons, \cite{Lack-i}) since we want pseudofunctors 
%to $V(X,A)\times V(X,B)$ to be {\em uniquely} given via pseudofunctors to  $V(X,A)$ and  $V(X,B)$. 

For 3-products of $k>2$ %more than two 
0-cells the projection 1-cells to the first $i$ and last $j$ components we will write by $p^k_{1\dots i}$ and 
$p^k_{k-j+1\dots k}$, respectively.

It is useful to unpack the above definition. We will do it for the dual notion of a 3-coproduct in $V$. In this case the 
natural biequivalence of bicategories in question takes the form $V(A\amalg B,X)\simeq V(A,X)\amalg V(B,X)$ and the 
analogous ``trifunctor'' $V(-,X):V\to\Bicat$ is now contravariant. 
% $V(X,A\amalg B)\simeq V(X,A)\amalg V(X,B).$

\begin{defn}
A 3-coproduct of 0-cells $A$ and $B$ in a tricategory $V$ consists of: %the following: 
%\begin{itemize}
%\item 
a 0-cell $A\amalg B$ and 1-cells $\iota_1:A\to A\amalg B, \iota_2:B\to A\amalg B$, such that
 \begin{itemize}
  \item for every 0-cell $T$ and 1-cells $f_1: A\to T, f_2: B\to T$ 
	      there are a 1-cell $u: A\amalg B\to T$ and equivalence 2-cells $\zeta_i: u\iota_i\Rightarrow f_i, i=1,2$; 
     %\begin{itemize}
  		 \item 
			for all 1-cells $u,v: A\amalg B\to T$ and 2-cells $\alpha: u\iota_1\Rightarrow v\iota_1$ and $\beta: u\iota_2\Rightarrow v\iota_2$, 
	     there are a 2-cell $\gamma:u\Rightarrow v$ and 3-cells $\Gamma_1: \gamma\ot\Id_{\iota_1}\Rrightarrow\alpha$ and 
			$\Gamma_2: \gamma\ot\Id_{\iota_2}\Rrightarrow\beta$; %, so that
       %\begin{itemize}
				\item 
				for every two 2-cells $\gamma,\gamma':u\Rightarrow v$ and every two 3-cells 
				$\chi_i: \gamma\ot\iota_i\Rrightarrow\gamma'\ot\iota_i, i=1,2$ there is a unique 3-cell $\Gamma: \gamma\Rrightarrow\gamma'$ such that 
					$\chi_i=\Gamma\ot\Id_{\Id_{\iota_i}}, i=1,2$.
       \end{itemize}
     %\end{itemize}
  % \end{itemize}
%\end{itemize}
\end{defn}

We say that a tricategory $V$ has small 3-(co)products if it has them for any family of 0-cells indexed by (elements of) a set.

%\section{Internalization and enrichment in 1-strict tricategories} %\sslabel{int3}

%In this section we define categories internal and enriched over 1-strict tricategories. 

\section{Categories internal in 1-strict tricategories} \selabel{int3}

We are interested in internalization in ambient weak $n$-categories, for $n=1,2,3$, that have an underlying 1-category. 
Such ambient weak $n$-categories we call {\em 1-strict}. These can be various ``categories of categories''. 
A folklore example of internal categories are pseudo double categories for which this condition is fulfilled: they are internal 
categories in the 2-category of categories. 
Also the tricategory $\Bicat_3$ of bicategories, pseudofunctors, pseudonatural transformations and modifications is 1-strict.  
In particular, in this paper we are interested in 1-strict tricategories $\DblPs$ and $2Cat_{wk}$, of 2-categories, 
pseudofunctors, weak natural transformations and modifications... 

Observe that while in a 1-strict tricategory 
the associativity and unitality of the composition of 1-cells are strict, and also the 3-cells $\pi_{D,C,B,A},\lambda_{B,A}, 
\rho_{B,A}, \mu_{B,A}$ (modifications from (TD7) and (TD8) from \cite[Definition 2.2]{GPS} evaluated at 1-cells $D,C,B,A$) 
are identities, the horizontal associativity and unitality of 2-cells (and the interchange law) work up to isomorphism.  

%Then $V$ is 1-strict and it has pullbacks if so does its underlying 1-category. 
Let $V$ be a 1-strict tricategory, we want to define a category internal in $V$. In \cite[Definition 2.11]{DH} an 
internal category in a Gray-category was defined. Therein, the definition of a Gray-category is based on whisker, so that 
instead of a full interchange law there appears an isomorphism 3-cell $sw$ (with an additional rule for whiskering). 
%\textcolor{grey}{
%In contrast to \cite[Definition 2.11]{DH}, working in $V$ we will have to use associativity constraint $a$, left and right 
%unity constraints $l$ and $r$ on 2-cells of $V$ and the full interchange law $\xi$ at the appropriate places. 
%Let (V4-1) - (V4-5) denote the axioms corresponding to (C4-1) - (C4-5) from \cite[Definition 2.11]{DH} obtained by modifying the latter 
%as indicated above, adding $a,l$ and $r$ on 2-cells of $V$ and $\xi$ where it corresponds. Writing out the diagrams in our style 
%for these axioms would take several pages, so we omit to type them out. }
From the point of view of $V$, the 3-cell $sw$ can be defined as the following transversal composition of 3-cells: 
$$ \bigg(\frac{[\alpha\vert\Id]}{[\Id\vert\beta]}\bigg) \stackrel{\xi}{\Rrightarrow} 
\bigg[\big(\frac{\alpha}{\Id}\big) \bigg\rvert \big(\frac{\Id}{\beta}\big)\bigg] \cong %equiv
\bigg[\big(\frac{\Id}{\alpha}\big) \bigg\rvert \big(\frac{\beta}{\Id}\big)\bigg]
\stackrel{\xi^{-1}}{\Rrightarrow}
\bigg(\frac{[\Id\vert\beta]}{[\alpha\vert\Id]}\bigg), $$
for 2-cells $\alpha$ and $\beta$, where the middle isomorphism stands for the composition of one ``vertical'' unity constraint with 
the inverse of the other in the appropriate order, in both coordinates. %Here notation 
Here $[\alpha\vert\beta]$ denotes %, closer to the diagramatical ``bubble'' notation for bicategories, stands for 
the horizontal composition $\beta\ot\alpha$, and the fractions denote the vertical one. We consider by the coherence Theorem 
\cite[Theorem 1.5]{GPS} that these unity constraints are identities, so $sw$ will be identity. 
Another difference with respect to \cite[Definition 2.11]{DH} is that therein the authors work with 1-pullbacks (a Gray-category 
is a 1-strict tricategory), while we are working with 3-pullbacks introduced in \seref{3-pull(co)pr}. 

%For our definition of an internal category in a 1-strict tricategory 
As a matter of fact, we will need only certain 3-pullbacks. For this reason we define iterated 3-pullbacks, analogously to iterated 
2-pullbacks from \cite{GP}. Let $B_1\Doublearrow B_0$ be 1-cells in a 1-strict tricategory $V$. 
Iterated $n$-fold composition of the span $B_1\Doublearrow B_0$ in $V$ can be defined via 3-pullbacks. Such $n$-fold composition 
we call {\em iterated 3-pullbacks}. We denote them by (any of the distributions of the parentheses on) $B_1\times_{B_0}B_1\times_{B_0}\dots\times_{B_0}B_1$ 
($n$ times). We will write this shortly as $B_1^{(n)_0}$, regardless the choice of the distributions, which will be clear from the context. Let 
$B_1^{(0)_0}$ be $B_0$. For the projections $p_i: B_1^{(n)_0} \to B_1$ for $i=1,2\dots n$ we will use lexicographical order. 
%Observe that in the above definition a 3-pullback 
%$B_1\times_{B_0}B_1$ is read from left to right, meaning that $p_1$ projects to the left copy of $B_1$. 

\begin{rem} \rmlabel{pb order}
The 3-pullback $(B_1\times_{B_0}B_1,s,t)$ we will consider with the following order of factors: 
$$
\bfig
%\putmorphism(-300,800)(1,-1)[\phantom{D}`\phantom{B_1\times_{B_0}B_1 }`\l] {300}1r
%\putmorphism(-300,800)(3,-1)[D`\phantom{B_1}`k]{900}1r
%\putmorphism(-300,800)(1,-3)[\phantom{D}`\phantom{A}`h]{250}1l
\putmorphism(-30,400)(1,0)[\phantom{B_1\times_{B_0}B_1}`\phantom{B}`p_2]{500}1a
\putmorphism(500,400)(0,-1)[B_1`B_0.`t]{400}1r
\putmorphism(0,400)(0,-1)[B_1\times_{B_0}B_1`B_1`p_1]{400}1l
\putmorphism(0,0)(1,0)[\phantom{B_1}`\phantom{C}`s]{500}1b
\efig
$$
The labels $s$ and $t$ are suggestive for the case when $B_1$ is a hom-set, then as the diagram indicates, the 3-pullback $B_1\times_{B_0}B_1$ 
is read from right to left, although the projections are labeled in the lexicographical order. 
\end{rem}

In the next definition, to simplify the notation, the unsubscribed symbol $\times$ will stand for $\times_{B_0}$ at many places.

\begin{defn} \delabel{int3}
Let $V$ be a 1-strict tricategory. A category internal in $V$ consists of: 
\begin{enumerate}
\item 1-cells $B_1\Doublearrow B_0$, which we call source and target morphisms, for which the iterated 3-pullbacks $B_1^{(n)_0},n\in\N$ exist; 
\item 1-cells: $B_1\times_{B_0}B_1\stackrel{c}{\to} B_1$ composition and $u: B_0\to B_1$ unit (or identity) morphism; 
\item equivalence 2-cells $a^*: c\ot(id_{B_1}\times_{B_0} c) \Rightarrow c\ot(c\times_{B_0} id_{B_1}), \hspace{0,2cm} 
l^*: c\ot(u\times_{B_0}id_{B_1})\Rightarrow id_{B_1}$ and $r^*: c\ot(id_{B_1}\times_{B_0}u)\Rightarrow id_{B_1}$ in $V$; 
\item 3-cells 
%$$\pi^*: [\Id_c \ot (a^*\times 1)] \cdot [a^*\ot\Id_{1\times c\times 1}]\cdot[\Id_c \ot(1\times a^*)]
%\to [a^* \ot\Id_{c\times 1\times 1}]\cdot [\Id_c \ot\Nat_{(c\times 1)(1\times 1\times c)}]\cdot 
%[a^* \ot\Id_{1\times 1\times c}],$$ 
$$\pi^*: \threefrac{\Id_c \ot(\Id_{id_{B_1}}\times a^*)}{a^*\ot\Id_{1\times c\times 1}}{\Id_c \ot (a^*\times \Id_{id_{B_1}})}
\Rrightarrow
\threefrac{a^* \ot\Id_{1\times 1\times c}}{\Id_c \ot\Nat_{(c\times 1)(1\times 1\times c)}}{a^* \ot\Id_{c\times 1\times 1}}$$
%$[\Id_c \ot(1\times l^*a)]\cdot[a^* \ot\Id_{1\times u\times 1}]\cdot[\Id_c\ot(r^*\times 1)]\to\Id_c$$ 
%$$\mu^*: r^*_B\ot u_A \Rrightarrow \frac{a^*_{B,u_,A}}{u_B\ot l^*_A}$$
$$\mu^*: \Id_c\ot(\Id_{id_{B_1}}\times r^*) \Rrightarrow \frac{a^*\ot \Id_{id_{B_1}\times u\times id_{B_1}}}{\Id_c\ot(l^*\times id_{B_1})}$$

$$\lambda^*: \Id_c\ot(\Id_{id_{B_1}}\times l^*) \Rrightarrow \threefrac{a^*\ot \Id_{id_{B_1}\times id_{B_1}\times u}}
{\Id_c\ot\Nat_{(c\times id_{B_1})\ot(id_{B_1}\times id_{B_1}\times u)}}{l^*\ot\Id_c}$$

$$\rho^*: \frac{a^*\ot\Id_{u\times id_{B_1}\times id_{B_1}}}{\Id_c\ot (r^*\times id_{B_1})}\Rrightarrow 
\threefrac{\Id_c\ot\Nat_{(c\times id_{B_1})\ot(u\times id_{B_1}\times id_{B_1})}}{r^*\ot\Id_c}
{\Nat_{id_{B_1}\ot c}}$$

$$\epsilon^*: l^*\ot\Id_u \Rrightarrow \frac{\Id_c\ot\Nat_{(id_{B_1}\times u)\ot u}}{r^*\ot\Id_u}$$
%where the 2-cells $\Nat$ are all identities, and their subscripts are their 1-cell domains 
%comment e) above), 
%(for the polytop form see axioms (TD7) and (TD8) of [GPS]) 
%
%\begin{figure}[hbt!]%[ht!]
%\centering
%\includegraphics[width=140mm]{3-cells.jpg}
%\end{figure}
which satisfy axioms (IT-1) - (IT-5) in the Appendix and symmetric versions of (IT-1), (IT-3) and (IT-4) 
(here the 2-cells $\nu$ are all identities, see the Remark below); 
\end{enumerate}
the above data should moreover satisfy the following compatibility conditions: % with source and target morphisms: 
$$sp_2=tp_1, \hspace{0,3cm} su=id_{B_0}=tu, \hspace{0,3cm} sc=sp_1, \hspace{0,3cm} tc=tp_2,$$ 
$$id_s\ot l^*=id_s=id_s\ot r^*, \hspace{0,3cm} id_t\ot l^*=id_t=id_t\ot r^*, \hspace{0,3cm} id_s\ot a^*=id_{sp_1}, 
\hspace{0,3cm} id_t\ot a^*=id_{tp_3},$$ % \lambda u=\rho u,
$$\Id_{id_{s}}\ot \pi^*=\Id_{id_{sp_1}}, \hspace{0,3cm} \Id_{id_{t}}\ot \pi^*=\Id_{id_{tp_4}},$$
$$\Id_{id_{s}}\ot \mu^*=\Id_{id_{s}}\ot\lambda^*= \Id_{id_{s}}\ot\rho^*=\Id_{id_{sp_1}}, \hspace{0,3cm} 
\Id_{id_{t}}\ot \mu^*=\Id_{id_{t}}\ot\lambda^*= \Id_{id_{t}}\ot\rho^*=\Id_{id_{tp_2}},$$ 
where $p_i, i=1,2,3,4$, are 1-cells projections from the corresponding pullbacks in $V$. 
\end{defn}

\begin{rem} \rmlabel{items int3}
When writing out the 3-cells (and the axioms) in our definition, %we omited writing the (co)domain 1-cells 
%on which the 2-cells act, but we want to draw reader's attention to the following few things. 
the following should be kept in mind. 
\begin{enumerate} [a)]
%\item By the pullback properties in the underlying 1-category of $V$, for all 1-cells $F,G,H,K$ in $V$ it holds:
%\begin{equation} \eqlabel{pullback prop.}
%(G\ot F)\times(H\ot K)=(G\times H)\ot(F\times K).
%\end{equation}
\item %In an internal category in $V$ there are 0-, 1-, 2- and 3-cells listed in items 1-4 in the definition below. 
%By the pullback properties it holds: 
We will identify 1-cells $(id_{B_1}\times u)\ot c$ (acting on $B_1\times_{B_0}B_1$) and $c\times u$ (acting on 
$(B_1\times_{B_0}B_1)\times_{B_0}B_0$), by suppressing the isomorphism 1-cells (for the associativity and unity of the 3-pullback) between 
their domains. We do similar for $u$ and their symmetric counterparts. Recall that by 1-strictness of $V$ one has $c\ot id_{B_1}=c$.  
\item We explain the (co)domains of the 2-cells $\Id_\bullet\times a^*, \Id_\bullet\times r^*, \Id_\bullet\times l^*$ %{id_{B_1}}
and their symmetric counterparts in item 4 in the definition above. Given a 2-cell $\alpha:G\ot F\Rightarrow G'\ot F'$, by abuse of notation, 
by $\alpha\times\id_{B_1}$ we will mean the induced 2-cell $(G\times 1)\ot(F\times 1)\Rightarrow (G'\times 1)\ot(F'\times 1)$. 
(Observe that by the 3-pullback property, between $(G\times 1)\ot(F\times 1)$ and $(G\ot F)\times 1$ there exists a (possibly non-isomorphism) 2-cell $\gamma$.)   
\item The naturality identity 2-cells we will sometimes draw explicitly and denote them all by $\nu$, or we will just write ``='' between two equal 
compositions of 1-cells. Here we refer to the 1-cells of the form $G\times F=(G\ot 1)\times(1\ot F)=(1\ot F)\times(G\ot 1)$. 
%\item In order to be precise, in some vertical compositions of horizontal compositions of 2-cells one should take into 
%account certain naturality 
%2-cells $\Nat_\bullet$, whose subscripts (usually) are their 1-cell domains. They will actually all be identities 
%by \equref{pullback prop.} and also by the properties like in the item b) above. For example: 
%$\Nat_{(c\times id_{B_1})\ot(id_{B_1}\times id_{B_1}\times u)}: (c\times id_{B_1})\ot(id_{B_1}\times id_{B_1}\times u)\Rightarrow
%(id_{B_1}\times u)\ot c$, by \equref{pullback prop.} it is easily seen that both (co)domain 1-cells equal $c\times u$. 

\item In order to simplify the diagrams and the definition, we could want the following two vertical compositions of 
horizontal compositions of 2-cells to be equal:
$$
\xymatrix@C=8pt{  \ar@/^1pc/[rr]^{}\ar@{}[rr]| {\Downarrow \Id} \ar@/_1pc/[rr]_{ } &  &  
\ar@/^1pc/[rr]^{}\ar@{}[rr]|{\Downarrow \Id} \ar@/_1pc/[rr]_{ } &  & }  
\qquad 
\xymatrix@C=8pt{  \ar@/^1pc/[rr]^{}\ar@{}[rr]| {\Downarrow \Id} \ar@/_1pc/[rr]_{ } &  &  
\ar@/^1pc/[rr]^{}\ar@{}[rr]|{\Downarrow \alpha} \ar@/_1pc/[rr]_{ } &  & }  \vspace{-0,2cm}$$
$$
\xymatrix@C=8pt{  \ar@/^1pc/[rr]^{}\ar@{}[rr]| {\Downarrow \Id} \ar@/_1pc/[rr]_{ } &  &  
\ar@/^1pc/[rr]^{}\ar@{}[rr]|{\Downarrow \alpha} \ar@/_1pc/[rr]_{ } &  & }  
\qquad   
\xymatrix@C=8pt{  \ar@/^1pc/[rr]^{}\ar@{}[rr]| {\Downarrow \Id} \ar@/_1pc/[rr]_{ } &  &  
\ar@/^1pc/[rr]^{}\ar@{}[rr]|{\Downarrow \Id} \ar@/_1pc/[rr]_{ } &  & } 
$$
When $V=\DblPs$, applying \prref{horiz comp 2-cells} and \prref{vertic comp 2-cells} %\cite[Propositions 4.13 and 4.14]{Fem} 
one can see that the two compositions above differ by a modification given by 
the globular 2-cells $\delta_{\alpha_i, id}$ for $i=1,2$. Thus one could restrict to a full sub tricategory of $V$ whose 1-cells 
are double pseudofunctors $F$ which applied to the identity 1h- and 1v-cells give identities. Then one could also consider that their 
distinguished 2-cells $F^A$ and $F_A$ (see the next section) are identities (for all 0-cells $A$ of the domain strict 
double category of $F$), thus the unity constraints for the horizontal composition would both be identities (see \cite[Section 4.8]{Fem}), 
and one could also consider that 2-cells of the sub tricategory are those double pseudonatural tranformations $\alpha$ of $V$ 
whose associated globular 2-cells $\delta_{\alpha_i, id}$ for $i=1,2$ are identities (see the end of \deref{hor psnat tr}). 
\end{enumerate}
\end{rem}

\begin{rem} %\rmlabel{V axioms}
Let us comment the axioms (IT-1) - (IT-5). 
We do it for the case of the full sub tricategory of $V$ from point e) in the above Remark, let us denote it by $V^*$. 
Although the 3-cell $sw$ is identity in our context, we will mention it, as it helps to better understand technically 
how the compositions of 3-cells are made in the axioms. 

By $n$-fold fractions we denote vertical composition of $n$ 3-cells (observe that we consider vertical associativity of 2-cells as identity). 
All the drawings of 2-cells (bicategory diagrams), and accordingly the 3-cells acting between them, are read from top to bottom and from left to right, including the horizontal 
composition of 3-cells $\alpha\ot\beta$, (first acts $\alpha$, then $\beta$) which otherwise is read from right to left. 
In one entry of an $n$-fraction vertical lines present transversal composition of 3-cells (read from left to right). 
Moreover, in one such entry may appear: 
$\frac{\alpha}{\alpha'}\vert \beta\vert \beta'\vert \beta''$ where all the named cells are 3-cells. 
This means that instead of writing separate drawings for four transversally composed 3-cells, %acting between drawings of 2-cells, 
we condense them into one 3-cell written this way. We usually do this when applying the distinguished 3-cells $sw, a,\xi$ 
from the ambient tricategory $V^*$ (associativity of 2-cells and interchangers). 

(IT-1) comprises of $\lambda^*_u, \Epsilon^*, \mu_u^*, sw$ and the first 1-cell in its domain 2-cell 
is $id_{B_1}\ot u\ot u$ (in the symmetric version it is $u\ot u\ot id_{B_1}$).

(IT-2) comprises of $\lambda^*, \rho^*, sw, a, \xi$ and the first 1-cell in its domain 2-cell 
is $u\ot id_{B_1}\ot u$. 

(IT-3) comprises of $\lambda^*, \pi^*, sw, a, \xi$ and the first 1-cell in its domain 2-cell 
is $id_{B_1}\ot id_{B_1}\ot id_{B_1}\ot u$ (in the symmetric version it is $u\ot id_{B_1}\ot id_{B_1}\ot id_{B_1}$). 
It corresponds to the {\em normalization in the first and the fourth coordinate}. 

(IT-4) comprises of $\mu^*, \lambda^*, \pi^*, sw, a, \xi$ and the first 1-cell in its domain 2-cell 
is $id_{B_1}\ot id_{B_1}\ot u\ot id_{B_1}$ (in the symmetric version it is $id_{B_1}\ot u\ot id_{B_1}\ot id_{B_1}$).
It corresponds to the {\em normalization in the second and the third coordinate}. 

(IT-5) comprises of $\pi^*, sw, \xi$. It corresponds to the {\em 4-cocycle condition on $a^*$}. 

\noindent Observe in these axioms that the 3-cells $sw, a,\xi$ are the distinguished 3-cells from the ambient tricategory $V^*$. 
\end{rem}

\section{Categories internal in $\DblPs$} \selabel{int in DblPs}

An internal category in $\DblPs$ consists of strict double categories $\Dd_0$ and $\Dd_1$, strict double functors 
$S,T: \Dd_1\to\Dd_0$, double pseudo functors $U: \Dd_0\to\Dd_1, M:\Dd_1\times_{\Dd_0}\Dd_1\to\Dd_1$, double pseudonatural 
transformations $a^*, l^*, r^*$ and double modifications $\pi^*, \mu^*, \lambda^*, \rho^*, \Epsilon^*$, satisfying the corresponding 
axioms from the previous section. Both double pseudo functors $U$ and $M$ are equipped with distinguished globular 2-cells 
(set $F$ for any of $U$ and $M$): 
$$
\scalebox{0.86}{
\bfig
\putmorphism(-550,200)(1,0)[F(A)`F(C)`F(gf)]{1000}1a

 \putmorphism(-550,-200)(1,0)[F(A)`F(B)`F(f)]{500}1a
 \putmorphism(-50,-200)(1,0)[\phantom{F(A)}`F(C)`F(g)]{500}1a

\putmorphism(-580,200)(0,-1)[\phantom{Y_2}``=]{400}1l
\putmorphism(450,200)(0,-1)[\phantom{Y_2}``=]{400}1l
\put(-150,10){\fbox{$F_{gf}$}}

\efig}
\quad
\scalebox{0.86}{
\bfig
 \putmorphism(-150,200)(1,0)[F(A)`F(A)`F(id_A)]{500}1a 
\putmorphism(340,200)(0,-1)[\phantom{Y_2}``=]{400}1l

 \putmorphism(-150,-200)(1,0)[F(A)`F(A)`=]{500}1a 
\putmorphism(-180,200)(0,-1)[\phantom{Y_2}``=]{400}1r
\put(0,0){\fbox{$F_A$}}
\efig}
\quad
\scalebox{0.86}{
\bfig
 \putmorphism(-180,400)(1,0)[F(A)`F(A)`=]{500}1a
\putmorphism(-180,400)(0,-1)[\phantom{Y_2}`F(A') `F(u)]{400}1l
\putmorphism(-180,-400)(1,0)[F(A'')`F(A'') `=]{500}1a

\putmorphism(-180,0)(0,-1)[\phantom{Y_2}``F(v)]{400}1l
\putmorphism(310,400)(0,-1)[\phantom{Y_2}` `F(vu)]{800}1l
\put(-40,180){\fbox{$F^{vu}$}}
\efig}
\quad
\scalebox{0.86}{
\bfig
 \putmorphism(-150,200)(1,0)[F(A)`F(A)`=]{500}1a 
\putmorphism(340,200)(0,-1)[\phantom{Y_2}``F(id_A)]{400}1r

 \putmorphism(-150,-200)(1,0)[F(A)`F(A)`=]{500}1a 
\putmorphism(-180,200)(0,-1)[\phantom{Y_2}``=]{400}1r
\put(0,0){\fbox{$F^A$}}
\efig}
$$
satisfying the following axioms, where $f,g,h$ are composable 1h-cells, $u,v,w$ are composable 1v-cells, $a$ and $b$ are 2-cells 
composable horizontally and $a$ and $a'$ are 2-cells composable vertically (note that here, for simplicity of the notation, we are denoting 
both 1h- and 1v-composition by juxtaposition, the difference is clear from the letters denoting 1-cells). Coherence in the 1h-direction: 
$$
\scalebox{0.86}{
\bfig
\putmorphism(-550,0)(1,0)[F(A)`F(C)`F(gf)]{1000}1a
 \putmorphism(450,0)(1,0)[\phantom{F(A)}`F(D) `F(h)]{600}1a

 \putmorphism(-550,-400)(1,0)[F(A)`F(B)`F(f)]{500}1a
 \putmorphism(-50,-400)(1,0)[\phantom{F(A)}`F(C)`F(g)]{500}1a
 \putmorphism(450,-400)(1,0)[\phantom{F(A)}`F(D) `F(h)]{600}1a

\putmorphism(-580,0)(0,-1)[\phantom{Y_2}``=]{400}1l
\putmorphism(450,0)(0,-1)[\phantom{Y_2}``=]{400}1l
\putmorphism(1050,0)(0,-1)[\phantom{Y_2}``=]{400}1r
\put(-150,-190){\fbox{$F_{gf}$}}
\put(650,-190){\fbox{$\Id$}}

\putmorphism(-550,400)(1,0)[F(A)`F(D) `F(h(gf))]{1600}1a

\putmorphism(-580,400)(0,-1)[\phantom{Y_2}``=]{400}1l
\putmorphism(1050,400)(0,-1)[\phantom{Y_3}``=]{400}1r
\put(140,200){\fbox{$F_{h(gf)}$}}

\efig}
=
\scalebox{0.86}{
\bfig
\putmorphism(-550,0)(1,0)[F(A)`F(B)`F(f)]{500}1a
 \putmorphism(-50,0)(1,0)[\phantom{F(A)}`F(D) `F(hg)]{1100}1a

 \putmorphism(-550,-400)(1,0)[F(A)`F(B)`F(f)]{500}1a
 \putmorphism(-50,-400)(1,0)[\phantom{F(A)}`F(C)`F(g)]{500}1a
 \putmorphism(450,-400)(1,0)[\phantom{F(A)}`F(D) `F(h)]{600}1a

\putmorphism(-580,0)(0,-1)[\phantom{Y_2}``=]{400}1l
\putmorphism(-50,0)(0,-1)[\phantom{Y_2}``=]{400}1r
\putmorphism(1050,0)(0,-1)[\phantom{Y_2}``=]{400}1r
\put(-350,-190){\fbox{$\Id$}}
\put(350,-190){\fbox{$F_{hg}$}}

\putmorphism(-550,400)(1,0)[F(A)`F(D) `F((hg)f)]{1600}1a

\putmorphism(-580,400)(0,-1)[\phantom{Y_2}``=]{400}1l
\putmorphism(1050,400)(0,-1)[\phantom{Y_3}``=]{400}1r
\put(60,200){\fbox{$F_{(hg)f}$}}
\efig}
$$

$$
\scalebox{0.86}{
\bfig
 \putmorphism(-150,250)(1,0)[F(A)`F(A)`F(id_A)]{600}1a %id_{F(A)}
 \putmorphism(410,250)(1,0)[\phantom{A\ot B}`F(B) `F(f)]{580}1a
\putmorphism(440,250)(0,-1)[\phantom{Y_2}``=]{400}1l

 \putmorphism(-150,-150)(1,0)[F(A)`F(A)`=]{600}1a %id_{F(A)}
 \putmorphism(410,-150)(1,0)[\phantom{A\ot B}`F(B) `F(f)]{580}1a
\putmorphism(-180,250)(0,-1)[\phantom{Y_2}``=]{400}1r
\putmorphism(1000,250)(0,-1)[\phantom{Y_2}``=]{400}1r
\put(0,0){\fbox{$F_A$}}
\put(640,30){\fbox{$\Id$}}
\efig}
\quad=
\scalebox{0.86}{
\bfig
 \putmorphism(-150,250)(1,0)[F(A)`F(A)`F(id_A)]{600}1a %id_{F(A)}
 \putmorphism(410,250)(1,0)[\phantom{A\ot B}`F(B) `F(f)]{580}1a

 \putmorphism(-150,-150)(1,0)[F(A)`F(B)`F(f id_A)]{1160}1b 
\putmorphism(-180,250)(0,-1)[\phantom{Y_2}``=]{400}1r
\putmorphism(1000,250)(0,-1)[\phantom{Y_2}``=]{400}1r
\put(320,0){\fbox{$F_{f id_A}$}}
\efig}
$$

$$
\scalebox{0.86}{
\bfig
 \putmorphism(-150,250)(1,0)[F(A)`F(B)`F(f)]{600}1a 
 \putmorphism(410,250)(1,0)[\phantom{A\ot B}`F(B) `F(id_B)]{580}1a
\putmorphism(440,250)(0,-1)[\phantom{Y_2}``=]{400}1l

 \putmorphism(-150,-150)(1,0)[F(A)`F(B)`F(f)]{600}1a 
 \putmorphism(410,-150)(1,0)[\phantom{A\ot B}`F(B) `=]{580}1a
\putmorphism(-180,250)(0,-1)[\phantom{Y_2}``=]{400}1r
\putmorphism(1000,250)(0,-1)[\phantom{Y_2}``=]{400}1r
\put(640,50){\fbox{$F_B$}}
\put(60,50){\fbox{$\Id$}}
\efig}
\quad=
\scalebox{0.86}{
\bfig
 \putmorphism(-150,250)(1,0)[F(A)`F(B)`F(f)]{600}1a 
 \putmorphism(410,250)(1,0)[\phantom{A\ot B}`F(B) `F(id_B)]{580}1a

 \putmorphism(-150,-150)(1,0)[F(A)`F(B),`F(id_B f)]{1160}1b 
\putmorphism(-180,250)(0,-1)[\phantom{Y_2}``=]{400}1r
\putmorphism(1000,250)(0,-1)[\phantom{Y_2}``=]{400}1r
\put(320,0){\fbox{$F_{ id_B f}$}}
\efig}
$$
coherence in the 1v-direction: 
$$
\scalebox{0.86}{
\bfig
 \putmorphism(-150,400)(1,0)[F(A)`F(A)`=]{600}1a
 \putmorphism(450,400)(1,0)[\phantom{F(A)}`F(A) `=]{600}1a
\putmorphism(-180,400)(0,-1)[\phantom{Y_2}`F(A') `F(u)]{400}1l
\put(620,-280){\fbox{$F^{w(vu)}$}}
\putmorphism(-180,-400)(1,0)[F(A'')` `=]{500}1a

\putmorphism(-180,0)(0,-1)[\phantom{Y_2}``F(v)]{400}1l
\putmorphism(450,400)(0,-1)[\phantom{Y_2}`F(A'') `F(vu)]{800}1r
\put(40,0){\fbox{$F^{vu}$}}
\putmorphism(450,-400)(0,-1)[\phantom{Y_2}``F(w)]{400}1r
\put(40,-620){\fbox{$\Id$}}

\putmorphism(-180,-400)(0,-1)[\phantom{Y_2}``F(w)]{400}1l
\putmorphism(-210,-800)(1,0)[F(A''')` `=]{480}1a
\putmorphism(420,-800)(1,0)[F(A''')` `=]{450}1a

\putmorphism(1000,400)(0,-1)[`G(B')`F(w(vu))]{1200}1r
\efig}
=
\scalebox{0.86}{
\bfig
 \putmorphism(-150,400)(1,0)[F(A)`F(A)`=]{600}1a
 \putmorphism(450,400)(1,0)[\phantom{F(A)}`F(A) `=]{600}1a
\putmorphism(-180,400)(0,-1)[\phantom{Y_2}`F(A') `F(u)]{400}1l
\put(610,-180){\fbox{$F^{(wv)u}$}}

\putmorphism(-180,0)(0,-1)[\phantom{Y_2}`F(A'')`F(v)]{400}1l
\putmorphism(450,0)(0,-1)[\phantom{Y_2}``F(wv)]{800}1r
\putmorphism(450,400)(0,-1)[\phantom{Y_2}`F(A') `F(u)]{400}1r
\put(0,180){\fbox{$\Id$}}

\putmorphism(-180,-400)(0,-1)[\phantom{Y_2}``F(w)]{400}1l
\putmorphism(-200,-800)(1,0)[F(A''')` `=]{480}1a
\putmorphism(420,-800)(1,0)[F(A''')` `=]{450}1a

\putmorphism(-180,0)(1,0)[\phantom{(B, \tilde A)}``=]{500}1a
\putmorphism(1000,400)(0,-1)[`F(A''')`F((wv)u)]{1200}1r
\put(0,-470){\fbox{$F^{wv}$}}
\efig}
$$

$$
\scalebox{0.86}{
\bfig
\putmorphism(-280,400)(1,0)[F(A)`F(A)` =]{450}1a
 \putmorphism(-280,0)(1,0)[F(A)`F(A)` =]{450}1a
 \putmorphism(-280,-400)(1,0)[F(A')`F(A')` =]{450}1a

\putmorphism(-280,400)(0,-1)[\phantom{Y_2}``=]{400}1l
 \putmorphism(-280,20)(0,-1)[\phantom{F(A)}` `F(u)]{400}1l

\putmorphism(180,400)(0,-1)[\phantom{Y_2}``]{400}1l
\putmorphism(210,360)(0,-1)[\phantom{Y_2}``F(id_A)]{400}0l
\putmorphism(180,20)(0,-1)[\phantom{Y_2}``F(u)]{400}1l
\put(-260,220){\fbox{$F^A$}}
\put(-220,-210){\fbox{$\Id$}}
\efig}
=
\scalebox{0.86}{
\bfig
\putmorphism(-280,400)(1,0)[F(A)`F(A)` =]{450}1a
 \putmorphism(-280,-400)(1,0)[F(A')`F(A')` =]{450}1a

\putmorphism(-280,400)(0,-1)[\phantom{Y_2}``F(u id_A)]{800}1l

\putmorphism(180,400)(0,-1)[\phantom{Y_2}`F(A)`F(id_A)]{400}1l
\putmorphism(180,20)(0,-1)[\phantom{Y_2}``F(u)]{400}1l
\put(-220,0){\fbox{$F_{u id_A}$}}
\efig}
\quad\text{and}\quad
\scalebox{0.86}{
\bfig
\putmorphism(-280,400)(1,0)[F(A)`F(A)` =]{450}1a
 \putmorphism(-280,0)(1,0)[F(A')`F(A')` =]{450}1a
 \putmorphism(-280,-400)(1,0)[F(A')`F(A')` =]{450}1a

\putmorphism(-280,400)(0,-1)[\phantom{Y_2}``F(u)]{400}1l
 \putmorphism(-280,20)(0,-1)[\phantom{F(A)}` `=]{400}1l

\putmorphism(180,400)(0,-1)[\phantom{Y_2}``F(u)]{400}1l
\putmorphism(180,20)(0,-1)[\phantom{Y_2}``]{400}1l
\putmorphism(210,90)(0,-1)[\phantom{Y_2}``F(id_{A'})]{400}0l
\put(-220,200){\fbox{$\Id$}}
\put(-260,-280){\fbox{$F^{A'}$}}
\efig}
=
\scalebox{0.86}{
\bfig
\putmorphism(-280,400)(1,0)[F(A)`F(A)` =]{450}1a
 \putmorphism(-280,-400)(1,0)[F(A')`F(A'),` =]{450}1a

\putmorphism(-280,400)(0,-1)[\phantom{Y_2}``F(id_{A'} u)]{800}1l

\putmorphism(180,400)(0,-1)[\phantom{Y_2}`F(A')`F(u)]{400}1l
\putmorphism(180,20)(0,-1)[\phantom{Y_2}``F(id_{A'})]{400}1l
\put(-220,0){\fbox{$F_{id_{A'} u}$}}
\efig}
$$ 
%compatibility of $F:\Dd\to\Ee$ with compositions $M_\Dd$ and $M_\Ee$: 
coherence for the composition of and unity 2-cells, horizontally: 
$$
\scalebox{0.86}{
\bfig
\putmorphism(-150,500)(1,0)[F(A)`F(C)`F(gf)]{1100}1a
 \putmorphism(-150,50)(1,0)[F(A)`F(B)`F(f)]{520}1a
 \putmorphism(370,50)(1,0)[\phantom{F(A)}`F(C) `F(g)]{540}1a
\put(320,270){\fbox{$F_{gf}$}}

\putmorphism(360,50)(0,-1)[\phantom{Y_2}``F(v)]{450}1r
\putmorphism(-180,500)(0,-1)[\phantom{Y_2}``=]{450}1l
\putmorphism(950,500)(0,-1)[\phantom{Y_2}``=]{450}1r
\put(0,-170){\fbox{$F(a)$}}
\put(600,-170){\fbox{$F(b)$}}

\putmorphism(-180,-400)(1,0)[F(A')`F(B') `F(f')]{530}1a
 \putmorphism(390,-400)(1,0)[\phantom{A'}` F(C') `F(g')]{530}1a

\putmorphism(-180,50)(0,-1)[\phantom{Y_2}``F(u)]{450}1l
\putmorphism(950,50)(0,-1)[\phantom{Y_3}``F(w)]{450}1r

\efig}
=
\scalebox{0.86}{
\bfig
\putmorphism(-150,500)(1,0)[F(A)`F(C)`F(gf)]{1050}1a
\putmorphism(-150,50)(1,0)[F(A')`F(C')`F(g'f')]{1050}1a

\putmorphism(-180,500)(0,-1)[\phantom{Y_2}``F(u)]{450}1r
\putmorphism(920,500)(0,-1)[\phantom{Y_2}``F(w)]{450}1r
\put(300,260){\fbox{$F(a\vert b)$}}
\put(310,-180){\fbox{$F_{g'f'}$}}

\putmorphism(-150,-400)(1,0)[F(A')`F(B') `F(f')]{530}1a
 \putmorphism(360,-400)(1,0)[\phantom{F(A')}` F(C') `F(g')]{550}1a

\putmorphism(-180,50)(0,-1)[\phantom{Y_2}``=]{450}1l
\putmorphism(920,50)(0,-1)[\phantom{Y_3}``=]{450}1r

\efig}
$$

$$
\scalebox{0.86}{
\bfig
\putmorphism(-250,500)(1,0)[F(A)`F(A)` =]{500}1a
 \putmorphism(-250,50)(1,0)[F(A)`F(A)` F(id_A)]{500}1a
 \putmorphism(-250,-400)(1,0)[F(A')`F(A')` F(id_{A'})]{500}1a

\putmorphism(-280,500)(0,-1)[\phantom{Y_2}``=]{450}1l
 \putmorphism(-280,70)(0,-1)[\phantom{F(A)}` `F(u)]{450}1l

\putmorphism(250,500)(0,-1)[\phantom{Y_2}``=]{450}1r
\putmorphism(250,70)(0,-1)[\phantom{Y_2}``F(u)]{450}1r
\put(-120,290){\fbox{$F_A^{-1}$}}
\put(-160,-150){\fbox{$F(\Id_u)$}}
\efig}
=
\scalebox{0.86}{
\bfig
\putmorphism(-250,500)(1,0)[F(A)`F(A)` =]{500}1a
 \putmorphism(-250,50)(1,0)[F(A')`F(A')` =]{500}1a
 \putmorphism(-250,-400)(1,0)[F(A')`F(A')` F(id_{A'})]{500}1a

\putmorphism(-280,500)(0,-1)[\phantom{Y_2}``F(u)]{450}1l
 \putmorphism(-280,70)(0,-1)[\phantom{F(A)}` `=]{450}1l

\putmorphism(250,500)(0,-1)[\phantom{Y_2}``F(u)]{450}1r
\putmorphism(250,70)(0,-1)[\phantom{Y_2}``=]{450}1r
\put(-160,260){\fbox{$F(\Id_u)$}}
\put(-120,-150){\fbox{$F_{A'}^{-1}$}}
\efig}
$$ 
%and compatibility of $F:\Dd\to\Ee$ with units $U_\Dd$ and $U_\Ee$: 
and vertically: 
$$
\scalebox{0.86}{
\bfig
 \putmorphism(-150,500)(1,0)[F(A)`F(A) `=]{500}1a
 \putmorphism(450,500)(1,0)[` `F(f)]{380}1a
\putmorphism(-180,500)(0,-1)[\phantom{Y_2}`F(A') `F(u)]{450}1l
\put(20,250){\fbox{$F^{vu}$}}
\putmorphism(-150,-400)(1,0)[F(A'')`F(A'') `=]{500}1a
\putmorphism(-180,50)(0,-1)[\phantom{Y_2}``F(v)]{450}1l
\putmorphism(380,500)(0,-1)[\phantom{Y_2}` `F(vu)]{900}1l
\put(560,45){\fbox{$F(\frac{a}{a'})$} }
\putmorphism(920,500)(0,-1)[F(B)`F(B'')`F(v'u')]{900}1r
\putmorphism(450,-400)(1,0)[``F(h)]{360}1a
\efig}= 
\scalebox{0.86}{
\bfig
 \putmorphism(-150,500)(1,0)[F(A)`F(B) `F(f)]{600}1a
 \putmorphism(450,500)(1,0)[\phantom{(B,A)}` `=]{450}1a
\putmorphism(-150,500)(0,-1)[\phantom{Y_2}` `F(u)]{450}1l
\put(620,50){\fbox{$F^{v'u'}$}}
\putmorphism(-150,-400)(1,0)[F(A'')` `F(h)]{500}1a
\putmorphism(-150,50)(0,-1)[\phantom{Y_2}``F(v)]{450}1l
\putmorphism(450,500)(0,-1)[\phantom{Y_2}`F(B') `F(u')]{450}1r
\putmorphism(450,50)(0,-1)[\phantom{Y_2}`F(B'')`F(v')]{450}1r
\put(40,260){\fbox{$F(a)$}}
\putmorphism(-150,50)(1,0)[F(A')``F(g)]{500}1a
\putmorphism(1000,500)(0,-1)[F(B)`F(B'')`F(v'u')]{900}1r
\putmorphism(480,-400)(1,0)[\phantom{F(A)}`\phantom{F(A)}`=]{500}1b
\put(40,-170){\fbox{$F(a')$}}
\efig}
$$

$$
\scalebox{0.86}{
\bfig
 \putmorphism(-150,250)(1,0)[F(A)`F(A)`=]{600}1a %id_{F(A)}
 \putmorphism(450,250)(1,0)[\phantom{F(A)}`F(B) `F(f)]{680}1a
\putmorphism(500,250)(0,-1)[\phantom{Y_2}``F(id_A)]{450}1l

 \putmorphism(-150,-200)(1,0)[F(A)`F(A)`=]{600}1a %id_{F(A)}
 \putmorphism(450,-200)(1,0)[\phantom{F(A)}`F(B) `F(f)]{680}1a
\putmorphism(-180,250)(0,-1)[\phantom{Y_2}``=]{450}1r
\putmorphism(1100,250)(0,-1)[\phantom{Y_2}``F(id_B)]{450}1r
\put(0,0){\fbox{$F^A$}}
\put(650,30){\fbox{$F(\Id_f)$}}
\efig}
=
\scalebox{0.86}{
\bfig
 \putmorphism(-100,250)(1,0)[F(A)`F(B)`F(f)]{550}1a
 \putmorphism(450,250)(1,0)[\phantom{F( B)}`F(B) `=]{550}1a

 \putmorphism(-100,-200)(1,0)[F(A)`F(B)`F(f)]{550}1a
 \putmorphism(450,-200)(1,0)[\phantom{F( B)}`F(B). `=]{550}1a 
\putmorphism(-120,250)(0,-1)[\phantom{Y_2}``=]{450}1r
\putmorphism(450,250)(0,-1)[\phantom{Y_2}``=]{450}1l
\putmorphism(1000,250)(0,-1)[\phantom{Y_2}``F(id_B)]{450}1r
\put(40,40){\fbox{$\Id_{F(f)}$}}
\put(650,30){\fbox{$F^B$}}
\efig}
$$

The above three coherences in the 1v-direction for $U$ and $M$ correspond to axioms (21)-(26) of \cite[Section 3]{GP}, respectively. 
The analogous six coherences in the 1h-direction do not appear there. The two (horizontally) globular 2-cells $F^{vu}$ and 
$F^A$ for $U$ and $M$ correspond to natural transformations (17)-(20): $U^A=\tau, U^{vu}=\mu, M^A=\delta, M^{vu}=\chi$, 
and the above two coherences for the composition of and unity 2-cells 
in the vertical direction for $U$ and $M$ correspond to naturalities of (17)-(20). One can analogously formulate 
natural transformations in the horizontal direction, introducing additional two (vertically) globular 2-cells $F_{gf}$ and 
$F_A$ for $U$ and $M$ and the above two coherences for the composition of and unity 2-cells in the horizontal direction, which 
correspond to their naturalities. (To formulate these natural transformations in the horizontal direction change the roles of 
vertical and horizontal cells in the definition of two categories determining a strict double category.) For the sake of 
comparing this structure to intercategories, for mnemotechnical reasons we could denote 
these distinguished (vertically) globular 2-cells as follows: $U_A=\tau', U_{gf}=\mu', M_A=\delta', M_{gf}=\chi'$.

\medskip

Summing up, for the double pseudo functors $U$ and $M$ we have eight globular 2-cells: 
$$U_{gf}, U_A, U^{vu}, U^A, \quad M_{gf}, M_A, M^{vu}, M^A,$$
which satisfy in total 20 axioms named above. We will denote their actions as follows. 
Let us denote the image under $M:\Dd_1\times_{\Dd_0}\Dd_1\to\Dd_1$ of $(y,x)$ by $(x\vert y)$ for any of the four types of cells %\Bb\times_{\Aa}\Bb\to\Bb
$(y,x)\in\Dd_1\times_{\Dd_0}\Dd_1$. Moreover, let us denote by $\Id^h_x$ the image under $U:\Dd_0\to\Dd_1$ of any 
of the four types of cells $x$ in $\Dd_0$. Now for 1h-cells $g, g',f,f'$ and 1v-cells $u, u', v, v'$ of $\Dd_1$ (for the action 
of $M$), respectively of $\Dd_0$ (for the action of $U$) we will write: 
\begin{equation} \eqlabel{globular on vertical}
\chi:\frac{(u\vert u')}{(v\vert v')}\Rightarrow (\frac{u}{v}\vert \frac{u'}{v'}), \quad 
\delta: id^v_{(A\vert A')}\Rightarrow(id^v_A\vert id^v_{A'}), \quad %$$
\mu: \frac{\Id^h_u}{\Id^h_v}\Rightarrow\Id^h_{\frac{u}{v}}, \quad 
\tau: id^v_{\Id^h_A}\Rightarrow \Id^h_{id^v_A}
\end{equation} 

\begin{equation} \eqlabel{globular on horiz}
\begin{cases}
(gf\vert g'f')\\
\quad \Downarrow \chi' \\
(g\vert g')(f\vert f')
\end{cases}  
	\qquad
\begin{cases}
(id^h_A\vert id^h_{A'})\\
\quad \Downarrow \delta' \\
id^h_{(A\vert A')}
\end{cases}  
	\qquad
\begin{cases}
\Id^h_{gf}\\
 \Downarrow \mu' \\
\Id^h_g\Id^h_f
\end{cases}  
	\qquad
\begin{cases}
\Id^h_{id^h_A}\\
 \Downarrow \tau' \\
id^h_{\Id^h_A}
\end{cases}  
\end{equation} 
here $id^v_A$ denotes the identity 1v-cell on $A$ (observe that the composition in the juxtapositions is read from right to left, 
while in $(-\vert-)$ it is done the other way around!).

\medskip 

A double pseudonatural transformation $\alpha: F\Rightarrow G$ between double pseudo functors $F$ and $G$ consists of 
a vertical pseudonatural transformation $\alpha_0: F\Rightarrow G$ and a horizontal pseudonatural transformation 
$\alpha_1: F\Rightarrow G$, both of which by \deref{hor psnat tr} are given by two distinguished globular 2-cells 
$\delta_{\alpha_0,u}$ and $\delta_{\alpha_1,f}$ and satisfy 5 axioms (two of them are trivial and one is simplified in the context of intercategories), 
%(which are actually part of the data of $\alpha_0$ and $\alpha_1$), 
two distinguished 2-cells $t^\alpha_f$ and $r^\alpha_u$ for every 1v-cell $u$ and 1h-cell $f$, which have to satisfy 6 axioms in total, 
by \deref{double 2-cells}. 
Comparing such a structure of a double pseudonatural transformation with the context of intercategories, that is, 
comparing 2-cells of the tricategory $\DblPs$ and the 2-category $LxDbl$, one finds that in the latter context only $\alpha_1$ appears 
(with $\delta_{\alpha_1,f}$ trivial), being the resting data $\alpha_0$, four 2-cells and 6 axioms new in our context. 

Thus each of double pseudonatural transformations 
$a^*: M(\Id\times_{\Dd_0} M) \to M(M\times_{\Dd_0} \Id), 
l^*: M(U\times_{\Dd_0}-)\to -$ and $r^*: M(-\times_{\Dd_0}U)\to -$ is equipped with 6 distinguished 2-cells for every 1v-cell $u$ and 1h-cell $f$ 
and satisfies 16 axioms. This makes 18 distinguished 2-cells and 48 axioms. As commented in \ssref{2-cells Theta}, if double 
pseudonatural transformations come from $\Theta$-double pseudonatural transformations (the 2-cells $t^\alpha_f$ and $r^\alpha_u$ come from a 
2-cell $\Theta^\alpha_A$), as indicated in \prref{teta->double}, then two axioms become trivially fulfilled for each 
double pseudonatural transformation, reducing the amount of axioms to 42. The 6 conditions (27)-(32) from \cite[Section 3]{GP} for 
horizontal transformations, corresponding to our $a^*, l^*, r^*$, together with the corresponding three naturality conditions, so 9 in total, 
are substituted by 42 or 48 axioms in our context.

\medskip

Instead of writing out all the axioms for all of the transformations here, let us just record the following. 
For the double pseudonatural transformation $a^*: M(\Id\times_{\Dd_0} M) \to M(M\times_{\Dd_0} \Id)$, which we can also write as 
$a^*: ((-\vert -)\vert-) \Rightarrow (-\vert(-\vert -))$, let us shorten: 
$L=(-\vert -)\vert -=((-\vert -)\vert-)$ and $R=-\vert (-\vert -)=(-\vert(-\vert -))$. 
The 1v- and 1h-composition in $\Dd_1$ we will denote by fractions and juxtapositions: $\frac{u}{v}$ and $gf$, respectively. 
Then the distinguished globular 2-cells for the double pseudonatural transformations $L$ and $R$ are given by:  
$$L^{vu}=\big(\frac{(u\vert u')\vert u''}{(v\vert v')\vert v''}\stackrel{\chi_{\bullet\bullet,\bullet}}{\Rightarrow}
\frac{(u\vert u')}{(v\vert v')}\vert \frac{u''}{v''}\stackrel{\chi\vert 1}{\Rightarrow}
\big(\frac{u}{v}\vert \frac{u'}{v'} \big)\vert \frac{u''}{v''} \big), \quad R^{vu}=\big(\frac{u\vert(u'\vert u'')}{v\vert(v'\vert v'')}\stackrel{\chi_{\bullet,\bullet\bullet}}{\Rightarrow}
\frac{u}{v}\vert \frac{(u'\vert u'')}{(v'\vert v'')} \stackrel{1\vert\chi}{\Rightarrow}
\frac{u}{v}\vert \big(\frac{u'}{v'} \vert \frac{u''}{v''}\big) \big)$$ 
$$L^A= \big( \Id^v_{(A\vert A')\vert A''} \stackrel{\delta_{\bullet\bullet,\bullet}}{\Rightarrow}
[\Id^v_{(A\vert A')}\vert\Id^v_{A''}] \stackrel{[\delta\vert 1]}{\Rightarrow}
[\Id^v_A\vert\Id^v_{A'}]\Id^v_{A''}\big) $$
$$R^A= \big( \Id^v_{A\vert (A'\vert A'')} \stackrel{\delta_{\bullet,\bullet\bullet}}{\Rightarrow}
[\Id^v_A\vert\Id^v_{(A'\vert A'')}] \stackrel{[1\vert\delta]}{\Rightarrow}
\Id^v_A[\Id^v_{A'}\vert\Id^v_{A''}]\big)$$ 

 $$L_{gf} =
    \begin{cases}
      \big((f\vert g)\vert(f'\vert g')\big)\vert(f''\vert g'') \\
       \qquad\quad \Downarrow \chi'\vert\Id \\
       (f'f\vert g'g)\vert(f''\vert g'') \\
			 \qquad\quad \Downarrow \chi'\\
			 f''(f'f)\vert g''(g'g)
    \end{cases}  
\quad
 R_{gf} =
    \begin{cases}
      (f\vert g)\vert\big((f'\vert g')\vert(f''\vert g'')\big) \\
       \qquad\quad \Downarrow \Id\vert\chi'\\
       (f\vert g)\vert(f''f'\vert g''g') \\
			 \qquad\quad \Downarrow \chi' \\
			 f''(f'f)\vert g''(g'g)
    \end{cases}$$  

\medskip 

 $$L_A =
    \begin{cases}
      (id^h_A\vert id^h_{A'})\vert id^h_{A''} \\
       \qquad \Downarrow \delta'\vert\Id\\
       id^h_{A\vert A'}\vert id^h_{A''} \\
			 \quad \Downarrow \delta' \\
			 id^h_{(A\vert A')\vert A''}
    \end{cases}  
	\qquad
		R_A =
    \begin{cases}
      id^h_A\vert(id^h_{A'}\vert id^h_{A''}) \\
       \qquad \Downarrow \Id\vert\delta'\\
       id^h_A\vert id^h_{A'\vert A''} \\
			 \quad \Downarrow \delta' \\
			 id^h_{A\vert(A'\vert A'')}.
    \end{cases}$$
\noindent In \cite[Section 4.2]{Fem1} we wrote out a half of the axioms for the double pseudonatural transformation $a^*$.

\subsection{(Pseudo)monoid in B\"ohm's $(Dbl,\ot)$ as a category internal in $\DblPs$}

From our discussion from the end of \ssref{beyond} we see that in order to view a monoid $\Aa$ in $(Dbl,\ot)$ as a category internal in 
$\DblPs$, the double pseudo functor $\oast:\Aa\times\Aa\to\Aa$ is a good candidate for a desired composition on the pullback 
($M:\Dd_1\times_{\Dd_0}\Dd_1\to\Dd_1$, with $\Dd_1=\Aa$ and $\Dd_0=1$). 

Recall that $m: \Aa\ot\Aa\to\Aa$ is a strict double functor on the Gray type monoidal product on $(Dbl,\ot)$, while 
$\oast:\Aa\times\Aa\to\Aa$ is a double pseudo functor on the Cartesian product of double categories. 
%The image $m(x\ot y)$ we denoted by $x\oast y$, for any of the four types of cells $x,y$.  
Let us set $f\oast g=m\big((1\ot g)(f\ot 1)\big)$ (recall the discussion from \ssref{Gabi's monoid}). Since 
$m$ is strictly multiplicative in both directions, we find $m\big((1\ot g)(f\ot 1)\big)=m(1\ot g)m(f\ot 1)$, which yields 
$(1\oast g)(f\oast 1)=f\oast g$ (taking $h'=1,k=1$ in the computation in \ssref{Gabi's monoid} we recover the same identity). 

Now direct computation shows: $h\oast(g\oast f)=(h\oast g)\oast f$ in both vertical and horizontal direction of 1-cells: 
use the distributive law of the tensor with respect to the composition of 1-cells %in both directions 
in the Gray type tensor product $\Aa\ot\Aa$ (see the description of this tensor product after \deref{H dbl}), 
the fact that associativity of the latter compositions is strict and that $m$ is strictly associative \cite[(iii) of Section 4.3]{Gabi}).   
This yields an analogous result on 
0- and double cells, then for the double pseudonatural transformation $a^*: \oast(\Id\times\oast) \to \oast(\oast\times\Id)$
we may set to be identity: $(a^*_0)_{C,B,A}=id^v_{(A\vert B)\vert C}$ and $(a^*_1)_{C,B,A}=id^h_{(A\vert B)\vert C}$, 
$(a^*_0)_{f'',f',f}=\Id_{(f\vert f')\vert f''}=t^{a^*}_f=t^{a^*}_{f'',f',f}$ and $(a^*_1)_{u'',u',u}=\Id_{(u\vert u')\vert u''}=
r^{a^*}_u=r^{a^*}_{u'',u',u}$, and $L^A=L^{A'',A',A}=1_{(A\vert A')\vert A''}$, the same for $R^A$, 
here $C,B,A$ are 0-cells, $u'',u',u$ 1v-cells, and $f'',f',f$ 1h-cells of $\Aa$. 
Observe that %after notation in \ssref{int in DblPs} 
it is $M=\oast, M(y,x)=y\oast x= (x\vert y)$. 

Let $I$ denote the image 0-cell of the strict double functor $u:*\to\Aa$. Observe that: $m(A,I)=\oast(A,I)=A\oast I$ and similarly 
the other way around, for any 0-cell $A\in\Aa$. Now by \cite[(iii) of Section 4.3]{Gabi} we deduce that left and right unity constraints 
$l^*$ and $r^*$ for $\oast:\Aa\times\Aa\to\Aa$ are identities. As a matter of fact, as a monoid in a 1-category it can not have 
2- and 3-cells for the constraints, so we have that a monoid $\Aa$ in $(Dbl,\ot)$ is not only a category internal in $\DblPs$, 
but even a category internal in the underlying 1-category of $\DblPs$, which is the category 
from \cite[Section 6]{Shul1}. %Here the tricategory structure of $\DblPs$ was not used. 

\medskip

Let us consider a monoidal 2-category made out of the monoidal category $(Dbl,\ot)$ from \cite{Gabi} by adding as 2-cells 
vertical transformations, whose 1v-cell components have 1h-companions (recall \ssref{bijective corr}). 
We denote this 2-category by $(Dbl_2,\ot)$. Let us now consider peudomonoids in this 2-category. We repeat the analogous arguments 
as in the above computations. The difference appears when computing associativity on the 1-cells: now $m$ is not strictly associative, rather there 
is an isomorphism $a^m_0: \ot(id\times\ot)\to\ot(\ot\times id)$. We have to take into account the form of (horizontal and vertical) 1-cells in 
$\Aa\ot\Aa\ot\Aa$, we find: $h\oast(g\oast f)=\big(h\oast(1\oast 1)\big)[\big(1\oast (g\oast 1)\big)\cdot(1\oast (1\oast f)\big)]$ and 
$(h\oast g)\oast f=[\big((h\oast 1)\oast 1\big)\cdot\big((1\oast g)\oast 1\big)]\big((1\oast 1)\oast f\big)$, where the square brackets 
may be omitted, and the dot denotes the composition of 1-cells (in the corresponding direction). 
Then we define the 2-cell $(a^*_0)_{h,g,f}$ as the following 2-cell: 
\begin{equation} \eqlabel{a^*_0}
(a^*_0)_{h,g,f}=
\bfig
 \putmorphism(-180,200)(1,0)[``(f\vert 1)\vert 1]{600}1a
 \putmorphism(420,200)(1,0)[` `(1\vert g)\vert 1]{600}1a
 \putmorphism(1010,200)(1,0)[` `(1\vert 1)\vert h]{600}1a
\put(-120,30){\fbox{$(a^m_0)_{f,1,1}$}}
\putmorphism(-180,220)(0,-1)[\phantom{Y_2}``(a^m_0)_{A,B,C}]{440}1l
\put(600,60){\fbox{$(a^m_0)_{1,g,1}$}}
\put(1200,60){\fbox{$(a^m_0)_{1,1,h}$}}

\putmorphism(420,220)(0,-1)[\phantom{Y_2}``]{440}1r
\putmorphism(240,170)(0,-1)[\phantom{Y_2}``(a^m_0)_{A',B,C}]{400}0r
\putmorphism(1010,220)(0,-1)[\phantom{Y_2}``]{440}1r
\putmorphism(990,160)(0,-1)[\phantom{Y_2}``(a^m_0)_{A',B',C}]{400}0r
\putmorphism(1600,220)(0,-1)[\phantom{Y_2}``(a^m_0)_{A',B',C'}]{440}1r

  \putmorphism(-180,-200)(1,0)[` `f\vert( 1\vert 1)]{600}1a
\putmorphism(420,-200)(1,0)[`` 1\vert( g\vert 1)]{600}1a
\putmorphism(1010,-200)(1,0)[`` 1\vert( 1\vert h)]{600}1a
\efig
\end{equation}
so that on 0-cells we have: $(a^*_0)_{A,B,C}=(a^m_0)_{A,B,C}$. (On the right hand-side of the identity \equref{a^*_0} the indexes are read 
from the left to the right, to accompany the notation of the 1h-cells used here.) In \ssref{embed transf} 
we proved for what here are 2-cells of $(Dbl_2,\ot)$ that they can be turned into %double pseudonatural transformations, that is, 
2-cells in the tricategory $\DblPs$. Let $a^m_1$ denote the obtained (strong) horizontal transformation, and $t^m_{h,g,f}$ and 
$r^m_{w,v,u}$ the obtained distinguished 2-cells making $a^m=(a^m_0, a^m_1, t^m, r^m)$ a double pseudonatural transformation. 
We define the 2-cell $(a^*_1)_{w,v,u}$, for 1v-cells $u,v,w$, in the analogous way as we did for $(a^*_0)_{h,g,f}$ above. 
%(in the above diagram we write $a^m_0$ for $a^m$ only in order to stress that $a^m$ is a vertical transformation; 
%instead of the above 2-category $(Dbl_2,\ot)$ we could consider horizontal transformations in the 2-cells, or 
%even pairs of a horizontal and a vertical transformation, and accordingly we would define the 2-cell $(a^*_1)_{w,v,u}$). 
The 2-cells $t^m, r^m$ are constructed due to \prref{teta->double} as follows: 
\begin{equation} \eqlabel{t^m,r^m}
t^m_{\tilde f}=
\scalebox{0.86}{\bfig
\putmorphism(-150,250)(1,0)[F(\tilde A)`F(\tilde B)`F(\tilde f)]{600}1a
 \putmorphism(450,250)(1,0)[\phantom{F(A)}`G(\tilde B) `a^m_1(\tilde B)]{600}1a

 \putmorphism(-150,-200)(1,0)[G(\tilde A)` G(\tilde B)` G(\tilde f)]{600}1a
 \putmorphism(450,-200)(1,0)[\phantom{F(A)}`G(\tilde B) ` =]{600}1a

\putmorphism(-180,250)(0,-1)[\phantom{Y_2}``a^m_0(\tilde A)]{450}1l
\putmorphism(450,250)(0,-1)[\phantom{Y_2}``]{450}1r
\putmorphism(300,250)(0,-1)[\phantom{Y_2}``a^m_0(\tilde B)]{450}0r
\putmorphism(1050,250)(0,-1)[\phantom{Y_2}``=]{450}1r
\put(0,10){\fbox{$(a^m_0)_{\tilde f}$}}
\put(650,20){\fbox{$\Epsilon^m_{\tilde B}$}}
\efig}
\quad\text{and}\quad
r^m_{\tilde u}=
\scalebox{0.86}{\bfig
 \putmorphism(-150,500)(1,0)[F(\tilde A)`G(\tilde A)  `a^m_1(\tilde A)]{600}1a
\putmorphism(-180,500)(0,-1)[\phantom{Y_2}`F(\tilde{A'}) `F(\tilde u)]{450}1l
\putmorphism(-150,-400)(1,0)[G(\tilde{A'})` `=]{480}1a
\putmorphism(-180,50)(0,-1)[\phantom{Y_2}``a^m_0(\tilde{A'})]{450}1l
\putmorphism(450,50)(0,-1)[\phantom{Y_2}`G(\tilde{A'})`=]{450}1r
\putmorphism(450,500)(0,-1)[\phantom{Y_2}`G(\tilde{A'}) `G(\tilde u)]{450}1r
\put(0,260){\fbox{$(a^m_1)_{\tilde u}$}}
\putmorphism(-180,50)(1,0)[\phantom{F(A)}``a^m_1(\tilde{A'})]{500}1a
\put(20,-170){\fbox{$\Epsilon^m_{\tilde{A'}}$}}
\efig}
\end{equation}
where $F=\ot(id\times\ot)$ and $G=\ot(\ot\times id)$, $\tilde f$ and $\tilde u$ are 1h- and 1v-cell in $\Aa\times\Aa\times\Aa$, respectively, 
and $\Epsilon^m_A$ is the 2-cell from the data that $a^m_0(A)$ is a companion of $a^m_1(A)$. 
We construct $t^*$ and $r^*$ by the same recipe: substitute $(a^m_0)_{\tilde f}$ from \equref{t^m,r^m} by $(a^*_0)_{h,g,f}$ from \equref{a^*_0}, and 
set $\Epsilon^*_{C',B',A'}=\Epsilon^m_{C',B',A'}$ to define $t^*_{h,g,f}$, analogously for $r^*_{w,v,u}$. Then $a^*=(a^*_0, a^*_1, t^*, r^*)$ 
constitutes a 2-cell in $\DblPs$. 

For the unity constraints $l^*, r^*$ the argument is simpler. Since $A\oast I$ is an image both by $m:\Aa\ot\Aa\to\Aa$ and by $\oast:\Aa\times\Aa\to\Aa$, 
as we argued above, % m(A,I)=\oast(A,I)=
we just set $l^*=l^m$ and $r^*=r^m$, being the right hand-sides unity constraints for $m$. Analogously as above, these vertical transformations 
can be made into double pseudonatural transformations, hence  $l^*$ and $r^*$ are indeed 2-cells in $\DblPs$. 

For the 3-cells in \deref{int3} we take to be identities and get that a pseudomonoid in $(Dbl_2,\ot)$ is indeed a category internal 
in $\DblPs$. 

\medskip

In order to have an example with non-trivial 3-cells from \deref{int3}, one can take a ``weak pseudomonoid'' in the tricategory $(Dbl_3,\ot)$, which 
is obtained from the 2-category $(Dbl_2,\ot)$ by adding invertible vertical modifications as 3-cells, {\em i.e.} invertible modifications of vertical transformations. 

%\pagebreak

Let us now prove that invertible vertical modifications give rise to invertible horizontal modifications, so that together they make (invertible) 
3-cells in the tricategory $\DblPs$. Then the 3-cells constraints for $m$, which are $\pi^m, \mu^m, \lambda^m, \rho^m$, can be upgraded to 3-cells 
$\pi^*, \mu^*, \lambda^*, \rho^*$ corresponding to the desired 3-cells in \deref{int3}, and we would have this desired example. 

Recall that vertical modifications are given by 2-cells $b_0(A)$ as on the left hand-side below, %in \equref{a_0}, 
then let the inverses of horizontal modifications be given via the 2-cells $b_1^{-1}(A)$ on the right hand-side below: %in \equref{b_1}: 
$$
\scalebox{0.86}{
\bfig
\putmorphism(-100,250)(1,0)[F(A)`F(A)`=]{550}1a
 \putmorphism(-100,-200)(1,0)[G(A)`G(A)`=]{550}1a

\putmorphism(-100,250)(0,-1)[\phantom{Y_2}``\alpha_0(A)]{450}1l
\putmorphism(450,250)(0,-1)[\phantom{Y_2}``]{450}1r
\putmorphism(300,250)(0,-1)[\phantom{Y_2}``\beta_0(A)]{450}0r
\put(0,10){\fbox{$b_0(A)$}}
\efig}
\qquad\quad\qquad
b_1^{-1}(A)=\scalebox{0.86}{
\bfig
 \putmorphism(-150,250)(1,0)[F(A)`F(A) `=]{500}1a
\put(-100,30){\fbox{$\eta^\alpha(A)$}}
\putmorphism(380,250)(0,-1)[\phantom{Y_2}` `]{450}1l
\putmorphism(520,250)(0,-1)[\phantom{Y_2}` `\alpha_0(A)]{450}0l
\putmorphism(950,250)(0,-1)[\phantom{Y_2}` `\beta_0(A)]{450}1r
\putmorphism(350,250)(1,0)[F(A)`F(A)`=]{600}1a
 \putmorphism(950,250)(1,0)[\phantom{F(A)}`G(A) `\beta_1(A)]{600}1a
 \putmorphism(470,-200)(1,0)[`G(A)`=]{500}1b
 \putmorphism(1060,-200)(1,0)[`G(A)`=]{500}1b
\putmorphism(1570,250)(0,-1)[\phantom{Y_2}``=]{450}1r
\put(530,10){\fbox{$b_0(A)$}}
\put(1250,30){\fbox{$\Epsilon^\beta_A$}}
\putmorphism(-150,-200)(1,0)[F(A)`G(A) `\alpha_1(A)]{520}1a
\putmorphism(-150,250)(0,-1)[``=]{450}1l
\efig}
$$
(in the obvious way $b_1(A)$ is given via $b^{-1}_0(A)$; recall that $\eta$ and $\Epsilon$ come from the data of companions). 
It is straightforward to prove that this defines horizontal modifications (one uses $\Epsilon\x\eta$-properties and the construction 
of a horizontal transformation out of a vertical one from \prref{vertic->horiz}; recall that 
for vertical transformations $\alpha_0$ the distinguished 2-cells $\delta_{\alpha_0,u}$ are identities, for 1v-cells $u$). 
Finally, the two compatibility conditions between a horizontal and a vertical modification from \deref{modif 3} are 
directly proved. In the second condition one uses the third identity in \coref{4 identities e-eta} which is fulfilled in this context. 
This finishes the proof that a ``weak pseudomonoid'' in the tricategory $(Dbl_3,\ot)$ is a category internal in the tricategory $\DblPs$. 

\medskip

The examples of intercategories from \cite{GP:Fram} which do not rely on laxness of the double functors in $LxDbl$, as duoidal categories do, 
are all examples of categories internal in the tricategory $\DblPs$ (so that 3-cells for the internal structure are trivial). These are {\em e.g.} 
monoidal double categories of \cite{Shul}, cubical bicategories of \cite{GG}, Verity double bicategories from \cite{Ver}, Gray categories \cite{Gray}.

\subsection{Geometric interpretation of a category internal in $\DblPs$}

Let us denote this structure formally by 
$$%\begin{equation} \eqlabel{alt-interc}
 \Dd_1\times_{\Dd_0}\Dd_1\triarrows \Dd_1\tripplearrow \Dd_0 %\quad\text{or:}\quad \Cc\triarrows \Bb\tripplearrow \Aa 
$$%\end{equation}
and the functor components of the double pseudo functors $U: \Dd_0\to\Dd_1$ and $M:\Dd_1\times_{\Dd_0}\Dd_1\to\Dd_1$ by 
$U_i, M_i, i=0,1$. Then we may obtain a similar grid of categories and functors to $(*)$ from \cite[Section 4]{GP}, 
the difference is that now the three columns in the grid are {\em strict} double categories and the rows differ in that not 
only the functors $U_1$ and $M_1$, but now also $U_0$ and $M_0$, are equipped with natural transformations expressing their 
lax multiplicativity and lax unitality.

Let us see a geometrical representation of this alternative notion to intercategories on a cube. 
Considering source and target functors, as well as arrows from morphisms to objects in the categories $(\Dd_0)_0, (\Dd_0)_1, (\Dd_1)_0, (\Dd_1)_1$ 
constituting double categories $\Dd_0$ and $\Dd_1$, 
one sees that the objects of $(\Dd_0)_0$ are the lowest and morphisms of $(\Dd_1)_1$ are the highest in this hierarchy, so we may present 
the former by vertices of a cube and the latter by the whole cube. For the rest of gadgets there is a choice, we will 
fix the one as in \cite[Section 4]{GP}, so that we have: \\

%\pagebreak

vertices - objects of $\Dd_0$ \qquad

horizontal arrows - objects of $\Dd_1$, 

vertical arrows - 1v-cells of $\Dd_0$, 

transversal arrows - 1h-cells of $\Dd_0$,

horizontal cells - 1h-cells of $\Dd_1$, 

lateral cells - 2-cells of $\Dd_0$, 

basic cells - 1v-cells of $\Dd_1$ and 

cube - 2-cells of $\Dd_1$. \vspace{-3,8cm} %\vspace{-1,8cm} 

$ \hspace{6cm}
\bfig
 \putmorphism(-20,520)(2,1)[``]{380}1a 
  \putmorphism(540,800)(1,0)[\bullet`\bullet`]{1000}1a 
 \putmorphism(1000,520)(2,1)[``]{350}1a 
%\put(500,600){\fbox{$1h \hspace{0,14cm} of\hspace{0,14cm} \Dd_1$}}
\put(400,560){\fbox{$1h \hspace{0,14cm} of\hspace{0,14cm} \Dd_1$}}
 \putmorphism(-120,1050)(1,0)[``vertices: objects \hspace{0,14cm} of\hspace{0,14cm} \Dd_0]{1000}0b 

 \putmorphism(-150,400)(1,0)[\bullet`\bullet`objects \hspace{0,14cm} of\hspace{0,14cm} \Dd_1]{1000}1b 
\putmorphism(840,400)(0,-1)[\phantom{Y_2}`\bullet`]{800}1l
\putmorphism(1100,400)(0,-1)[\phantom{Y_2}``1v \hspace{0,14cm} of\hspace{0,14cm} \Dd_0]{800}0l

 \putmorphism(-170,-400)(1,0)[\bullet`\bullet`]{1020}1a 
\putmorphism(-180,400)(0,-1)[\phantom{Y_2}``]{800}1r
\put(0,0){\fbox{$1v \hspace{0,14cm} of\hspace{0,14cm} \Dd_1$}}

 \putmorphism(-170,-640)(1,0)[``cube: 2\x cells \hspace{0,14cm} of\hspace{0,14cm} \Dd_1]{1020}0a 
\putmorphism(1530,810)(0,-1)[\phantom{Y_2}`\bullet`]{840}1l
 \putmorphism(1030,-300)(2,1)[``]{320}1a 
 \putmorphism(830,-350)(1,0)[``1h \hspace{0,14cm} of\hspace{0,14cm} \Dd_0]{820}0a 
\put(880,200){\fbox{$2\x cells \hspace{0,14cm} of\hspace{0,14cm} \Dd_0$}}
\efig
$
%\vspace{0,2cm} 

\noindent From here we see that vertical and transversal arrows compose in their respective directions,  
horizontal cells compose in the transversal direction, basic cells compose in the vertical direction, and 
lateral cells both in vertical and transversal directions. All of them compose strictly associatively and unitary. 
The pullback $\Dd_1\times_{\Dd_0}\Dd_1$ can be represented by horizontal connecting of cubes, and accordingly the 
functor $M:\Dd_1\times_{\Dd_0}\Dd_1\to\Dd_1$ corresponds to the horizontal composition of cubes. 

The globular 2-cells \equref{globular on vertical} of $\Dd_1$ are thus cubes whose only non-identity cells are the basic ones, 
and we will consider that they map from the back towards the front. They compose in the transversal direction. 
On the other hand, the globular 2-cells \equref{globular on horiz} of $\Dd_1$ are cubes whose only non-identity cells 
are the horizontal ones, they map from top to bottom, and compose in the vertical direction. 

The double pseudofunctor $U$ applied to a 2-cell $a$ of $\Dd_0$ gives a cube $\Id^h_a$ which is horizontal 
identity cube on the lateral cell $a$, and the rest of the cells are identities on the corresponding 1h- and 
1v-cells at the borders of $a$. 

A 2-cell in $\Dd_1$ is a cube whose lateral cells are identities, top and bottom correspond to its source and target 1h-cells, 
while front and back basic cells correspond to its source and target 1v-cells. 

For all the laws described in \seref{int in DblPs} observe that horizontal composition of 2-cells in $\Dd_1$ corresponds to the 
transversal composition of cubes, and that vertical composition of 2-cells in $\Dd_1$ corresponds to the vertical composition of 
cubes.

\section{Enriched categories as internal categories in 1-strict tricategories} \selabel{enrich-int}

In the first subsection of this section we introduce categories enriched over 1-strict tricategories. In the second 
subsection we prove the result from the above title and in the third one we discuss examples 
in lower dimensions that can be seen as its consequences. %illustrate this result. 
The next section we dedicate to illustrate this result on the tricategory of tensor categories.

\subsection{Categories enriched over 1-strict tricategories} 

For enrichment we need some kind of a monoidal product in the ambient tricategory. We will consider tricategorical products from 
\ssref{3-(co)prod} (with the terminal object). By a terminal object in a tricategory we mean a 0-cell $I$ so that for any 0-cell 
$T$ there is a unique 1-cell $t:T\to I$, and all the 2-cells $t\Rightarrow t$ are the identity one. In the following definition 
for the terminal object in the ambient tricategory $V$ we will write just $*$.

\begin{defn} \delabel{enr3}
Let $V$ be a 1-strict tricategory with 3-products. We say that $\Tau$ is a category enriched over $V$ if it consists of: 
\begin{enumerate}
\item a set of objects $\Ob\Tau$ of $\Tau$;
\item for all $A,B\in\Ob\Tau$ a 0-cell $\Tau(A,B)$ in $V$; 
\item for all $A,B,C\in\Ob\Tau$ a 1-cell $\circ:\Tau(B,C)\times\Tau(A,B)\to\Tau(A,C)$ in $V$ called {\em composition};
\item for all $A\in\Ob\Tau$ a 1-cell $I_A:*\to\Tau(A,A)$ in $V$ called {\em unit}; 
\item equivalence 2-cells in $V$:  
$a^\dagger: -\circ(-\circ-) \to (-\circ-)\circ-$, and for all 
$A,B\in\Ob\Tau: \hspace{0,24cm} l^\dagger: I_B\cdot 1_{\Tau(A,B)}\to 1_{\Tau(A,B)}$ and   
$r^\dagger: 1_{\Tau(A,B)}\cdot I_A\to 1_{\Tau(A,B)}$; 
\item 3-cells 
$\pi^\dagger, \mu^\dagger, l^\dagger, r^\dagger$ and $\Epsilon^\dagger$ analogous to those in item 4. of \deref{int3} 
and which satisfy the analogous Axioms as the latter ones. 
\end{enumerate}
\end{defn}

The formal differences in the cells and Axioms in \deref{int3} and the above one are the following. In the vertices 
of the diagrams the iterated 3-pullbacks $B_1^{(n)_0}$ are replaced by $\Tau(\bullet,\bullet)^{\times n}$ for natural numbers $n$, 
1-cells $c$ and $u$ are replaced by $\comp$ and $I_\bullet$, respectively, and supraindeces * are replaced by supraindeces $\dagger$.

\medskip

\begin{lma}
There exist equivalence 1-cells in $V$ between the following 3-coproducts:
$$\amalg_{A\in\Ob\Tau}\amalg_{B\in\Ob\Tau}\Tau(A,B) \simeq \amalg_{B\in\Ob\Tau}\amalg_{A\in\Ob\Tau}\Tau(A,B) \simeq  
\amalg_{A,B\in\Ob\Tau} \Tau(A,B).$$
\end{lma}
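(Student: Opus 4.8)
The plan is to argue purely from the universal property of $3$-coproducts established in \ssref{3-(co)prod}, never inspecting the internal structure of the $0$-cells $\Tau(A,B)$. The key observation is that all three $0$-cells displayed in the statement are $3$-coproducts of \emph{one and the same} family, namely $\{\Tau(A,B)\}_{(A,B)\in\Ob\Tau\times\Ob\Tau}$, merely organized with different bracketings and orderings of the index set. Since a $3$-coproduct is determined by its universal property up to an equivalence $1$-cell, it suffices to produce canonical comparison $1$-cells and to check, again via that universal property, that they are equivalences.

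First I would record the injection $1$-cells. Writing $\iota_{A,B}\colon\Tau(A,B)\to\amalg_{A,B}\Tau(A,B)$ for the canonical injections into the single coproduct, and $j_B^A\colon\Tau(A,B)\to\amalg_{A}\amalg_{B}\Tau(A,B)$ for the composite $\Tau(A,B)\to\amalg_B\Tau(A,B)\to\amalg_A(\amalg_B\Tau(A,B))$ of the two levels of injections, I obtain by the first clause of the definition of a $3$-coproduct an induced $1$-cell $\Phi\colon\amalg_{A,B}\Tau(A,B)\to\amalg_A\amalg_B\Tau(A,B)$ together with equivalence $2$-cells $\Phi\iota_{A,B}\Rightarrow j_B^A$. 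Symmetrically, the family $\{\iota_{A,B}\}$ assembled first over $B$ and then over $A$ yields an induced $1$-cell $\Psi\colon\amalg_A\amalg_B\Tau(A,B)\to\amalg_{A,B}\Tau(A,B)$ with equivalence $2$-cells $\Psi j_B^A\Rightarrow\iota_{A,B}$. Exchanging the roles of $A$ and $B$ throughout produces the analogous comparison pair for the left-hand equivalence $\amalg_A\amalg_B\Tau(A,B)\simeq\amalg_B\amalg_A\Tau(A,B)$.

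Next I would verify that $\Phi$ and $\Psi$ are mutually inverse up to equivalence. Pasting the two families of $2$-cells above shows that $\Psi\Phi\,\iota_{A,B}$ is equivalent to $\iota_{A,B}$ and that $\Phi\Psi\,j_B^A$ is equivalent to $j_B^A$, for every $A,B$; the identity $1$-cells satisfy the same property trivially. Hence both $\Psi\Phi$ and $\Id$ are $1$-cells out of the coproduct whose restrictions along all injections agree up to equivalence, so the second clause of the universal property supplies a $2$-cell $\Psi\Phi\Rightarrow\Id$ whose restrictions along the injections recover the chosen equivalences; as these restrictions are equivalences one concludes that the $2$-cell itself is an equivalence. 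The same argument gives an equivalence $2$-cell $\Phi\Psi\Rightarrow\Id$, and the third clause (uniqueness of the mediating $3$-cell) guarantees that the witnessing data are coherent, so $(\Phi,\Psi)$ is an adjoint equivalence. This establishes the right-hand $\simeq$, and the left-hand one is identical after the $A\leftrightarrow B$ swap.

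The main obstacle will be the coherence bookkeeping rather than the construction itself: the induced $1$-cells come equipped not only with the equivalence $2$-cells $\zeta$ but also with the isomorphism $3$-cells $\Gamma_1,\Gamma_2$ of the second clause, and to upgrade the pointwise equivalences into genuine equivalence $2$-cells $\Psi\Phi\Rightarrow\Id$ one must feed exactly the correctly pasted $3$-cell into the hypothesis of that clause and then invoke the uniqueness of $\delta$ in the third clause to see that the two candidate witnesses coincide. I would also note that the passage from the binary case to the small (possibly infinite) families indexed by $\Ob\Tau$ requires no new idea, since the universal property in \ssref{3-(co)prod} is already stated for arbitrary families, so the associativity and commutativity equivalences for iterated coproducts are instances of the same comparison.
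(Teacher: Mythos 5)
The paper states this lemma without any proof, treating it as a formal consequence of the universal property of 3-coproducts, so there is no argument of the author's to compare yours against; what you have written is the proof the paper implicitly delegates to the reader, and it is correct in substance. Your strategy --- build the comparison 1-cells $\Phi$ and $\Psi$ from the first clause, compare $\Psi\Phi$ and $\Phi\Psi$ with the identities via the second clause, and use the third clause to make the witnessing data coherent --- is exactly the standard uniqueness-of-colimits argument transported to the tricategorical setting of \seref{3-pull(co)pr}. Two points deserve to be made explicit rather than left to ``bookkeeping''. First, the step from ``$\gamma\colon\Psi\Phi\Rightarrow\Id$ restricts along every $\iota_{A,B}$ to an equivalence'' to ``$\gamma$ is an equivalence'' is not automatic from the second clause alone: you must build a candidate quasi-inverse $\gamma^{-}$ from the quasi-inverses of the restrictions, and then use the \emph{uniqueness} part of the third clause (applied to the identity 3-cells on $\gamma$ and on $\gamma^{-}$) to see that the induced 3-cells $\gamma^{-}\gamma\Rrightarrow\Id$ and $\gamma\gamma^{-}\Rrightarrow\Id$ are invertible; your sentence asserting the conclusion skips this small but necessary detour. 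Second, on the iterated side the universal-property clauses are phrased in terms of the outer injections $\kappa_A\colon\amalg_{B}\Tau(A,B)\to\amalg_{A}\amalg_{B}\Tau(A,B)$, not the composites $j^A_B=\kappa_A\circ\iota^A_B$, so producing the 2-cell $\Phi\Psi\Rightarrow\Id$ requires a two-stage application: first the second clause of each inner coproduct $\amalg_{B}\Tau(A,B)$ to promote the data along the $j^A_B$ to data along the $\kappa_A$, and only then the second clause of the outer coproduct. Neither point threatens the argument, and the $A\leftrightarrow B$ swap together with composition of the two resulting equivalences disposes of the remaining comparison, so the lemma is established.
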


\begin{ex} \exlabel{GPS}
Let $\Bicat_3$ denote the tricategory of bicategories, pseudofunctors, pseudonatural transformations and modifications, 
it is clearly 1-strict. A tricategory from \cite[Definition 2.2]{GPS} is a category enriched in $\Bicat_3$ such that  
$\mu^\dagger$ and $\Epsilon^\dagger$ are identities. Of course, the latter can not be used as a definition of the notion 
of a tricategory. Rather, one says that a tricategory is a category {\em weakly enriched over the category} $\Bicat_1$ of 
bicategories and pseudofunctors. More general, instead of saying ``a category enriched over a 1-strict tricategory $V$'' 
one could say ``a category {\em weakly enriched over the underlying category} of $V$''. 
\end{ex}

\subsection{Enriched categories as internal categories in 1-strict tricategories}

Let $V$ be a tricategory with a terminal object $\I$, finite tricategorical products and tricategorical pullbacks. Then 
observe that a 3-product $X\times Y$ is in particular a 3-pullback $\big(X\times_\I Y; t_X,t_Y\big)$, where $t_X,t_Y$ are the unique 
morphisms into $\I$. Moreover, a 3-product $X\times Y\times Z$ is a 3-pullback $\big((X\times Y)\times_Y (Y\times Z); p_2,p_1\big)$. 
In particular, for $Y=Y_1\times\dots\times Y_k$ for any natural $k$, a 3-product $X\times Y_1\times\dots\times Y_k\times Z$ is 
a 3-pullback $\big((X\times Y_1\times\dots\times Y_k)\times_{(Y_1\times\dots\times Y_k)} (Y_1\times\dots\times Y_k\times Z); 
p_2,p_1\big)$. %p_{23\dots k+1},p_{12\dots k}\big)$. 

In this section we deal with ``hands on enrichment'' and for this we found it easier to use lexicographical order when 
writing 3-products and 3-pullbacks (contrary to \rmref{pb order}).

\begin{prop} \prlabel{biequiv a_n}
Let $V$ be a 1-strict tricategory with finite 3-products, a terminal object $\I$ and small tricategorical coproducts. 
Assume that $\Tau$ is a category enriched over $V$,  set $T_0=\amalg_{A\in\Ob\Tau} \I_A$ - the coproduct of copies of 
the terminal object indexed by the objects of $\Tau$, and $T_1=\amalg_{A,B\in\Ob\Tau} \Tau(A,B)$, and suppose that $V$ 
has iterated 3-pullbacks $T_1^{(n)_0}$. If additionally the following conditions are fulfilled: 
\begin{enumerate} 
\item for every natural $n\geq 2$ the trifunctors $\amalg_{B_1,\dots,B_{n-1}}$ preserve the following 3-pullbacks:
%the 3-products 
$\big(\amalg_A\Tau(A,B_1)\big)\times\Tau(B_1,B_2)\times\dots\times\Tau(B_{n-2},B_{n-1})\times\big(\amalg_C\Tau(B_{n-1},C)\big)$; 
%seen as 3-pullbacks over the terminal object (for $n=2$) and over 3-products $\Tau(B_1,B_2)\dots\times\Tau(B_{n-2},B_{n-1})$ 
%with respect to suitable projections (for $n>2$), 
\item the trifunctors $X\times-$ and $-\times X$, for $X\in\Ob\Tau$,  
%of the form $\Tau(A,B), \amalg_A\Tau(A,B), \amalg_B\Tau(A,B), T_1$ and products thereof, 
preserve the coproducts $\amalg_A\Tau(A,B)$ and $\amalg_B\Tau(A,B)$; 
\end{enumerate}
then the resulting 3-pullbacks in 1. are $\amalg_{A, B_1\dots,B_{n-1}, C\in\Ob\Tau} \Tau(A,B_1)\times\dots\times\Tau(B_{n-1},C)$ and 
for every natural $n\geq 2$ there are equivalence 1-cells in $V$: 
$$a^n_{A_1,\dots,A_{n+1}}: \amalg_{A_1,\dots,A_{n+1}\in\Ob\Tau} \Tau(A_1,A_2)\times\dots\times\Tau(A_{n},A_{n+1})
\stackrel{\simeq}{\to} 
\underbrace{T_1\times_{T_0}\dots \times_{T_0} T_1}_n$$
(with all possible distributions of parentheses).
\end{prop}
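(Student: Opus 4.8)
The plan is to build the equivalence $a^n$ by induction on $n$, reducing the general coproduct-over-all-objects to the iterated pullback through repeated application of the two preservation hypotheses. First I would treat the base case $n=2$: I need an equivalence
$$
a^2: \amalg_{A_1,A_2,A_3\in\Ob\Tau}\Tau(A_1,A_2)\times\Tau(A_2,A_3)
\stackrel{\simeq}{\to} T_1\times_{T_0}T_1.
$$
Here the strategy is to unpack $T_1\times_{T_0}T_1$ using that $T_1=\amalg_{A,B}\Tau(A,B)$ and $T_0=\amalg_A\I_A$ is a coproduct of terminal objects. The key observation is that the source and target maps $s,t:T_1\to T_0$ are induced on coproduct components by the unique maps into the terminal objects, so that the fibre product over $T_0$ forces the ``middle'' object to agree. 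Concretely I would compute the pullback componentwise: by hypothesis (1) with $n=2$ the trifunctor $\amalg_{B_1}$ preserves the 3-pullback $\big(\amalg_A\Tau(A,B_1)\big)\times\big(\amalg_C\Tau(B_1,C)\big)$, which is just a 3-product since there are no intermediate factors, and by hypothesis (2) the product trifunctors distribute over the coproducts in each variable. Combining these identifies the coproduct-indexed product with the pullback over $T_0$, yielding $a^2$ up to the stated equivalence. Throughout, ``equivalence 1-cell'' is understood in the sense made precise by the 3-product/3-coproduct definitions of \ssref{3-(co)prod}, and the equivalences produced by preservation hypotheses compose to an equivalence by the usual 2-out-of-3 behaviour inside the hom-bicategories of $V$.

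Next I would carry out the inductive step. Assuming $a^{n-1}$ is an equivalence, I would realize the $n$-fold pullback $T_1\times_{T_0}\cdots\times_{T_0}T_1$ as $(T_1^{(n-1)_0})\times_{T_0}T_1$ (for a chosen parenthesization), substitute the inductive equivalence $a^{n-1}$ into the first factor, and then apply hypothesis (1) for the index $n$ together with hypothesis (2) exactly as in the base case to absorb the new $T_1$ factor. The point is that hypothesis (1) is stated precisely so that the trifunctor $\amalg_{B_1,\dots,B_{n-1}}$ carries the product
$$
\big(\amalg_A\Tau(A,B_1)\big)\times\Tau(B_1,B_2)\times\cdots\times\Tau(B_{n-2},B_{n-1})\times\big(\amalg_C\Tau(B_{n-1},C)\big)
$$
to the corresponding $n$-fold pullback, giving the claimed value $\amalg_{A,B_1,\dots,B_{n-1},C}\Tau(A,B_1)\times\cdots\times\Tau(B_{n-1},C)$; composing with the inductively built equivalence on the remaining factors produces $a^n$. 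For the independence of the parenthesization, I would invoke the associativity of 3-pullbacks built into \deref{int3} (iterated 3-pullbacks are well-defined up to equivalence regardless of the distribution of parentheses), so all choices of $a^n$ agree up to a canonical equivalence 2-cell.

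The main obstacle I anticipate is bookkeeping the equivalences coherently rather than the existence of any single one. Because $V$ is only 1-strict, products and pullbacks are characterized by \emph{biequivalences} of hom-bicategories and the relevant universal properties hold only up to equivalence 2-cells and invertible 3-cells; so each time I distribute a product over a coproduct or splice in $a^{n-1}$, I am composing equivalence 1-cells and must check that the composite still satisfies the defining biequivalence property. The cleanest way to manage this is to phrase every step as ``$V(X,-)$ applied to both sides yields biequivalent bicategories, naturally in $X$,'' using the reformulations after the definitions of 3-product and 3-coproduct, and then to transport equivalences along the representable trifunctors; the preservation hypotheses (1) and (2) are exactly what make these representable comparisons equivalences. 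I would verify that composition of such representable equivalences is again one (this is where 2-out-of-3 in the bicategorical setting is used) and that the associativity coherence of iterated 3-pullbacks supplies the invertible modifications needed to compare different parenthesizations, so that the family $\{a^n\}$ is well-defined up to coherent equivalence. I do not expect to need to exhibit explicit inverses or coherence data beyond what the universal properties already guarantee.
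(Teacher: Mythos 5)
Your argument for $n=2$ is exactly the paper's: apply $\amalg_{B}$ to the 3-product $\big(\amalg_A\Tau(A,B)\big)\times\big(\amalg_C\Tau(B,C)\big)$, use hypothesis (1) to obtain a 3-pullback and hypothesis (2) to rewrite its coordinates, and compare with the construction of $(T_1\times_{T_0}T_1,s,t)$. Where you diverge is in the general case: you induct on $n$ by splitting off one factor, writing $T_1^{(n)_0}$ as $T_1^{((n-1))_0}\times_{T_0}T_1$ and splicing in $a^{n-1}$, whereas the paper applies hypothesis (1) in one shot for each $n$, decomposing the $n$-fold product as the overlapping pullback $\big(T_1^{(n-1)_0}\big)\times_{T_1^{(n-2)_0}}\big(T_1^{(n-1)_0}\big)$ of two $(n-1)$-fold halves over their common $(n-2)$-fold part --- which is exactly the shape in which hypothesis (1) is phrased, with coproducts sitting only at the two outer ends. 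Your route is workable but carries two costs the paper's does not: the literal statement of hypothesis (1) does not cover the pullback you form in the inductive step (a coproduct of $(n-1)$-fold products against a single $T_1$ over $T_0$), so you must either fall back on the ``all at once'' application of (1) anyway or supplement the hypotheses with the standard fact that a 3-pullback is unchanged up to equivalence when a vertex is replaced by an equivalent 0-cell; and that invariance is precisely the extra ingredient your ``splice in $a^{n-1}$'' step uses silently. The paper's organization avoids both points, at the price of only sketching the cases $n>3$; your coherence discussion (reading everything through the representables $V(X,-)$ and the biequivalence formulations of 3-(co)products) is consistent with how the paper's equivalence 2-cells in \equref{2-isos} are meant to be produced.
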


\begin{proof}
We will do the proof for the cases when $n$ equals 2 and 3, the higher cases are proven in analogous way. 
For $n=2$ we start by a 3-pullback $\big(\amalg_A\Tau(A,B)\big)\times\big(\amalg_C\Tau(B,C)\big)$ (over $\I$),
% with respect to $t_1$ and $t_2$
and act by the trifunctor $\amalg_B$ on it.  %for $X=\amalg_A\Tau(A,B)$ first and then for $X=
By (1) we get the following 3-pullback, where in the first coordinate we apply the preservation property (2), and in the 
second and third the corresponding equivalences of the coproducts (in the rest of coordinates by abuse of notation we do not 
change the notation of the 1-cells for simplicity reasons): 
$$(\amalg_{A,B,C}\Tau(A,B)\times\Tau(B,C), \,\, %\amalg_B\amalg_A\Tau(A,B), \amalg_B\amalg_C\Tau(B,C), 
\amalg_{A,B}\Tau(A,B), \,\, \amalg_{B,C}\Tau(B,C), \,\,
\amalg_B \I_B, \,\, \amalg_B p_1, \,\, \amalg_B p_2;\,\, \amalg_B t, \,\, \amalg_B t).$$ 
On the other hand, by construction this 3-pullback is $(T_1\times_{T_0}T_1,s,t)$. Thus there is an equivalence 
$$a^2_{A,B,C}: \amalg_{A,B,C}\Tau(A,B)\times\Tau(B,C)\stackrel{\simeq}{\to}T_1\times_{T_0}T_1.$$

\medskip 

For $n=3$ we start with a 3-pullback 
\begin{equation} \eqlabel{tri pb}
(\big[\big(\amalg_A\Tau(A,B)\big)\times\Tau(B,C)\big]\times_{\Tau(B,C)}\big[\Tau(B,C)\times\big(\amalg_D\Tau(C,D)\big)\big], 
%\big(\amalg_A\Tau(A,B)\big)\times\Tau(B,C), \Tau(B,C)\times\big(\amalg_D\Tau(C,D)\big), \Tau(B,C), p_{12}, p_{23}; 
p_2,p_1 \big),
\end{equation}
which can be rewritten as the 3-product: 
\begin{multline*}
(\big(\amalg_A\Tau(A,B)\big)\times\Tau(B,C)\times\big(\amalg_D\Tau(C,D)\big), \,\, 
\big(\amalg_A\Tau(A,B)\big)\times\Tau(B,C), \,\, \Tau(B,C)\times\big(\amalg_D\Tau(C,D)\big), \\
\Tau(B,C), p^3_{12}, p^3_{23}; p_2,p_1 \big).
\end{multline*}
We act on it by the trifunctor $\amalg_B\amalg_C\simeq\amalg_{B,C}\simeq\amalg_C\amalg_B$ and get by (1) a 3-pullback, 
which by (2) has the form: 
\begin{multline*}
(\amalg_{A,B,C,D}\big(\Tau(A,B)\times\Tau(B,C)\times\Tau(C,D)\big), \,\, \amalg_{A,B,C}\big(\Tau(A,B)\times\Tau(B,C)\big), \,\, 
\amalg_{B,C,D}\big(\Tau(B,C)\times\Tau(C,D)\big), \\
\amalg_{B,C}\Tau(B,C), \,\, \amalg_{B,C}p_{12}, \,\, \amalg_{B,C}p_{23}, \,\, \amalg_{B,C}p_2, \,\, \amalg_{B,C}p_1\big)
\end{multline*}
and by construction (see \equref{tri pb}) it is indeed the 3-pullback 
%$(T_1\times_{T_0}T_1\times_{T_0}T_1,T_1\times_{T_0}T_1,T_1\times_{T_0}T_1,s,t)$
$T_1\times_{T_0}T_1\times_{T_0}T_1$ (we differentiate the two distributions of the parentheses).  
%$(T_1\times_{T_0}T_1)\times_{T_0}T_1\simeq T_1\times_{T_0}(T_1\times_{T_0}T_1)$. 
This yields equivalences 
$$a^{3,L}_{A,B,C,D}: \amalg_{A,B,C,D}\big(\Tau(A,B)\times\Tau(B,C)\big)\times\Tau(C,D)\stackrel{\simeq}{\to}(T_1\times_{T_0}T_1)\times_{T_0}T_1$$
and 
$$a^{3,R}_{A,B,C,D}: \amalg_{A,B,C,D}\Tau(A,B)\times\big(\Tau(B,C)\times\Tau(C,D)\big)\stackrel{\simeq}{\to}T_1\times_{T_0}(T_1\times_{T_0}T_1).$$

For general $n$ one obtains that by construction $\amalg_{A, B_1\dots,B_{n-1}, C\in\Ob\Tau} \Tau(A,B_1)\times\dots\times\Tau(B_{n-1},C)$ 
is $\big(\underbrace{T_1\times_{T_0}\dots \times_{T_0} T_1}_{n-1}\big) \hspace{0,14cm}
\times_{\underbrace{T_1\times_{T_0}\dots \times_{T_0} T_1}_{n-2}} \hspace{0,14cm}
\big(\underbrace{T_1\times_{T_0}\dots \times_{T_0} T_1}_{n-1} \big)$, which is equivalent to 
%\simeq
$\underbrace{T_1\times_{T_0}\dots \times_{T_0} T_1}_n$. 
\qed\end{proof}

Let us fix the notation for the associated equivalence 2-cells for the above equivalences: 
\begin{equation} \eqlabel{2-isos} 
\alpha^{3,L}:(a^{3,L})^{-1}\comp a^{3,L}\stackrel{\simeq}{\Rightarrow}\Id, %$ and similarly $\alpha^{3,R}$. 
\hspace{0,14cm} \alpha^{3,R}:(a^{3,R})^{-1}\comp a^{3,R}\stackrel{\simeq}{\Rightarrow}\Id, \hspace{0,14cm} %Let $
\alpha^2: (a^2)^{-1}\comp a^2\stackrel{\simeq}{\Rightarrow}\Id % denote the analogous isomorphism 2-cell for the above biequivalence $a^2_{A,B,C}$. 
\end{equation}
(by $(\bullet)^{-1}$ here we denote a quasi-inverse).

\begin{prop} \prlabel{weak-int}
In the conditions of the previous Proposition, a category $\Tau$ enriched over $V$ is a particular case of a category internal in $V$. 
%(in the sense of the above definition)
\end{prop}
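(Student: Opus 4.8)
The plan is to transport the internal-category structure of \deref{int3} along the
equivalences $a^n$ constructed in \prref{biequiv a_n}. I would set $B_0=T_0=\amalg_{A\in\Ob\Tau}\I_A$ and
$B_1=T_1=\amalg_{A,B\in\Ob\Tau}\Tau(A,B)$, and take for the source/target $1$-cells
$s,t:T_1\Doublearrow T_0$ the coproduct $1$-cells induced by the unique morphisms
$\Tau(A,B)\to\I$ into the terminal object (landing in the $A$-th, respectively $B$-th, copy
of $\I$), using here that the coproducts are indexed so that $s$ picks out the source object and
$t$ the target object. The iterated $3$-pullbacks $T_1^{(n)_0}$ exist by hypothesis, and by
\prref{biequiv a_n} each of them is equivalent, via $a^n$, to the coproduct
$\amalg_{A_1,\dots,A_{n+1}}\Tau(A_1,A_2)\times\dots\times\Tau(A_n,A_{n+1})$, which is the natural
domain of the enriched composition data.

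Next I would define the composition and unit $1$-cells. The unit $u:T_0\to T_1$ is the coproduct
of the $I_A:\I\to\Tau(A,A)$, and the composition $c:T_1\times_{T_0}T_1\to T_1$ is obtained by
precomposing the coproduct $\amalg_{A,B,C}\big(\comp:\Tau(B,C)\times\Tau(A,B)\to\Tau(A,C)\big)$
with the quasi-inverse equivalence $(a^2)^{-1}$ of \prref{biequiv a_n}; explicitly
$c=\big(\amalg_{A,B,C}\comp\big)\comp (a^2)^{-1}$. The equivalence $2$-cells $a^*,l^*,r^*$ of item
$3$ in \deref{int3} are then the images of the enriched $a^\dagger,l^\dagger,r^\dagger$ under the
appropriate $a^n$-transport, with the associativity $2$-cell $a^*$ requiring also the coherence
$2$-isomorphisms $\alpha^{3,L},\alpha^{3,R},\alpha^2$ from \equref{2-isos} to mediate between the two
distributions of parentheses on the triple pullback. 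Likewise the $3$-cells
$\pi^*,\mu^*,\lambda^*,\rho^*,\epsilon^*$ are transported from $\pi^\dagger,\mu^\dagger,\lambda^\dagger,
\rho^\dagger,\Epsilon^\dagger$. The compatibility equations $sp_2=tp_1$, $su=id_{B_0}=tu$, $sc=sp_1$,
$tc=tp_2$ and the normalization conditions $id_s\ot l^*=id_s$ etc.\ hold because $s,t,u$ are coproduct
$1$-cells of maps into/out of the terminal object, so composing with them simply reads off the
outer source/target indices, which the enriched data manifestly respect.

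The main obstacle will be verifying that axioms (IT-1)--(IT-5) of \deref{int3}, stated for the
internal data, follow from the analogous enriched axioms of \deref{enr3} after transport. The
difficulty is that $a^*,l^*,r^*$ and the higher cells are not literally equal to
$a^\dagger,l^\dagger,r^\dagger$ but are conjugated by the equivalences $a^n$ and the coherence
cells of \equref{2-isos}; consequently each internal axiom becomes the corresponding enriched
axiom whiskered by strings of $a^n$, $(a^n)^{-1}$ and $\alpha^{j}$. One must check that these
whiskering cells cancel coherently---that is, that the pasting diagrams expressing (IT-1)--(IT-5)
reduce, modulo the coherence isomorphisms $\alpha^{3,L},\alpha^{3,R},\alpha^2$ and their higher-$n$
analogues, to the enriched axioms of item $6$ in \deref{enr3}. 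Since by hypothesis $V$ is
$1$-strict, the associativity and unitality of $1$-cell composition are on the nose, so the only
genuine bookkeeping is at the level of $2$- and $3$-cells; I expect this to go through by a
routine but lengthy coherence argument, invoking the biequivalence property of each $a^n$ (which
guarantees the transported cells are again equivalences and the transported $3$-cells again
isomorphisms) together with the fact that the coherence $2$-isomorphisms of \equref{2-isos}
satisfy their own triangle/pentagon relations inside $V$.

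Finally, I would observe that the construction is consistent across the different $n$: because
\prref{biequiv a_n} produces the equivalences $a^n$ for \emph{all} distributions of parentheses
and these are compatible through the coherence cells, the transported structure does not depend on
the bracketing choices up to the prescribed isomorphisms, so the data genuinely assemble into a
single category internal in $V$ in the sense of \deref{int3}. This completes the identification of
the enriched category $\Tau$ as a category internal in $V$.
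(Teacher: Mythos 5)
Your proposal follows essentially the same route as the paper's proof: take $s,t,u$ as the evident coproduct $1$-cells, induce $c$ from $\amalg_{A,B,C}\circ$ via the $3$-coproduct property and the quasi-inverse of $a^2$, and transport $a^\dagger,l^\dagger,r^\dagger$ and the $3$-cells $\pi^\dagger,\mu^\dagger,\lambda^\dagger,\rho^\dagger,\Epsilon^\dagger$ along the equivalences of the previous Proposition, using the coherence $2$-cells of \equref{2-isos} (the paper carries this out via prism- and cylinder-shaped pasting diagrams, first inducing the cell on the coproduct side by the $3$-coproduct universal property and then conjugating by the equivalences to land on the pullback side). The construction and the verification strategy you outline coincide with the paper's argument.
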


\begin{proof}
By the 3-coproduct property, 
the composition from the enrichment $\circ:\Tau(A,B)\times\Tau(B,C)\to\Tau(A,C)$ induces a 1-cell $\crta\circ$ up to an equivalence 
2-cell $\kappa$, 
and the 1-cell $\circ\times\id$ induces a 1-cell $\crta\circ_{12}$ up to an equivalence 2-cell $\zeta_L$, 
as indicated in the two left squares in the diagram:  
$$% \hspace{8cm}
\bfig
 \putmorphism(-400,800)(1,0)[\big(\Tau(A,B)\text{x}\Tau(B,C)\big)\text{x}\Tau(C,D)`\amalg_{A,B,C,D} \big(\Tau(A,B)\text{x}\Tau(B,C)\big)\text{x}\Tau(C,D)`
\iota^4_{A,B,C,D}]{1660}1a 
\putmorphism(1960,800)(1,0)[\phantom{Y_2}`(T_1\times_{T_0}T_1)\times_{T_0}T_1`a^{3,L}_{A,B,C,D}]{740}1a
\putmorphism(1300,800)(0,-1)[\phantom{Y_2}``\crta\circ_{12}]{400}1l
\putmorphism(-180,800)(0,-1)[\phantom{Y_2}``\circ\times\id]{380}1l
\putmorphism(2500,800)(0,-1)[\phantom{Y_2}`` c\times_{T_0}\id]{400}1r
\put(580,580){$\Swarrow\zeta_L$}
\put(1900,580){$\Swarrow\zeta'_L$}

 \putmorphism(-150,400)(1,0)[\Tau(A,C)\times\Tau(C,D)`\amalg_{A,C,D\in\Ob\Tau} \Tau(A,C)\times\Tau(C,D)`\iota^3_{A,C,D}]{1300}1a 
\putmorphism(1300,400)(0,-1)[\phantom{Y_2}``\crta\circ]{400}1l
\putmorphism(1730,400)(1,0)[\phantom{Y_2}`T_1\times_{T_0}T_1`a^2_{A,C,D}]{800}1a

 \putmorphism(-30,0)(1,0)[`\amalg_{A,D\in\Ob\Tau} \Tau(A,D)`\iota^2_{A,D}]{1380}1a 
\putmorphism(-180,400)(0,-1)[\phantom{Y_2}`\Tau(A,D)`\circ]{380}1l
\putmorphism(2500,400)(0,-1)[\phantom{Y_2}`T_1.` c]{400}1r
\putmorphism(1730,0)(1,0)[\phantom{Y_2}` `=]{690}1a
\put(580,180){$\Swarrow\kappa$}
\put(1900,180){$\Swarrow\xi$}
\efig
$$
Using the equivalences $a^{3,L}, a^2$ and their quasi-inverses, the 1-cells $\crta\circ$ and $\crta\circ_{12}$ 
induce 1-cells $c:T_1\times_{T_0}T_1\to T_1$ and $c\times_{T_0}\id$ up to equivalence 2-cells $\xi$ and $\zeta'_L$ in $V$, respectively,  
in the two right squares above. 
Here $a^2, a^{3,L}$ are biequivalences from the above Proposition, and $\iota$'s are the corresponding 3-coproduct structure embeddings. 
Observe that $\xi$ and $\zeta'_L$ are given through the 2-cells \equref{2-isos} (horizontally composed with suitable identity 2-cells). 

%The five most right edges of the right prism are all induced by the edges of the pentagon 2-cell $\crta a$ 
%%as their conjugates by the equivalences $a^3, a^2
%The latter 
%%present the five 2-cells on the horizontal cover of the right prism and they 
%Thus the 
%isomorphism 2-cell $a^*$ for the associativity of $c$ is given as a suitable composition of 2-cells $\alpha^2, \crta a, 
%\xi^{-1}$ and $(\zeta_R')^{-1}$.

\medskip

Now one may draw an analogous diagram to the above one in a parallel plane above it, where now $\id\times\circ$ induces a 1-cell 
$\crta\circ_{23}$ up to an equivalence 2-cell $\zeta_R$, and $\crta\circ_{23}$ induces a 1-cell $\id\times_{T_0}c$ up to an equivalence 
2-cell $\zeta'_R$. 
From the enrichment we have an equivalence 2-cell $a^\dagger$ up to which the pentagon for the associativity of $\circ$ commutes. One can draw this 
pentagon transversally to the plane of the paper so that it connects the two diagrams, in the two planes, at their extreme left edges, adding a fifth edge 
for the associativity in the top 0-cell. The latter associativity 1-cell induces associativity 1-cells between the 3-coproduct and 3-pullback 
0-cells, by the property of a 3-coproduct and via the equivalences $a^{3,L},a^{3,R}$, respectively. 
Now the 2-cell $a^\dagger$ induces an equivalence 2-cell $\crta a$, 
up to which $\crta\circ$ is associative, so to say, it connects the two diagrams transversally at the level of the 3-coproduct vertices. 
Finally, $\crta a$ induces an equivalence 2-cell $a^*$ up to which $c$ is associative, connecting the two diagrams transversally at their 
extreme right edges. 

To understand how $\crta a$ and $a^*$ are defined, observe that the three 2-cells $a^\dagger$ and yet-to-be-defined $\crta a$ and $a^*$ divide this three-dimensional diagram into two horizontal prisms with pentagonal bases in the transversal direction. Now consider the two prisms as 
3-cells between the following two pairs of compositions of 2-cells: 

$
\bfig
\putmorphism(-10,390)(1,0)[``]{800}1a
\putmorphism(-20,420)(1,-1)[``]{350}1l 
  \putmorphism(60,420)(-1,-1)[``]{350}1l
\putmorphism(720,420)(1,-1)[``]{330}1l
\putmorphism(240,120)(1,0)[``]{780}1a
\putmorphism(-260,170)(1,-1)[``]{330}1l
  \putmorphism(300,170)(-1,-1)[``]{330}1l
  \putmorphism(1060,150)(-1,-1)[``]{330}1l
\putmorphism(0,-120)(1,0)[``]{800}1a
\put(0,120){$a^\dagger$}
\put(460,220){$\zeta_L$}
\put(400,-20){$\kappa_{A,C,D}$}
\efig
\hspace{3cm} %desno:
\bfig
\putmorphism(-10,390)(1,0)[``]{800}1a
\putmorphism(-20,420)(1,-1)[``]{350}1l 
  \putmorphism(60,420)(-1,-1)[``]{350}1l
\putmorphism(720,420)(1,-1)[``]{330}1l
\putmorphism(240,120)(1,0)[``]{780}1a
\putmorphism(-260,170)(1,-1)[``]{330}1l
  \putmorphism(300,170)(-1,-1)[``]{330}1l
  \putmorphism(1060,150)(-1,-1)[``]{330}1l
\putmorphism(0,-120)(1,0)[``]{800}1a
\put(0,120){$\crta a$}
\put(460,220){$\zeta'_L$}
\put(400,-20){$\xi_{A,C,D}$}
\efig
$
$$\Downarrow \hspace{8cm} \Downarrow$$% \Downarrow \displaystyle{=}

$
\bfig
\putmorphism(200,430)(1,0)[``]{760}1a
  \putmorphism(190,390)(1,0)[\simeq``]{760}0a
\putmorphism(110,360)(1,0)[``]{730}1a
  \putmorphism(100,380)(1,0)[`\simeq`]{750}0a

  \putmorphism(200,400)(-1,-1)[``]{330}1l
\putmorphism(890,450)(1,-1)[``]{340}1l %
\putmorphism(-70,130)(1,0)[``]{720}1a
\putmorphism(-120,170)(1,-1)[``]{330}1l 
  \putmorphism(1210,220)(-1,-1)[``]{400}1l %%
\putmorphism(130,-120)(1,0)[``]{760}1a

\putmorphism(570,170)(1,-1)[``]{330}1l
  \putmorphism(900,400)(-1,-1)[``]{330}1l
\put(850,120){$\crta a$}
\put(340,220){$\zeta_R$}
\put(280,-20){$\kappa_{A,B,D}$}
\efig
\hspace{3cm} %desno:
\bfig
\putmorphism(200,430)(1,0)[``]{760}1a
  \putmorphism(190,390)(1,0)[\simeq``]{760}0a
\putmorphism(110,360)(1,0)[``]{730}1a
  \putmorphism(100,380)(1,0)[`\simeq`]{750}0a

  \putmorphism(200,400)(-1,-1)[``]{330}1l
\putmorphism(890,450)(1,-1)[``]{340}1l %
\putmorphism(-70,130)(1,0)[``]{720}1a
\putmorphism(-120,170)(1,-1)[``]{330}1l 
  \putmorphism(1210,220)(-1,-1)[``]{400}1l %%
\putmorphism(130,-120)(1,0)[``]{760}1a

\putmorphism(570,170)(1,-1)[``]{330}1l
  \putmorphism(900,400)(-1,-1)[``]{330}1l
\put(850,120){$a^*$}
\put(340,220){$\zeta'_R$}
\put(280,-20){$\xi_{A,B,D}$}
\efig
$

\noindent 
Now we define a 2-cell whose domain and codomain coincide with those of the 
``horizontal composition of the place where we wrote $\crta a$ and the identity 2-cell 
of the left upper 1-cell $\iota^4_{A,B,C,D}$'' as the appropriate composition of the 
resting six 2-cells (and their quasi-inverses) in the left prism. Then by the property of a 3-coproduct there is a 2-cell 
$\crta a$ at the place we want to have it, namely, so that $\crta a\ot\Id_{\iota^4}$ is isomorphic to the mentioned composition. 
The 2-cell $a^*$ is obtained similarly, with the difference that instead of the 3-coproduct property one uses the fact that 
all horizontal 1-cells in the right prism above are equivalences, thus $a^*$ is given as a suitable composition of the 2-cells 
$\alpha^2, \crta a, \xi^{-1}$ and $(\zeta_R')^{-1}$ (recall that by $(\bullet)^{-1}$ we denote the corresponding quasi-inverse 2-cells).

From the enrichment we have an invertible 3-cell 
$$\pi^\dagger: \threefrac{\Id_{\circ} \ot(\Id_{id_T}\times a^\dagger)}{a^\dagger\ot\Id_{1\times\circ\times 1}}{\Id_{\circ} \ot 
(a^\dagger\times \Id_{id_T})} \Rrightarrow
\threefrac{a^\dagger \ot\Id_{1\times 1\times\circ}}{\Id_{\circ} \ot\Nat_{(\circ\times 1)(1\times 1\times \circ)}}
{a^\dagger \ot\Id_{\circ\times 1\times 1}}$$
satisfying a septagonal identity (here $T$ stands for 0-cells of the form $\Tau(A,B), A,B\in V$). 
Let us denote the domain and the codomain 2-cells of $\pi$ by $L(a^\dagger)$ and $R(a^\dagger)$, respectively. 
%From the way how $\crta a$ and $a^*$ are defined it follows that there 
We next show that $\pi^\dagger$ induces a 3-cell $\pi^*:L(a^*)\Rrightarrow R(a^*)$, where now $L,R$ have the obvious adjusted meaning. 
Consider the following two 2-cells in $V$ whose upper 
1-cells are the domain and codomain of $L(a^\dagger)$, respectively, and whose lower 1-cells are the domain and codomain of $L(a^*)$, respectively: 
$$
\Omega^L=
\bfig
 \putmorphism(-150,500)(1,0)[T^4` `\circ\times\id\times\id]{600}1a
\putmorphism(-180,500)(0,-1)[\phantom{Y_2}`\amalg T^4 `\iota^5]{450}1r
\put(-30,290){\fbox{$(\zeta^4_{12})^{-1}$}}
\put(-30,-170){\fbox{$(\zeta'^4_{12})^{-1}$}}
\putmorphism(-210,-400)(1,0)[T^4_1` `c\times\id\times\id]{600}1a
\putmorphism(-180,50)(0,-1)[\phantom{Y_2}``a^4]{420}1r
\putmorphism(380,500)(0,-1)[\phantom{Y_2}` `\iota^4]{450}1r
\putmorphism(380,50)(0,-1)[` `a^3]{450}1r
\putmorphism(920,500)(0,-1)[\phantom{Y_2}` `\iota^3]{450}1r
\putmorphism(920,50)(0,-1)[\phantom{Y_2}` `a^2]{450}1r

\putmorphism(430,500)(1,0)[``\circ\times\id]{500}1a
 \putmorphism(830,500)(1,0)[\phantom{F(A)}` `\circ]{500}1a
  \putmorphism(-110,50)(1,0)[` `\crta\circ_{12}^4]{530}1a
\putmorphism(430,50)(1,0)[``\crta\circ_{12}^3]{500}1a
 \putmorphism(920,50)(1,0)[``\crta\circ]{440}1a
\putmorphism(1300,500)(0,-1)[\phantom{Y_2}``\iota^2]{450}1r
\put(530,290){\fbox{$(\zeta^3_{12})^{-1}$}}
\put(1050,290){\fbox{$\kappa^{-1}$}}
\putmorphism(410,-400)(1,0)[``c\times \id ]{500}1a
\putmorphism(900,-400)(1,0)[` `c]{440}1a
\putmorphism(1300,50)(0,-1)[\phantom{Y_2}``=]{450}1r
\put(530,-170){\fbox{$(\xi'^3_{12})^{-1}$}}
\put(1050,-170){\fbox{$\xi^{-1}$}}
\efig
\Omega_L= 
\bfig
 \putmorphism(-150,500)(1,0)[T^4` `\id\times\id\times\circ]{600}1a
\putmorphism(-180,500)(0,-1)[\phantom{Y_2}`\amalg T^4 `\iota^5]{450}1r
\put(-30,290){\fbox{$(\zeta^4_{34})^{-1}$}}
\put(-30,-170){\fbox{$(\zeta'^4_{34})^{-1}$}}
\putmorphism(-210,-400)(1,0)[T^4_1` `\id\times\id\times c]{600}1a
\putmorphism(-180,50)(0,-1)[\phantom{Y_2}``a^4]{420}1r
\putmorphism(380,500)(0,-1)[\phantom{Y_2}` `\iota^4]{450}1r
\putmorphism(380,50)(0,-1)[` `a^3]{450}1r
\putmorphism(920,500)(0,-1)[\phantom{Y_2}` `\iota^3]{450}1r
\putmorphism(920,50)(0,-1)[\phantom{Y_2}` `a^2]{450}1r

\putmorphism(430,500)(1,0)[``\id\times\circ]{500}1a
 \putmorphism(830,500)(1,0)[\phantom{F(A)}` `\circ]{500}1a
  \putmorphism(-110,50)(1,0)[` `\crta\circ_{34}^4]{530}1a
\putmorphism(430,50)(1,0)[``\crta\circ_{23}^3]{500}1a
 \putmorphism(920,50)(1,0)[``\crta\circ]{440}1a
\putmorphism(1300,500)(0,-1)[\phantom{Y_2}``\iota^2]{450}1r
\put(530,290){\fbox{$(\zeta^3_{23})^{-1}$}}
\put(1050,290){\fbox{$\kappa^{-1}$}}
\putmorphism(410,-400)(1,0)[``\id\times c ]{500}1a
\putmorphism(900,-400)(1,0)[` `c]{440}1a
\putmorphism(1300,50)(0,-1)[\phantom{Y_2}``=]{450}1r
\put(530,-170){\fbox{$(\xi'^3_{23})^{-1}$}}
\put(1050,-170){\fbox{$\xi^{-1}$}}
\efig
$$
Analogously, one defines $\Omega^R$ and $\Omega_R$ for the corresponding 2-cells for $R(a^\dagger)$ %, R(\crta a)$ 
and $R(a^*)$. Then let us consider the following two 3-cells which we draw here in the form of two cylinders with 
oval horizontal bases: %\vspace{-3cm}
\begin{figure}[hbt!]%[ht!]
\centering
\includegraphics[width=140mm]{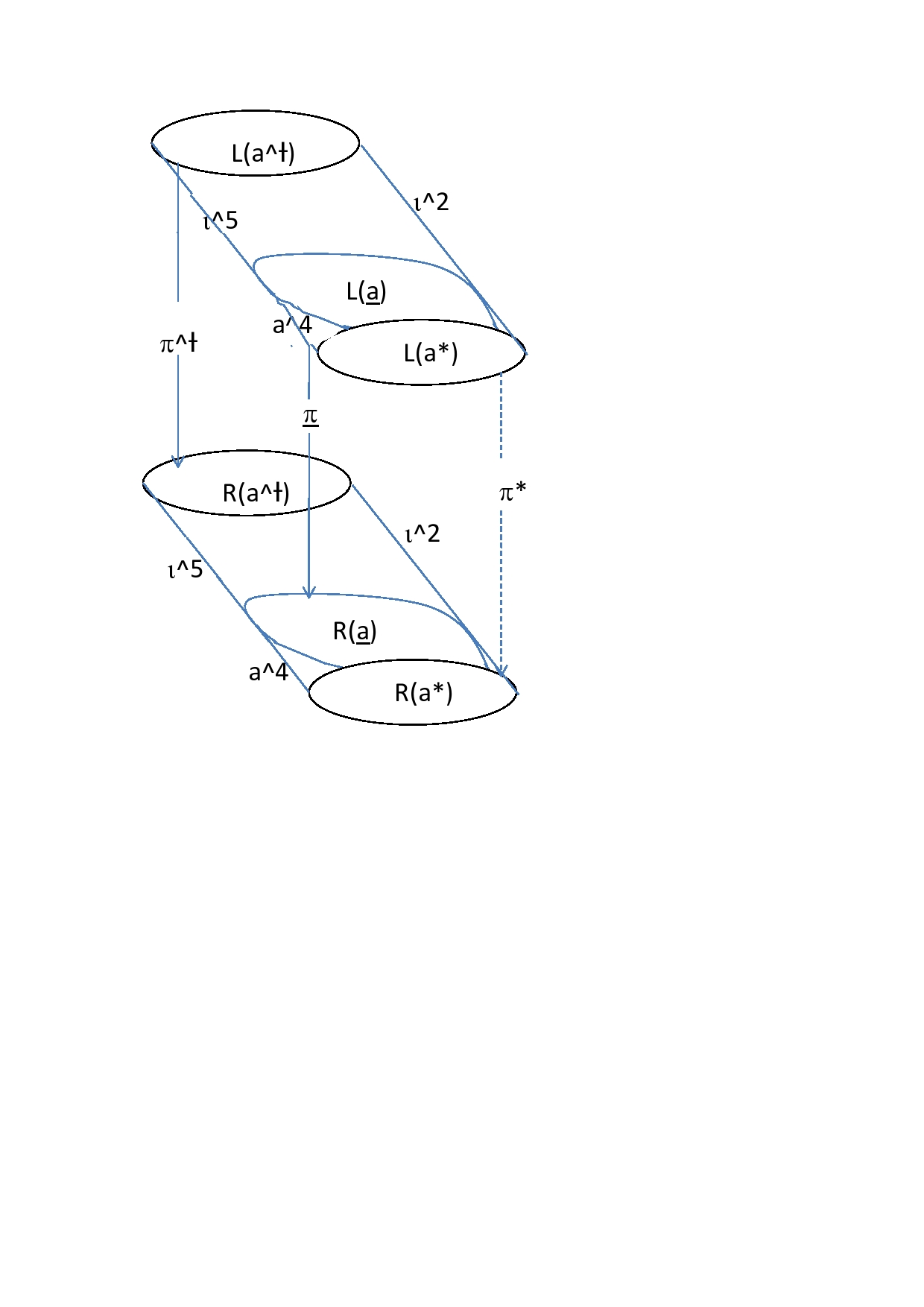}
%%\caption{A simple caption \label{3-cell Pi}}
\end{figure}
\bigskip

%And here we see figure \ref{3-cell-Pi}.

\bigskip

\bigskip

\vspace{7cm}
\noindent The upper 2-cell of the upper cylinder is $\Omega^L$ and the lower $\Omega_L$, while the upper 2-cell of the lower cylinder 
is $\Omega^R$ and the lower $\Omega_R$. Since the above prism with pentagonal basis, by which the 2-cell $a^\dagger$ induced a 2-cell $a^*$, 
is an isomorphism 3-cell (transversal composition of two isomorphism 3-cells), then these two cylinders with oval bases (each of which is 
an analogous transversal composition of two 3-cells, as the before mentioned composition of isomorphism 3-cells), are also isomorphism 3-cells. 
Then $\pi^\dagger$ induces a 3-cell $\crta\pi$, and the latter induces the wanted 3-cell $\pi^*$. Namely, start from the 2-cell 
$L(\crta a)\ot\Id_{\iota^5}$, it is isomorphic to the 2-cell $\Id_{\iota^2}\ot L(a^\dagger)$, map the latter by the 3-cell 
$\Id_{\Id_{\iota^2}}\ot\pi^\dagger$ to the 2-cell $\Id_{\iota^2}\ot R(a^\dagger)$, which is isomorphic to the 2-cell $R(\crta a)\ot\Id_{\iota^5}$, 
then this resulting 3-cell, which is a certain conjugation of $\Id_{\Id_{\iota^2}}\ot\pi^\dagger$, determines a unique 3-cell $\crta\pi$, 
by the 3-coproduct property, so that $\Id_{\Id_{\iota^2}}\ot\pi^\dagger$ equals $\crta\pi\ot\Id_{\Id_{\iota^5}}$. For $\pi^*$: start with the 2-cell 
$L(a^*)$, it is isomorphic to $L(\crta a)\ot \Id_{(a^4)^{-1}}$, map the latter by $\crta\pi\ot\Id_{\Id_{a^4}}$ to $R(\crta a)\ot \Id_{(a^4)^{-1}}$, 
which is isomorphic to $R(a^*)$. This composition 3-cell is the wanted $\pi^*$. (Recall that $(a^4)^{-1}$ denotes a quasi-inverse.) 
Since $\pi^\dagger$ satisfies a septagonal identity, so does $\pi^*$ in its desired form. 

\medskip

This proves the existence of a composition $c:T_1\times_{T_0}T_1\to T_1$ associative up to an equivalence for a structure of a category 
internal in $V$. 

In an analogous way the unit 1-cell $I_A:\I\to\Tau(A,A)$ from enrichment induces a unit 1-cell $u: T_0\to T_1$, and the 2-cells 
$l^\dagger,r^\dagger$ for the unity law in the enrichment induce 2-cells $l^*,r^*$ for the unity law in an internal category in $V$. 
The induction of the associated 3-cells $\lambda^*$ and $\rho^*$, but also of $\mu^*$ and $\Epsilon^*$, from the 3-cells from the enrichment 
$\lambda^\dagger, \rho^\dagger, \mu^\dagger$ and $\Epsilon^\dagger$, respectively, goes the analogous way as we proved it above for $\pi^\dagger$. 
\qed\end{proof}

Observe that by the construction of $T_0$ in the above proof, if $V$ is a 1-strict tricategory whose 0-cells are bicategories, 
like in \exref{GPS}, then 0-cells of $T_0$ are the same as 0-cells of $\Tau$. Moreover, 1-cells of $T_0$ are the identities on 
its 0-cells and 2-cells are identities on the latter, {\em i.e.} the object of objects $T_0$ is discrete. 
%In \exlabel{GPS} $V=\Bicat_3$ is a 1-strict tricategory whose 0-cells are bicategories, 

\subsection{Examples of enrichment and internalization in lower dimensions}

In the examples where $V$ is some kind of ``category of categories'', for the existence of the iterated $n$-pullbacks, $n=1,2,3$, 
it is sufficient to require that source and target 1-cells $s,t$ be {\em strict} functors. We illustrate this by showing it for the 2-category 
$PsDbl_2$ of pseudodouble categories, pseudodouble functors and vertical transformations, \cite{Shul, GG} (then it also applies 
to the 2-category $Cat_2$ of categories, functors and natural transformations). Namely, as in \cite[Proposition 2.1]{GP}, 
we have: 

\begin{prop}
The set-theoretical pullback of strict double functors $F:\A\to\C$ and $G:\B\to\C$ determines on objects, 1-cells and 2-cells 
a pseudo double category $\A\times_\C\B$ which is the 2-pullback of $F$ and $G$ in $PsDbl_2$. The projections onto $\A$ and $\B$ are 
strict double functors. 
\end{prop}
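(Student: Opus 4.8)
The plan is to build $\A\times_\C\B$ componentwise on the four underlying sets of cells and then transport the pseudo double category structure of $\A$ and $\B$ through the set-theoretical pullback. Concretely, an object of $\A\times_\C\B$ is a pair $(A,B)$ with $F(A)=G(B)$, and a 1h-cell, 1v-cell, or 2-cell is a pair of cells of the same kind whose images under $F$ and $G$ coincide in $\C$. Because $F$ and $G$ are \emph{strict} double functors they preserve both the (weak) horizontal and the (strict) vertical compositions and all identities on the nose, so each pullback set is closed under the double-categorical operations performed coordinatewise, and these operations are thereby well defined on $\A\times_\C\B$.

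First I would install the vertical (strict) composition by composing the two coordinates separately; it is strictly associative and unital since it is so in $\A$ and in $\B$. For the horizontal (weak) direction I would define horizontal composition componentwise and take the associativity and unit constraints to be the pairs $(\alpha^\A,\alpha^\B)$, $(\lambda^\A,\lambda^\B)$, $(\rho^\A,\rho^\B)$ of the constraints of $\A$ and $\B$. Each such pair is a legitimate 2-cell of the pullback precisely because strictness of $F,G$ forces $F(\alpha^\A)=\alpha^\C=G(\alpha^\B)$ — a strict functor carries the coherence constraints of its source to those of its target — so the two constraints agree over $\C$. The pentagon and triangle axioms for $\A\times_\C\B$ then hold, being the componentwise conjunction of those in $\A$ and $\B$. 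The two projections $p_1:\A\times_\C\B\to\A$ and $p_2:\A\times_\C\B\to\B$ read off a coordinate, hence preserve all this structure strictly and are strict double functors, and by construction $Fp_1=Gp_2$ on the nose.

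Next I would verify the universal property of a strict $2$-pullback in $PsDbl_2$. For the $1$-dimensional part, given pseudo double functors $H:\Dd\to\A$ and $K:\Dd\to\B$ with $FH=GK$, the assignment $x\mapsto(Hx,Kx)$ on each kind of cell lands in the pullback, and its horizontal comparison constraints are the pairs $(H_{gf},K_{gf})$ and $(H_A,K_A)$. The $2$-dimensional part is analogous: a pair of vertical transformations $\phi:H\Rightarrow H'$, $\psi:K\Rightarrow K'$ with $F\phi=G\psi$ has components that pair into a vertical transformation of the pullback, giving a unique $\langle\phi,\psi\rangle$ projecting to $\phi$ and $\psi$. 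In each case uniqueness is forced because the projections jointly detect every cell.

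The one point requiring genuine care — the main obstacle — is this $2$-cell compatibility for \emph{weak} functors: since pseudo double functors are only weakly multiplicative in the horizontal direction, the induced map $\langle H,K\rangle$ is pseudo rather than strict, so one must check that its comparison isomorphisms actually lie in the pullback set and obey the functoriality axioms. As above, this hinges entirely on the strictness of $F$ and $G$: a strict functor has identity horizontal comparison cells, so $(FH)_{gf}=F(H_{gf})$ and $(GK)_{gf}=G(K_{gf})$, and the equation $FH=GK$ of pseudo double functors includes the equality of comparison cells, yielding $F(H_{gf})=G(K_{gf})$ and $F(H_A)=G(K_A)$ — exactly the conditions that make $(H_{gf},K_{gf})$ and $(H_A,K_A)$ into $2$-cells of $\A\times_\C\B$ and that secure the coherence of $\langle H,K\rangle$. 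Everything else reduces to a routine componentwise transfer of the axioms from $\A$ and $\B$, which I would only indicate rather than spell out.
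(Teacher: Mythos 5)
Your construction is correct and is essentially the argument the paper delegates to the cited reference (\cite[Proposition 2.1]{GP}): form the pullback coordinatewise, use strictness of $F$ and $G$ to see that composites, units and the constraint cells $(\alpha^{\A},\alpha^{\B})$, $(\lambda^{\A},\lambda^{\B})$, $(\rho^{\A},\rho^{\B})$ again lie over $\C$, and check the strict 2-dimensional universal property via paired comparison cells $(H_{gf},K_{gf})$, $(H_A,K_A)$. You also correctly isolate the one non-routine point, namely that strictness of $F$ and $G$ is exactly what makes the induced pseudo double functor's comparison isomorphisms land in the pullback.
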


The construction in the proposition from the last subsection can be carried out in 2-categories: consider 3-cells to be identities, 
and consider equivalence 2-cells to be bijective. Then we obtain:

\begin{cor} \colabel{weak-int}
Let $V$ be a Cartesian monoidal 2-category %so that its underlying 1-category $V_1$ has 
with a terminal object $\I$, finite 2-products and small 2-coproducts preserved by the pseudofunctors $-\times X, X\times -:V\to V$ 
for every $X\in V$. Let $\Tau$ be a category enriched over $V$, set 
$T_0=\amalg_{A\in\Tau}\I_A$ and $T_1=\amalg_{A,B\in \Tau}\Tau(A,B)$, suppose that the iterated 
2-pullbacks $T_1^{(n)_0}$ exist and that the pseudofunctors $\amalg_{B_1,\dots,B_{n-1}}$ map the 2-products: 
$\big(\amalg_A\Tau(A,B_1)\big)\times\Tau(B_1,B_2)\times\dots\times\Tau(B_{n-2},B_{n-1})\times\big(\amalg_C\Tau(B_{n-1},C)\big)$ 
%$a^n_{A_1,\dots,A_{n+1}}: \amalg_{A_1,\dots,A_{n+1}\in\Ob\Tau} \Tau(A_1,A_2)\times\dots\times\Tau(A_{n},A_{n+1})
to iterated 2-pullbacks: $\underbrace{T_1\times_{T_0}\dots \times_{T_0} T_1}_n$.  
Then $\Tau$ is a particular case of a category internal to $V$. 
\end{cor}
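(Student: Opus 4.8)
The plan is to obtain \coref{weak-int} as a direct specialization of \prref{weak-int}, by checking that a Cartesian monoidal $2$-category $V$ satisfying the hypotheses can be regarded as a $1$-strict tricategory to which that Proposition applies, and that in this degenerate situation all the $3$-dimensional data collapse to the identities. First I would observe that every $2$-category is a $1$-strict tricategory in which the only $3$-cells are identities: the underlying $1$-category is the one obtained by taking $0$- and $1$-cells, and the modifications $\pi,\lambda,\rho,\mu$ of \cite[Definition 2.2]{GPS} are forced to be trivial. Under this identification an ordinary $2$-product in $V$ is exactly a $3$-product in the sense of \ssref{3-(co)prod} (the required biequivalence $V(X,A\times B)\simeq V(X,A)\times V(X,B)$ degenerates to an equivalence of the hom-categories, since the hom-bicategories are merely categories), and likewise small $2$-coproducts are small $3$-coproducts. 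The preservation hypothesis on $-\times X$ and $X\times -$ is precisely condition (2) of \prref{biequiv a_n}, and the hypothesis that the pseudofunctors $\amalg_{B_1,\dots,B_{n-1}}$ send the displayed $2$-products to the iterated $2$-pullbacks $\underbrace{T_1\times_{T_0}\dots\times_{T_0}T_1}_n$ is exactly condition (1).

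Next I would invoke the preceding Proposition in the subsection to guarantee that the relevant iterated pullbacks behave well: since $s,t$ are required to be \emph{strict} double functors, the set-theoretical pullback computes $T_1^{(n)_0}$ in $PsDbl_2$ (respectively in $Cat_2$), so the iterated $2$-pullbacks genuinely exist and their projections are again strict. This verifies the standing existence assumption ``$V$ has iterated $3$-pullbacks $T_1^{(n)_0}$'' of \prref{weak-int}. With both numbered conditions of \prref{biequiv a_n} in force, that Proposition produces the equivalence $1$-cells $a^n_{A_1,\dots,A_{n+1}}$; in the $2$-categorical setting these equivalences are isomorphisms (bijective functors), which is why the Corollary phrases the hypothesis with the pseudofunctors mapping $2$-products exactly to the iterated $2$-pullbacks rather than only up to equivalence.

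Then I would run the construction of \prref{weak-int} verbatim, noting at each stage that the weak data simplify. The composition $\circ$ induces $\crta\circ$ and then $c\colon T_1\times_{T_0}T_1\to T_1$ through the $3$-coproduct property and the equivalences $a^2,a^{3,L},a^{3,R}$; since all $2$-cells in play are bijective (``equivalence $2$-cells to be bijective''), the induced $a^*,l^*,r^*$ are genuine natural isomorphisms. The induced $3$-cells $\pi^*,\mu^*,\lambda^*,\rho^*,\Epsilon^*$, built from their daggered counterparts exactly as in the proof of \prref{weak-int}, are all identities because there are no nontrivial $3$-cells in a $2$-category; their defining coherence (the septagonal identity for $\pi^*$, etc.) holds automatically. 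The compatibility conditions $sc=sp_1$, $tc=tp_2$, $su=\id=tu$ and the rest follow from the corresponding enrichment-level identities together with strictness of $s,t$.

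The step I expect to be the main obstacle is the bookkeeping that the $2$-categorical $2$-products really do descend to the iterated $2$-pullbacks $T_1\times_{T_0}\cdots\times_{T_0}T_1$ under $\amalg_{B_1,\dots,B_{n-1}}$, i.e.\ matching the coproduct indexing with the pullback over $T_0=\amalg_A\I_A$; this is where the discreteness of $T_0$ (the final remark after \prref{weak-int}, that $T_0$ has identity $1$-cells and identity $2$-cells) is needed, since it forces the pullback $T_1\times_{T_0}T_1$ to select composable pairs exactly along shared objects of $\Tau$. Once this indexing is checked for the base cases $n=2,3$ exactly as in \prref{biequiv a_n}, the general $n$ and the remaining verifications are routine, and the Corollary follows.
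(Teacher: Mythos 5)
Your proposal is correct and follows essentially the same route as the paper, which proves this corollary in one line by specializing \prref{weak-int} to the case where $3$-cells are identities and equivalence $2$-cells are invertible; your write-up simply makes explicit the verification that the hypotheses of \prref{biequiv a_n} and \prref{weak-int} degenerate to the stated $2$-categorical ones. (The only superfluous step is invoking the set-theoretic pullback proposition to "verify" the existence of the iterated $2$-pullbacks: the corollary already assumes this, and that proposition only applies to the specific examples $PsDbl_2$ and $Cat_2$, not to a general $V$.)
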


\begin{ex}
A category enriched over the 2-category $V=Cat_2$ %of categories, functors and natural transformations 
is a bicategory, and it is well-known that a bicategory embeds into a pseudodouble category, which is a category internal in $Cat_2$. 
%(this is sort of a weak version of the result from \cite{Ehr:III}). 
\end{ex}

\begin{ex}
%Let $V=PsDbl_2$ be the 2-category of pseudodouble categories, pseudodouble functors and vertical transformations, \cite{Shul, GG}. 
A category enriched over $PsDbl_2$ is a locally cubical bicategory from \cite[Definition 11]{GG}. 
A category internal to $PsDbl_2$ is a version of an intercategory. \coref{weak-int} applied to $V=PsDbl_2$ uses the argumentation 
similar to \cite[Section 3.5]{GP:Fram}, where a locally cubical bicategory is shown to be a particular case of an intercategory. 
\end{ex}

Truncating the result of \coref{weak-int} to 1-categories one recovers a version of the results in \cite[Appendix]{Ehr:III} and \cite{Power}. 
As a particular case of this we have the following. 
A Gray-category is a category enriched over the monoidal category $Gray$ with the Gray monoidal product. This product is defined 
as an image of a cubical functor defined on the Cartesian product of two 2-categories. In \cite[Section 5.2]{GP:Fram} it is shown 
how a Gray-category can be seen as an intercategory, a category internal in $LxDbl$. As an intermediate step one can see how a 
Gray-category is made a category internal in $Gray$. 

%\begin{ex}
%Let $V=Gray$ denote the monoidal category of 2-categories and 2-functors with the monoidal Gray product. 
%A category enriched over $Gray$ is by definition a {\em Gray category}. Observe that although $V$ is of dimension 1, 0-cells of 
%$Gray$ are categories of dimension 2, henceforth a Gray category is a category of dimension 3, that is, it has 0-, 1-, 2- and 3-cells. 
%It is internal to $Gray$ as indicated in \prref{weak-int}. It is presented as an intercategory in \cite[Section 5.2]{GP:Fram}.  
%\end{ex}

%\textcolor{rojo}{
In the above three examples we can embed the 1-category $Gray$ and the 2-categories $Cat_2$ and $PsDbl_2$ to our tricategory $\DblPs$ 
and we get three examples of categories internal in $\DblPs$.

%For $V=PsDbl$ the existence of the 3-pullbacks in \prref{biequiv a_n} is assured by the fact that the unique 1-cells into 
%the terminal object and all the projection 1-cells are strict double functors. 

\section{Tricategory of tensor categories: enrichment and internalization} 
% made into an internal category}

Apart from our search for an alternative framework to intercategories and what we developed in \seref{int in DblPs}, 
we had another motivating example to introduce internal categories in a tricategory in \seref{int3}. 
Namely, analogously to the double category of rings, in one dimension higher we have a $(1\times 2)$-category of tensor categories 
(for this name see {\em e.g.} \cite{Shul}). It is an internal category in a suitable tricategory $V$, so that the category of objects 
consists of tensor categories, tensor functors and tensor natural transformations (thus the vertical direction is strictly 
associative and unital), while the category of morphisms consists of bimodule categories, bimodule functors, and bimodule natural transformations. Since the associativity for the relative tensor product of bimodule categories is an equivalence (and not an isomorphism!),  the horizontal direction of this $(1\times 2)$-category is tricategorical in nature. 
Clearly, the tricategory $\Tens$ of tensor categories, with 0-cells tensor categories, 1-cells bimodule categories, 2-cells bimodule 
functors, and 3-cells bimodule natural transformations lies in this $(1\times 2)$-category. 

In the first subsection below we will show that the tricategory $\Tens$ is enriched over the tricategory $2Cat_{wk}$, of 2-categories, 
pseudofunctors, weak natural transformations and modifications. Note that $2\x\Cat_{wk}$ is 1-strict. In the second subsection we will 
present an internal category structure for $\Tens$ in $2Cat_{wk}$ richer than the one coming from \prref{weak-int}, that is, where the object 
of objects $T_0$ is not descrete.

%We will also show how the tricategory $\Tens$ forms a part of a category internal in the same tricategory $2Cat_{wk}$, in the sense of 
%\deref{int3}. 
%%This is our motivating example also for a ``category enriched over a tricategory $V$'' which embeds into a ``category internal in $V$''. 
%Inspired by this example we would like to have a general result stating that ``enriched'' is a special case 
%of ``internal'' in the setting of tricategories. Strict versions of such a statement were proved in \cite{Ehr:III} and 
%\cite{Power} for 1-categories. 

\subsection{$\Tens$ as an enriched category} %\sslabel{Bimods}

Let us recall and discuss the structure of a tricategory $\Tens$ of tensor categories. 
%The tricategory structure of $\Tens$ is as follows: 
\begin{enumerate} 
\item For every two tensor categories $\C$ an $\D$ we have a 2-category $\Bimod(\C,\D)$; 
\item given two $\C\x\D$-bimodule categories $\M, \N$ there is a category $\Bimod(\C,\D)\big(\M,\N\big)={}_\C\Fun_\D(\M,\N)$ whose 
composition of morphisms is given by the vertical composition of $\C\x\D$-bimodule natural transformations, which we denote by $\cdot$ 
(this is the transversal composition of 3-cells in $\Tens$); 
\item given a third $\C\x\D$-bimodule category $\Ll$ there is a functor 
$\comp: {}_\C\Fun_\D(\N,\Ll)\times{}_\C\Fun_\D(\M,\N)\to{}_\C\Fun_\D(\M,\Ll)$ 
given by the composition of $\C\x\D$-bimodule functors and $\C\x\D$-bimodule natural transformations; 
thus the horizontal composition of 2-cells in $\Bimod(\C,\D)$ is given by the usual horizontal composition of natural transformations 
(this is the vertical composition of 2- and 3-cells in $\Tens$); 
by the functor properties of $\comp$, on this 2-category level we have the strict interchange law: 
$(\zeta'\cdot\omega')\comp(\zeta\cdot\omega)=(\zeta'\comp\zeta)\cdot(\omega'\comp\omega)$ for accordingly composable natural transformations 
$\omega, \omega', \zeta, \zeta'$; 
%where the composition of 1-cells, {\em i.e.} $\C\x\D$-bimodule functors, is given by the Deligne product $\F\Del\G$, accordingly, 
%vertical composition of 2-cells in $\Bimod(\C,\D)$, that is of $\C\x\D$-bimodule transformations $\omega$ and $\omega'$, is given by 
%$\omega\Del\omega'$, and for 
\item given a third tensor category $\E$ there is a pseudofunctor $\Del_\D: \Bimod(\D,\E)\times\Bimod(\C,\D)\to\Bimod(\C,\E)$, so 
the composition of 1-cells and the horizontal composition of 2- and 3-cells in $\Tens$ is given by the relative tensor 
product of bimodule categories. Let $(\M,\N),(\M', \N')\in\Bimod(\D, \E)\times\Bimod(\C, \D)$, then set for the hom-set 
$$\Bimod(\D, \E)\times\Bimod(\C, \D)\big((\M,\N),(\M', \N')\big)={}_\C\Fun_\E^{\D\x bal}\big((\M,\N),(\M', \N')\big),$$ 
which is the category of $\D$-balanced $\C\x\E$-bimodule functors and natural transformations. Then there is a functor 
$\widetilde\s : {}_\C\Fun_\E^{\D\x bal}\big((\M,\N),(\M', \N')\big) \to {}_\C\Fun_\E(\N\Del_\D\M, \N'\Del_\D\M')$ and there are natural isomorphisms 
$\widetilde G\comp\widetilde F \iso\widetilde{G\comp F}$ and $\Id_{\N\Del_\D\M}\iso\widetilde{\Id_{(\M,\N)}}$ for all 
$F\in{}_\C\Fun_\E^{\D\x bal}\big((\M,\N),(\M', \N')\big)$ and $G\in{}_\C\Fun_\E^{\D\x bal}\big((\M',\N'),(\M'', \N'')\big)$  
(this corresponds to the bimodule case of \cite[Proposition 3.3.2]{Schaum1}). In particular, 
the latter natural isomorphisms imply that we have the interchange law at this level holding up to an isomorphism: 
$(\F'\Del_\D\G')\comp(\F\Del_\D\G)\iso(\F'\comp\F)\Del_\D(\G'\comp\G)$ for according bimodule functors, and also: 
$\Id_{\N\Del_\D\M}\iso\Id_\N\Del_\D\Id_\M$. The above functor property implies in particular: 
$(\zeta'\comp\omega')\Del_\D(\zeta\comp\omega)=(\zeta'\Del_\D\zeta)\comp(\omega'\Del_\D\omega)$ for according bimodule natural 
transformations, and $\Id_{\G\Del_\D\F}=\Id_\G\Del_\D\Id_\F$; 
\item for 0-, 1- and 2-cells $\C, \M$ and $\F$ respectively there are identity 1-, 2- and 3-cells $\C, \id_\M$ and $\Id_\F$, respectively; 
\item there are pseudonatural equivalences $a,l,r$ so that concretely for the corresponding bimodule categories one has equivalence functors: 
$a_{\M,\N,\Ll}: (\M\Del_\C\N)\Del_\D\Ll\stackrel{\cong}{\to}\M\Del_\C(\N\Del_\D\Ll), 
l_\N:\C\Del_\C\N\stackrel{\cong}{\to}\N$ and $r_\N:\N\Del_\D\D\stackrel{\cong}{\to}\N$ (observe that the respective naturalities hold up to 
natural isomorphisms); 
\item there are modifications $\pi, \mu, \lambda$ and $\rho$ which evaluated at bimodule categories give natural isomorphisms 
$$\pi: (\id_\K\Del_\C a_{\N,\M,\Ll})\comp a_{\K, \N\Del_\D\M, \Ll}\comp(a_{\K, \N,\M}\Del_\E\id_\Ll) \Rrightarrow a_{\K,\N,\M\Del_\E\Ll}\comp
a_{\K\Del_\C\N,\M,\Ll},$$ 
$$\mu_{\M,\D,\Ll}: r_\M\Del_\D\id_\N\Rrightarrow(\id_\M\Del_\D l_\N)\comp a_{\M,\D \N}, $$
$$\lambda_{\C,\M,\N}: l_\M\Del_\D\id_\N\Rrightarrow l_{\M\Del_\D\N}\comp a_{\C,\M,\N}, $$ %(l_\M\Del_\D \id_\N)
$$\rho_{\C,\M,\E}:(\id_\M\Del_\D r_\N)\comp a_{\M,\N,\E}\Rrightarrow r_{\M\Del_\D\N},$$ 
similar to those in (vi)-(ix) of \cite[Theorem 3.6.1]{Schaum1} and they satisfy three axioms analogous to those in (x) of {\em loc.cit.}. 
\end{enumerate}

\begin{rem} \rmlabel{iso not =}
To see %avoid the error which is easy to commit to think 
that the functors on the two sides in the isomorphism $(\F'\Del_\D\G')\comp(\F\Del_\D\G)\iso
(\F'\comp\F)\Del_\D(\G'\comp\G)$ are a priori not equal, observe the following. As functors acting on the relative tensor product, 
they both are given up to an isomorphism by the defining functors $(\F'\times\G')\comp(\F\times\G)$ and $(\F'\comp\F)\times(\G'\comp\G)$, 
respectively, which are clearly equal between themselves. Since both functors are determined up to an isomorphism by the same functor, 
they only can be isomorphic between themselves, and one can not claim that they are equal. This applies to the point 4. above. 
By the same reason naturalities in the point 6. above hold only up to an isomorphism. 
\end{rem}

\begin{rem} %\rmlabel{ }
For a fixed tensor category $\C$ it was proved in \cite{Gr} that $\Bimod(\C,\C)$ forms a monoidal 2-category in the sense of 
\cite{KV}, which is a non-semistrict monoidal bicategory, namely, it is weaker than a Gray monoid. Though, \cite{Gr} follows the approach of 
\cite{ENO} where the relative tensor product of bimodule categories is defined in such a way that a functor from such tensor product is 
defined {\em uniquely} by a balanced functor, whereas in \cite{Del} it is defined {\em up to a unique isomorphism}. This has for a 
consequence that many of the structure isomorphisms in $\Bimod(\C,\C)$ in \cite{Gr} result to be identities (coherence 3-cells: 
$\pi$ for the associativity constraint, \cite[Proposition 4.9]{Gr}, and $\lambda$ and $\rho$ for the left and right unity constraints 
\cite[Proposition 4.11]{Gr}), and moreover the associativity constraint $a$ itself is an isomorphism instead of being an equivalence 
(see the proof of \cite[Proposition 4.4]{Gr}). Substituting tensor categories by fusion categories (semisimple tensor categories), 
in \cite{Schaum1} it was proved that these form a tricategory (in a weaker sense than in \cite{Gr}, as we just pointed out). 
%with the only difference that the tensor categories are moreover assumed to be semisimple, 
%{\em i.e.} the author works with fusion categories. 
Semisimplicity does not influence the arguments of the proof, so we may take it as a proof that $\Tens$ is a tricategory. 
Note that the author uses the term ``2-functor'' for a pseudofunctor, \cite[Definition A 3.6]{Schaum1}. 
\end{rem}

\medskip

From the items 1, 4, 5, 6 and 7 above it is clear that $\Tens$ is a category enriched over the tricategory of 2-categories, 
pseudofunctors, weak natural transformations and modifications, which we denoted earlier by $2\x\Cat_{wk}$.

\subsection{Internal category structure for $\Tens$}
%The $(1\times 2)$-category of tensor categories}

Now let us explain the $(1\times 2)$-category structure for tensor categories, {\em i.e.} of a category internal in $2\x\Cat_{wk}$. 
To do so we will give 2-categories $C_0$ and $C_1$, pseudofunctors $s,t,u$ and $c$, weak natural equivalences $\alpha^*, \lambda^*$ 
and $\rho^*$ and modifications $\pi^*, \mu^*, \lambda^*, \rho^*,\Epsilon^*$. As we announced at the beginning of this section, 
let $C_0$ be the 2-category of tensor categories, tensor functors and tensor natural transformations, 
and let $C_1$ be the 2-category of bimodule categories, bimodule functors and bimodule natural transformations. Fix tensor categories 
$\C$ and $\D$. To give a source and target 2-functors $s,t: C_1\to C_0$, let $\M$ be a $C\x\D$-bimodule category, $\F$ a $C\x\D$-bimodule functor, 
and $\omega$ a $C\x\D$-bimodule natural transformation. Set $s(\M)=\C, t(\M)=\D, s(\F)=\id_\C, t(\F)=\id_\D$ and $s(\omega)=\Id_{\id_\C}$ and 
$t(\omega)=\Id_{\id_\D}$ - the identity functors on $\C$ and $\D$ are obviously tensor functors, and the identity natural transformations on 
these two identity functors are obviously tensor ones. It is also clear that thus defined source and target functors are strict 2-functors. 
To define the identity 2-functor $u: C_0\to C_1$, take tensor categories $\C,\D$, tensor functors $F,G:\C\to\D$ and a tensor natural transformation 
$\zeta:F\to G$, and for $C, C', C''\in\C$ let $C\rtr C'$ denote the left action of $C$ on $C'$ and $C'\ltr C''$ the right action of $C''$ on $C'$. 
Set $u(\C)=\C$ as a $C$-bimodule category, $u(F)=F$ as a $C$-bimodule functor where $\D$ is a $\C$-bimodule category 
%\Id_\D^F$ the identity functor on $\D$ where the notation $\Id_\D^F$ alludes that $\D$ is a left $\C$-module category 
through $F$, that is: $C\rtr D\ltr C'=F(C)\ot D\ot F(C')$ for an object $D\in\D$ and where $\ot$ denotes the tensor product in $\D$ (a well-known fact), 
then $F$ is clearly $C$-bilinear. Finally, set $u(\zeta)=\zeta$, then similarly as for functors, $\zeta$ is a $\C$-bilinear natural transformation. 
%the identity natural transformation $\Id_\D^F \to \Id_\D^G$, so that $\Id^\zeta_{\id_\D}(D)=\id_D$ 
%the identity morphism on $D$ and $\Id^\zeta_{\id_\D}(C\rtr D\ltr D')=\zeta(C)\ot\id_D\ot\id_{D'}$. 
%Then it is easily seen that $\Id^\zeta_{\id_\D}$ is indeed a $\C\x\D$-bilinear natural transformation. 
To see that $u$ is indeed a 2-functor, take a further tensor category $\E$ and a tensor functor $G:\D\to\E$, then it is clear that $GF$ as a 
$\C$-bimodule functor is equal to the composition of $G$ as a $\D$-bimodule functor and $F$ as a $\C$-bimodule functor. 

The rest of the structure (a pseudofunctor $c$, pseudonatural equivalences  $\alpha^*, \lambda^*, \rho^*$ and modifications $\pi^*, \mu^*, 
\lambda^*, \rho^*,\Epsilon^*$) are given as in \prref{weak-int}. That $c$ is a pseudofunctor and not a 2-functor follows from 
\rmref{iso not =}. For this reason the tricategory $\Tens$ is an internal category in the tricategory $2\x\Cat_{wk}$, rather than 
in the Gray 3-category $2CAT_{nwk}$, as conjectured in \cite[Example 2.14]{DH} (1-cells in $2CAT_{nwk}$ are 2-functors, while in $2\x\Cat_{wk}$ 
these are pseudofunctors).

\medskip

%\textcolor{rojo}{
Observe that the tricategory of 2-categories $2\x\Cat_{wk}$ embeds into the tricategory $\DblPs$. Thus the 
$(1\times 2)$-category of tensor categories is also an example of our alternative notion to intercategories (with non-trivial 3-cells 
involved in the internalization).

\bigskip

{\bf Acknowledgements.} 

I am profoundly thankful to Gabi B\"ohm for helping me %providing me the guidelines to better 
understand the problem of non-fitting of monoids in her monoidal category $Dbl$ into intercategories, for suggesting me to try her $Dbl$ 
as the codomain for the embedding in Section 2, and for many other richly nurturing discussions. 
My deep thanks also go to the referee of the previous version of this paper. 
This research was partly developed during my sabbatical year from the Instituto de Matem\'atica Rafael Laguardia of the 
Facultad de Ingeneir\'ia of the Universidad de la Rep\'ublica in Montevideo (Uruguay). My thanks to ANII and PEDECIBA Uruguay 
for financial support. The work was also supported by the Serbian Ministry of Education, Science and
Technological Development through Mathematical Institute of the Serbian Academy of Sciences and Arts.

%\pagebreak 

\pagebreak

%% APPENDIX for the axiomes od internal cats

\begin{figure}[hbt!]%[ht!]
\centering
\includegraphics[width=140mm]{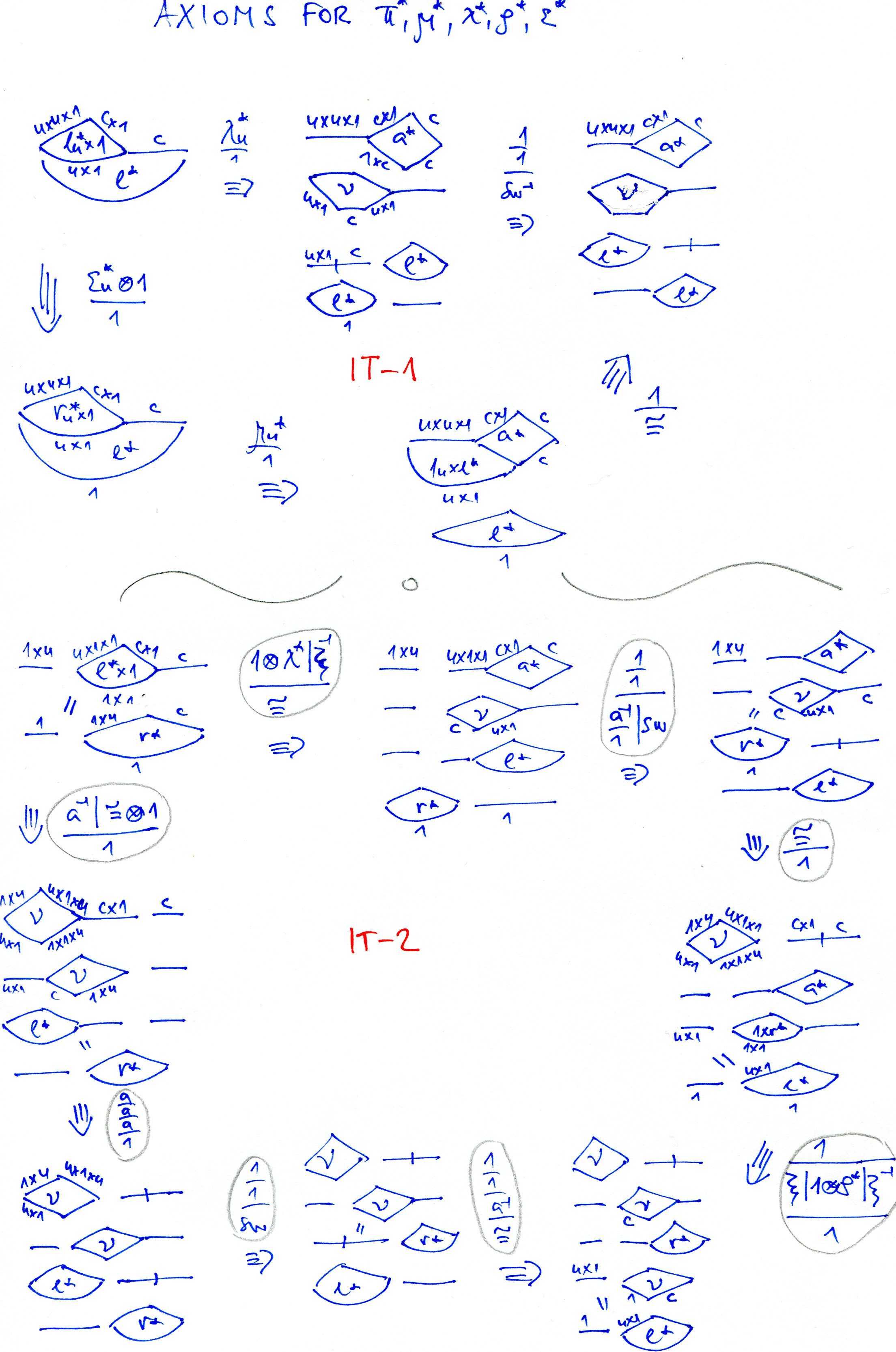}
\end{figure}

\begin{figure}[hbt!]%[ht!]
\centering
\includegraphics[width=140mm]{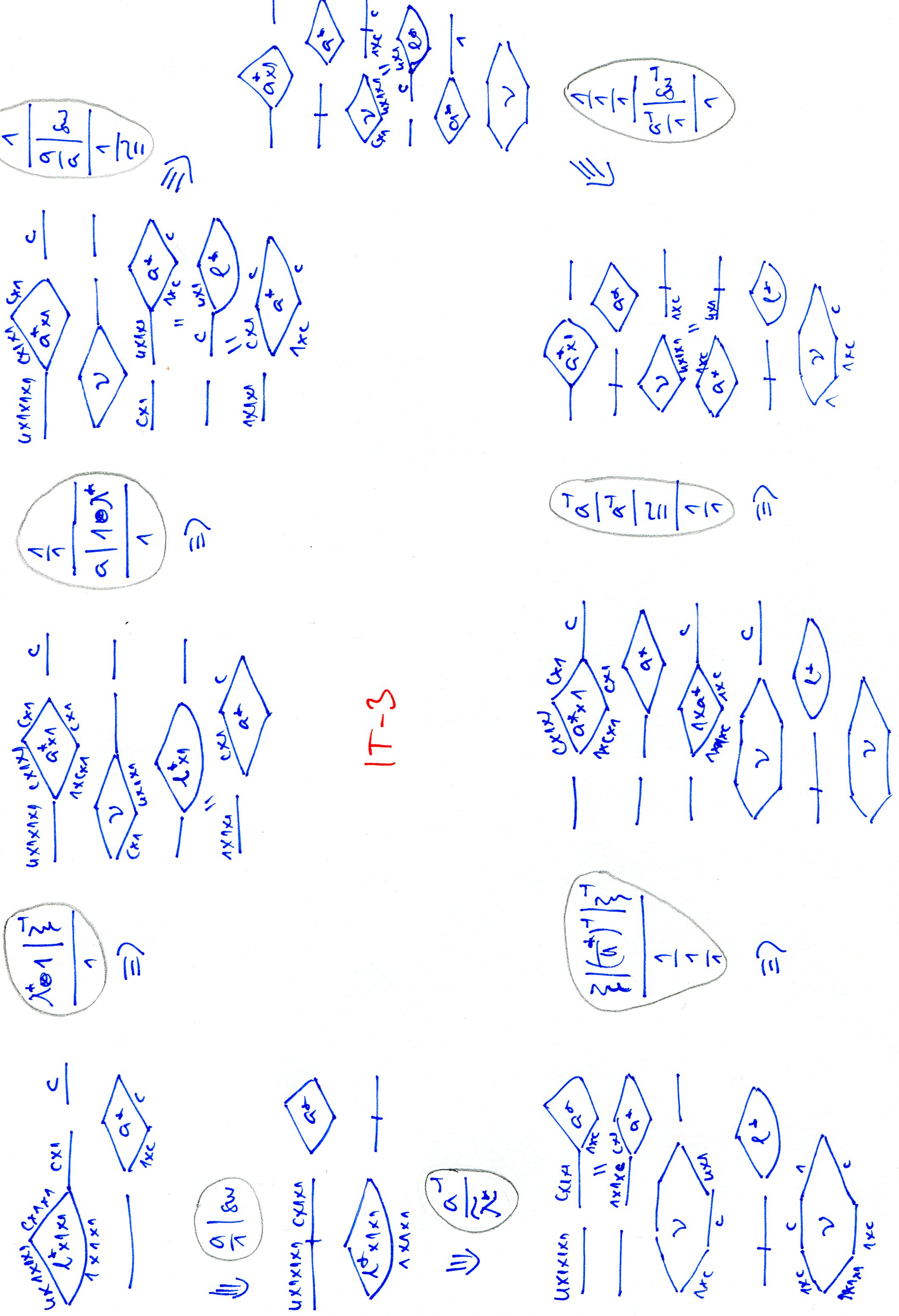}
\end{figure}

\begin{figure}[hbt!]%[ht!]
\centering
\includegraphics[width=140mm]{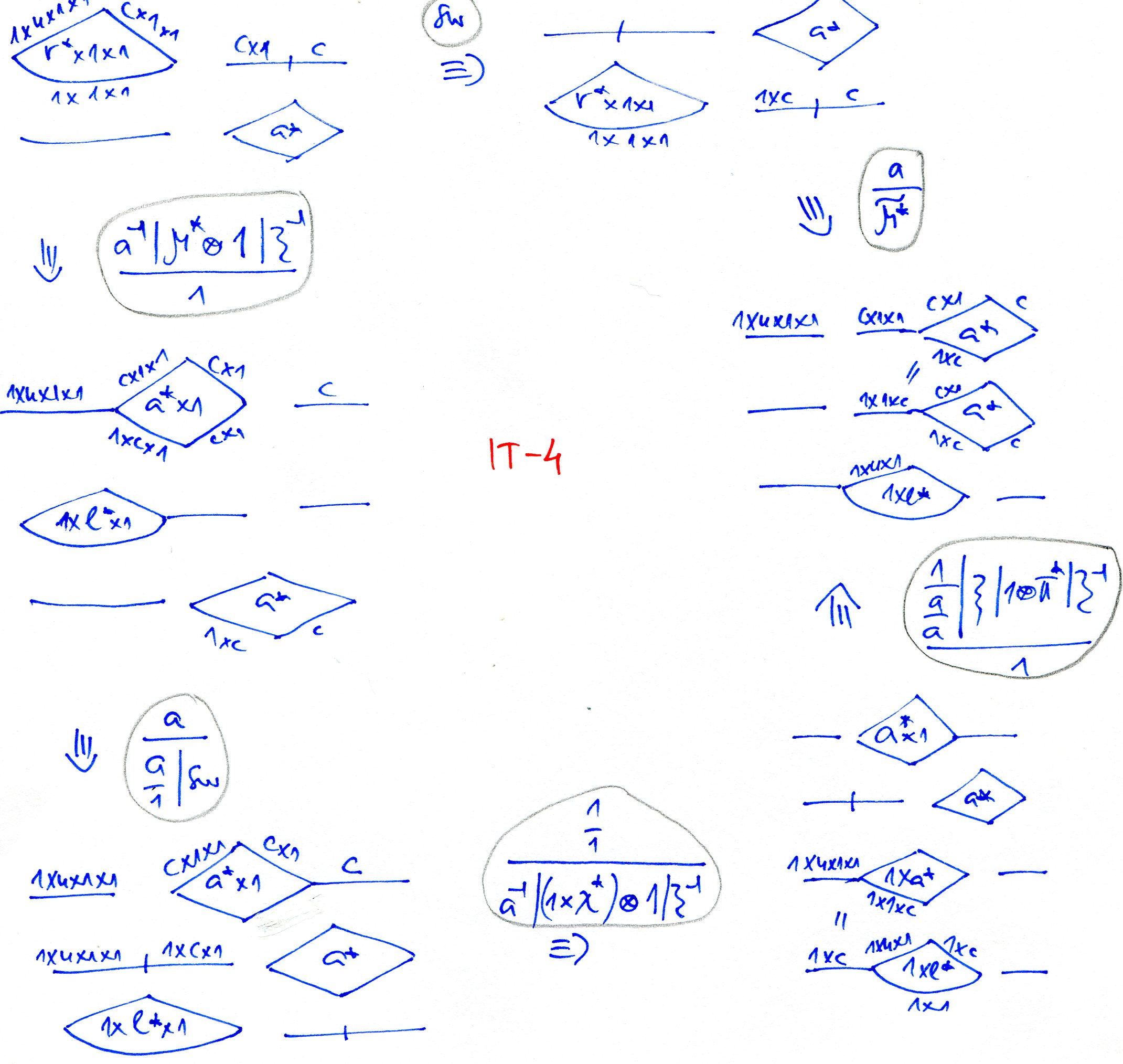}
\end{figure}

\begin{figure}[hbt!]%[ht!]
\centering
\includegraphics[width=140mm]{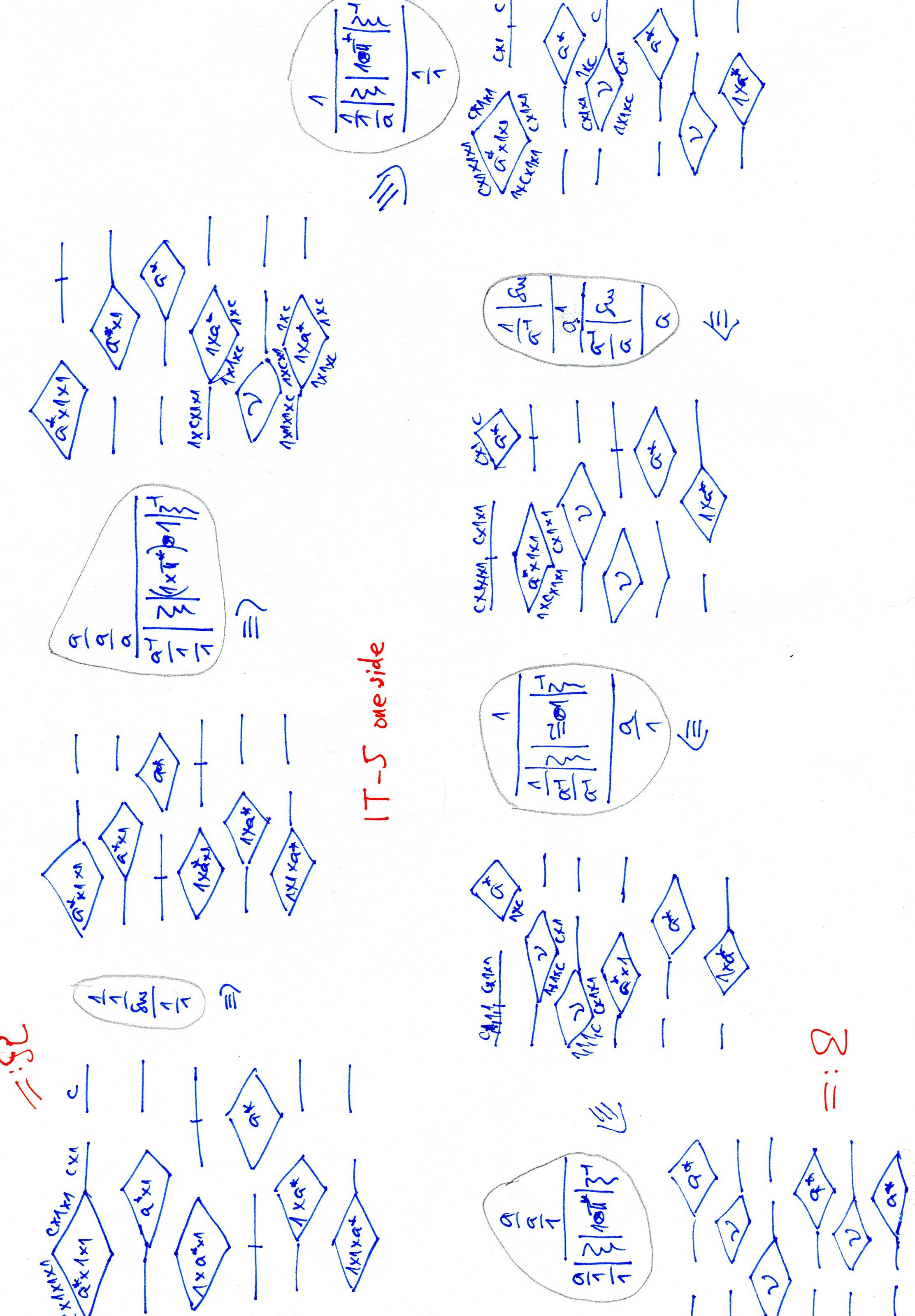}
\end{figure}

\begin{figure}[hbt!]%[ht!]
\centering
\includegraphics[width=140mm]{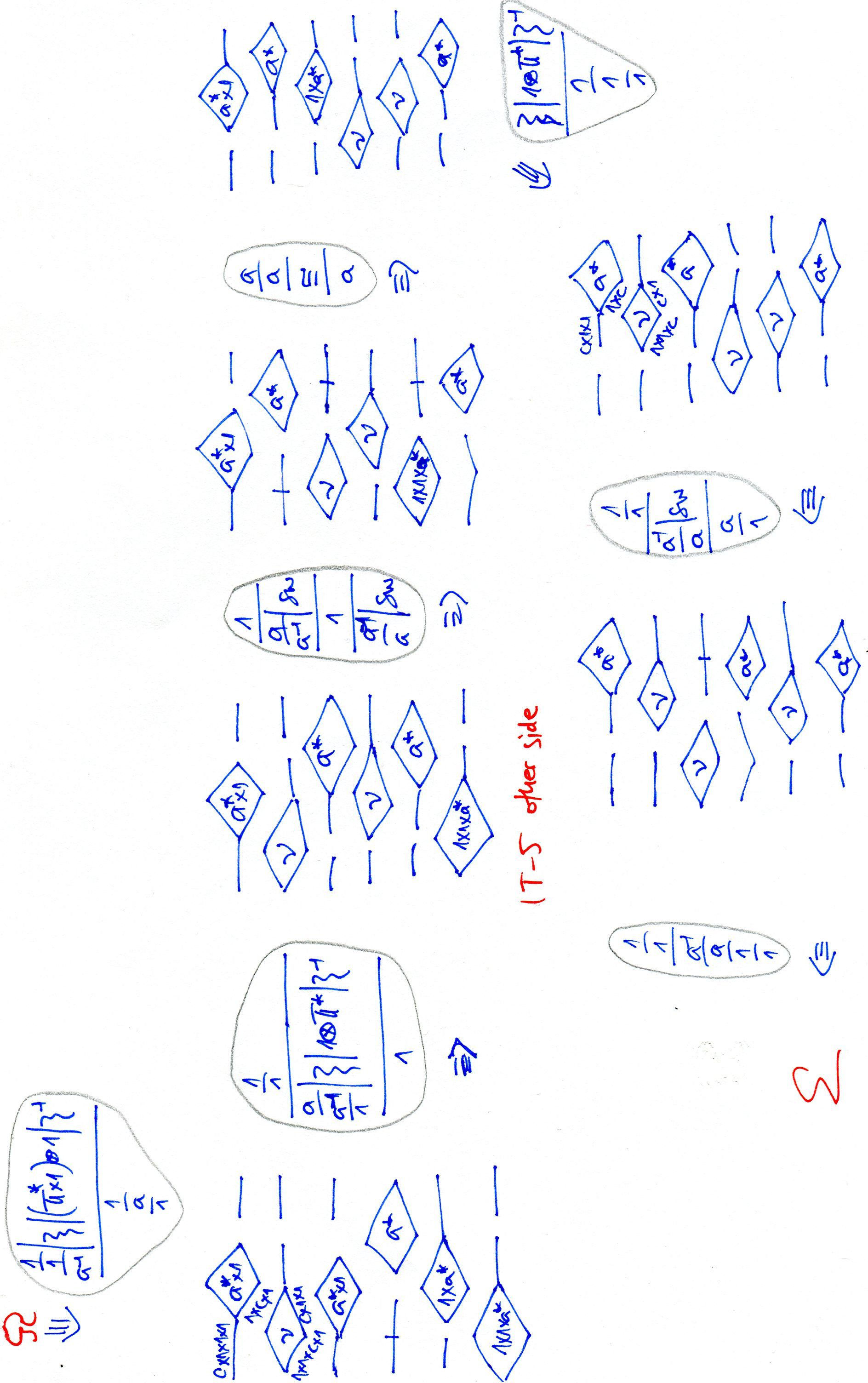}
%%\caption{A simple caption \label{3-cell Pi}}
\end{figure}
\bigskip

\end{document}